\documentclass[letterpaper, 11pt,abstract=true]{scrartcl}
\usepackage[english]{babel}
\usepackage[T1]{fontenc}
\usepackage[utf8]{inputenc}
\usepackage{libertine}
\usepackage{color,amssymb,amsfonts,mathrsfs,graphicx,setspace,amsmath}
\usepackage{amsthm}
\usepackage{caption}
\usepackage[left=3cm,right=3cm,top=2.5cm,bottom=3cm]{geometry}
\usepackage[most]{tcolorbox}
\usepackage[framemethod=TikZ]{mdframed}
\usepackage{thm-restate}
\usepackage{fullpage}
\usepackage{xcolor}
\usepackage{tikz}
\usetikzlibrary{shapes.geometric,hobby,calc} 
\usetikzlibrary{decorations.pathmorphing}
\usetikzlibrary{decorations.text}
\usetikzlibrary{shapes.misc}
\usetikzlibrary{calc}
\usetikzlibrary{arrows.meta}
\usetikzlibrary{decorations,shapes}
\usepackage{hyperref}
\usepackage{nameref,cleveref}
\usepackage{macros}
\usepackage{tikzmacros}
\pgfdeclarelayer{dungeon}
\pgfdeclarelayer{background}
\pgfdeclarelayer{foreground}
\pgfsetlayers{dungeon,background,main,foreground}

\usepackage{refcount}

\usepackage{scalerel}
\usepackage{enumitem}
\setlist{nosep}

\usepackage{dsfont}
\usepackage{soul}
\usepackage{xspace}
\usepackage[prependcaption,colorinlistoftodos]{todonotes}
\usepackage{titling}
\usepackage{marginnote}
\usepackage{subcaption}

\usepackage{stmaryrd}

\usepackage{textpos}

\usepackage{comment}

\newcommand{\cC}{\mathcal{C}}

\newcommand{\cP}{\mathcal{P}}
\newcommand{\cQ}{\mathcal{Q}}
\newcommand{\cS}{\mathcal{S}}

\definecolor{CornflowerBlue}{rgb}{0.39, 0.58, 0.93}
\definecolor{LavenderMagenta}{rgb}{0.93, 0.51, 0.93}
\definecolor{PastelOrange}{rgb}{1.0, 0.7, 0.28}
\definecolor{AppleGreen}{rgb}{0.55, 0.71, 0.0}

\hypersetup{
	colorlinks=true,
	linkcolor=myBlue,
	citecolor=myBlue,
	urlcolor=myLightBlue,
	bookmarksopen=true,
	bookmarksnumbered,
	bookmarksopenlevel=2,
	bookmarksdepth=3,
	hypertexnames=false 
}

\title{\Huge{Potatoes and Onions}}

\predate{}
\date{}
\postdate{}

\DeclareRobustCommand{\authorthing}{
    Marcin Briański\thanks{Marcin Briański was partially supported by Polish National Science Centre grant number 2019/34/E/ST6/00443.}\\[0.05cm]
		\normalsize\aff{Jagiellonian University, Kraków}\\
		\normalsize\href{mailto:marcin.brianski@doctoral.uj.edu.pl}{marcin.brianski@doctoral.uj.edu.pl}\\
		\and
	    Meike Hatzel\thanks{Part of this work has been done while Meike Hatzel was working supported by the Institute for Basic Science (IBS-R029-C1).}\\[0.05cm] \normalsize\aff{Technical University of Darmstadt}\\
	    \normalsize\href{mailto:research@meikehatzel.com}{research@meikehatzel.com}\\
	    \and
	    Karolina Okrasa\thanks{Part of this work has been done while Karolina Okrasa was working at the University of Oxford, supported by the project UKRI EP/X024431/1.}\\[0.05cm]
        \normalsize\aff{Warsaw University of Technology}\\
        \normalsize\href{mailto:karolina.okrasa@pw.edu.pl}{karolina.okrasa@pw.edu.pl}
	    \and
	    Michał Pilipczuk\thanks{This work has been done while Micha\l{} Pilipczuk was working at the Institute of Informatics of the University of Warsaw. In particular, his work  on this manuscript was supported by the project BOBR
	    	that has received funding from the European Research Council (ERC) under the European Union’s Horizon
	    	2020 research and innovation programme (grant agreement No. 948057).}\\[0.05cm]
 		\normalsize\aff{Technical University of Darmstadt}\\
 		\normalsize\href{mailto:pilipczuk@mathematik.tu-darmstadt.de}{pilipczuk@mathematik.tu-darmstadt.de}
 
}
\author{\authorthing}

\newif\ifcomment
  \commenttrue

\begin{document}
\maketitle
\begin{textblock}{20}(-1.3,  5.65)
	\includegraphics[width=40px]{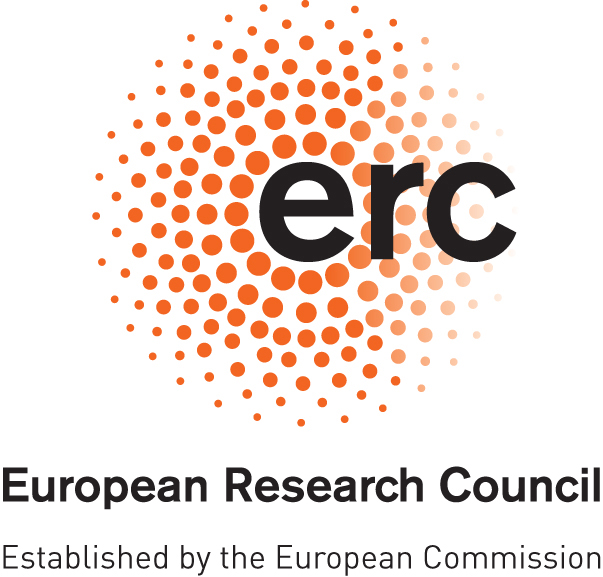}%
\end{textblock}
\begin{textblock}{20}(-1.8,  5.65)
	\includegraphics[width=80px]{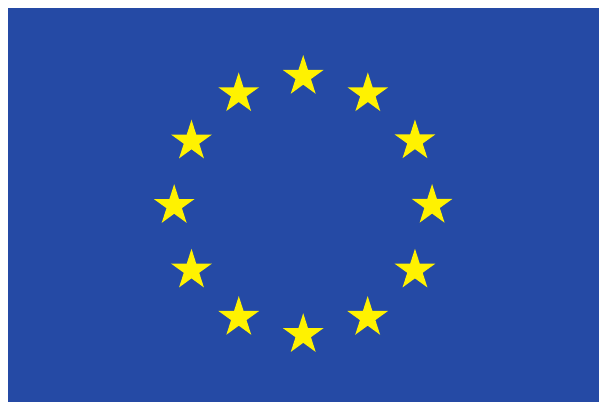}%
\end{textblock}

\begin{abstract}
	The first major step in the theory of graph minors of Robertson and Seymour was the \say{grid theorem}, a fundamental duality result describing that any graph of sufficiently high \treewidth must contain a large grid as a minor. Since this pioneering work, multiple grid theorems have been established in structural graph theory, linking various width parameters of combinatorial objects with the non-existence of grid-like obstructions. In particular, Kawarabayashi and Kreutzer~[STOC~'15] proved a variant of the grid theorem for directed graphs that links directed treewidth with the non-existence of topological minors of large cylindrical walls. On the other hand, to study immersions of undirected graphs --- another containment relation that focuses on edge-disjointness, rather than vertex-disjointness --- Wollan~[JCTB~'15] introduced a parameter \emph{tree-cutwidth} and proved that it is dual to immersions of large walls.
	
	
	In this work, we present a version of Wollan's parameter for directed graphs, the \emph{directed tree-cutwidth}.
	The main result is a grid theorem for directed tree-cutwidth, establishing that it is dual to directed immersions of large cylindrical walls.
\end{abstract}

\thispagestyle{empty}
\newpage
\setcounter{page}{1}

\section{Introduction}\label{sec:intro}

The fundamental Grid Minor Theorem of Robertson and Seymour~\cite{RobertsonS86} says that there is a function $f\colon \N\to \N$ such that every graph of treewidth more than $f(k)$ must contain the $k\times k$ grid as a minor.
In turn, a minor model of a $k\times k$ grid forces the treewidth of a graph to be at least $k$.
These two statements combined prove a \emph{duality} between the existence of a tree-decomposition of small width --- witnessing that the graph has a tree-like structure --- and the existence of a large grid minor --- a large bidimensional object witnessing that the graph is not tree-like.
Since the work of Robertson and Seymour, multiple duality results of this kind --- commonly called \emph{grid theorems} --- have been discovered in structural graph theory, linking parameters describing the tree-likeness of various combinatorial objects with the non-existence of grid-like obstructions.
For our work, two such grid theorems are of particular importance.

First, Kawarabayashi and Kreutzer~\cite{kawarabayashi2015directed} proved in 2015 a directed variant of the Grid Minor Theorem, which links the boundedness of the \emph{directed treewidth} of a directed graph (digraph) with the non-existence of topological minors of large cylindrical walls\footnote{The original formulation of Kawarabayashi and Kreutzer speaks of excluding cylindrical grids as \emph{butterfly minors}, which is another containment notion for digraphs. Their formulation is easily seen to be equivalent to the one stated here.}. Here, we say that a digraph $H$ is a topological minor of a digraph $D$ if one can map the vertices of $H$ to distinct vertices of $D$, and the edges of $H$ to directed paths in $D$ that connect the images of respective endpoints and are pairwise internally vertex-disjoint.
Also, a \emph{cylindrical grid} of order $k$ is a grid-like directed graph that consists of $k$ concentric cycles oriented the same way, and $2k$ radial paths linking the cycles via an alternating orientation pattern (see~\cref{fig:different_walls}).
A \emph{cylindrical wall} of order \(k\) is then obtained by splitting every vertex \(u\) of the cylindrical grid of order \(k\) into two vertices \(u^+\) and \(u^-\) connected by an edge $(u^+,u^-)$; $u^+$ inherits all incoming edges of \(u\) and $u^-$ inherits all outgoing edges.

\begin{figure}[!ht]
	\begin{center}
	\begin{tikzpicture}
		\node (left) at (-4,0) {
			\begin{tikzpicture}[scale=0.9]
				\CylindricalGrid{4}{1}{black}{black}
			\end{tikzpicture}};
		
		\node (right) at (4,0) {
			\begin{tikzpicture}[scale=0.7]

				\foreach \x in {0, 2, 4, 6}{
					\foreach \y in {-1,1,3,5}{
						\draw (\x, \y+1)-- (\x, \y);						
					}
				}
				\foreach \x in {1, 3, 5, 7}{
					\foreach \y in {0,2,4,6}{
						\draw (\x, \y+1)-- (\x, \y);						
					}
				}
				
				\foreach \i in {0,...,6}
				{
					\draw[edge] (0,\i) -- (7,\i);
				}
				\draw[edge] (1,7) -- (7,7);
				\draw[edge] (0,-1) -- (6,-1);
				
				\foreach \i in {1,...,6}
				{
					\foreach \j in {-1,...,7}
					{
						\node[vertex,scale=0.5] at (\i,\j) {};
					}
				}
				\foreach \i in {0}
				{
					\foreach \j in {-1,...,6}
					{
						\node[vertex,scale=0.5] at (\i,\j) {};
					}
				}
				\foreach \i in {7}
				{
					\foreach \j in {0,...,7}
					{
						\node[vertex,scale=0.5] at (\i,\j) {};
					}
				}
				\end{tikzpicture}};
	\end{tikzpicture}
	\end{center}
	\caption{Left: A cylindrical grid of order \(4\). A cylindrical wall of order $4$ can be obtained from this digraph by splitting every vertex $u$ not lying on the two extremal cycles into two vertices $u^+,u^-$, connected by an edge $(u^+,u^-)$; $u^+$ inherits all incoming edges and $u^-$ inherits all outgoing edges. Right: An undirected~wall.}
	\label{fig:different_walls}
\end{figure}

Second, in 2015, Wollan~\cite{wollan2015structure} proved a grid theorem for \emph{immersions} of undirected graphs, which is a containment notion defined in the same way as topological minors, except that the paths modelling the edges are required to be edge-disjoint instead of internally vertex-disjoint\footnote{Technically, this notion is called a \emph{weak immersion}, as opposed to a strong immersion where we additionally forbid every path modelling an edge $e$ to pass through the images of vertices other than the endpoints of $e$. Throughout this paper we work exclusively with weak immersions and, consequently, we just call them \emph{immersions}.}. More precisely, Wollan introduced a graph parameter \emph{tree-cutwidth} and proved a duality result linking it to immersions of large walls (see~\cref{fig:different_walls}). Tree-cutwidth roughly resembles treewidth, but is fundamentally based on decomposing the graph using edge cuts instead of vertex cuts. The underlying notion of a graph decomposition is called a \emph{tree-cut decomposition}:
it is a tree $T$ where each node $x$ is assigned a \emph{bag} $\beta(x)$ so that the bags $\{\beta(x)\colon x\in V(T)\}$ form a \emph{near-partition} of the vertex set of the decomposed graph~$G$. (Every vertex of $G$ belongs to exactly one bag, but there may exist empty bags.) The \emph{width}\footnote{In~\cite{wollan2015structure}, Wollan defines the width slightly differently, but as proved in~\cite{GiannopoulouPRT21}, his definition is functionally equivalent to the one given below.} of a tree-cut decomposition $(T,\beta)$ is the least integer $k$ such that
\begin{itemize}
	\item every bag has size at most $k$;
	\item every edge of $T$ induces a \emph{cut} (bipartition of $V(G)$) of \emph{order} at most $k$ (so with at most $k$ edges crossing the cut); and
	\item every node of $T$ is incident to at most $k$ edges of $T$ that induce a cut of order larger than $2$.
\end{itemize}
Similarly to the definition of treewidth, the \emph{tree-cutwidth} of a graph $G$ is the minimum possible width of a tree-cut decomposition of $G$.

\paragraph*{Our contribution.} In this work we prove a directed counterpart of the result of Wollan~\cite{wollan2015structure}, or equivalently, an immersion counterpart of the result of Kawarabayashi and Kreutzer~\cite{kawarabayashi2015directed}. That is, we introduce a digraph width parameter called \emph{directed tree-cutwidth}, and we prove a grid theorem linking it to immersions of \emph{cylindrical walls}. The discovered duality is explained by the following two statements, which are our main results.

\begin{restatable}{theorem}{ThmImm}
	\label{thm:grid_or_small_dtcw}
	There is a function \(\thmFkt{thm:grid_or_small_dtcw}{f}{}\colon \N\to \N\) such that for any digraph \(D\), one of the following~holds:
	\begin{enumerate}[label=\textbf{(I\arabic*)},leftmargin=1.2cm]
		\item \(D\) contains a cylindrical wall of order $k$ as an immersion, or
		\item the directed tree-cutwidth of $D$ is at most \(\thmFkt{thm:grid_or_small_dtcw}{f}{k}\).
	\end{enumerate}
	Moreover, there is a fixed-parameter algorithm (with parameter $k$) to find either an immersion model of a cylindrical wall of order $k$ in $D$, or a directed tree-cut decomposition of $D$ of width at most~\(\thmFkt{thm:grid_or_small_dtcw}{f}{k}\).
\end{restatable}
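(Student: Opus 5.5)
We reduce to the directed grid theorem of Kawarabayashi and Kreutzer via a degree-based dichotomy. The underlying principle is that immersions ignore vertex-disjointness. A digraph can contain a large cylindrical wall immersion for two reasons: either it has a large cylindrical wall as a topological minor, or edge-connectivity is concentrated at few vertices. The plan below handles these two sources separately.

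\textbf{Step 1: a well-linked edge set.} Assume the directed tree-cutwidth of $D$ is larger than $f(k)$. First we establish an obstruction-type characterization of directed tree-cutwidth, analogous to the well-linked set/bramble duality for directed treewidth. We want to show that large width yields a large set $X$ of edges (or vertices) that is \emph{edge-well-linked in the directed sense}: for any two equal-size subsets $A,B\subseteq X$ there are many edge-disjoint directed $A$–$B$ paths. We obtain this by the standard recursive decomposition argument. We try to build a decomposition top-down, splitting along small directed edge cuts (cuts with few edges in one direction). If the splitting always succeeds, it produces a decomposition of bounded width. If it gets stuck, a Menger-type failure gives the well-linked set. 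The same procedure, run with an FPT-time edge-cut subroutine, is also the algorithm for the ``moreover'' part.

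\textbf{Step 2: the dichotomy via degree reduction.} We next split on how many vertices in the region of $X$ have high total degree.

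\emph{Few high-degree vertices.} Suppose only boundedly many vertices have degree at least $d(k)$. We delete them or split them into gadgets. In the remaining graph, edge-well-linkedness transfers to vertex-well-linkedness up to a factor depending on $d(k)$, because edge-disjoint paths in a bounded-degree digraph yield vertex-disjoint paths after losing a factor of the degree. This gives large directed treewidth. The Kawarabayashi–Kreutzer theorem then yields a large cylindrical wall as a topological minor, which is in particular an immersion.

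\emph{Many high-degree vertices.} Suppose there are many vertices of large degree that are pairwise highly edge-connected in both directions through $X$. We immerse the cylindrical wall directly. Its vertices, which have maximum degree $3$, are mapped to these hub vertices, and its edges are routed along edge-disjoint directed paths. Such paths exist by directed Menger combined with a greedy or flow argument, since each pair of hubs has large edge-connectivity in both directions. An intermediate version also works: it suffices to find a large ``directed edge-clique'' immersion among the hubs, and every cylindrical wall immerses into a large bidirected complete graph.

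\textbf{Main obstacle.} The hard part is Step 2. The difficulty is that high-degree vertices may have their degree mostly into or mostly out of the relevant region. Because of this, pairwise bidirectional edge-connectivity between hubs is not automatic from well-linkedness. I would first apply a Ramsey/pigeonhole selection among the hubs, choosing a large subset that is mutually well-linked in both directions relative to $X$. If that fails, I would use the failure to exhibit a small directed edge cut, which contradicts the choice of $X$. Making the thresholds $d(k)$ and the size of $X$ interact consistently will determine $f$.
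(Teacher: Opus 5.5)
Your dichotomy is in the right spirit, since the paper also separates ``many well-connected hubs'' from ``boundedly many hubs''. But the branch with boundedly many hubs is where the real work lies, and your treatment of it fails: deleting the hubs, or splitting them into gadgets, destroys exactly the connectivity you want to transfer. Take the bidirected star with $N$ leaves. The leaves are edge-well-linked in your sense (route $a_i\to c\to b_i$), and there is a single high-degree vertex. Yet the digraph has directed tree-cutwidth $1$ and does not even contain a cylindrical wall of order $2$ as an immersion, and deleting the centre leaves an edgeless graph. So a well-linked set together with few hubs does not yield a wall. In this regime the correct outcome is a \emph{decomposition} in which a tree node of unbounded degree carries a small onion-guard of one chosen orientation; the $(N,\ell)$-almost onion-stars show that one cannot demand both orientations. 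Your plan never constructs onion-guards, i.e.\ it never addresses \cref{item:dtcw:onionguards}.

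The missing idea is the Small Council Lemma (\cref{lem:SCL,lem:SCL_flow}). Around the set $Z$ of vertices of high \emph{cycle}-degree one forms a migration digraph and applies the Erd\H{o}s--P\'osa property of tripods (\cref{thm:tripod-EP}). A small hitting set for all in-tripods (or all out-tripods) gives a decomposition whose node with bag $Z$ guards onions of that orientation; the bounded cycle-degree remainder is handled by Kawarabayashi--Kreutzer as you intended (\cref{lem:tripod_hitting}). Large packings of both kinds give an out- and an in-onion-star. The Onion Transport Lemma (\cref{lem:otl}) moves them to a common centre, and \cref{lem:two_onion_stars} merges them into an onion-star, hence a wall.

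The other steps have gaps too. In Step~1, the object you extract is not an obstruction (the bidirected star above has one), so Step~2 cannot produce a wall from it alone. Moreover, the ``standard'' top-down splitting along small-order directed cuts is not available here. Order-based cuts in digraphs have a bounded and an unbounded direction and do not uncross cleanly. The decomposition tree must also have nodes of unbounded degree: with bounded bags, a subcubic tree forces an edge-guard of size about $N/3$ on the bidirected star. The paper's global scaffold is instead the Potato Theorem (\cref{lemma:potato}), which measures cuts by porosity and uncrosses them with important cuts, followed by decomposing torsos and stitching.

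In the many-hubs case, pairwise high edge-connectivity does not let you route the wall greedily with its vertices on the hubs. In the construction behind \cref{fig:highly_connected_set_no_K5}, every path from a terminal to another terminal shares an edge with every path into it from another terminal, although the terminals are arbitrarily flow-connected. So the wall cannot be immersed with all its vertices on terminals, and there is not even a $\vec{K}_3$ immersion. What works is \cref{lem:onion_star} (via Onion Harvesting), which turns a large highly flow-connected set into an onion-star, combined with \cref{lem:gridImm_from_onion_star}.
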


\begin{restatable}{theorem}{ThmgridImmImplhighDTCW}
	\label{thm:imm_implies_large_dtcw}
	There is a function \(\thmFkt{thm:imm_implies_large_dtcw}{f}{}\colon \N\to \N\) such that for any positive integer \(k\),
	every digraph containing a cylindrical wall of order \(\thmFkt{thm:imm_implies_large_dtcw}{f}{k}\) as an immersion has \dtcw at least \(k\).
\end{restatable}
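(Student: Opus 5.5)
Since directed tree-cutwidth has not yet been formally defined at this point, I will assume it is built from a tree-cut-style decomposition $(T,\beta)$: bags near-partition $V(D)$, and each tree edge induces a directed edge cut whose ``width'' is controlled, presumably counting arcs in the direction that destroys strong connectivity. The plan has two steps. First show that \dtcw is monotone under immersion: if $H$ is immersed in $D$, then $\mathrm{dtcw}(H)\le g(\mathrm{dtcw}(D))$ for some function $g$, ideally the identity. Then show that the cylindrical wall of order $f(k)$ has \dtcw at least $k$. Together these give the statement with $f(k)$ taken large enough that the wall's lower bound beats $g^{-1}$.

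\textbf{Monotonicity.} Given a model of $H$ in $D$ and a decomposition $(T,\beta)$ of $D$, keep $T$ and put each $v\in V(H)$ into the bag of its image. Bag sizes do not increase. Each arc of $H$ crossing the cut of a tree edge is modelled by a directed path in $D$ that crosses the same cut in the same direction. The paths are arc-disjoint, so the order of each cut in $H$ is at most its order in $D$, in each direction. Cuts of order at most $2$ stay of order at most $2$, so the degree condition on ``thick'' edges is also preserved. If the definition involves additional strong-connectivity or torso conditions, I would check that they are likewise inherited via path contraction.

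\textbf{Lower bound for the wall.} I would argue by an unbreakable-set/bramble argument in the style of the undirected proof that walls have large tree-cutwidth. In a cylindrical wall $W$ of order $n$, take many vertex sets $X$, for instance unions of a concentric cycle and a radial path, that are pairwise arc-disjointly strongly linked. Concretely, for any two sets $A,B$ of size $m\le n/2$ on the outer cycle there should be $m$ arc-disjoint directed paths from $A$ to $B$, obtained by Menger's theorem together with the grid structure. Now fix a decomposition of width $w<k$ and orient each tree edge towards the side containing the ``bulk'' of the wall. A side counts as the bulk if it contains a majority of some well-linked set. Because cuts have order at most $w$ in each direction, the linkedness forces at most a $O(w)$-sized part of the wall on the small side. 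Following the orientations reaches a sink node $t$. All tree edges at $t$ point inward, so every branch at $t$ holds few wall vertices, $\beta(t)$ has at most $w$ vertices, and at most $w$ incident tree edges carry cuts of order above $2$.

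\textbf{Main obstacle.} The hardest step is deriving a contradiction at the sink $t$. The many branches with cut order at most $2$ can each contain some wall vertices, and collectively they might cover the wall. To rule this out I would use that a cylindrical wall has minimum in- and out-degree structure such that any set $S$ with at most $2$ arcs leaving or entering it is small: it contains $O(1)$ vertices, since the wall is ``locally 2-arc-connected'' in both directions along cycles and radial paths. Combining this with the boundary bound then gives $|V(W)|\le w+w\cdot O(w)+(\text{few branches})\cdot O(1)$. The remaining difficulty is controlling how many small-cut branches can carry wall vertices. Each such vertex has degree $3$, and its arcs must go to $\beta(t)$ or through thick cuts, so the count is $O(w^2)$. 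This yields $|V(W)|=O(w^2)$, contradicting $|V(W)|=\Theta(n^2)$ once $n\ge Ck$. I expect the bookkeeping of directions in this small-cut counting to be the delicate part.
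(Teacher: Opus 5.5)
\textbf{There is a genuine gap.} Your argument is for a guessed parameter that is not the paper's directed tree-cutwidth, and it fails even for that parameter.

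In the paper, the width of a decomposition bounds three things:
\begin{itemize}
\item the bag sizes;
\item the \emph{porosity} of each cut induced by a tree edge, i.e.\ the minimum size of an edge set hitting every cycle that visits both sides;
\item for each node $x$, an \emph{onion-guard} $\theta(x)$ that hits all out-onions of $x$ or all in-onions of $x$.
\end{itemize}
There is no Wollan-style count of incident thick edges. Porosity can also be far below cut order, so a lower bound for an order-based parameter would not transfer anyway.

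Your lower-bound argument never uses onions, and it cannot work without them. Every vertex of a cylindrical wall has total degree $3$ with in-degree $1$ or out-degree $1$. Consider the star tree-partition with an empty central bag and one singleton leaf bag per wall vertex. Its bags have size at most $1$. Every induced cut has porosity $1$, and order at most $2$ whichever direction you count. So under your own definition this decomposition has no thick edges and width $1$, and all cylindrical walls would have bounded width. The step that breaks is your claim that the arcs of a vertex in a thin branch must go to $\beta(t)$ or through thick cuts. Here every arc goes directly from one thin branch into another. The paper's overview uses exactly this star example to motivate onion-guards.

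\textbf{The missing idea is to exploit the onion-guard at a balanced node.} The paper's argument runs as follows.
\begin{itemize}
\item Take the wall of order $k^2+1$ and a decomposition of width at most $k-1$. Choose $x^\star$ so that every component of $T-x^\star$ contains at most half of the wall's vertices.
\item Since $|\theta(x^\star)|\le k-1$, select $\ell=k^2-k+2$ concentric cycles and $\ell$ radial pairs $(P_i,Q_i)$ that avoid $\theta(x^\star)$. These form an immersed wall of order $\ell$ that uses no edge of $\theta(x^\star)$.
\item An onion model avoiding $\theta(x^\star)$ cannot have its root and tip in different components of $T-x^\star$. Otherwise it would be both an in-onion and an out-onion of $x^\star$, and so would be hit.
\item By \cref{lem:another-onion-in-the-wall}, onion models exist for so many root/tip pairs that any two vertices on the same side of the root/tip bipartite graph have at least $2(\ell-2)>k-1\ge|\beta(x^\star)|$ common neighbours.
\item Propagating along these common neighbours forces every vertex of the subwall into $\beta(x^\star)$ together with a single component of $T-x^\star$. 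This contradicts the balance of $x^\star$.
\end{itemize}

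Monotonicity (\cref{thm:immersion-monotone}) must likewise be phrased for guards rather than cut sizes. Replace each guard $F$ by the set of edges of the immersed digraph whose model trail meets $F$. The image of a crossing cycle is a crossing closed trail in the host, and the image of an onion model is an onion model in the host with root and tip in the corresponding bags. Both are therefore hit by $F$, and guard sizes do not increase.
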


Due to its technicality, we postpone the formal definition of directed tree-cutwidth to \cref{sec:decomps}, but let us quickly discuss its main features. Similarly to (undirected) tree-cutwidth, the underlying notion of a decomposition, say of a digraph $D$, consists of a tree $T$ and a bag function $\beta$ mapping the nodes of $T$ to the corresponding bags that form a near-partition of the vertex set of $D$. To define the \emph{width} of such a decomposition, we take the smallest integer $k$ that simultaneously bounds the following measures:
\begin{itemize}
	\item \emph{Bag sizes:} Every bag must be of size at most $k$.
	\item \emph{Porosity:} Every edge of $T$ must induce a cut (bipartition of the vertex set) of $D$ such that there must be a set of at most $k$ edges that hits all the directed cycles crossing the cut.
	\item \emph{Onion-guarding:} For every node $x$ of $T$, there must be a set of at most $k$ edges that hits certain structures, called \emph{onions}, crossing $x$.  
\end{itemize}
These three conditions are in a direct analogy to the three conditions defining the width of an (undirected) tree-cut decomposition. We remark that, as expected, directed tree-cutwidth is monotone under taking immersions (see \cref{thm:immersion-monotone}).

At this point, we would like to highlight a major difference between our work and the preceding works on the structure theory for directed graphs. The traditional way to measure the complexity of a cut $(A,B)$ of a digraph $D$ is to consider its \emph{order}: the minimum of $\Abs{\Out{}{A}}$ --- the number of edges going from $A$ to $B$ --- and $\Abs{\Out{}{B}}$ --- the number of edges going from $B$ to $A$. While this approach makes certain submodularity arguments work naturally, it also introduces an unwieldy lack of symmetry: every cut has a ``bounded'' and an ``unbounded'' orientation, which makes uncrossing cuts in digraphs very cumbersome. Throughout most of this paper, we instead measure the complexity of a cut $(A,B)$ by its \emph{porosity}: the minimum size of an edge hitting set for directed cycles visiting both sides of the cut. Note that a cut of order $k$ also has porosity at most $k$, but the converse implication does not always hold. The definition of porosity is symmetric and allows for natural uncrossing of cuts: if $(A,B)$ and $(A',B')$ are cuts of porosity $p$ and $p'$ respectively, then all the four cuts $(A\cap A',B\cup B')$, $(A\cap B',B\cup A')$, $(A\cup A',B\cap B')$, $(A\cup B',B\cap A')$ have porosity at most $p+p'$ (see \cref{lemma:porosity-boolean}). The drawback is that porosity is not a submodular measure, so at certain places we resort to using the traditional order~instead.

Our usage of the concept of porosity is directly inspired by the digraph width measure \emph{cyclewidth}, due to Hatzel, Rabinovich, and Wiederrecht~\cite{HatzelRW19}, where this idea emerged first. Similarly to porosity, we also measure the degree of a vertex in a non-traditional way: for a vertex $u$, instead of focusing on the minimum or the maximum or the sum of the indegree and the outdegree of $u$, we consider the \emph{cycle-degree}\footnote{Note that this is a different concept than defined by Chartrand, Saba and Wormald~\cite{1984othercycledegree} under the same name. We allow this clash, as we are not aware of any use of this notion since.} of $u$, defined as the minimum size of an edge hitting set for cycles passing through~$u$. (By Menger's Theorem, this is equal to the maximum size of a packing of edge-disjoint directed cycles passing through $u$.)
In general, we believe that this paper showcases that these non-traditional measures of cuts or degrees in digraphs offer new possibilities, and may play an important role in future research on digraph structure theory.
Recently a vertex-version of cycle-degree has been used by Nisse, Picasarri-Arrieta, and Sau~\cite{2024vertexcycledegree} in the context of dicolourings.
We expand this topic into some concrete questions in~\cref{sec:conclusions}.

Let us exemplify the advantages of porosity using one of the major technical results that we prove along the way towards \cref{thm:grid_or_small_dtcw}; this result is of independent interest. In the following, by a \emph{tree-partition} of a digraph $D$ we just mean a pair $(T,\beta)$ where $T$ is a tree and $\beta$ is a bag function defined on the nodes of $T$ so that $\{\beta(x)\colon x\in V(T)\}$ is a near-partition of the vertex set of $D$. The \emph{porosity} of a tree-partition $(T,\beta)$ is the maximum porosity among the cuts induced by the edges of $T$, and $(T,\beta)$ is \emph{$k$-\linked} if for any pair of distinct vertices $u,v$ of $D$ that belong to the same bag of $\beta$, there are $k$ edge-disjoint paths from $u$ to $v$ and $k$ edge-disjoint paths from $v$ to $u$. In \cref{sec:potato} we prove the~following.

\begin{restatable}[Potato Theorem]{theorem}{PotatoLemma}
	\label{lemma:potato}
	There exists a function \(\thmFkt{lemma:potato}{f}{} \colon \N \to \N\) such that for every positive integer~$k$, every digraph \(D\) has a $k$-\linked tree-partition of \porosity at most \(\thmFkt{lemma:potato}{f}{k}\).
	
	Moreover, there is a fixed-parameter algorithm that finds, for any digraph $D$ and parameter $k$, a $k$-\linked tree-partition of porosity at most \(\thmFkt{lemma:potato}{f}{k}\).
\end{restatable}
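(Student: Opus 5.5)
We build the tree-partition by a recursive decomposition driven by small-porosity cuts, in the spirit of the undirected argument that every graph admits a tree-partition into $k$-edge-connected pieces with adjacent pieces separated by few edges. Call a set $X\subseteq V(D)$ \emph{$k$-linked} if every ordered pair of vertices of $X$ is joined by $k$ edge-disjoint paths in $D$. If two vertices $u,v$ fail this, then by Menger's Theorem there is a set $A$ with $u\in A$, $v\notin A$ and $\Abs{\Out{}{A}}<k$ (or the symmetric case). Such a cut has order, and hence porosity, less than $k$. So the obstruction to linkedness is always a cut of small porosity. We repeatedly refine a tree-partition along such cuts. The symmetry of porosity, together with the uncrossing property of \cref{lemma:porosity-boolean}, lets us keep the refined cuts nested.

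\textbf{Step 1: a laminar family.} Consider all cuts $(A,B)$ of porosity below $k$ (or below some threshold $p_i$) that separate two vertices which are not yet linked. We extract a maximal \emph{laminar} family $\mathcal{L}$ of such cuts that still separates every non-linked pair. Uncrossing two crossing cuts of porosities $p,p'$ yields corner cuts of porosity at most $p+p'$. Naive uncrossing therefore doubles the porosity at each step, so we cannot uncross arbitrarily often. The plan is to use a hierarchy of thresholds $k=p_0<p_1<\dots<p_k$ with $p_{i+1}\geq 2p_i+k$, and to argue by a potential argument:
\begin{itemize}
\item choose a cut of minimum porosity, and among those one with a minimal side;
\item show that it can be uncrossed against all previously chosen cuts while losing only an additive $p_i$.
\end{itemize}
A laminar family of cuts naturally defines a tree $T$, whose nodes are the atoms of the family and whose edges are the cuts. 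This yields a tree-partition with porosity bounded by the largest threshold used.

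\textbf{Step 2: bags are linked.} Within an atom $X$, any two vertices $u,v$ are linked with respect to all of $D$, not just $D[X]$, since paths may leave $X$. If they were not linked, Menger's Theorem gives a separating cut of order below $k$. By maximality of $\mathcal{L}$, that cut must cross some member of $\mathcal{L}$. Uncrossing then produces a cut of bounded porosity that separates $u$ from $v$ and is laminar with $\mathcal{L}$, contradicting maximality. The number of rounds is bounded by a function of $k$, because each round raises the connectivity level of the unresolved pairs. Hence the final porosity is $\thmFkt{lemma:potato}{f}{k}$, for example of order $2^{O(k)}k$.

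\textbf{Main obstacle.} The key difficulty is the uncrossing. Porosity is not submodular, so we cannot mimic Gomory--Hu-style arguments that keep the order bounded. Only additive growth $p+p'$ is available, and a cut may need to be uncrossed against many others. The first remedy to try is to always uncross a new cut against the \emph{union} of the sides of the laminar family that it crosses. A hitting set for cycles crossing this union consists of the edges of $\Out{}{}$ of the boundary, and it is bounded because each tree edge is a small cut. However, a single node may be adjacent to unboundedly many tree edges, so the hitting set is a union over unboundedly many cuts and can be large. If this fails, I would instead apply the traditional order for the Menger cut, which is submodular, and use submodularity to choose \emph{minimal} sides (an extremal or important-cut argument). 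Crossing is then avoided altogether, and porosity is needed only to certify the final tree edges.

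\textbf{Algorithm.} Each step is a max-flow computation or an enumeration of important cuts of size below $k$. Both are fixed-parameter tractable, which gives the algorithmic claim.
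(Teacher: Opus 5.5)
There is a genuine gap, and it sits exactly where you put the ``main obstacle''. A laminar family of bounded-porosity cuts that separates every non-linked pair is essentially the whole theorem, and none of your arguments produces one.

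\textbf{Why your arguments do not work.} In Step~2, uncrossing the Menger cut against one crossing member $(A',B')\in\mathcal{L}$ gives a cut of porosity at most $p+p'$ that no longer crosses $(A',B')$. It may still cross arbitrarily many other members, for instance every cut of a long nested chain, or every leaf-cut at a node of high degree. Each further uncrossing adds another summand, so the final cut has unbounded porosity. It is then not a candidate for $\mathcal{L}$, and maximality gives no contradiction.
\begin{itemize}
\item The threshold hierarchy $p_{i+1}\ge 2p_i+k$ only pays for a bounded number of uncrossings.
\item The ``minimum porosity, minimal side'' potential argument has nothing to rest on. Porosity is not submodular, and the only available inequality is the union bound of \cref{lemma:porosity-boolean}.
\item Your second remedy (switch to order and take extremal or important cuts) is not yet an argument. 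Submodularity keeps both corners of two crossing order-$k$ cuts at order $k$ only when both corners are forced to have order at least $k$, that is, when they split a $k$-flow-connected set. So it yields laminarity only on such a set. Choosing extremal sides does not by itself stop cuts from crossing outside it; this is exactly what the paper needs important cuts for.
\item Since you proceed in rounds, cuts of a new round must also be made compatible with the tree from earlier rounds. The same unbounded-crossing problem reappears there, and the proposal does not address it.
\end{itemize}

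\textbf{The ideas you are missing.} The paper inducts on $k$ and refines each bag $S$ of a coherent $k$-linked tree-partition separately.
\begin{itemize}
\item \emph{Uncrossing inside $S$ uses order, not porosity.} Every cut splitting $S$ has order at least $k$. So by \cref{lemma:potato_uncrossing}, crossing order-$k$ cuts uncross into order-$k$ cuts with no growth at all. A maximal family of order-$k$ cuts that is laminar \emph{on $S$} then splits $S$ into $(k+1)$-flow-connected pieces.
\item \emph{Laminarity in all of $D$ is recovered in two steps.} First, each cut is intersected with its same-orientation ancestors; this is again exact, by submodularity. Second, each side $A$ is replaced by $\mu(A)$. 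For each boundary vertex $z\in Z(A)\setminus S$ and each $k$-key $\{z\}$-$(S\cap A)$-cut $(E,F)$, one deletes from $A$ the closure of $Z(A)\cap E$ in $D[E]$. One then keeps the closure of $S\cap A$ in what remains.
\item \emph{Why $\mu$ works.} The $4^k$ bound on important cuts makes $\mu(A)$ a boolean combination of boundedly many cuts of order at most $k$, hence of porosity $2^{O(k)}$. It also forces incomparable cuts to become disjoint. Within each orientation the sets $\mu(\cdot)$ along an ancestor chain are nested, so each final side is an intersection of only two of them.
\item \emph{New cuts are never uncrossed against old ones.} The local trees are glued into the old tree. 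Each new edge-guard absorbs the guard of the parent edge and the guards of the children that are ``ambiguous'' over that edge. Coherence bounds the number of ambiguous children by the porosity of the new cut: each such child contributes a cycle inside its strongly connected subtree that crosses the cut, and these cycles are vertex-disjoint.
\end{itemize}
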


Thus, \cref{lemma:potato} shows that \emph{any} digraph can be decomposed, in a tree-like manner, into $k$-edge-connected pieces using cuts of porosity bounded by a function of $k$. (Our proof yields a bound of $\thmFkt{lemma:potato}{f}{k}\leq 2^{\Oh(k^2)}$.) A result of a similar nature was proved by Giannopoulou, Kawarabayashi, Kreutzer, and Kwon~\cite{directedTangleArxiv,directedTangleJournal}: every digraph has a directed tree-decomposition of edge-width $\Oh(k^2)$ that distinguishes all directed tangles of order $k$. The precise formulation of these terms is highly technical, but the bottom line is that this result provides only a loose grasp on how the exposed separations of order $\Oh(k^2)$, which distinguish ``$k$-connected parts'', are organised in a tree, and in particular it does not provide a single laminar family of separations doing this job. Intuitively speaking, our \cref{lemma:potato} shows that this can be achieved in a much cleaner way in the context of tree-partitions and edge cuts, provided we allow measuring the complexity of cuts by their porosity instead of their order.

Finally, we also emphasize that \cref{thm:grid_or_small_dtcw} and \cref{lemma:potato} yield constructive algorithms that find the desired objects in every digraph $D$, allowing for potential algorithmic applications (see~\cref{sec:conclusions}).

\paragraph{Bounded degree.}
It is natural that in digraphs of bounded degree~\cref{thm:imm_implies_large_dtcw} follows from the Directed Grid Theorem~(\cref{thm:gridtheorem}) due to the qualitative equivalence of \emph{directed treewidth} to a parameter called \emph{directed branchwidth}~\cite{thesisWiederrecht,wiederrecht2019note}.
We discuss the bounded degree case in detail in \cref{sec:boundeddeg}, where we show that on digraphs of bounded degree directed branchwidth is equivalent to our parameter \emph{directed tree-cutwidth}.


\paragraph*{Other related work.} We remark that there has been a significant amount of work on immersions in digraphs that are \emph{Eulerian}: every vertex has outdegree equal to the indegree.
For instance, DeVos, McDonald, Mohar, and Scheide~\cite{DeVosMMS13} proved a result similar to \cref{lemma:potato} for Eulerian digraphs. Recently, Cavallaro, Kawarabayashi, Kreutzer gave a fixed-parameter algorithm for the \textsc{Edge Disjoint Paths} problem in Eulerian digraphs~\cite{CavallaroKK24}, as well as proved the Eulerian digraphs embeddable into a fixed surface are well-quasi-ordered under the strong immersion order~\cite{WGO-strong-immersions}. However, the setting of immersions in Eulerian digraphs is quite different from that in general digraphs, because thanks to the condition that the digraph is Eulerian, every cut of order $k$ actually has only $k$ edges going in both directions. This makes Eulerian digraphs behave much more like undirected graphs when it comes to width measures based on edge cuts, rendering the corresponding toolbox inapplicable in our setting.



\section{Overview}\label{sec:overview}


In this section we provide an informal overview
of the introduced definition of \dtcw (\cref{def:dtcw}), as well as
of the proofs of \cref{thm:grid_or_small_dtcw,thm:imm_implies_large_dtcw}, focusing on main conceptual insights.

\subsection{Motivating examples and the definition of \dtcw}

The definition of \dtcw is guided by the same general philosophy as Wollan's tree-cut width
in the undirected setting~\cite{wollan2015structure}, and the directed treewidth, which over time converged~\cite{kreutzer_width-measures_2014,kawarabayashi2015directed,directedTangleJournal,kawarabayashi2022directed} to a slightly different definition than the original one by Johnson, Robertson, Seymour, and Thomas~\cite{johnson2001directed} and Reed~\cite{reed_introducing_1999}; see~\cite{dtw_def_comparison} for a discussion of different definitions.
Our goal is to design a structural parameter that captures exclusion of a large cylindrical wall as an immersion.


A first na\"ive attempt would be the following.
As the underlying notion of a decomposition, take a tree \(T\) together with a near-partition \(\beta\) of \(V(D)\) indexed by
\(V(T)\); this is called a \emph{tree-partition} of $D$. For each edge \(e \in E(T)\), we may consider the cut \((X,Y)\) of $D$ \emph{induced} by $e$:  $X$ is the union of the bags residing in one component of \(T \setminus e\), and $Y$ is the union of the bags in the other component.
To define the directed tree-cutwidth of such a tree-partition $(T,\beta)$, one may try to imitate the definition of directed treewidth (see \cref{def:dtw}) and require that (i) each bag consists of a bounded number of vertices, and (ii) each cut induced by an edge $e$ of $T$ has small porosity (which, recall, means that there is a small edge set hitting all directed cycles that visit both sides of the corresponding cut).


Requiring only conditions (i) and (ii) above is not restrictive enough: observe that every digraph $D$ with bounded cycle-degree --- for instance, the cylindrical wall --- admits such a tree-partition, obtained by taking $T$ to be a star with as many leaves as the vertices of $D$, and assigning every leaf a singleton bag containing a different vertex of $D$ (the centre is assigned an empty bag).
The intuition is that we have to somehow bound the ``total interaction'' around every node $x$ of the tree-partition.
In the context of directed treewidth, this is done by postulating an upper bound on the size of a set called $\Gamma(x)$ which additionally to the vertices in the bag assigned to \(x\) also contains vertices from \textsl{vertex-guards} $\gamma(e)$ associated with the edges $e$ incident to $x$.
The idea of these vertex-guards is that they control the behaviour of vertex-disjoint paths between vertices lying in the bags of different subtrees of \(T-e\).
For us, it would be the easiest to just bound the maximum degree of the tree $T$ underlying the tree-partition.
This, however, would be too restrictive, as it disallows
decomposing a large bidirected star using a directed tree-cut decomposition of constant width.
Note here that a bidirected star does not contain a cylindrical wall of order 2 as an immersion, so it should have constant directed tree-cutwidth.



Here, we again take an inspiration from the work of Wollan~\cite{wollan2015structure}, who observed that in the context of immersions in undirected graphs, there is a qualitative difference between an $N$-leaf star with every edge replaced with two parallel edges (call it $S_N^{\times 2}$), and one where every edge is replaced with three parallel edges (call it $S_N^{\times 3}$):
\begin{itemize} 
	\item $S_N^{\times 2}$ does not contain an immersion of a $3\times 3$ wall, so it should have constant tree-cutwidth.
	\item $S_N^{\times 3}$, for $N$ large enough, contains any subcubic graph as an immersion; so in particular, a large wall. Hence, the graphs $S_N^{\times 3}$ for $N\in \N$ should have unbounded tree-cutwidth.
\end{itemize}
In the definition of tree-cutwidth, this distinction is reflected by explicitly controlling the number of edges incident to a node $x$ of $T$ that induce a cut of order $3$ or more, and disregarding in this calculation the cuts of order $2$ or less.

In the directed setting there is a very similar obstruction: the onion star.
\begin{figure}[!ht]
	\begin{center}
		\begin{tikzpicture}
			\node (onion) at (-4,0) {\begin{tikzpicture}
					\node (centre) at (0,0) {};
					\node[vertex] (root) at ($(centre)+(180:1.5)$) {};
					\node[vertex] (stem) at ($(centre)+(0:1.5)$) {};
					\node (stem-label) at ($(stem)+(45:0.4)$) {$\text{tip}$};
					\node (root-label) at ($(root)+(135:0.4)$) {$\text{root}$};
					
					\draw (root) edge[directededgedecorated,LavenderMagenta] (stem);
					\draw (root) edge[directededgedecorated,LavenderMagenta,bend left] (stem);
					\draw (stem) edge[directededgedecorated,CornflowerBlue,bend left] (root);
			\end{tikzpicture}};
			
			\node (onion-star) at (4,0) {\begin{tikzpicture}
					\node[vertex] (centre) at (0,0) {};
					
					\foreach \i in {0,...,5}
					{
						\node[vertex] (s-\i) at ($(centre)+({360/6*\i+20}:2)$) {};
					}
					\foreach \i in {0,1,2}
					{
						\draw (centre) edge[directededgedecorated,LavenderMagenta] (s-\i);
						\draw (centre) edge[directededgedecorated,LavenderMagenta,bend left] (s-\i);
						\draw (s-\i) edge[directededgedecorated,CornflowerBlue,bend left] (centre);
					}
					\foreach \i in {3,4,5}
					{
						\draw (centre) edge[directededgedecorated,LavenderMagenta,bend left] (s-\i);
						\draw (s-\i) edge[directededgedecorated,CornflowerBlue,bend left] (centre);
						\draw (s-\i) edge[directededgedecorated,CornflowerBlue] (centre);
					}
			\end{tikzpicture}};
		\end{tikzpicture}		
	\end{center}
	\caption{Left: An onion. Right: An onion-star of order 3.}
	\label{fig:onion-star}  
	\label{fig:onion}
\end{figure}
An \emph{onion} is a digraph consisting of two vertices --- the \emph{root} and the \emph{tip} ---
two parallel edges from the root to the tip, and a single edge from the tip to the root, see~\cref{fig:onion}.
An onion star of order \(k\) is obtained from a star with \(2k\) leaves by replacing \(k\) of its edges with an onion with the root mapped to the centre vertex
and the tip to the leaf, and replacing the remaining \(k\) edges with an onion
with the tip mapped to the centre and the root to the leaf.
As already recognized by Bo\.zyk, Defrain, Pilipczuk, and Okrasa~\cite{bozyk2022digraphs}, this simple structure is rich enough to immerse all objects we care about:
onion-stars are immersion-universal for subcubic strongly connected digraphs.
In particular, every cylindrical wall can be embedded in a large enough onion star.
Therefore, when investigating an edge $e$ of a tree-partition $(T,\beta)$, we have to care what kind of onions can \emph{cross} it --- have the root and the tip on the opposite sides of the cut induced by $e$. Note that there are two potential ways in which an onion may cross $e$, depending which side of the associated cut contains the root and which one contains the tip.


This distinction is illustrated by the \((N,\ell)\)-almost onion-star.
Here we have a centre and \(N+\ell\) leaves, with exactly \(\ell\) edges replaced by onions of one
orientation, and the remaining \(N\) edges replaced by onions of the opposite orientation.
If we fixed \(\ell\) and let \(N\) be arbitrarily large, then such digraphs still do not
immerse a large cylindrical wall.
So a satisfactory decomposition should have bounded directed tree-cutwidth on all
\((N,\ell)\)-almost onion-stars and also on all \((\ell,N)\)-almost onion-stars,
with the bound depending only on~\(\ell\), but not on \(N\).

The way we resolve this issue is by introducing \emph{onion-guards}.
At each node \(x\) of the decomposition tree $T$ we allow a bounded-size set of edges
whose role is to hit immersion models of onions that cross cuts incident with \(x\)
in a prescribed orientation.
More precisely, we require that for each edge of $T$ incident with \(x\),
where the corresponding cut is \((X,Y)\) with \(Y\) being the side containing the bag of \(x\), one of the following two behaviours occurs (and for each edge incident with $x$, the same one):
\begin{itemize}
	\item the onion-guard at \(x\) hits every onion immersion model whose root lies in \(Y\) and tip lies in \(X\);~or 
	\item the onion-guard at \(x\) hits every onion immersion model whose tip lies in \(Y\)
and root lies in \(X\).
\end{itemize}
Thus, each node chooses one global orientation type --- out-hitting or in-hitting ---
and its onion-guard controls all cuts incident with that node in that orientation. Note that an onion immersion model whose root and tip lie in two different components of $T-x$ must be hit by the onion-guard regardless of the chosen orientation --- the model counts both as an ``in-onion'' and as an ``out-onion''.

This leads us to our final definition of the directed tree-cutwidth of a tree-partition $(T,\beta)$: it is the smallest $k$ such that
\begin{itemize}
	\item every bag has size at most $k$,
	\item every cut induced by an edge of $T$ has porosity at most $k$, and
	\item every node of $T$ can be assigned an onion-guard of size at most $k$ satisfying the property described in the previous paragraph.
\end{itemize} 
And \emph{directed tree-cutwidth} of a digraph $D$ is the minimum possible directed tree-cutwidth of a tree-partition of $D$. Formally, we define \emph{\dtcdecomps} as tree-partitions $(T,\beta)$ endowed with an \emph{edge-guard} function, mapping every edge of $T$ to a suitable hitting set witnessing the porosity, and an \emph{onion-guard} function, mapping every node $x$ of $T$ to its suitable onion-guard for $x$; and the \emph{width} of a \dtcdecomp is the maximum size of a bag, an edge-guard, or an onion-guard.
With the definition in place, we now sketch the proofs of \cref{thm:grid_or_small_dtcw,thm:imm_implies_large_dtcw}.



\subsection{Wall immersion forces large \dtcw}

Let us start with the easier implication, \cref{thm:imm_implies_large_dtcw}.
We have to argue that the existence of an immersion model of a large cylindrical wall forces the \dtcw to be large.
We first argue (\cref{thm:immersion-monotone}) that \dtcw is monotone under taking immersions; this follows by naturally replacing edges in edge-guards (hitting sets witnessing porosity) and onion-guards by edges containing them in the immersion-model.
Thanks to that, for \cref{thm:imm_implies_large_dtcw} it suffices to
prove that a large enough cylindrical wall itself has large \dtcw.
The proof is conceptually quite straightforward, although with some rough
edges hiding behind the details.
At a high level, the intuition is that a cylindrical wall is both
highly spread out with respect to any \dtcdecomp of bounded width,
and highly robust in the sense that many pairs of vertices can serve as
the root and tip of some onion immersion model.

Suppose towards a contradiction that a large cylindrical wall \(W\) admits a tree-partition $(T,\beta)$ of small directed tree-cutwidth.
By a standard argument, there exists a node \(x\) of $T$ whose removal breaks $T$ into components, each accommodating only at most half of the branch-vertices (that is vertices of degree 3) of~$W$.

Consider the onion-guard associated with \(x\).
As this guard is small, we may choose a large subwall \(W'\subseteq W\)
completely avoiding all edges of that onion-guard.
The key property of cylindrical walls is that within such a large subwall
one can realise an onion immersion model with root $u$ and tip $v$ for very many pairs \((u,v)\) of vertices, essentially
provided only that the degrees of \(u\) and \(v\) do not forbid
them from acting as the root and the tip.
In other words, onion immersions are abundant and robust inside the wall.

Now consider a cut of $W$ induced by some edge of $T$ incident to \(x\), and let
\((X,Y)\) be the corresponding cut of \(W'\), where \(Y\) is the side containing the bag of \(x\).
If \(x\) is out-hitting, then any onion immersion model with root in \(Y\) and tip in \(X\)
must meet the onion-guard of \(x\); if \(x\) is in-hitting, the analogous statement
holds with the roles of root and tip reversed.
But \(W'\) avoids the onion-guard entirely.
Hence, for many pairs of vertices in \(W'\), the two endpoints cannot lie on opposite
sides of such a cut in the forbidden orientation.
As onion immersions exist for a large proportion of admissible pairs, this forces a
large fraction of the vertices of \(W'\) to lie in the same component of
\(T \setminus x\), or else in the bag of \(x\) itself.
This contradicts the choice of \(x\) as we would see too many vertices
in a single component of $T-x$.

\subsection{No wall immersion implies bounded \dtcw}

We now proceed to sketching the involved implication, that is, the proof of \cref{thm:grid_or_small_dtcw}.
Let \(D\) be a digraph excluding a cylindrical wall of order $k$ as an immersion.
Our goal is to show that \(D\) admits a tree-partition of directed tree-cutwidth bounded by a function of $k$.

The first step in the proof is to obtain a ``coarse'' tree-partition of $D$ that, roughly speaking, separates every pair of vertices that could be in principle separated by a cut of bounded porosity.
This is formalised through the notion of a \emph{$k$-\linked tree-partition} (\cref{def:treept,def:linked}), which requires that for any two vertices $u,v$ residing in the same bag, there is an edge-disjoint flow of size $k$ from $u$ to $v$ and also an edge-disjoint flow of size $k$ from $v$ to $u$.
Here, we prove a somewhat surprisingly clean decomposition theorem: the \nameref{lemma:potato}~(\cref{lemma:potato}).
It states that one can always find a tree-partition \((T,\beta)\) of porosity bounded in terms of $k$ that is $k$-\linked.
We believe that the \nameref{lemma:potato} is a result of independent interest and, potentially, a wider scope of applicability; we postpone an overview of its proof to the~\cref{sec:overview-potato}.

The preliminary decomposition $(T,\beta)$ provided by the~\nameref{lemma:potato} has the following feature:
if some bag \(\beta(x)\) is large (in terms of $k$), then \(D\) already contains a large wall as an immersion.
Indeed, by construction, vertices inside a bag can be pairwise connected by large flows in both directions,
and a theorem of Bożyk, Defrain, Okrasa, and Pilipczuk~\cite{bozyk2022digraphs}
then gives a large onion star as an immersion
(see~\cref{lem:onion_star}), which contradicts our assumption.
Thus, we may henceforth assume that all bags of \((T,\beta)\) have bounded size.

The next step is to analyse each bag through its associated torso, see~\cref{fig:torso_simplified}.
For a node \(x \in V(T)\), its torso \(\torso{x}\) is obtained by keeping the vertices of \(\beta(x)\),
and compacting each component hanging below \(x\) into a single \emph{auxiliary} onion in which
the tip collects edges entering that component, while the root collects edges leaving it.
See~\cref{sec:torso} for a formal definition.

\begin{figure}[!ht]
	\begin{center}
		\begin{tikzpicture}
			\node (decomp) at (-3,0) {
				\begin{tikzpicture}
					\node (centre) at (0,0) {};
					\node[draw,circle,inner sep=0.5cm] (bag-v) at ($(centre)$) {\(\beta(x)\)};
					\node (parent) at ($(bag-v)+(90:2)$) {parent};
					
					\node[draw=gray,gray,circle,inner sep=0.3cm] (c-1) at ($(bag-v)+({360/16*9}:3)$) {};
					\node[draw=gray,gray,circle,inner sep=0.3cm] (c-2) at ($(bag-v)+({360/16*11}:2.7)$) {};
					\node[draw=gray,gray,circle,inner sep=0.3cm] (c-3) at ($(bag-v)+({360/16*13}:2.7)$) {};
					\node[draw=gray,gray,circle,inner sep=0.3cm] (c-4) at ($(bag-v)+({360/16*15}:3)$) {};
					
					\node[gray,rotate=43] at ($(c-3)!0.5!(c-4)$) {\(\dots\)};
					
					\node[vertex,scale=0.7,CornflowerBlue] (e-tail) at ($(bag-v)+(190:0.7)$) {};
					\node (e-head) at ($(c-1)+(135:0.3)$) {};
					\node[vertex,scale=0.7,LavenderMagenta] (g-head) at ($(bag-v)+(350:0.7)$) {};
					\node (g-tail) at ($(c-3)+(45:0.3)$) {};
					
					\node (f-tail) at ($(c-1)+(315:0.3)$) {};
					\node (f-head) at ($(c-2)+(170:0.3)$) {};
					
					
					\begin{pgfonlayer}{background}
						\draw[dashed] (parent) to (bag-v);
						
						\draw[gray] (bag-v) to (c-1);
						\draw[gray] (bag-v) to (c-2);
						\draw[gray] (bag-v) to (c-3);
						\draw[gray] (bag-v) to (c-4);
						
						\draw[gray] (c-1) to ($(c-1)+(190:1.7)$);
						\draw[gray] (c-1) to ($(c-1)+(260:1.7)$);
						\draw[gray,dashed,shorten <=0.3cm,shorten >=0.3cm] ($(c-1)+(190:1.7)$) to ($(c-1)+(260:1.7)$);
						
						\draw[gray] (c-2) to ($(c-2)+(220:1.7)$);
						\draw[gray] (c-2) to ($(c-2)+(280:1.7)$);
						\draw[gray,dashed,shorten <=0.3cm,shorten >=0.3cm] ($(c-2)+(220:1.7)$) to ($(c-2)+(280:1.7)$);
						
						\draw[gray] (c-3) to ($(c-3)+(260:1.7)$);
						\draw[gray] (c-3) to ($(c-3)+(310:1.7)$);
						\draw[gray,dashed,shorten <=0.3cm,shorten >=0.3cm] ($(c-3)+(260:1.7)$) to ($(c-3)+(310:1.7)$);
						
						\draw[gray] (c-4) to ($(c-4)+(350:1.7)$);
						\draw[gray] (c-4) to ($(c-4)+(280:1.7)$);
						\draw[gray,dashed,shorten <=0.3cm,shorten >=0.3cm] ($(c-4)+(350:1.7)$) to ($(c-4)+(280:1.7)$);

						\draw[directededge,CornflowerBlue,bend right] (e-tail) to (e-head.center);
						\draw[directededge,LavenderMagenta,out=80,in=270] (g-tail.center) to (g-head);
						\draw[directededge,AppleGreen,bend right] (f-tail.center) to (f-head.center);
					\end{pgfonlayer}	
			\end{tikzpicture}};
			
			\node (torso) at (4,-3) {
				\begin{tikzpicture}
					\node (centre) at (0,0) {};
					\node[draw,circle,inner sep=0.5cm] (bag-v-t) at ($(centre)$) {\(\beta(x)\)};
					
					\node (c-1-t) at ($(bag-v)+({360/16*9}:3)$) {};
					\node (c-2-t) at ($(bag-v)+({360/16*11}:2.7)$) {};
					\node (c-3-t) at ($(bag-v)+({360/16*13}:2.7)$) {};
					\node (c-4-t) at ($(bag-v)+({360/16*15}:3)$) {};
					
					\node[vertex,scale=0.7,PastelOrange] (a-1) at ($(c-1-t)+({360/16*9-90}:0.4)$) {};
					\node[vertex,scale=0.7,PastelOrange] (b-1) at ($(c-1-t)+({360/16*9+90}:0.4)$) {};
					
					\node[vertex,scale=0.7,PastelOrange] (a-2) at ($(c-2-t)+({360/16*11-90}:0.4)$) {};
					\node[vertex,scale=0.7,PastelOrange] (b-2) at ($(c-2-t)+({360/16*11+90}:0.4)$) {};
					
					\node[vertex,scale=0.7,PastelOrange] (a-3) at ($(c-3-t)+({360/16*13-90}:0.4)$) {};
					\node[vertex,scale=0.7,PastelOrange] (b-3) at ($(c-3-t)+({360/16*13+90}:0.4)$) {};
					
					\node[vertex,scale=0.7,PastelOrange] (a-4) at ($(c-4-t)+({360/16*15-90}:0.4)$) {};
					\node[vertex,scale=0.7,PastelOrange] (b-4) at ($(c-4-t)+({360/16*15+90}:0.4)$) {};
					
					\node[gray,rotate=43] at ($(c-3-t)!0.5!(c-4-t)$) {\(\dots\)};
					
					\node[vertex,scale=0.7,CornflowerBlue] (e-tail) at ($(bag-v-t)+(190:0.7)$) {};
					\node[vertex,scale=0.7,LavenderMagenta] (g-head) at ($(bag-v-t)+(350:0.7)$) {};
					
					
					\begin{pgfonlayer}{background}
						\foreach \i in {1,2,3,4}{
							\draw[directededge,PastelOrange] (a-\i) to (b-\i);
						}					
						\draw[directededge,CornflowerBlue,bend right] (e-tail) to (a-1);
						\draw[directededge,LavenderMagenta,out=80,in=270] (b-3) to (g-head);
						\draw[directededge,AppleGreen,bend right] (b-1) to (a-2);
						
						\foreach[evaluate={\j=int(7+2*\i);}] \i in {1,2,3,4}{
							\draw (b-\i) edge[directededgedecorated,PastelOrange,quick curve through={($(b-\i)!0.5!(a-\i)+({360/16*\j}:0.5)$)}] (a-\i);
							\draw (b-\i) edge[directededgedecorated,PastelOrange,quick curve through={($(b-\i)!0.5!(a-\i)+({360/16*\j}:0.3)$)}] (a-\i);
						}
						
						\draw[directededge,gray!30!white,dashed] (bag-v-t) to (a-1);
						\draw[directededge,gray!30!white,dashed] (bag-v-t) to (a-2);
						\draw[directededge,gray!30!white,dashed] (bag-v-t) to (a-3);
						\draw[directededge,gray!30!white,dashed] (bag-v-t) to (a-4);
						
						\draw[directededge,gray!30!white,dashed] (b-1) to (bag-v-t);
						\draw[directededge,gray!30!white,dashed] (b-2) to (bag-v-t);
						\draw[directededge,gray!30!white,dashed] (b-3) to (bag-v-t);
						\draw[directededge,gray!30!white,dashed] (b-4) to (bag-v-t);
						
					\end{pgfonlayer}
					
%
%
%
			\end{tikzpicture}};
		\end{tikzpicture}
	\end{center}
	\caption{Illustration of how the torso~\(\torso{x}\) for a node \(x \in \V{T}\) is obtained.
	Every subtree is replaced by an \textcolor{PastelOrange}{onion} depicted in \textcolor{PastelOrange}{orange} and the edges in and out of the subtrees are preserved.}
	\label{fig:torso_simplified}
\end{figure}

Note that even though the bag of $x$ has bounded size, $\torso{x}$ can be arbitrarily large, for $x$ may have an arbitrarily high degree in $T$.
However, one of the key properties of torsos is that in \(\torso{x}\), the only vertices that can still have large cycle-degree are the original vertices of \(\beta(x)\).
All the other vertices come from ``compressed hanging pieces'' and have cycle-degree \(1\).
Moreover, if some \(\torso{x}\) were to contain a large cylindrical wall as an immersion,
then this immersion could be lifted back to~\(D\); this is proved in~\cref{lem:grid_imm_in_torsos}.
Since \(D\) excludes a large wall immersion, so does every torso.

At this point we can invoke the Directed Grid Theorem of Kawarabayashi and Kreutzer~\cite{kawarabayashi2022directed}
(see~\cref{thm:gridtheorem}) on the bounded cycle-degree part of each torso.
This yields a directed tree-decomposition of these parts, and because the vertices have bounded cycle-degree,
this decomposition can be converted into a directed tree-cut decomposition of bounded width.
What remains is to understand how to reincorporate the vertices of \(\beta(x)\)
into this decomposition.

This is the role of the argument in~\cref{sec:councils}.
Here we use the fact that vertices in \(\beta(x)\) can be pairwise connected by large flows,
together with the recent \EP theorem for tripods by Briański, Hatzel, Okrasa and Pilipczuk~\cite{2024tripodEP}
(see \cref{thm:tripod-EP}). Here, roughly speaking, a \emph{tripod} is a rooted tree with two leaves, oriented towards the root, but it is better to think of a tripod as of an onion with subdivided edges, and the root split into three vertices: two sources and one sink. Following this interpretation, 
applying the theorem from~\cite{2024tripodEP} leads to three possible outcomes:
\begin{itemize}
  \item there is a small edge set hitting onion immersion models in  \(\torso{x}\) that correspond to "out-onions" (with respect to $x$) immersion models in $D$;
  \item there is a small edge set hitting onion immersion models in  \(\torso{x}\) that correspond to "in-onions" (with respect to $x$) immersion models in $D$;
  \item there is a large packing of both types of onion immersion models.
\end{itemize}

In the third case, the two large packings can be combined using the flows connecting the vertices of \(\beta(x)\)
to build a large onion star in \(D\), which in particular contains a large cylindrical wall immersion,
contrary to our assumptions.
Therefore, only the first two outcomes can occur in \(D\),
and we use them to successfully decompose $\torso{x}$.

The final step, carried out in~\cref{sec:hard},
is to glue the decompositions of torsos obtained for different nodes of $T$ in the tree-partition \((T,\beta)\) constructed using the~\nameref{lemma:potato}.
Here, to ensure that this stitching process preserves bounded width, we exploit two
properties:
\begin{itemize}
	\item that the tree-partition $(T,\beta)$ has bounded porosity; and
	\item that in the construction of $\torso{x}$, each subtree hanging below $x$ was replaced by a small onion.
\end{itemize}
In particular, the second property allows us to assume that in the decomposition of $\torso{x}$, each small onion modelling a subtree below $x$ has the root and the tip placed in the same bag, and this bag becomes a natural ``attachment point'' for the subtree. This
allows us to naturally combine the decomposition of the torsos into a single \dtcdecomps of the whole digraph $D$, making the stitching process work
out rather beautifully.

\subsection{Proof of the~\nameref{lemma:potato}}\label{sec:overview-potato}

Finally, let us comment on the proof of \nameref{lemma:potato} (\cref{lemma:potato}), which is presented in \cref{sec:potato}. The proof proceeds by induction on $k$: we consecutively construct a $1$-\linked tree-partition, a $2$-\linked tree-partition, a $3$-\linked tree-partition, and so on, each time refining the tree-partition obtained in the previous step. Therefore, the main step of the induction is captured by the following task: given a $k$-\linked tree-partition $(T,\beta)$, construct a $(k+1)$-\linked tree-partition $(T',\beta')$ whose porosity is not much larger than the porosity of $(T,\beta)$.

Consider any node $x$ of $T$ and denote $S\coloneqq \beta(x)$ for brevity.
By assumption, $S$ is $k$-flow-connected (i.e., for every pair of vertices $u,v$, there is an edge-disjoint flow of size $k$ from $u$ to $v$ and from $v$ to~$u$), but not necessarily $(k+1)$-flow connected. Therefore, we would like to refine $S$ by splitting it into a tree-partition $(T_x,\beta_x)$ with $(k+1)$-flow-connnected bags. Performing such a refinement for every node $x$ of $T$ will yield the transformation from $(T,\beta)$ to $(T',\beta')$.

\newcommand{\Cc}{\mathcal{C}}

The first, na\"ive approach to splitting $S$ is to consider an inclusion-wise maximal family $\Cc_S$ of cuts of $D$ of order $k$ that is \emph{laminar} on $S$ in the following sense:
\begin{itemize}
	\item if for each cut $(A,B)\in \Cc_S$ we consider the restriction $(A,B)|_S\coloneqq (A\cap S,B\cap S)$, then these restrictions are pairwise different and have both sides non-empty; and
	\item no two cuts $(A,B),(A',B')$ \emph{cross} on $S$, that is, have all the four sets $A\cap A'\cap S$, $A\cap B'\cap S$, $B\cap A'\cap S$, $B\cap B'\cap S$ non-empty.
\end{itemize}
It is easy to see that such a maximal family $\Cc_S$ splits $S$ into $(k+1)$-flow-connected pieces, and these pieces can be naturally organized into a tree whose edges are labelled by the cuts of $\Cc_S$. Indeed, if we fix an arbitrary vertex $s\in S$, order $\Cc_S$ by the inclusion of sides that do not contain $s$, and add an artificial root larger than all the elements of $\Cc_S$, then this defines on $\Cc_S$ a structure of a \emph{tree order} --- the ancestor-descendant order of a tree. The edges of this tree can be then labelled by the corresponding elements of $\Cc_S$, while the nodes can be associated with pieces ($k+1$-flow-connected subsets of $S$) defined as suitable differences between $s$-avoiding sides of the cuts of $\Cc_S$. This yields the first approximation of $(T_x,\beta_x)$.

Now, we would like to stitch the tree-partitions $(T_x,\beta_x)$, for $x$ ranging over the nodes of $T$, into a single tree-partition $(T,\beta)$ of $D$. The issue is that the cuts of $\Cc_{S}$, where $S=\beta(x)$, are required to be laminar only on $S$. Therefore, they may arbitrarily cross outside of $S$ and we have little control over how the cuts of $\Cc_{\beta(x)}$ interact with the cuts of $\Cc_{\beta(y)}$, for different nodes $x,y$ of $T$. To remedy this, we perform a very delicate uncrossing scheme on the families $\Cc_{\beta(x)}$ to combine them into a single, globally laminar family of cuts. The uncrossing crucially relies on the notion of \emph{important cuts} (more commonly known as \emph{important separators}), a technique borrowed from the field of parameterized algorithms (see e.g.~\cite[Chapter~8]{PAbook}). More precisely, we amend the cuts in each family $\Cc_{\beta(x)}$ by adding/subtracting some parts of $D$ lying outside of $\beta(x)$, and these amendments are defined as certain important cuts in $D$. We exploit the classic bounds on the number of distinct important cuts to show that the porosity of the cuts grows in a controlled way during the uncrossing process. The fact that we measure cuts using their porosity, instead of order, is crucial in this argument.

\section{Preliminaries}\label{sec:prelims}

\subsection{Basics}

For a positive integer $k$, we write $[k]$ for the set $\Set{1,\ldots,k}$. A \emph{near-partition} of a set $U$ is a collection $\FF$ of subsets of $U$ (possibly empty) such that every element of $U$ belongs to exactly one set in $\FF$.

\paragraph*{(Di)graphs.}
We use standard terminology and notations for graphs and directed graphs (\emph{digraphs}, for short). The vertex set and the edge set of a graph $G$ are denoted by $V(G)$ and $E(G)$ respectively, while the vertex set and the edge set of a digraph $D$ are denoted by $V(D)$ and $E(D)$, respectively. All graphs and digraphs considered in this paper are finite, so we always assume these sets to be finite.

Let $D$ be a digraph. An edge from $x$ to $y$ in $D$ is denoted by $(x,y)$, and we call $x$ and $y$ the \emph{tail} and the \emph{head} of $(x,y)$, respectively, and denote them by $\tail{e}$ and $\head{e}$. 
In all what follows we assume that for each edge $e$ of a digraph we have $\tail{e}\neq \head{e}$; so we disallow \emph{loops} --- edges with same head and tail. However, we allow \emph{parallel edges}: multiple edges with the same head and tail. Thus, formally, $E(D)$ is a multiset of edges, but we tacitly apply the standard set notation to it. On the other hand, all graphs considered in this paper are \emph{simple}: they do not contain loops or parallel edges (edges with same endpoints). An (undirected) edge with endpoints $u,v$ is denoted $uv$.


A \emph{subgraph} $D'$ of a digraph $D$ is a digraph with $V(D') \subseteq V(D)$ and $E(D') \subseteq E(D)$. For $X$ being a set of edges or a set of vertices, by $D-X$ we denote the digraph obtained from $D$ by removing all the edges, respectively vertices, of $X$. We say that a set of edges \(F\) \emph{hits} a subgraph $D'$ of $D$ if \(F\cap E(D')\neq \emptyset\).

For a set $S \subseteq V(D)$, we write \(\Out{D}{S}\) for the set of edges whose tail belong to \(S\).
Similarly, \(\In{D}{S}\) consists of edges whose head belong to $S$. 



A \emph{trail} $Q$ is a sequence $(e_1,\ldots,e_m)$ of pairwise distinct edges of $D$ such that for every $i \in [m-1]$ we have $\head{e_i}=\tail{e_{i+1}}$.
If $a=\tail{e_1}$ and $b=\head{e_m}$, we call such $Q$ a \emph{$a$-$b$-trail}.
A trail \emph{visits} a vertex $u$ if $u$ is the head or the tail of one of its edges. A \emph{path} is a trail $Q=(e_1,\ldots,e_m)$ that visits every vertex at most once, that is, for any $i,j\in [m]$, we have $\head{e_i}=\tail{e_j}$ if and only if $j=i+1$.
A \emph{cycle} is a trail for which $\tail{e_1}=\head{e_m}$. Note that we allow a cycle to visit a vertex multiple times, but every edge may appear on a trail at most once. To distinguish this, a \emph{simple cycle} is a cycle that visits every vertex at most once, that is, whose edges have pairwise different heads.

Let $P=(e_1,\ldots,e_m)$ be a path that visits vertices $a$ and $b$, in this order. 
We denote by ${a}\xrightarrow{P} {b}$ the subpath of $P$ that starts with $a$ and ends with $b$.
If $a = \tail{e_1}$ (resp.~$b = \head{e_m}$) we may simplify the notation and write $bP$ (resp.~$aP$).

\newcommand{\red}[1]{\Fkt{\variablestyle{red}}{#1}}

If now $Q=(e_{m+1},\ldots,e_n)$ is a path such that $\head{e_m}=\tail{e_{m+1}}$, by $P \cdot Q$ we denote a \emph{concatenation} of $P$ and $Q$, i.e., a sequence $(e_1,\ldots,e_m,e_{m+1},\ldots,e_n)$.
Note that in general $P \cdot Q$ does not need to be a path (not even a trail), but it contains a path from \(\Start{P}\) to \(\End{Q}\) that we call a \emph{reduction} of $P \cdot Q$, and denote by \(\red{P \cdot Q}\).
When combining the concatenation notation with the subpath notation, we will write $a\xrightarrow{P}  b \xrightarrow{Q} c$ instead of $a\xrightarrow{P}  b \cdot b \xrightarrow{Q} c$.

For a vertex $v$ of $D$, we define the \emph{cycle-degree} of \(v\), denoted \(\Cycledeg{D}{v}\), as the maximum number of edge-disjoint cycles that contain an edge adjacent to $v$.

\paragraph*{(Rooted) trees.} We often use (rooted) trees as structures underlying our decompositions.
A \emph{tree} is a connected undirected graph that does not contain any cycle.
A \emph{rooted tree} is a digraph obtained by orienting the edges of a tree so that there is exactly one vertex, called the \emph{root}, that can reach every other vertex by a directed path. In the context of a (rooted) tree $T$ that underlies a decomposition of a digraph $D$, we often call the vertices of $T$ \emph{nodes} in order to distinguish them from the vertices of $D$.

Let \(T\) be a rooted tree.
We say that a node $x$ is an \emph{ancestor} of a node $y$, and that $y$ is a \emph{descendant} of $x$, if $y$ is reachable from $x$ by a path in \(T\). Note that thus, every node is its own ancestor and descendant. Also, for every edge \((x,y)\) of $T$, we call \(x\) the \emph{parent} of $y$ and \(y\) a \emph{child} of \(x\). 
The subtree of $T$ rooted at a node \(x\) is the subtree induced by $x$ and all its descendants; it is denoted by \(\subtree{T}{x}\), while $\treeUpper{T}{x}$ denotes the subtree induced by all the nodes not contained in \(\subtree{T}{x}\).

Every edge \(e=xy\) in a tree \(T\) splits the \(T\) into two subtrees, which we shall denote by \(\treeSplit{T}{x}{e}\) (containing \(x\)) and \(\treeSplit{T}{y}{e}\) (containing \(y\)).
If the tree is rooted, then we refer to the subtree not containing the root by \(\treeLower{T}{e}\) (this subtree is equal to $\subtree{T}{\head{e}}$), and to the part containing the root by \(\treeUpper{T}{e}\) (this subtree is equal to $\treeUpper{T}{\head{e}}$).

It often is the case that a (rooted) tree $T$ is endowed with a \emph{bag function} $\beta\colon V(T)\to 2^U$, mapping every node $x$ of $T$ to its bag $\beta(x)$ so that $\{\beta(x)\colon x\in V(T)\}$ form a near-partition of some universe~$U$ (usually, the vertex set of a digraph).
In this context, for a subtree $T'$ of $T$ we denote $\beta(T')\coloneqq \bigcup_{x\in V(T')} \beta(x)$. Note that for every edge $e = xy$ of $T$, we have that $\Brace{\beta(\treeSplit{T}{x}{e}),\beta(\treeSplit{T}{y}{e})}$ is a partition of $U$; we call it the \emph{cut} induced by $e$.

We write \(\inc{e}{x}\) if the edge \(e\) of a tree $T$ is incident to the node \(x\) of \(T\).

\paragraph*{Flows.}
For  two sets of vertices \(A, B \subseteq \V{D}\),
an \emph{\(A\)-\(B\)-flow} of order \(k\) is a collection of \(k\) edge-disjoint paths, each starting in a vertex of \(A\) and ending in a vertex of \(B\).
If \(A = \Set{a}\) and/or \(B = \Set{b}\), then we write \(a\)-\(B\)-flow/\(A\)-\(b\)-flow/\(a\)-\(b\)-flow instead.
A set \(X \subseteq \V{D}\) is \emph{\(k\)-flow-connected} if for every two vertices \(u,v \in X\) there is a \(u\)-\(v\)-flow of order \(k\) in \(D\).

We use the classic Menger's Theorem.

\begin{theorem}[Menger's Theorem~\cite{menger1927}]
	\label{thm:menger}
	Let \(D\) be a digraph, \(s,t\) two vertices in \(D\) and \(k \in \N\).
	Then either \(D\) contains an \(s\)-\(t\)-flow of order \(k\) or a set \(F\) of less than \(k\) edges such that \(D-F\) does not contain an \(s\)-\(t\)-path.
\end{theorem}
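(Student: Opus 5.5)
We would prove this by the standard augmenting-path argument, treating edge-disjoint paths as an integral unit-capacity flow. If $s=t$ we take $k$ copies of the trivial path as the flow, so assume $s\neq t$. Fix a maximum number $m$ of edge-disjoint $s$-$t$-paths in $D$ and let $f\subseteq E(D)$ be the union of their edges. If $m\ge k$ we are done, so assume $m<k$. The goal is then to exhibit an edge set of size exactly $m$ whose removal destroys all $s$-$t$-paths.

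\textbf{Residual digraph.} Form the residual digraph $R$ on $V(D)$. For every edge $e\in E(D)\setminus f$ we keep $e$ with its orientation, and for every $e\in f$ we add the reversed edge from $\head{e}$ to $\tail{e}$. Let $A$ be the set of vertices reachable from $s$ in $R$.

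\textbf{Case $t\in A$: augmentation.} Suppose $R$ contains an $s$-$t$-path $P$. Form $f'$ from $f$ by adding the forward edges of $P$ and deleting the edges of $f$ whose reversals lie on $P$. A counting argument shows that every vertex other than $s,t$ has equal in- and out-degree in $f'$, while $s$ has out-excess $m+1$ and $t$ has in-excess $m+1$. Repeatedly peeling off walks from $s$ then yields $m+1$ edge-disjoint $s$-$t$-trails, possibly with some leftover cycles which we discard. Each trail contains an $s$-$t$-path using a subset of its edges. This gives $m+1$ edge-disjoint $s$-$t$-paths, contradicting the maximality of $m$. Hence $t\notin A$.

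\textbf{Case $t\notin A$: the cut.} Let $F$ be the set of edges of $D$ with tail in $A$ and head outside $A$.
\begin{itemize}
\item Every edge of $F$ lies in $f$, since otherwise it would also be an edge of $R$ and its head would belong to $A$.
\item Every edge of $D$ entering $A$ from outside is not in $f$, since otherwise its reversal would be an edge of $R$ leaving $A$.
\end{itemize}
Each of the $m$ chosen paths starts in $A$ and ends outside $A$. By the second observation it never re-enters $A$, so it crosses from $A$ to its complement exactly once, using exactly one edge of $F$. Since the paths are edge-disjoint and every edge of $F$ lies on one of them, $|F|=m<k$. Finally, every $s$-$t$-path in $D$ must leave $A$ at some point, and hence uses an edge of $F$. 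So $D-F$ has no $s$-$t$-path, as required.

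\textbf{Main obstacle.} The only delicate step is the augmentation bookkeeping: checking the degree balance of $f'$ and converting $f'$ back into $m+1$ edge-disjoint paths. This uses only the decomposition of a degree-balanced edge set into trails and cycles, and the fact that any $s$-$t$-trail contains an $s$-$t$-path on a subset of its edges.
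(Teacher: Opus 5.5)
Your proof is correct. It is the standard Ford--Fulkerson augmenting-path proof of the edge version of Menger's theorem. The degree bookkeeping for $f'$ is right: each edge of $P$ shifts net outflow by $+1$ at its tail and $-1$ at its head. Your two observations about the edges leaving and entering $A$ are exactly what is needed to conclude that each of the $m$ paths uses exactly one edge of $F$, so $|F|=m<k$.

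There is no paper proof to compare against: the paper does not prove this statement, but cites it from \cite{menger1927} and uses it as a black box. Your route does make explicit something the citation leaves implicit, namely constructiveness. Repeated augmentation computes, in polynomial time, either $k$ edge-disjoint $s$-$t$-paths or an edge set of size less than $k$ meeting all $s$-$t$-paths. This is the ``efficient algorithmic counterpart'' of Menger's theorem that the paper relies on for its fixed-parameter algorithms, for instance in the remark after the tripod Erd\H{o}s--P\'osa theorem and when building the set $W$ in the Small council lemma (\cref{lem:SCL}).

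The degenerate case $s=t$ depends on whether zero-edge paths count as $s$-$t$-paths. The statement holds under either convention: with $k$ trivial paths, or with $F=\emptyset$ when $k\ge 1$. So this point is harmless.
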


Note the following straightforward corollaries.

\begin{corollary}
	\label{cor:setFlow}
	Let \(Z\) be a \(d\)-flow-connected set in a digraph \(D\).
	Then for every \(z \in Z\) and \(X \subseteq Z\) there is a \(z\)-\(X\)-flow of order \(d\). 
\end{corollary}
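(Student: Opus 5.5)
The statement is \cref{cor:setFlow}: for a $d$-flow-connected set $Z$, every $z\in Z$ and $X\subseteq Z$ admit a $z$-$X$-flow of order $d$. The plan is to reduce to a single-target flow and apply Menger's Theorem (\cref{thm:menger}). We may assume $X\neq\emptyset$, since otherwise there is nothing to route; the statement implicitly requires this.

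If $z\in X$, the flow is essentially trivial. Taking $d$ copies of the trivial path at $z$ (paths with no edges) gives $d$ edge-disjoint paths, consistent with the convention that a flow is a collection of edge-disjoint paths. If the paper's conventions forbid empty paths, we instead note that for $z\in X$ the cut argument below still applies to $X\setminus\{z\}$ when nonempty. So the substantive case is $z\notin X$.

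For $z\notin X$, form an auxiliary digraph $D'$ from $D$ by adding a new vertex $t$ and, for each $x\in X$, $d$ parallel edges $(x,t)$. Suppose for contradiction that $D'$ has no $z$-$t$-flow of order $d$. By \cref{thm:menger} there is a set $F$ of fewer than $d$ edges of $D'$ such that $D'-F$ has no $z$-$t$-path. Pick any $x\in X$. Some edge $(x,t)$ survives in $D'-F$, since $|F|<d$. Hence $F\cap E(D)$ must separate $z$ from $x$ in $D$. But $|F\cap E(D)|<d$, which contradicts the existence of a $z$-$x$-flow of order $d$ guaranteed by $d$-flow-connectivity of $Z$ (note $z,x\in Z$ are distinct).

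So $D'$ has $d$ edge-disjoint $z$-$t$-paths. Truncate each path at the first vertex of $X$ it meets; this vertex exists because the last edge enters $t$ from $X$. The truncated paths lie in $D$, start at $z$, end in $X$, and remain edge-disjoint, giving the desired $z$-$X$-flow of order $d$.

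No step is a real obstacle. The only points needing care are the degenerate cases $z\in X$ and $X=\emptyset$, and the use of multiple parallel edges to $t$ so that the added edges cannot be the bottleneck.
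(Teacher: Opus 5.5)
Your proof is correct and is the standard derivation the paper has in mind. The paper gives no separate proof and calls this a straightforward corollary of Menger's Theorem (\cref{thm:menger}); your super-sink construction with $d$ parallel edges from each $x\in X$ is the same device the paper uses in the proof of \cref{lem:SCL_flow}, and your treatment of the degenerate cases ($X\neq\emptyset$ implicitly assumed, zero-edge paths when $z\in X$) matches the paper's informal use of trivial trails there.
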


\begin{corollary}\label{cor:cycleDeg-hitting}
	Let \(v\) be a vertex of a digraph \(D\).
	Then the cycle-degree of \(v\) is equal to the minimum size of a set of edges \(F\) such that \(F\) hits every cycle containing an edge adjacent to \(v\).
\end{corollary}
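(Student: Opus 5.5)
The plan is to reduce the statement to Menger's Theorem (\cref{thm:menger}) in an auxiliary digraph in which $v$ is split into a source and a sink. Let $D'$ be obtained from $D$ by replacing $v$ with two new vertices $v^{\mathrm{out}}$ and $v^{\mathrm{in}}$. Every edge with tail $v$ gets tail $v^{\mathrm{out}}$, and every edge with head $v$ gets head $v^{\mathrm{in}}$. This gives a natural bijection between $E(D)$ and $E(D')$, so we identify the two edge sets. Since $D$ has no loops, no edge of $D$ becomes an edge from $v^{\mathrm{out}}$ to $v^{\mathrm{in}}$ in an illegitimate way.

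The key step is to show that, under this identification, cycles containing an edge adjacent to $v$ and $v^{\mathrm{out}}$-$v^{\mathrm{in}}$-paths in $D'$ correspond in the following sense.
\begin{itemize}
\item Every $v^{\mathrm{out}}$-$v^{\mathrm{in}}$-path $P$ in $D'$ is, read in $D$, a cycle through $v$. Its first edge leaves $v$, so it contains an edge adjacent to $v$.
\item Conversely, let $C$ be a cycle (a closed trail) in $D$ containing an edge adjacent to $v$. Then $C$ visits $v$. Rotate $C$ so that it starts with an edge leaving $v$, and cut it at the first subsequent return to $v$. This gives a $v^{\mathrm{out}}$-$v^{\mathrm{in}}$-walk in $D'$ whose edges form a subset of $E(C)$. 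By reduction, it contains a $v^{\mathrm{out}}$-$v^{\mathrm{in}}$-path $P_C$ with $E(P_C)\subseteq E(C)$.
\end{itemize}

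Two consequences follow. First, a set $F\subseteq E(D)$ hits every cycle containing an edge adjacent to $v$ if and only if $D'-F$ has no $v^{\mathrm{out}}$-$v^{\mathrm{in}}$-path. The forward direction uses the first item; the backward direction uses the second item, since hitting $P_C$ means hitting $C$. Second, the maximum number of edge-disjoint cycles containing an edge adjacent to $v$ equals the maximum order of a $v^{\mathrm{out}}$-$v^{\mathrm{in}}$-flow in $D'$. A flow yields edge-disjoint cycles by the first item. A packing of cycles $C_1,\dots,C_m$ yields paths $P_{C_1},\dots,P_{C_m}$, which are edge-disjoint because $E(P_{C_i})\subseteq E(C_i)$.

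Finally, apply \cref{thm:menger} to $v^{\mathrm{out}},v^{\mathrm{in}}$ in $D'$ with $k$ equal to the maximum flow order plus one. There is no flow of order $k$, so there is a set $F$ of at most $k-1$ edges separating $v^{\mathrm{out}}$ from $v^{\mathrm{in}}$. By the first consequence, $F$ is a hitting set of size at most the cycle-degree. The reverse inequality is the trivial one: any hitting set must contain a distinct edge from each cycle of an edge-disjoint packing.

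The only delicate point is the second item: cycles are closed trails that may revisit $v$, so one has to extract a genuine $v^{\mathrm{out}}$-$v^{\mathrm{in}}$-path inside $E(C)$ rather than working with $C$ itself. Everything else is bookkeeping.
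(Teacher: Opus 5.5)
Your proof is correct and is exactly the argument the paper intends: the paper states this as a straightforward corollary of Menger's Theorem (\cref{thm:menger}) without writing out a proof, and splitting $v$ into $v^{\mathrm{out}}$ and $v^{\mathrm{in}}$ is the standard way to make that reduction precise. You also extract a $v^{\mathrm{out}}$-$v^{\mathrm{in}}$-path inside $E(C)$ for a closed trail $C$ that may revisit $v$, which correctly handles the one detail the paper leaves implicit.
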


\paragraph*{Ramsey's Theorem.}
A graph is \emph{complete} if every two vertices are adjacent.
Two disjoint sets \(A\) and \(B\) of vertices in a graph $G$ are \emph{complete} to each other if every vertex of \(A\) is adjacent to every vertex of \(B\), and \emph{anti-complete} to each other if no vertex of \(A\) is adjacent to any vertex of \(B\).

We shall use the following two versions of Ramsey's Theorem: the bipartite one and the colourful one.

\begin{theorem}\label{thm:BipRamsey}
	There is a function \(\bRamsey\colon \N\times\N\to \N\) such that the following holds:
	For every bipartite graph \(G\) with both sides, say \(A\) and \(B\), of size at least \(\bRamsey(a,b)\), there exist subsets $A' \subseteq A$ and $B' \subseteq B$ of sizes $a$, and $b$ respectively, that are either complete or anti-complete to each other.  
\end{theorem}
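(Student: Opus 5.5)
This is the bipartite Ramsey theorem, so I would prove it by the standard iterated pigeonhole argument and obtain an explicit bound such as $\bRamsey(a,b)\le \max(a,b)\cdot 2^{\max(a,b)}$ or similar. By symmetry and monotonicity it suffices to handle $a=b=n$: if both sides have size at least $\bRamsey(n,n)$ with $n=\max(a,b)$, we find $A''\subseteq A$ and $B''\subseteq B$ of size $n$ that are complete or anti-complete. Any subsets of sizes $a$ and $b$ then inherit the property.

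The construction goes as follows. Fix $a$ vertices $A_0=\{v_1,\dots,v_a\}\subseteq A$. Each vertex $w\in B$ has a ``type,'' namely its neighbourhood within $A_0$, which is a subset of $A_0$; there are $2^a$ possible types. If $|B|\ge 2^a\cdot(2b-1)$, pigeonhole yields a set $B_0$ of at least $2b-1$ vertices of $B$ sharing the same type $N\subseteq A_0$. The catch is that $N$ is not necessarily $\emptyset$ or $A_0$, so this alone does not give $A_0$ complete or anti-complete to $B_0$. To fix this, I would use the directional idea instead. Take $A_0$ of size $2a-1$. Pigeonhole over the $2^{2a-1}$ types gives at least $b$ vertices $B'$ of $B$ with a common neighbourhood $N\subseteq A_0$. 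Either $|N|\ge a$ or $|A_0\setminus N|\ge a$. In the first case, any $a$ vertices of $N$ are complete to $B'$; in the second, any $a$ vertices of $A_0\setminus N$ are anti-complete to $B'$.

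Hence $\bRamsey(a,b)\coloneqq\max\bigl(2a-1,\;2^{2a-1}(b-1)+1\bigr)$ suffices. The only step requiring care is replacing $A_0$ by a set of size $2a-1$ so that the shared neighbourhood or its complement is large. There is no real obstacle beyond this: the argument is elementary and self-contained, and it uses nothing from the rest of the paper.
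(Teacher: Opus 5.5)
Your final argument is correct. Take $|A_0|=2a-1$ and apply pigeonhole over the $2^{2a-1}$ possible traces $N(w)\cap A_0$, $w\in B$; this gives $b$ vertices of $B$ with a common trace $N$. Since $|N|+|A_0\setminus N|=2a-1$, one of these two sets has at least $a$ elements, and it is complete or anti-complete, respectively, to those $b$ vertices. So $\bRamsey(a,b)\coloneqq\max\bigl(2a-1,\,2^{2a-1}(b-1)+1\bigr)$ works.

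There is no proof in the paper to compare against. The statement is quoted as the classical bipartite Ramsey theorem, and only the existence of some function $\bRamsey$ is ever used. Your single-exponential explicit bound is therefore more than the paper needs.

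For a clean write-up, make these changes:
\begin{itemize}
\item Delete the abandoned first attempt with $|A_0|=a$.
\item Delete the unproved bound $\max(a,b)\cdot 2^{\max(a,b)}$ from your opening paragraph.
\item Drop the reduction to $a=b$, since your final argument handles $(a,b)$ directly.
\item Set aside the degenerate case $a=0$ as trivial (take $A'=\emptyset$), since your formula is not an integer there.
\end{itemize}
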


\begin{theorem}\label{thm:ColourRamsey}
	For every \(k \in \N\) there exists a function \(\colRamsey\colon \N^k \to \N\) such that the following holds:
	For every complete graph \(G\) on at least  \(\colRamsey(x_1,\dots,x_k)\) vertices, whose edges are coloured with \(k\) colours, say \(c_1,\dots,c_k\), there exists \(i \in [k]\) and a set \(K\) of \(x_i\) vertices of \(G\) such that all the edges with both endpoints in \(K\) have colour \(c_i\).
\end{theorem}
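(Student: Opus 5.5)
We prove \cref{thm:ColourRamsey} by the classical Erdős--Szekeres neighbourhood argument, with induction on the sum $x_1+\dots+x_k$. Throughout, $k$ is fixed and $x_1,\dots,x_k$ are non-negative integers. We define $\colRamsey$ recursively.
\begin{itemize}
	\item If $x_i\le 1$ for some $i\in[k]$, set $\colRamsey(x_1,\dots,x_k)\coloneqq 1$.
	\item Otherwise, set
	\[
		\colRamsey(x_1,\dots,x_k)\coloneqq 1+\sum_{i=1}^{k}\colRamsey(x_1,\dots,x_{i-1},x_i-1,x_{i+1},\dots,x_k).
	\]
\end{itemize}
The recursion is well founded, since every argument tuple on the right-hand side has a strictly smaller coordinate sum.

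For the base case, suppose $x_i\le 1$ for some $i$. Then any single vertex of $G$ (or the empty set, if $x_i=0$) is a set $K$ of $x_i$ vertices. It contains no edges, so vacuously all edges inside $K$ have colour $c_i$, and the conclusion holds.

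For the inductive step, assume $x_i\ge 2$ for all $i$, and let $G$ be a complete graph on at least $\colRamsey(x_1,\dots,x_k)$ vertices whose edges are $k$-coloured.
\begin{itemize}
	\item Fix any vertex $v$. Partition the remaining vertices into sets $N_1,\dots,N_k$, where $N_i$ consists of the vertices $u$ for which the edge $vu$ has colour $c_i$.
	\item There are at least $\sum_i \colRamsey(\dots,x_i-1,\dots)$ remaining vertices. By the pigeonhole principle, some $N_i$ satisfies $|N_i|\ge \colRamsey(x_1,\dots,x_i-1,\dots,x_k)$.
	\item The induced subgraph $G[N_i]$ is complete and inherits the colouring, so we apply the induction hypothesis to it. This yields one of two outcomes.
	\begin{itemize}
		\item For some $j\ne i$, there is a set $K\subseteq N_i$ of $x_j$ vertices all of whose edges have colour $c_j$. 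Then $K$ is already the desired set in $G$.
		\item There is a set $K'\subseteq N_i$ of $x_i-1$ vertices all of whose edges have colour $c_i$. Then $K\coloneqq K'\cup\{v\}$ has $x_i$ vertices. Every edge inside $K$ has colour $c_i$: edges within $K'$ by choice of $K'$, and edges from $v$ because $K'\subseteq N_i$.
	\end{itemize}
\end{itemize}
This completes the induction.

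There is no real obstacle in this argument. The only care needed is in the base case, so that the recursion terminates and degenerate values such as $x_i\in\{0,1\}$ are handled consistently. The bipartite version, \cref{thm:BipRamsey}, is not needed here.
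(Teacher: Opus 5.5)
The paper does not prove this statement; it quotes it as the classical multicolour Ramsey theorem. Your argument is the standard neighbourhood-and-pigeonhole induction on $x_1+\dots+x_k$ and is correct. It implicitly assumes $k\ge 1$, which the statement needs anyway for $[k]$ to be non-empty, and the extra $+1$ in your recursion gives a slightly weaker but perfectly valid bound.
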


We use the shorthands $\bRamsey(t)\coloneqq \bRamsey(t,t)$ and $\colRamsey^k(t)\coloneqq \colRamsey(\underbrace{t,t,\ldots,t}_{k\text{ times}})$.


\subsection{Immersions and topological minors}
Let $D$ and $H$ be digraphs.
We say that $D$ admits $H$ as an \emph{immersion} if there exists an
\emph{immersion model} of $H$ in $D$, that is, a mapping $\pi$ defined on the vertices and edges of $H$ satisfying the following:
\begin{itemize}
	\item $\pi$ maps vertices of $H$ to pairwise different vertices of $D$;
	\item $\pi$ maps each edge $(x,y) \in E(H)$ to a $(\pi(x),\pi(y))$-trail in $D$; and
	\item the trails in $\{\pi(e) \colon e \in E(H)\}$ are pairwise edge-disjoint.
\end{itemize}
Note that in this definition, by shortcutting the trails if necessary, we may assume that each trail $\pi(e)$ does not visit any vertex more than once, i.e., it forms a path.

The notions of topological minors and of topological minor models are defined in the same way, except we require that the paths in $\{\pi(e) \colon e \in E(H)\}$ are pairwise vertex-disjoint apart from possibly sharing endpoints. Clearly, if $H$ is a digraph without parallel edges, then any digraph $D$ containing $H$ as a topological minor also contains $D$ as an immersion.



\subsection{Cylindrical grids and walls}

A \emph{cylindrical grid} of order \(k\) is a digraph consisting of \(k\) vertex-disjoint simple cycles \(C_1,\dots,C_k\) of length \(2k\) and \(2k\) vertex-disjoint paths \(P_1,\dots,P_k,Q_1,\dots,Q_k\) with $\bigcup_{i\in [k]} V(C_i)=\bigcup_{i\in [k]} V(P_i)\cup V(Q_i)$ such that every path \(P_i\) intersects the cycles in order \(C_1,\dots,C_k\) and every path \(Q_i\) intersects the cycles in order \(C_k,\dots,C_1\).
Additionally, every cycle $C_i$ intersects the paths in cyclic order \(P_1,Q_1,\dots,P_k,Q_k\).

We obtain the \emph{cylindrical wall} of order \(k\) from the cylindrical grid of order \(k\) by splitting every vertex of the cycles \(C_2,\dots,C_{k-1}\) into two vertices: one remains the head of both in-coming edges, the other remains the tail of both out-going edges, and we connect the first to the latter by an edge; see \cref{fig:wall}.
For $i \in [k]$, we denote the consecutive vertices of $P_i$ by $p^-_{i,1},p^+_{i,2},p^-_{i,2},p^+_{i,3}\ldots, p^-_{i,k-1}, p^+_{i,k},$ and the consecutive vertices of $Q_i$ by $q^-_{i,k},q^+_{i,k-1},q^-_{i,k-1},q^+_{i,k-2},\ldots, q^-_{i,2}, q^+_{i,1}.$
We denote by $P^+$ the set of all vertices of in-degree 2 that belong to the paths $P_i$, $i\in [k]$, and the definitions of the sets $P^-,Q^+$ and $Q^-$ follow analogously.
The cylindrical wall of order \(k\) is denoted by \(\gridImm{k}\).


\begin{figure}[!ht]
	\begin{center}
		\begin{tikzpicture}[scale=0.8]
			\def\dist{1.5}
			\node (centre) at (0,0) {};
			\node[vertex,label=315:{\(p^{-}_{1,1}\)}] (p-1-1-m) at ($(0,0)$) {};
			\node[vertex,label=315:{\(p^{-}_{2,1}\)}] (p-2-1-m) at ($(p-1-1-m)+(270:{2*\dist})$) {};
			\node[vertex,label=315:{\(p^{-}_{3,1}\)}] (p-3-1-m) at ($(p-2-1-m)+(270:{2*\dist})$) {};
			\node[vertex,label=315:{\(p^{-}_{4,1}\)}] (p-4-1-m) at ($(p-3-1-m)+(270:{2*\dist})$) {};
			
			\node[vertex,label=315:{\(q^{+}_{1,1}\)}] (q-1-1-p) at ($(p-1-1-m)+(270:{\dist})$) {};
			\node[vertex,label=315:{\(q^{+}_{2,1}\)}] (q-2-1-p) at ($(q-1-1-p)+(270:{2*\dist})$) {};
			\node[vertex,label=315:{\(q^{+}_{3,1}\)}] (q-3-1-p) at ($(q-2-1-p)+(270:{2*\dist})$) {};
			\node[vertex,label=315:{\(q^{+}_{4,1}\)}] (q-4-1-p) at ($(q-3-1-p)+(270:{2*\dist})$) {};
			
			\foreach \i in {1,2,3,4}
			{
				\node[vertex,label=315:{\(p^{+}_{\i,2}\)}] (p-\i-2-p) at ($(p-\i-1-m)+(0:{\dist})$) {};
				\node[vertex,label=315:{\(p^{-}_{\i,2}\)}] (p-\i-2-m) at ($(p-\i-2-p)+(0:{\dist})$) {};
				\node[vertex,label=315:{\(p^{+}_{\i,3}\)}] (p-\i-3-p) at ($(p-\i-2-m)+(0:{\dist})$) {};
				\node[vertex,label=315:{\(p^{-}_{\i,3}\)}] (p-\i-3-m) at ($(p-\i-3-p)+(0:{\dist})$) {};
				\node[vertex,label=315:{\(p^{+}_{\i,4}\)}] (p-\i-4-p) at ($(p-\i-3-m)+(0:{\dist})$) {};
			}
			\foreach \i in {1,2,3,4}
			{
				\node[vertex,label=315:{\(q^{-}_{\i,2}\)}] (q-\i-2-m) at ($(q-\i-1-p)+(0:{\dist})$) {};
				\node[vertex,label=315:{\(q^{+}_{\i,2}\)}] (q-\i-2-p) at ($(q-\i-2-m)+(0:{\dist})$) {};
				\node[vertex,label=315:{\(q^{-}_{\i,3}\)}] (q-\i-3-m) at ($(q-\i-2-p)+(0:{\dist})$) {};
				\node[vertex,label=315:{\(q^{+}_{\i,3}\)}] (q-\i-3-p) at ($(q-\i-3-m)+(0:{\dist})$) {};
				\node[vertex,label=315:{\(q^{-}_{\i,4}\)}] (q-\i-4-m) at ($(q-\i-3-p)+(0:{\dist})$) {};
			}
			
			\foreach \i in {1,2,3,4}{
				\node (P-\i) at ($(p-\i-1-m)+(-0.85,0)$) {\textcolor{CornflowerBlue}{\(P_{\i}\)}};
				\node (Q-\i) at ($(q-\i-1-p)+(-0.85,0)$) {\textcolor{CornflowerBlue}{\(Q_{\i}\)}};
				
				\begin{pgfonlayer}{background}
					\draw[directededge,CornflowerBlue] (p-\i-1-m) to (p-\i-2-p);
					\begin{scope}
						\clip (p-\i-2-p.center) -- (p-\i-2-m.center) -- ($(p-\i-2-m)+(0,-0.5)$) -- ($(p-\i-2-p)+(0,-0.5)$) -- cycle;
						\draw[directededge,CornflowerBlue] (p-\i-2-p) to (p-\i-2-m);
					\end{scope}
					\begin{scope}
						\clip (p-\i-2-p.center) -- (p-\i-2-m.center) -- ($(p-\i-2-m)+(0,0.5)$) -- ($(p-\i-2-p)+(0,0.5)$) -- cycle;
						\draw[directededge,LavenderMagenta] (p-\i-2-p) to (p-\i-2-m);
					\end{scope}
					\draw[directededge,CornflowerBlue] (p-\i-2-m) to (p-\i-3-p);
					\begin{scope}
						\clip (p-\i-3-p.center) -- (p-\i-3-m.center) -- ($(p-\i-3-m)+(0,-0.5)$) -- ($(p-\i-3-p)+(0,-0.5)$) -- cycle;
						\draw[directededge,CornflowerBlue] (p-\i-3-p) to (p-\i-3-m);
					\end{scope}
					\begin{scope}
						\clip (p-\i-3-p.center) -- (p-\i-3-m.center) -- ($(p-\i-3-m)+(0,0.5)$) -- ($(p-\i-3-p)+(0,0.5)$) -- cycle;
						\draw[directededge,LavenderMagenta] (p-\i-3-p) to (p-\i-3-m);
					\end{scope}
					\draw[directededge,CornflowerBlue] (p-\i-3-m) to (p-\i-4-p);
					
					\draw[directededge,CornflowerBlue] (q-\i-4-m) to (q-\i-3-p);
					\begin{scope}
						\clip (q-\i-3-p.center) -- (q-\i-3-m.center) -- ($(q-\i-3-m)+(0,-0.5)$) -- ($(q-\i-3-p)+(0,-0.5)$) -- cycle;
						\draw[directededge,CornflowerBlue] (q-\i-3-p) to (q-\i-3-m);
					\end{scope}
					\begin{scope}
						\clip (q-\i-3-p.center) -- (q-\i-3-m.center) -- ($(q-\i-3-m)+(0,0.5)$) -- ($(q-\i-3-p)+(0,0.5)$) -- cycle;
						\draw[directededge,LavenderMagenta] (q-\i-3-p) to (q-\i-3-m);
					\end{scope}
					\draw[directededge,CornflowerBlue] (q-\i-3-m) to (q-\i-2-p);
					\begin{scope}
						\clip (q-\i-2-p.center) -- (q-\i-2-m.center) -- ($(q-\i-2-m)+(0,-0.5)$) -- ($(q-\i-2-p)+(0,-0.5)$) -- cycle;
						\draw[directededge,CornflowerBlue] (q-\i-2-p) to (q-\i-2-m);
					\end{scope}
					\begin{scope}
						\clip (q-\i-2-p.center) -- (q-\i-2-m.center) -- ($(q-\i-2-m)+(0,0.5)$) -- ($(q-\i-2-p)+(0,0.5)$) -- cycle;
						\draw[directededge,LavenderMagenta] (q-\i-2-p) to (q-\i-2-m);
					\end{scope}
					\draw[directededge,CornflowerBlue] (q-\i-2-m) to (q-\i-1-p);
					
					\draw[directededge,LavenderMagenta] (p-\i-1-m) to (q-\i-1-p);
					\draw[directededge,LavenderMagenta] (p-\i-2-m) to (q-\i-2-p);
					\draw[directededge,LavenderMagenta] (p-\i-3-m) to (q-\i-3-p);
					\draw[directededge,LavenderMagenta] (p-\i-4-p) to (q-\i-4-m);
				\end{pgfonlayer}
			}
			
			\foreach[evaluate={\j=int(\i+1);}] \i in {1,2,3}{
				\draw[directededge,LavenderMagenta] (q-\i-1-p) to (p-\j-1-m);
				\draw[directededge,LavenderMagenta] (q-\i-2-m) to (p-\j-2-p);
				\draw[directededge,LavenderMagenta] (q-\i-3-m) to (p-\j-3-p);
				\draw[directededge,LavenderMagenta] (q-\i-4-m) to (p-\j-4-p);
			}
			
			\foreach \x/\y in {q-4-1-p/p-1-1-m,q-4-2-m/p-1-2-p,q-4-3-m/p-1-3-p,q-4-4-m/p-1-4-p}{
				\begin{scope}
					\clip ($(\x)+(-0.2,-1)$) -- ($(\y)+(-0.2,1)$) --($(\y)+(5,1)$) -- ($(\x)+(5,-1)$) -- cycle;
					\draw[directededge,LavenderMagenta,out=260,in=100] (\x) to (\y);
				\end{scope}
				\begin{scope}
					\clip ($(\x)+(-0.2,-1)$) -- ($(\y)+(-0.2,1)$) --($(\y)+(-5,1)$) -- ($(\x)+(-5,-1)$) -- cycle;
					\draw[directededge,LavenderMagenta,out=260,in=100,dashed] (\x) to (\y);
				\end{scope}
			}
			
			\node[LavenderMagenta] (C-1-label) at ($(p-1-1-m)+(-0.2,1.3)$) {\(C_1\)};
			\node[LavenderMagenta] (C-2-label) at ($(p-1-2-p)+(-0.2,1.3)$) {\(C_2\)};
			\node[LavenderMagenta] (C-3-label) at ($(p-1-3-p)+(-0.2,1.3)$) {\(C_3\)};
			\node[LavenderMagenta] (C-4-label) at ($(p-1-4-p)+(-0.2,1.3)$) {\(C_4\)};
			
			
		\end{tikzpicture}
	\end{center}
	\caption{The cylindrical wall of order 4.}
	\label{fig:wall}
\end{figure}

\subsection{Onions and tripods}

\paragraph*{Onions.}
An \emph{onion} is a digraph $Z$ which consists of two distinct vertices $x$ and $y$, and three edges: two from $x$ to $y$ and one from $y$ to $x$.
We call $x$ the \emph{root} and $y$ the \emph{tip} of $Z$. (See~\cref{fig:onion}.) 

An \emph{in-onion-star} of order $k$, denoted $S^\variablestyle{in}_{k}$, is a digraph consisting of $k$ onions that all share the same tip (called the \emph{centre} of $S^\variablestyle{in}_{k}$) but their roots are pairwise different.
Similarly, an \emph{out-onion-star} of order $k$, denoted $S^\variablestyle{out}_k$, is a digraph consisting of $k$ onions that all share the same root (called the \emph{centre} of $S^\variablestyle{out}_{k}$) but their tips are pairwise different.
An \emph{onion-star} of order $k$ is obtained by identifying the centres of an in-onion-star and an out-onion-star both of order $k$, see~\cref{fig:onion-star}.

As already noted in \cite{bozyk2022digraphs}, to embed any strongly connected subcubic (i.e., with maximum total degree at most $3$) digraph as an immersion, it suffices to find an immersion model of an onion-star of a sufficiently large order. This observation in particular applies to cylindrical walls.

\begin{lemma}
	\label{lem:gridImm_from_onion_star}
	Let \(H\) be a digraph with \(t\) vertices in which every vertex has total degree at most \(3\) and both in-degree and out-degree at most \(2\).
	Then every digraph containing the onion-star of order \(t\) as an immersion also contains \(H\) as an immersion.
	In particular, if a digraph \(D\) contains an onion-star of order $2k^2$ as an immersion, then \(D\) also contains \(\gridImm{k}\) as an immersion.
\end{lemma}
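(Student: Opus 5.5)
The plan is to construct the immersion model of \(H\) explicitly. Each vertex of \(H\) goes to a distinct leaf of the onion-star, and each edge of \(H\) is routed as a two-hop trail through the centre. Let \(c\) denote the centre of the onion-star of order \(t\).

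Look first at the leaves. An onion of the out-onion-star has root \(c\) and tip the leaf \(\ell\), so it contributes two edges \(c\to\ell\) and one edge \(\ell\to c\); such a leaf has ``in-capacity'' \(2\) and ``out-capacity'' \(1\). Symmetrically, a leaf \(\ell\) of the in-onion-star has two edges \(\ell\to c\) and one edge \(c\to\ell\), so out-capacity \(2\) and in-capacity \(1\).

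Next, classify the vertices of \(H\). Since every vertex \(v\) has total degree at most \(3\) and in- and out-degree at most \(2\), either \(\deg^+(v)\le 1\) and \(\deg^-(v)\le 2\) (call \(v\) \emph{in-heavy}), or \(\deg^+(v)=2\) and \(\deg^-(v)\le 1\) (call \(v\) \emph{out-heavy}). We map in-heavy vertices injectively to leaves of the out-onion-star and out-heavy vertices injectively to leaves of the in-onion-star. This is possible because each class has at most \(t\) vertices and each side of the onion-star has \(t\) leaves. In each case, the in-degree and out-degree of \(v\) are at most the in- and out-capacity of its leaf \(\pi(v)\).

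Now route the edges. At each leaf \(\pi(v)\), assign to the out-edges of \(v\) pairwise distinct edges from \(\pi(v)\) to \(c\), and to the in-edges of \(v\) pairwise distinct edges from \(c\) to \(\pi(v)\); the capacity bounds make this possible. Then map each edge \((u,v)\in E(H)\) to the trail formed by the edge from \(\pi(u)\) to \(c\) assigned to \((u,v)\) at \(u\), followed by the edge from \(c\) to \(\pi(v)\) assigned to \((u,v)\) at \(v\). This is a genuine \(\pi(u)\)-\(\pi(v)\)-trail, since \(\pi(u)\neq\pi(v)\) and its two edges are distinct. Distinct edges of \(H\) receive disjoint pairs of onion-star edges, because every onion-star edge is incident to exactly one leaf and is assigned to at most one \(H\)-edge at that leaf. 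Sharing the vertex \(c\) is allowed for immersions, so \(\pi\) is an immersion model. Parallel edges and non-strong-connectivity of \(H\) cause no trouble.

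For the ``in particular'' part, the cylindrical wall \(\gridImm{k}\) has more than \(2k^2\) vertices in general, so the bound \(t\le 2k^2\) cannot come from counting all vertices. I would instead use the refined statement the argument actually proves: it suffices that \(H\) has at most \(t\) in-heavy and at most \(t\) out-heavy vertices. In \(\gridImm{k}\) every vertex has total degree \(3\) with in- and out-degree at most \(2\). The in-degree-\(2\) vertices are exactly those in \(P^+\cup Q^+\), and the out-degree-\(2\) vertices are exactly those in \(P^-\cup Q^-\). Each of these two classes has \(2k\) vertices on each of \(C_1\) and \(C_k\) and \(2k\) on each of the \(k-2\) inner cycles, that is, \(2k^2-2k\le 2k^2\) vertices in total. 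Hence the onion-star of order \(2k^2\) suffices.

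The only point needing care is checking this per-class count against the wall's indexing, including the unsplit extremal cycles. Everything else is routine.
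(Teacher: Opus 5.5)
Your argument is correct. It is the standard one behind this lemma, which the paper states without proof and attributes to Bo\.zyk et al.: vertices with out-degree at most $1$ go to tips of the out-onion-star, the remaining vertices go to roots of the in-onion-star, and every edge of $H$ is routed through the centre. You are also right that the ``in particular'' part needs your refined per-class count. The cylindrical wall of order $k$ has $4k^2-4k$ vertices, which exceeds $2k^2$ for $k\ge 3$, so it does not follow from the first sentence with $t=|V(H)|$.

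There is one small slip in that count. Each class has only $k$ vertices (not $2k$) on each of the unsplit cycles $C_1$ and $C_k$; for the in-degree-$2$ class these are $q^+_{i,1}$ and $p^+_{i,k}$. That is what gives your total $2k^2-2k$, and the required bound $\le 2k^2$ holds either way.
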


When speaking about an immersion model of an onion, we often implicitly identify it with its image, that is, the subgraph of the host graph formed by all the vertices and edges participating in the model. Also, the root and the tip of an onion immersion model is the image of the root and the tip, respectively.

\paragraph*{Tripods.} A \emph{migration digraph} is a triple \(\Brace{D,S,T}\) consisting of a digraph \(D\) and two vertex sets \(S,T\subseteq V(D)\).
An \emph{(\(S\)-\(T\)-)out-tripod} in \(\Brace{D,S,T}\) is a subgraph of \(D\) consisting of
\begin{itemize}
	\item two distinct vertices \(s_1,s_2 \in S\),
	\item a vertex \(t \in T\),
	\item a vertex \(c \in \V{D}\) called the \emph{centre}, and
	\item an \((s_1,c)\)-path, an \((s_2,c)\)-path, and a \((c,t)\)-path, all vertex-disjoint except for sharing~\(c\).
\end{itemize}
Similarly, an \emph{(\(S\)-\(T\)-)in-tripod} is a subgraph of \(D\) consisting of
\begin{itemize}
	\item two distinct vertices \(t_1,t_2 \in T\),
	\item a vertex \(s \in T\),
	\item a vertex \(c \in \V{D}\) called the \emph{centre}, and
	\item an \((s,c)\)-path, a \((c,t_1)\)-path, and a \((c,t_2)\)-path, all vertex-disjoint except for~sharing~\(c\).
\end{itemize}
Note that in these definitions, we allow the distinguished vertices to coincide, except for requiring $s_1\neq s_2$ in an out-tripod and $t_1\neq t_2$ in an in-tripod.

We say \emph{tripod} for a tripod of either type.
Briański, Hatzel, Okrasa, and Pilipczuk~\cite{2024tripodEP} proved that tripods have the \EP-property.

\begin{theorem}[{\cite{2024tripodEP}}]
	\label{thm:tripod-EP}
	There is a function \(\thmFkt{thm:tripod-EP}{f}{}\colon \N\to \N\) such that for every migration digraph \(D\) and \(k\in \N\)~either
	\begin{itemize}
		\item there is a family of \(k\) edge-disjoint \(S\)-\(T\)-in-tripods (resp.~out-tripods) in \(D\), or
		\item there is a set $X$ consisting of at most \(\thmFkt{thm:tripod-EP}{f}{k}\) edges such that \(D-X\) does not contain any \(S\)-\(T\)-in-tripod (resp.~out-tripod).
	\end{itemize}
	Moreover, there is a fixed-parameter algorithm to find, for any digraph \(D\) and parameter $k$, either of these objects in $D$.
\end{theorem}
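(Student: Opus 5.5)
By reversing all edges and swapping the roles of $S$ and $T$, in-tripods turn into out-tripods, so I would prove only the out-tripod case. The plan starts from a reachability reformulation. If $P_1$ is an $(s_1,t)$-path and $P_2$ is an $(s_2,t)$-path with $s_1\neq s_2$, let $c$ be the first vertex of $P_2$ that lies on $P_1$. Then $s_1\xrightarrow{P_1}c$, $s_2\xrightarrow{P_2}c$ and $c\xrightarrow{P_1}t$ form an out-tripod; the degenerate case $s_2\in V(P_1)$ is handled by taking $c=s_2$. Hence $D-X$ has no out-tripod if and only if every $t\in T$ is reachable in $D-X$ from at most one vertex of $S$. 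A hitting set is therefore an edge set whose deletion makes the ``$S$-sources reaching each $t$'' unique. This is a cut condition, so Menger's Theorem (\cref{thm:menger}) and submodularity of edge cuts become available.

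Assume there are no $k$ edge-disjoint out-tripods. I would build $X$ iteratively. Take a maximal family of edge-disjoint out-tripods; it has fewer than $k$ members. Then try to separate, using cuts of order bounded in $k$, the vertices of $S$ that can share a common target, using Menger applied to the auxiliary sink $t^*$ joined from $T$, and to single sources. Concretely, for every $t$ consider the minimum edge cut separating $S$ from $t$ while allowing one designated source to remain. Choosing these cuts as important cuts should keep the number of relevant distinct cuts bounded by a function of $k$, as in the parameterized-algorithms toolbox referred to in the overview. If every such cut has order at most $g(k)$ and only $h(k)$ of them are needed, their union is $X$. The algorithm is then the usual fixed-parameter enumeration of important cuts combined with flow computations.

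The main obstacle is the remaining case, in which some separation requires a large cut. There I must extract $k$ edge-disjoint out-tripods from large flows, and edge-disjoint paths from distinct sources may all funnel into few targets and collide. I would first try the following: from a large $S$-$t$ flow with many distinct sources, pair the paths greedily, merging two paths at their first common vertex, and use the third leg supplied by the flow itself. When the flows concentrate on a few high-in-degree vertices, I would apply Ramsey-type cleaning (\cref{thm:BipRamsey}) to find either many pairwise disjoint merge points or a single vertex of large in- and out-flow, which yields the tripods directly. Making this dichotomy quantitative is where I expect the real work, and I anticipate that $\thmFkt{thm:tripod-EP}{f}{k}$ will grow rapidly as a result.
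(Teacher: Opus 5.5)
Your reduction from in-tripods to out-tripods and your reachability reformulation are correct: $D-X$ has no out-tripod iff every $t\in T$ is reachable in $D-X$ from at most one vertex of $S$. Beyond that, though, the proposal is an outline whose two load-bearing steps are missing, and the difficulty is not where you place it.

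For a \emph{single} target $t$ the dichotomy is routine. Add a super-source joined to every $s\in S$ by $k$ parallel edges. If there are $2k$ edge-disjoint paths to $t$, each source starts at most $k$ of them. So they can be paired into $k$ pairs with distinct sources, and each pair yields a tripod inside its own union by your first-common-vertex argument. Otherwise a cut of order below $2k$ (\cref{thm:menger}) cuts the source edges of at most one $s$, and its real edges leave $t$ reachable from at most one source. Hence the ``large cut'' branch you call the main obstacle is easy per target. Essentially all the content of the theorem lies in turning such local separations into one \emph{global} set of $f(k)$ edges. Equivalently, it lies in packing tripods whose targets differ but which compete for the same edges.

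Your mechanism for this global step does not work as stated. \cref{thm:important_cuts} bounds the number of important cuts for \emph{fixed} terminal sets, whereas you range over all $t\in T$ and all designated sources, so nothing bounds the union. Moreover, locally optimal choices genuinely fail to combine. Take sources $s_0,s_1,\dots,s_N$, targets $t_1,\dots,t_N$, an extra vertex $h$, and edges $s_0h$, $ht_i$, $s_it_i$ for $i\in[N]$. Every tripod has sources $s_0,s_i$ and therefore uses $s_0h$, so the packing number is $1$ and $\{s_0h\}$ hits all tripods. Yet designating $s_0$ at $t_i$ forces the unique minimum separation $\{s_it_i\}$, which gives $N$ distinct cuts. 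Some global rule coordinating the designated sources and the cuts is needed, and you give none.

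The packing side has the same problem. Many pairwise disjoint merge points do not give edge-disjoint tripods, because the legs from the merge points to $T$ may collide. Likewise, a hub with large in-flow from $S$ and large out-flow to $T$ does not yield tripods ``directly'', since the in-paths and out-paths may share edges.

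The paper does not reprove this statement; it imports it from \cite{2024tripodEP}. According to the remark following the statement, that proof combines Menger's Theorem with the Matroid Intersection Theorem \cite{edmonds1970} and the Onion Harvesting Lemma (\cref{lem:onion_harvesting}). These are the ingredients that your outline replaces with an unproved hope about important cuts and Ramsey cleaning.
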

Although the algorithmic counterpart of \cref{thm:tripod-EP} is not explicitely stated in \cite{2024tripodEP}, we remark that the proof can be adapted in a straightforward way to obtain one. Namely, the proof relies on basic constructions executable in fixed-parameter time and the following black-boxes: Menger's Theorem (\cref{thm:menger}), Matroid Intersection Theorem~\cite{edmonds1970}, and Onion Harvesting Lemma (\cref{lem:onion_harvesting}). All these black-boxes have efficient algorithmic counterparts.

\begin{figure}[!ht]
	\centering
	\begin{tikzpicture}
		\node (centre) at (0,0) {};
		\def\dist{0.5}
		\node[vertex,CornflowerBlue] (s-3) at ($(centre)+(180:2.5)$) {};
		\node[vertex,CornflowerBlue] (s-2) at ($(s-3)+(90:\dist)$) {};
		\node (s-1-label) at ($(s-2)+(130:0.25)$) {$s_1$};
		\node[vertex,CornflowerBlue] (s-1) at ($(s-2)+(90:\dist)$) {};
		\node[vertex,CornflowerBlue] (s-4) at ($(s-3)+(270:\dist)$) {};
		\node (s-2-label) at ($(s-4)+(130:0.25)$) {$s_2$};
		\node[vertex,CornflowerBlue] (s-5) at ($(s-4)+(270:\dist)$) {};
		
		\node[vertex,PastelOrange] (t-centre) at ($(centre)+(280:0.3)$) {};
		\node[PastelOrange!85!black] (centre-label) at ($(t-centre)+(60:0.25)$) {$c$};
		
		\node[vertex,LavenderMagenta] (t-3) at ($(centre)+(0:2.5)$) {};
		\node[vertex,LavenderMagenta] (t-2) at ($(t-3)+(90:\dist)$) {};
		\node[vertex,LavenderMagenta] (t-1) at ($(t-2)+(90:\dist)$) {};
		\node[vertex,LavenderMagenta] (t-4) at ($(t-3)+(270:\dist)$) {};
		\node[vertex,LavenderMagenta] (t-5) at ($(t-4)+(270:\dist)$) {};
		\node (t-label) at ($(t-5)+(60:0.25)$) {$t$};
		
		\draw[pathdecorated] (s-2) to (t-centre);
		\draw[pathdecorated] (s-4) to (t-centre);
		\draw[path] (t-centre) to (t-5);
		
		\begin{pgfonlayer}{background}
			\def\setdist{0.4}
			\draw[line width=1.5pt,CornflowerBlue] ($(s-1)+(180:\setdist)$) to[closed,curve through ={($(s-1)+(90:\setdist)$) ($(s-1)+(0:\setdist)$) ($(s-1)!0.5!(s-5)+(0:\setdist)$) ($(s-5)+(0:\setdist)$) ($(s-5)+(270:\setdist)$) ($(s-5)+(180:\setdist)$)}] ($(s-5)!0.5!(s-1)+(180:\setdist)$);
			
			\node[CornflowerBlue] (S-label) at ($(s-5)+(170:1)$) {$S$};
			
			\draw[line width=1.5pt,LavenderMagenta] ($(t-1)+(180:\setdist)$) to[closed,curve through ={($(t-1)+(90:\setdist)$) ($(t-1)+(0:\setdist)$) ($(t-1)!0.5!(t-5)+(0:\setdist)$) ($(t-5)+(0:\setdist)$) ($(t-5)+(270:\setdist)$) ($(t-5)+(180:\setdist)$)}] ($(t-5)!0.5!(t-1)+(180:\setdist)$);
			
			\node[LavenderMagenta] (T-label) at ($(t-5)+(10:1)$) {$T$};
		\end{pgfonlayer}
	\end{tikzpicture}
	\caption{An out-tripod with its \textcolor{PastelOrange!85!black}{centre $c$}, the two sources $s_1$ and $s_2$ and its sink $t$.
	This figure is reproduced and originally provided in~\cite{2024tripodEP}.}
	\label{fig:tripod}
\end{figure}

\subsection{Directed cuts}
A \emph{(directed) cut} of a digraph \(D\) is a bi-partition \(\Brace{A,B}\) of \(\V{D}\).
The cut \(\Brace{A,B}\) has \emph{order} \(k\) if there are at most \(k\) edges with tail in \(A\) and head in \(B\), i.e.~\(\Abs{\Out{}{A}} \leq k\), or there are at most \(k\) edges with tail in \(B\) and head in \(A\), i.e.~\(\Abs{\Out{}{B}} \leq k\). 
Thus, formally, $(A,B)$ and $(B,A)$ denote exactly the same object. 
We also write \(\Abs{\Brace{A,B}} \leq k\) to denote that \(\Brace{A,B}\) has order at most $k$.

A cycle $C$ in $D$ \emph{crosses} the cut \(\Brace{A,B}\) if it visits both a vertex belonging to $A$ and a vertex belonging to $B$.
The \emph{porosity} of \(\Brace{A,B}\) is the minimum size of a set of edges that intersects all cycles crossing \(\Brace{A,B}\).
The cut is called \emph{trivial} if \(A = \emptyset\) or \(B = \emptyset\).
For disjoint vertex subsets $X,Y\subseteq V(D)$, an \emph{$X$-$Y$-cut} is a cut $(A,B)$ with $X\subseteq A$ and $Y\subseteq B$.
For a non-empty set \(X \subseteq \V{D}\), we say that the cut $(A,B)$ \emph{splits} $X$ if both $A\cap X$ and $B\cap X$ are non-empty.


\section{Decompositions}\label{sec:decomps}

In this section we introduce a width measure for directed graphs that serves the role analogous to the role of tree-cutwidth in undirected graphs.
A large cylindrical wall as an immersion shall witness it to be large, while the absence of an immersion of a large cylindrical wall ensures it to be small.

In the context of immersion models, many paths of the model may run through a single vertex of the host digraph.
A standard way of limiting this behaviour is to restrict the degree of the tree underlying the decomposition, for instance to assume that it is subcubic.
This idea is implemented in the definition of multiple width measures, for example in branchwidth, rankwidth, and cyclewidth.
In our setting however, this restriction would be too strong, as not every vertex of high degree can be used to actually obtain an immersion model of a large cylindrical wall.
For example, a large bidirected star does not contain any interesting immersions (in particular, large cylindrical walls); yet intuitively, it would be unnatural to try to decompose it using a subcubic decomposition.
A large onion star, on the other hand, contains a large cylindrical wall as an immersion.
In order to capture this difference we do not restrict the degree of our decomposition tree, but instead introduce edge sets that we call \emph{onion guards}.
Intuitively, these prevent the nodes of high degree in a decomposition to harbour a lot of onions spread out over all of their neighbours.
We remark that a similar, yet different mechanism is used in the definition of (undirected) tree-cutwidth, where cuts induced by edges of the decomposition work differently depending on whether the number of edges crossing the cut is at most two, or three or~more.

\paragraph*{Tree-partitions.}
We proceed to defining the decompositions and the corresponding width parameters.
We start with a simple notion that underlies all our decompositions.

\begin{definition}[Tree-partition]
  \label{def:treept}
	Given a digraph $D$, a triple $(T,\beta,\gamma)$ is a {\em{tree-partition}} of $D$ if $T$ is a tree and we have the following:
	\begin{enumerate}[label=\textbf{(P\arabic*)},leftmargin=1.2cm]
		\item \(\beta \colon \V{T} \to 2^{\V{D}}\) maps every node $x$ of \(T\) to a subset of vertices \(\beta(x)\subseteq \V{D}\), called the \emph{bag} of $x$, so that $\{\beta(x)\colon x\in \V{T}\}$ is a near-partition of \(\V{D}\).\label{item:tp:bags}
		\item \(\gamma \colon \E{T} \to 2^{A(D)}\) maps every edge~$e=xy$ of \(T\) to a set of edges \(\gamma(e)\subseteq A(D)\), called the \emph{edge-guard} of~$e$, so that \(\gamma(e)\) hits all the cycles in $D$ that cross the cut induced by $e$; i.e.,  $(\beta(\treeSplit{T}{x}{e}),\beta(\treeSplit{T}{y}{e}))$.\label{item:tp:edgeguards}
	\end{enumerate}
	The \emph{porosity} of $(T,\beta,\gamma)$ is the maximum size of an edge-guard, that is, $\max_{e\in E(T)} |\gamma(e)|$.
\end{definition}

Note that in the definition above, we do \emph{not} bound the maximum degree of the tree $T$.

In the definition of a tree-partition, we may allow $T$ to be a rooted tree, and thus way we may speak about \emph{rooted tree-partitions}.
We sometimes need to work with rooted tree-partitions that have the following basic connectivity property: We say that a rooted tree-partition $(T,\beta,\gamma)$ of a digraph $D$ is \emph{\coherent} if for every non-root node $x$, the set $\bigcup \{\beta(y)\colon y\textrm{ is a descendant of }x\textrm{ in }T\}$ induces a strongly connected subgraph of $D$.
The following standard lemma shows that every rooted tree-partition can be adjusted to a coherent one without enlarging the bags or the edge-guards.

\begin{lemma}
	\label{lem:rooting_potdecomps}
	Let \(D\) be a digraph and let \((T, \beta, \gamma)\) be a rooted tree-partition of \(D\).
	Then, there exists a coherent tree-partition \((T', \beta', \gamma')\) of \(D\) such that:
	\begin{itemize}
		\item for each node \(x' \in V(T')\) there exists a node \(x \in V(T)\) such that \(\beta'(x') \subseteq \beta(x)\); and
		\item for each edge \(e' \in E(T')\) there exists an edge \(e \in E(T)\) such that \(\gamma'(e') = \gamma(e)\);
	\end{itemize}
	Moreover, $(T',\beta',\gamma')$ can be found in time polynomial in the size of $D$ and $T$.
\end{lemma}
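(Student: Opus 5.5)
The plan is the standard "split non-connected subtrees" argument, carried out top-down (from the root towards the leaves).

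\textbf{Setup.} For a node $x$ of $T$ write $\beta(T_x)$ for the union of the bags of the descendants of $x$. The intended construction is recursive. Suppose some non-root node $x$ has $D[\beta(T_x)]$ not strongly connected, and let $C_1,\dots,C_m$ be its strongly connected components. We replace the subtree $T_x$ by $m$ disjoint copies $T_x^{(1)},\dots,T_x^{(m)}$, each attached to the parent $p$ of $x$. In the $i$-th copy, each node $y$ receives the bag $\beta(y)\cap C_i$. Every edge of a copy inherits the edge-guard of the edge of $T$ it copies; in particular, the edge $p\,x^{(i)}$ inherits $\gamma(px)$.

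\textbf{Invariants.} Both required properties are immediate from the construction.
\begin{itemize}
\item Bags only shrink, so each new bag is contained in an old one.
\item The new bags still near-partition $V(D)$, since the $C_i$ partition $\beta(T_x)$.
\end{itemize}

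\textbf{Validity of the edge-guards.} This is the main check. Consider an edge $e'$ of a copy $T_x^{(i)}$ that copies an edge $e=yz$ of $T_x$, with $z$ below $y$. The new cut induced by $e'$ separates $\beta(T_z)\cap C_i$ from everything else. Let $Q$ be a cycle crossing this new cut.
\begin{itemize}
\item If $Q$ stays inside $\beta(T_x)$, then, being a cycle, it lies in a single strong component of $D[\beta(T_x)]$. Since it meets $C_i$, it lies in $C_i$. It therefore visits both $\beta(T_z)\cap C_i$ and $C_i\setminus\beta(T_z)$, so it crosses the old cut induced by $e$ and is hit by $\gamma(e)$.
\item If $Q$ leaves $\beta(T_x)$, it visits a vertex outside $\beta(T_z)$ and a vertex inside it, so again it crosses the old cut of $e$.
\end{itemize}
The edge $p\,x^{(i)}$ is handled the same way. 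A cycle crossing its new cut (between $C_i$ and the rest) either leaves $\beta(T_x)$, and then crosses the old cut of $px$, or stays inside $\beta(T_x)$, and then lies within $C_i$ and cannot cross. Either way it is hit by $\gamma(px)$. Edges outside $T_x$ induce the same cuts as before, so their guards remain valid.

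\textbf{Termination and running time.} I would process nodes in BFS order from the root, applying the split at $x$ and then recursing into each copy $T_x^{(i)}$. A copy's root $x^{(i)}$ has subtree vertex set exactly $C_i$, which is strongly connected, so it never needs to be split again. At deeper nodes of the copy we apply the same rule, but now relative to the restricted bags; the argument above goes through verbatim with $D$-cycles.

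Nodes with empty bag sets may be deleted, or they are harmless, since $\beta(T_x)=\emptyset$ counts as trivially connected (or we remove such subtrees). The total number of nodes then stays bounded by $|V(T)|\cdot|V(D)|$, because distinct copies of a node have disjoint nonempty bag-sets below them. Each step costs one strong-components computation, so the overall running time is polynomial.

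The only subtle point is the observation that a cycle contained in $\beta(T_x)$ lies in a single strong component of $D[\beta(T_x)]$; this is exactly what makes the inherited guards sufficient.
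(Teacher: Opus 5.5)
Your proof is correct and uses the same construction as the paper. You replace a subtree whose bags do not induce a strongly connected subgraph by one copy per strong component, intersect the bags with that component, let the copied edges inherit their edge-guards, and verify the guards using the fact that a cycle inside the subtree lies in a single strong component.

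The one real difference is the processing order. You go top-down, so later splits never change the vertex set below an already-processed node. The paper goes bottom-up by height, and instead needs the observation that a strongly connected set lying below a node stays inside one component of any split made higher up. Similarly, your pruning of empty subtrees plays the role of the paper's removal of empty leaves and contraction of empty degree-$2$ nodes.
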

\begin{proof}
	
	
	For every node $x \in V(T)$ denote by $h(x)$ the maximum number of nodes on a path in $T$ leading from $x$ to a leaf.
	Let $r$ be the root of $T$, and \(\TT_0 \coloneqq (T, \beta, \gamma)\).
	For every $i \in [h(r)-1]$ let \(T^{\text{rest}}_i\) be the subtree of $T$ induced by all vertices for which $h(x) >i$.
	We will define a sequence \(\TT_1,\TT_2,\TT_3,\ldots,\TT_{h(r)-1}\) of rooted tree-partitions of~\(D\), where \(\TT_i = (T_i, \beta_i, \gamma_i)\),
	satisfying the following conditions for each $i$:
	\begin{enumerate}
		\item $V(T^{\text{rest}}_i) \subseteq V(T_{i})$, in particular $r \in V(T_i)$; moreover, $r$ is a root of $T_i$; 
		\item for each node \(x' \in V(T_i)\) there exists a node \(x \in V(T)\) such that \(\beta_i(x') \subseteq \beta(x)\);
		\item for each edge \(e' \in E(T_i)\) there exists an edge \(e \in E(T)\) such that \(\gamma_i(e') = \gamma(e)\); and
		\item for each node \(x \in V(T_i) \setminus V(T^{\text{rest}}_i)\), the set
		\(\bigcup \Set{\beta_i(y) \colon y \text{ is a descendant of } x\text{ in }T_i}\)
		induces a strongly connected subgraph of \(D\);
	\end{enumerate}
	Suppose we have already defined \(\TT_i = (T_i, \beta_i, \gamma_i)\).
	Let \(X\subseteq V(T^{\text{rest}}_i)\) be the set of all nodes $x$ for which \(h(x)=i + 1\).
	If \(X = \Set{r}\) the construction terminates with \((T',\beta',\gamma')\coloneqq ( T_i, \beta_i, \gamma_i)\).
	Otherwise, every node in \(X\) has a parent.
	For \(x \in X\), let \(p(x)\) be the parent of \(x\) in \(T_i\).
	As \(h(p(x)) > h(x)\) by definition of $h$, the node \(p(x)\) does not belong to \(X\).
	Let \(S_x \coloneqq \bigcup \Set{\beta_i(y) \colon y \text{ is a descendant of } x \text{ in }T_i}\).
	Since the nodes of \(X\) are pairwise incomparable in the ancestor-descendant relation of $T_i$,
	the sets \(S_x\) for $x\in X$ are pairwise disjoint.
	
	We now proceed to define \(T_{i + 1}\).
	For each $x\in X$, let \(C^x_1, C^x_2, \dots, C^x_{\ell(x)}\) be the strongly connected components of \(D[S_x]\).
	Note that the tree $T^{\text{rest}}_{i+1}$ is induced by the vertices $V(T^{\text{rest}}_i) \setminus X$, and is not empty since it includes at least the root \(r\), as well as all the parents of the nodes from \(X\).
	The tree \(T_{i + 1}\) will have \(r\) as its root and as its nodes the set
	\(V(T^{\text{rest}}_{i+1}) \cup \bigcup _ {x \in X} V(\subtree{T}{x}) \times [\ell(x)]\).
	The edges of \(T_{i + 1}\) are defined as follows:
	\begin{itemize}
		\item \(V(T^{\text{rest}}_{i+1})\) induces a tree isomorphic to \(T^{\text{rest}}_{i+1}\);
		\item \(V(\subtree{T}{x}) \times \Set{j}\), for each $x\in X$ and \(j \in [\ell(x)]\), induces a subtree
		isomorphic to \(\subtree{T}{x}\); and
		\item the node \((x, j)\), for each $x\in X$ and \(j \in [\ell(x)]\), has \(p(x) \in V(T^{\text{rest}}_{i+1})\) as its parent.
	\end{itemize}
	This gives us the tree \(T_{i + 1}\).
	Next, we define the bag function \(\beta_{i + 1}\) as follows (see \cref{fig:potdecomp_rooting_step}):
	\begin{equation*}
		\beta_{i + 1}(z) = \begin{cases}
			\beta_i(z), & \text{ if } z \in V(T^{\text{rest}}_{i+1})\\
			\beta_i(z') \cap V(C^x_j), & \text{ if } z = (z', j) \text{ where } z' \in V(\subtree{T}{x}), x \in X, \text{ and } j \in [\ell(x)]\text{.}
		\end{cases}
	\end{equation*}
	Finally, we complete the construction with the definition of the edge-guard function \(\gamma_{i + 1}\):
	\begin{equation*}
		\gamma_{i + 1}(a,b) = \begin{cases}
			\gamma_i(a, b), & \text{ if } a,b \in V(T^{\text{rest}}_{i+1}),\\
			\gamma_i(p(x),x), & \text{ if } a = p(x), b=(x, j) \text{ where } x \in X, j \in [\ell(x)],\\
			\gamma_i(a', b'), & \text{ if } a = (a', j), b = (b', j) \text{ where } a', b' \in V(\subtree{T}{x}), x \in X, j \in [\ell(x)]\text{.}
		\end{cases}
	\end{equation*}
	\begin{figure}
		\centering
		\begin{tikzpicture}
			\path[use as bounding box] (-5,-5) rectangle (5,2);
			\tikzstyle{child} = [draw,thick,isosceles triangle,rotate=90,scale=0.8]
			\tikzstyle{help} = [draw,thick,rectangle]
			\def\distH{2}
			\node (left) at (-4,0) {
				\begin{tikzpicture}
					\path[use as bounding box] (-2,-2) rectangle (2,2);
					\node[draw,thick,circle] (vertex) at (0,0) {};
					\node (parent) at ($(vertex)+(110:1.2)$) {};
					\node (parent-p) at ($(parent)+(110:0.3)$) {};
					\node[child] (child-1) at ($(vertex)+(-2.2,-\distH)$) {\phantom{\(T_1\)}};
					\node[child] (child-2) at ($(vertex)+(0,-\distH)$) {\phantom{\(T_2\)}};
					\node[child] (child-3) at ($(vertex)+(2.2,-\distH)$) {\phantom{\(T_3\)}};
					
					\node (child-1-text) at ($(child-1)$) {\(T_1\)};
					\node (child-2-text) at ($(child-2)$) {\(T_2\)};
					\node (child-3-text) at ($(child-3)$) {\(T_3\)};
					
					\draw[edge] (vertex) to (child-1.east);
					\draw[edge] (vertex) to (child-2.east);
					\draw[edge] (vertex) to (child-3.east);
					
					\draw[edge] (vertex) to (parent);
					\draw[edge,dotted] (parent-p.center) to (parent.center);
					
					\node[blue!60!white,draw,rectangle,rounded corners] at ($(child-1)+(270:1.4)$) {
						\begin{tikzpicture}[scale=0.7]
							\node (centre) at (0,0) {};
							\node[scale=2,thick] (cyc-1) at ($(centre)+(180:1)$) {};
							\draw[directededge,PastelOrange] ($(cyc-1)+(0:0.4)$) arc (0:355:0.4);

							\node (cyc-2) at ($(centre)+(60:1)$) {};
							\draw[directededge,LavenderMagenta] ($(cyc-2)+(0:0.4)$) arc (0:355:0.4);
							\node (cyc-3) at ($(centre)+(-60:1)$) {};
							\draw[directededge,CornflowerBlue] ($(cyc-3)+(0:0.4)$) arc (0:355:0.4);
							
							\draw[directededge,shorten >=4] (cyc-1) to (cyc-2);
							\draw[directededge,shorten <=5,shorten >=5] (cyc-2) to (cyc-3);
							\draw[directededge,shorten <=3,shorten >=4] (cyc-1) to (cyc-3);
					\end{tikzpicture}};
					
					\node[blue!60!white,draw,rectangle,rounded corners] at ($(child-2)+(270:1.4)$) {
						\begin{tikzpicture}[scale=0.7]
							\node (centre) at (0,0) {};
							
							\node[scale=2,thick] (cyc-1) at ($(centre)+(180:1)$) {};
							\draw[directededge,PastelOrange] ($(cyc-1)+(0:0.4)$) arc (0:355:0.4);
							
							\node (cyc-2) at ($(centre)+(0:1)$) {};
							\draw[directededge,LavenderMagenta] ($(cyc-2)+(0:0.4)$) arc (0:355:0.4);
							
							\draw[directededge,shorten >=5,shorten <=1] (cyc-1) to (cyc-2);
					\end{tikzpicture}};
					
					\node[blue!60!white,draw,rectangle,rounded corners] at ($(child-3)+(270:1.4)$) {
						\begin{tikzpicture}[scale=0.7]
							\node (centre) at (0,0) {};
							
							\node[scale=2,thick] (cyc-1) at ($(centre)$) {};
							\draw[directededge,CornflowerBlue] ($(cyc-1)+(0:0.4)$) arc (0:355:0.4);
					\end{tikzpicture}};
			\end{tikzpicture}};
			
			\node (right) at (4,0) {\begin{tikzpicture}
					\path[use as bounding box] (-2,-2) rectangle (2,2);
					\node[draw,thick,circle] (vertex) at (0,0) {};
					\node (parent) at ($(vertex)+(110:1.2)$) {};
					\node (parent-p) at ($(parent)+(110:0.3)$) {};
					\node[child] (child-1) at ($(vertex)+(-3,-\distH)$) {\phantom{\(T_1\)}};
					\node[child] (child-11) at ($(vertex)+(-1.8,-\distH)$) {\phantom{\(T_1\)}};
					\node[child] (child-111) at ($(vertex)+(-0.6,-\distH)$) {\phantom{\(T_1\)}};
					\node[child] (child-2) at ($(vertex)+(0.6,-\distH)$) {\phantom{\(T_2\)}};
					\node[child] (child-22) at ($(vertex)+(1.8,-\distH)$) {\phantom{\(T_2\)}};
					\node[child] (child-3) at ($(vertex)+(3,-\distH)$) {\phantom{\(T_3\)}};
					
					\node (child-1-text) at ($(child-1)$) {\(T_1\)};
					\node (child-11-text) at ($(child-11)$) {\(T_1\)};
					\node (child-111-text) at ($(child-111)$) {\(T_1\)};
					\node (child-2-text) at ($(child-2)$) {\(T_2\)};
					\node (child-22-text) at ($(child-22)$) {\(T_2\)};
					\node (child-3-text) at ($(child-3)$) {\(T_3\)};
					
					\draw[edge] (vertex) to (child-1.east);
					\draw[edge] (vertex) to (child-11.east);
					\draw[edge] (vertex) to (child-111.east);
					\draw[edge] (vertex) to (child-2.east);
					\draw[edge] (vertex) to (child-22.east);
					\draw[edge] (vertex) to (child-3.east);
					
					\draw[edge] (vertex) to (parent);
					\draw[edge,dotted] (parent-p.center) to (parent.center);
					
					\node[blue!60!white,draw,rectangle,rounded corners] at ($(child-1)+(270:1)$) {
						\begin{tikzpicture}[scale=0.7]
							\node (centre) at (0,0) {};
							
							\node[scale=2,thick] (cyc-1) at ($(centre)$) {};
							\draw[directededge,PastelOrange] ($(cyc-1)+(0:0.4)$) arc (0:355:0.4);
					\end{tikzpicture}};
					\node[blue!60!white,draw,rectangle,rounded corners] at ($(child-11)+(270:1)$) {
						\begin{tikzpicture}[scale=0.7]
							\node (centre) at (0,0) {};
							
							\node[scale=2,thick] (cyc-1) at ($(centre)$) {};
							\draw[directededge,LavenderMagenta] ($(cyc-1)+(0:0.4)$) arc (0:355:0.4);
					\end{tikzpicture}};
					\node[blue!60!white,draw,rectangle,rounded corners] at ($(child-111)+(270:1)$) {
						\begin{tikzpicture}[scale=0.7]
							\node (centre) at (0,0) {};
							
							\node[scale=2,thick] (cyc-1) at ($(centre)$) {};
							\draw[directededge,CornflowerBlue] ($(cyc-1)+(0:0.4)$) arc (0:355:0.4);
					\end{tikzpicture}};
					
					\node[blue!60!white,draw,rectangle,rounded corners] at ($(child-2)+(270:1)$) {
						\begin{tikzpicture}[scale=0.7]
							\node (centre) at (0,0) {};
							
							\node[scale=2,thick] (cyc-1) at ($(centre)$) {};
							\draw[directededge,PastelOrange] ($(cyc-1)+(0:0.4)$) arc (0:355:0.4);
					\end{tikzpicture}};
					\node[blue!60!white,draw,rectangle,rounded corners] at ($(child-22)+(270:1)$) {
						\begin{tikzpicture}[scale=0.7]
							\node (centre) at (0,0) {};
							
							\node[scale=2,thick] (cyc-1) at ($(centre)$) {};
							\draw[directededge,LavenderMagenta] ($(cyc-1)+(0:0.4)$) arc (0:355:0.4);
					\end{tikzpicture}};
					
					\node[blue!60!white,draw,rectangle,rounded corners] at ($(child-3)+(270:1)$) {
						\begin{tikzpicture}[scale=0.7]
							\node (centre) at (0,0) {};
							
							\node[scale=2,thick] (cyc-1) at ($(centre)$) {};
							\draw[directededge,CornflowerBlue] ($(cyc-1)+(0:0.4)$) arc (0:355:0.4);
					\end{tikzpicture}};
			\end{tikzpicture}};
			
			\node[scale=2] at ($(left)!0.45!(right)$) {\(\Rightarrow\)};
		\end{tikzpicture}
		
		\caption{
			Illustration depicting local modification of the \potdecomp \(\TT_i\).
			Blue blocks below each subtree depict the partition of the subgraph decomposed by each subtree into strongly connected components.
		}
		\label{fig:potdecomp_rooting_step}
	\end{figure}
	
	We now verify the postulated properties of \(\TT_{i+1}\coloneqq (T_{i+1},\beta_{i+1},\gamma_{i+1})\).
	That properties 1--3 hold immediately from the construction and from the assumption that they hold for \(\TT_i\).
	Property~4 holds by construction for the nodes
	of height \(i + 1\), while for the nodes of height \(i\) or less, it follows by the assumption about \(\TT_i\)
	and the observation that the strongly connected components we split by at height \(i + 1\) cannot ever split
	a strongly connected component induced by the bags of a subtree deeper down.
	
	Finally, we need to verify that $\TT_{i+1}$ is indeed a rooted tree-partition of $D$.
	For~\cref{item:tp:bags}, this follows trivially from the construction and the assumption that
	\((T, \beta, \gamma)\) is a rooted tree-partition of~$D$.
	For~\cref{item:tp:edgeguards}, observe that every edge \(e\) of \(T_{i + 1}\) induces either a cut
	already present in \(T_i\) (with the same guard set), or a cut of the form
	\((A \cap C^x_j, B \cup (A \setminus C^x_j))\) where \((A,B)\) is the cut induced in $\TT_i$ by an edge \(e'\) incident to \(V(T_x)\), for some $x\in X$.
	But noting
	that \(C^x_j\) is a strongly connected component of a superset of \(A\), every cycle that contains
	a vertex in \(A \cap C^x_j\) and a vertex outside of \(A \cap C^x_j\) must necessarily contain
	a vertex in \(A\) and a vertex outside of \(A\).
	Hence, the cycle must be hit by the guard set \(\gamma_i(e') = \gamma_{i + 1}(e)\).
	This concludes the verification of the properties of $\TT_{i+1}$.
	
	As a last step, to ensure that we do not introduce too many empty nodes in $T_{i+1}$, if $x \in V(T_{i+1}) \setminus  V(T^{\text{rest}}_{i+1})$ is such that $\beta_{i+1}(x)=\emptyset$, we exhaustively do the following:
\begin{enumerate}[label=\textbf{(A\arabic*)},leftmargin=1.2cm]
	\item if $x$ is a leaf of $T_{i+1}$, we delete $x$;
	\item if $x$ has precisely two neighbours $y$ and $z$, we delete $x$, add edge $yz$ and set $\gamma_{i+1}(yz)=\gamma_{i+1}(xy)$;
\end{enumerate}	
	Note that these modifications preserve properties 1--3 and the fact that $\TT_{i+1}$ is a tree-partition rooted in~$r$.
	It is straightforward to verify that property 4 is also preserved, as since $x \neq r$, one of the neighbours of $x$ must be a parent of $x$.
	
    The construction finishes after at most $h(r)$ rounds, yielding a coherent tree-partition $\TT'$.
 	Moreover, constructing $\TT'$ takes time polynomial in $|E(D)|$ and $|V(T)|$; in particular, by (A1)-(A2), each of the nodes of $T'$ corresponding to empty bag is of degree at least 3, thus the size of $T'$ is polynomial in $|V(D)|$ (and thus in $|E(D)|$).
\end{proof}

Finally, as mentioned in \cref{sec:overview}, we make crucial use of tree-partitions whose bags are well flow-connected.
This property is formalised in the following definition.

\begin{definition}\label{def:linked}
	We say that a tree-partition $(T,\beta,\gamma)$ of a digraph $D$ is \emph{$k$-\linked} if for every node $x$ of $T$, the set $\beta(x)$ is $k$-flow-connected in $D$.
\end{definition}

\paragraph*{Directed tree-cutwidth.}
The notions of directed tree-cut decompositions and of directed tree-cutwidth arise by enriching tree-partitions with hitting sets for onions that \say{cross} the nodes.
However, the concept of a \say{crossing} onion has to be chosen carefully.

Let \(\Brace{T,\beta,\gamma}\) be a tree-partition of a digraph $D$.
Then, an \emph{out-onion} of $x \in \V{T}$ is an onion immersion model $Z$ in $D$ such that the nodes $y,z$ of $T$ whose bags contain the tip and the root of~$Z$, respectively, are such that $y\neq x$ and $y$ and $z$ do not lie in the same connected component of~$T-x$.
\emph{In-onions} of $x \in \V{T}$ are defined analogously, except we require $z\neq x$.
Note that if $y$ and $z$ belong to different components of $T-x$ then $Z$ is both an in- and an out-onion of $x$, and if $y$ and $z$ belong to the same component of $T-x$ then $Z$ is neither an in- nor an out-onion of~$x$.

\begin{definition}[\Dtcw]
  \label{def:dtcw}
	Given a digraph \(D\), a quadruple \(\Brace{T,\beta,\gamma,\theta}\) is a \emph{\dtcdecomp} of \(D\) if
	\begin{enumerate}[label=\textbf{(D\arabic*)},leftmargin=1.2cm]
		\item \(\Brace{T,\beta,\gamma}\) is a tree-partition of $D$, and \label{item:dtcw:tp}
		\item \(\theta \colon \V{T} \to 2^{A(D)}\) maps every node $x$ of \(T\) to a set of edges in \(\theta(x)\subseteq A(D)\), called the \emph{onion-guard} of \(x\), so that $\theta(x)$ hits all the out-onions of $x$ or all the in-onions of \(x\).
		\label{item:dtcw:onionguards}
	\end{enumerate}
	
	The \emph{width} of \(\Brace{T,\beta,\gamma,\theta}\) is defined as
	\begin{equation*}
		\max\left\{\max_{v \in \V{T}} \Abs{\beta(v)}, \max_{e \in \E{T}} \Abs{\gamma(e)}, \max_{v \in \V{T}} \Abs{\theta(v)}\right\},
	\end{equation*}
	while the \emph{\dtcw} of \(D\) is defined as the minimum width over all possible \dtcdecomps of \(D\).
\end{definition}

We remark that when working with tree-partitions or directed tree-cut decompositions, we will sometimes assume $T$ to be rooted.
Then these are called \emph{rooted tree-partitions} and \emph{rooted directed tree-cut decompositions}, respectively.


%

	
\paragraph*{Torsos.}
  \label{sec:torso}
  
	For a rooted tree-partition \(\Brace{T,\beta,\gamma}\) of a digraph $D$ and a node $x$ of $T$, we define the \emph{torso} \(\torso{x}\) as follows; see \cref{fig:torso} for an illustration.
	Let \(y_1,\dots,y_{\ell}\) be the children of \(x\) in \(T\).
	
	\begin{figure}[!ht]
		\begin{center}
			\begin{tikzpicture}
				\node (decomp) at (-3,0) {
					\begin{tikzpicture}
						\node (centre) at (0,0) {};
						\node[draw,circle,inner sep=0.5cm] (bag-v) at ($(centre)$) {\(\beta(x)\)};
						\node (parent) at ($(bag-v)+(90:2)$) {parent};
						
						\node[draw=gray,gray,circle,inner sep=0.3cm] (c-1) at ($(bag-v)+({360/16*9}:3)$) {\(c_1\)};
						\node[draw=gray,gray,circle,inner sep=0.3cm] (c-2) at ($(bag-v)+({360/16*11}:2.7)$) {\(c_2\)};
						\node[draw=gray,gray,circle,inner sep=0.3cm] (c-3) at ($(bag-v)+({360/16*13}:2.7)$) {\(c_3\)};
						\node[draw=gray,gray,circle,inner sep=0.3cm] (c-4) at ($(bag-v)+({360/16*15}:3)$) {\(c_{\ell}\)};
						
						\node[gray,rotate=43] at ($(c-3)!0.5!(c-4)$) {\(\dots\)};
						
						\node[vertex,scale=0.7,CornflowerBlue] (e-tail) at ($(bag-v)+(190:0.7)$) {};
						\node (e-head) at ($(c-1)+(135:0.3)$) {};
						\node[vertex,scale=0.7,LavenderMagenta] (g-head) at ($(bag-v)+(350:0.7)$) {};
						\node (g-tail) at ($(c-3)+(45:0.3)$) {};
						
						\node (f-tail) at ($(c-1)+(315:0.3)$) {};
						\node (f-head) at ($(c-2)+(170:0.3)$) {};
						
						\node (e-label) at ($(e-tail)!0.5!(e-head)+(100:0.6)$) {\textcolor{CornflowerBlue}{\(e\)}};
						\node (f-label) at ($(f-tail)!0.5!(f-head)+(225:0.6)$) {\textcolor{AppleGreen}{\(f\)}};
						\node (g-label) at ($(g-tail)!0.5!(g-head)+(10:0.3)$) {\textcolor{LavenderMagenta}{\(g\)}};
						
						\begin{pgfonlayer}{background}
							\draw[dashed] (parent) to (bag-v);
							
							\draw[gray] (bag-v) to (c-1);
							\draw[gray] (bag-v) to (c-2);
							\draw[gray] (bag-v) to (c-3);
							\draw[gray] (bag-v) to (c-4);
							
							\draw[gray] (c-1) to ($(c-1)+(190:1.7)$);
							\draw[gray] (c-1) to ($(c-1)+(260:1.7)$);
							\draw[gray,dashed,shorten <=0.3cm,shorten >=0.3cm] ($(c-1)+(190:1.7)$) to ($(c-1)+(260:1.7)$);
							
							\draw[gray] (c-2) to ($(c-2)+(220:1.7)$);
							\draw[gray] (c-2) to ($(c-2)+(280:1.7)$);
							\draw[gray,dashed,shorten <=0.3cm,shorten >=0.3cm] ($(c-2)+(220:1.7)$) to ($(c-2)+(280:1.7)$);
							
							\draw[gray] (c-3) to ($(c-3)+(260:1.7)$);
							\draw[gray] (c-3) to ($(c-3)+(310:1.7)$);
							\draw[gray,dashed,shorten <=0.3cm,shorten >=0.3cm] ($(c-3)+(260:1.7)$) to ($(c-3)+(310:1.7)$);
							
							\draw[gray] (c-4) to ($(c-4)+(350:1.7)$);
							\draw[gray] (c-4) to ($(c-4)+(280:1.7)$);
							\draw[gray,dashed,shorten <=0.3cm,shorten >=0.3cm] ($(c-4)+(350:1.7)$) to ($(c-4)+(280:1.7)$);

							\draw[directededge,CornflowerBlue,bend right] (e-tail) to (e-head.center);
							\draw[directededge,LavenderMagenta,out=80,in=270] (g-tail.center) to (g-head);
							\draw[directededge,AppleGreen,bend right] (f-tail.center) to (f-head.center);
						\end{pgfonlayer}	
				\end{tikzpicture}};
				
				\node (torso) at (4,-3) {
					\begin{tikzpicture}
						\node (centre) at (0,0) {};
						\node[draw,circle,inner sep=0.5cm] (bag-v-t) at ($(centre)$) {\(\beta(x)\)};
						
						\node (c-1-t) at ($(bag-v)+({360/16*9}:3)$) {};
						\node (c-2-t) at ($(bag-v)+({360/16*11}:2.7)$) {};
						\node (c-3-t) at ($(bag-v)+({360/16*13}:2.7)$) {};
						\node (c-4-t) at ($(bag-v)+({360/16*15}:3)$) {};
						
						\node[vertex,scale=0.7,PastelOrange] (a-1) at ($(c-1-t)+({360/16*9-90}:0.4)$) {};
						\node[vertex,scale=0.7,PastelOrange] (b-1) at ($(c-1-t)+({360/16*9+90}:0.4)$) {};
						
						\node[vertex,scale=0.7,PastelOrange] (a-2) at ($(c-2-t)+({360/16*11-90}:0.4)$) {};
						\node[vertex,scale=0.7,PastelOrange] (b-2) at ($(c-2-t)+({360/16*11+90}:0.4)$) {};
						
						\node[vertex,scale=0.7,PastelOrange] (a-3) at ($(c-3-t)+({360/16*13-90}:0.4)$) {};
						\node[vertex,scale=0.7,PastelOrange] (b-3) at ($(c-3-t)+({360/16*13+90}:0.4)$) {};
						
						\node[vertex,scale=0.7,PastelOrange] (a-4) at ($(c-4-t)+({360/16*15-90}:0.4)$) {};
						\node[vertex,scale=0.7,PastelOrange] (b-4) at ($(c-4-t)+({360/16*15+90}:0.4)$) {};
						
						\node[gray,rotate=43] at ($(c-3-t)!0.5!(c-4-t)$) {\(\dots\)};
						
						\node[vertex,scale=0.7,CornflowerBlue] (e-tail) at ($(bag-v-t)+(190:0.7)$) {};
						\node[vertex,scale=0.7,LavenderMagenta] (g-head) at ($(bag-v-t)+(350:0.7)$) {};
						
						\node (e-label) at ($(e-tail)!0.5!(a-1)+(100:0.6)$) {\textcolor{CornflowerBlue}{\(e'\)}};
						\node (f-label) at ($(b-1)!0.5!(a-2)+(225:0.6)$) {\textcolor{AppleGreen}{\(f'\)}};
						\node (g-label) at ($(b-3)!0.5!(g-head)+(10:0.35)$) {\textcolor{LavenderMagenta}{\(g'\)}};
						
						\begin{pgfonlayer}{background}
							\foreach \i in {1,2,3,4}{
								\draw[directededge,PastelOrange] (a-\i) to (b-\i);
							}					
							\draw[directededge,CornflowerBlue,bend right] (e-tail) to (a-1);
							\draw[directededge,LavenderMagenta,out=80,in=270] (b-3) to (g-head);
							\draw[directededge,AppleGreen,bend right] (b-1) to (a-2);
							
							\foreach[evaluate={\j=int(7+2*\i);}] \i in {1,2,3,4}{
								\draw (b-\i) edge[directededgedecorated,violet,quick curve through={($(b-\i)!0.5!(a-\i)+({360/16*\j}:0.5)$)}] (a-\i);
								\draw (b-\i) edge[directededgedecorated,violet,quick curve through={($(b-\i)!0.5!(a-\i)+({360/16*\j}:0.3)$)}] (a-\i);
							}
							
							\draw[directededge,gray!30!white,dashed] (bag-v-t) to (a-1);
							\draw[directededge,gray!30!white,dashed] (bag-v-t) to (a-2);
							\draw[directededge,gray!30!white,dashed] (bag-v-t) to (a-3);
							\draw[directededge,gray!30!white,dashed] (bag-v-t) to (a-4);
							
							\draw[directededge,gray!30!white,dashed] (b-1) to (bag-v-t);
							\draw[directededge,gray!30!white,dashed] (b-2) to (bag-v-t);
							\draw[directededge,gray!30!white,dashed] (b-3) to (bag-v-t);
							\draw[directededge,gray!30!white,dashed] (b-4) to (bag-v-t);
							
						\end{pgfonlayer}
						
						\node (a-1-label) at ($(a-1)+(160:0.3)$) {\textcolor{PastelOrange}{\(a_1\)}};
						\node (b-1-label) at ($(b-1)+(260:0.3)$) {\textcolor{PastelOrange}{\(b_1\)}};
						
						\node (a-2-label) at ($(a-2)+(220:0.4)$) {\textcolor{PastelOrange}{\(a_2\)}};
						\node (b-2-label) at ($(b-2)+(295:0.3)$) {\textcolor{PastelOrange}{\(b_2\)}};
						
						\node (a-3-label) at ($(a-3)+(250:0.3)$) {\textcolor{PastelOrange}{\(a_3\)}};
						\node (b-3-label) at ($(b-3)+(340:0.3)$) {\textcolor{PastelOrange}{\(b_3\)}};
						
						\node (a-4-label) at ($(a-4)+(300:0.3)$) {\textcolor{PastelOrange}{\(a_{\ell}\)}};
						\node (b-4-label) at ($(b-4)+(20:0.3)$) {\textcolor{PastelOrange}{\(b_{\ell}\)}};	
				\end{tikzpicture}};
			\end{tikzpicture}
		\end{center}
		\caption{Illustration of the construction of \(\torso{x}\) for a node \(x \in \V{T}\).
			The Chekhov-edges are depicted in violet.}
		\label{fig:torso}
	\end{figure}
	
	The vertex set of \(\torso{x}\) is defined as \(\beta(x) \cup \bigcup_{i\in [\ell]} \Set{a_i,b_i}\).
	That is, for every child $y_i$ of \(x\) we add two new \emph{auxiliary vertices} $a_i$ and $b_i$.
	Denote $V_i=\beta(\subtree{T}{y_i})$ for brevity.

	Now, we let \(E^\star \subseteq E(D)\) be the set of all the edges $e\in E(D)$ such that the head and the tail of $e$ either both belong to $\beta(x)$, or belong to different sets among $\{\beta(x)\}\cup \{V_i\colon i\in [\ell]\}$.
	For every edge \(e \in E^\star\), we introduce a new edge \(e'\) to $\torso{x}$:
	\begin{itemize}
		\item If \(\tail{e},\head{e} \in \beta(x)\), then \(e'\coloneqq e\).
		\item If \(\tail{e} \in \beta(x)\) and \(\head{e} \in V_j\), then \(e' \coloneqq (\tail{e},a_j)\).
		\item If \(\tail{e} \in V_i\) and \(\head{e} \in \beta(x)\), then \(e' \coloneqq (b_i,\head{e})\).
		\item If \(\tail{e} \in V_i\) and \(\head{e} \in V_j\) with \(i\neq j\), then \(e' \coloneqq (b_i,a_j)\).
	\end{itemize}

		We denote by $\domain{\torsoEdgeMap{x}}$ the set of torso-edges we have constructed so far, and define a partial map \(\torsoEdgeMap{x}\colon \E{\torso{x}} \to E^\star \) mapping every \(e\in \domain{\torsoEdgeMap{x}}\) to its corresponding edge \(e'\) in \(D\) in order to formally keep track of the relation between the torso-edges and the original edges.
	Then (and this is why \(\torsoEdgeMap{x}\) is partial), for every $i\in [\ell]$ we add to $\torso{x}$ the edge $(a_i,b_i)$ and two edges $(b_i,a_i)$.
	The two edges $(b_i,a_i)$ will be called the \emph{Chekhov-edges}\footnote{\url{https://en.wikipedia.org/wiki/Chekhov's_gun}} of $z_i$.
	Thus, the multiset of edges of \(\torso{x}\) consists of \(\torsoEdgeMap{x}^{-1}(E^\star)\), the edges $(a_i,b_i)$ for $i\in [\ell]$, and all the Chekhov-edges.

	Note that for every $i\in [\ell]$, the three edges introduced between \(a_i\) and \(b_i\) yield an onion, which we refer to as \(O_i\).
	Similarly to calling \(a_i,b_i\) auxiliary vertices, we also call the edges  \(O_i\) the \emph{auxiliary edges}.

We next verify the following: an immersion model of a large cylindrical wall found in a torso can be conveniently lifted to the original digraph.

\begin{lemma}
	\label{lem:grid_imm_in_torsos}
	Let \(\Brace{T,\beta,\gamma}\) be a rooted tree-partition of a digraph \(D\), and let \(x \in \V{T}\).
Suppose that for every child $y$ of $x$ the set $\bigcup \Set{\beta(z) \ \colon z \text{ is a descendant of } y}\) induces a strongly connected subgraph of \(D\).
	If \(\torso{x}\) contains \(\gridImm{2k}\) as an immersion, then \(D\) contains \(\gridImm{k}\) as an immersion.
	
	Moreover, having an immersion model of \(\gridImm{2k}\) in \(\torso{x}\), we can find an immersion model of \(\gridImm{k}\) in~$D$ in time polynomial in size of $D$.
\end{lemma}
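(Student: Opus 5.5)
The plan is to analyse how an immersion model $\pi$ of $\gridImm{2k}$ in $\torso{x}$ can use the auxiliary onions $O_i$. Then we replace every use of an auxiliary onion by routing inside $D[V_i]$, which is strongly connected by assumption.

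\emph{Step 1: local structure of an auxiliary onion.} In $\torso{x}$ the vertex $a_i$ has exactly one outgoing edge, namely $(a_i,b_i)$. The vertex $b_i$ has exactly one incoming edge, the same edge $(a_i,b_i)$. The Chekhov-edges go from $b_i$ to $a_i$. Consequently any trail that uses a Chekhov-edge must enter $b_i$ through $(a_i,b_i)$ and then leave $a_i$ again through $(a_i,b_i)$, which is impossible. The only exception is a trail that starts at $b_i$ and ends at $a_i$, and such a trail can be shortcut. Hence, after normalising every $\pi(e)$ to a path, no Chekhov-edge is used. Each $O_i$ is then traversed by at most one trail segment of the form $(g_1,(a_i,b_i),g_2)$, where $g_1$ is a torso-edge into $a_i$ and $g_2$ is a torso-edge out of $b_i$. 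Let $h=\head{\torsoEdgeMap{x}(g_1)}\in V_i$ and $t=\tail{\torsoEdgeMap{x}(g_2)}\in V_i$. We replace $(a_i,b_i)$ by an $h$-$t$ path inside $D[V_i]$, which exists because $D[V_i]$ is strongly connected. All other torso-edges are replaced by their preimages under $\torsoEdgeMap{x}$. The sets $V_i$ are pairwise disjoint, and each $O_i$ is used by at most one segment, so the rerouted trails remain edge-disjoint. If no branch vertex of the model lies on an auxiliary vertex, this already gives $\gridImm{2k}$ in $D$.

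\emph{Step 2: branch vertices placed on auxiliary vertices.} The degree constraints restrict where branch vertices can sit. A vertex of $\gridImm{2k}$ with out-degree $2$ cannot be mapped to $a_i$, and one with in-degree $2$ cannot be mapped to $b_i$. So only vertices of type $u^+$ (in-degree $2$) or degree-$2$ vertices of the extremal cycles can land on $a_i$. Only vertices of type $u^-$ (out-degree $2$) or extremal-cycle vertices can land on $b_i$. Moreover, each $O_i$ carries at most one such event together with its unique through-traffic. The difficulty is a $u^+$ mapped to $a_i$. Its two incoming trails enter $V_i$ at heads $h_1,h_2$, and we cannot in general find two edge-disjoint paths in $D[V_i]$ merging them. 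The factor $2$ is where I would spend the slack. I plan to pass to a sub-immersion of $\gridImm{k}$ inside $\gridImm{2k}$, using every other cycle and every other pair of radial paths. In it, each branch vertex of $\gridImm{k}$ is realised by a pair $u^+,u^-$ of $\gridImm{2k}$, and each edge of $\gridImm{k}$ is realised by a path in $\gridImm{2k}$ that passes through at least one intermediate pair, so it has spare degree-$2$ segments. Then if the image of $u^+$ is some $a_i$, the model edge $(u^+,u^-)$ is the edge $(a_i,b_i)$ and $u^-$ is mapped outside $V_i$. I would relocate the branch vertex of $\gridImm{k}$ to the image of $u^-$ of the same pair, or of a neighbouring pair along the spare segment. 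The merging of the two incoming trails then happens at a real vertex, or at a vertex of $\beta(x)$ or of another $V_j$ that we route through. If the image of $u^-$ is some $b_j$, we symmetrically relocate to $u^+$. If both land on auxiliary vertices, they must be $a_i$ and $b_i$ of the same onion, joined by $(a_i,b_i)$, and we shift along the spare segment.

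\emph{Step 3: conclusion and algorithm.} After the relocation, no branch vertex of the chosen $\gridImm{k}$ lies on an auxiliary vertex, and Step 1 lifts the model to $D$. All operations consist of shortcutting trails, finding paths in strongly connected subgraphs and re-indexing, so the procedure runs in polynomial time. I expect the main obstacle to be Step 2: checking carefully that the relocation along the spare segment never forces two trails to share an edge of some $D[V_j]$. In particular, the doubled structure of $\gridImm{2k}$ must guarantee that the through-traffic of an onion and a relocated branch point never compete for the same $O_i$. If this fails, I would fall back to choosing the sub-wall $\gridImm{k}$ greedily, discarding the at most one problematic pair per onion, and arguing that this discards only a bounded fraction of rows and columns.
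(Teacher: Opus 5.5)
Your Step~1 is correct and matches how the paper handles through-traffic and the Chekhov-edges. The gap is in Step~2, and it comes from a false premise. You claim that the two incoming trails of a vertex $u^+$ mapped to $a_i$ cannot in general be merged inside $D[V_i]$. They always can, because you are free to choose the merge point. Let $h_1,h_2\in V_i$ be the heads of the $D$-edges by which the incoming trails enter $V_i$, and let $t\in V_i$ be the tail of the $D$-edge by which the outgoing trail leaves. Take an $h_1$-$t$ path $R$ in $D[V_i]$, and a path $S$ from $h_2$ to the first vertex $r$ of $R$ that it meets. The prefix of $R$ up to $r$, the path $S$, and the suffix of $R$ from $r$ are then pairwise edge-disjoint, so you simply map $u^+$ to $r$. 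Symmetrically, an out-degree-$2$ vertex mapped to $b_i$ is moved to a splitting vertex inside $V_i$.

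The relocation you propose instead does not work. From $a_i$ the only way on is the single edge $(a_i,b_i)$, which carries the single trail $\mu(u^+,u^-)$. So the two incoming trails can never both reach the image of $u^-$, or of any later vertex, edge-disjointly. Merging ``inside another $V_j$'' is exactly the operation you declared impossible. The fallback of discarding problematic pairs cannot be bounded either: the only non-auxiliary vertices of $\torso{x}$ are those of $\beta(x)$, so almost all branch vertices may sit on auxiliary vertices. Also, your last case is not exhaustive: $u^+\mapsto a_i$ together with $u^-\mapsto b_j$ for $j\neq i$ is possible.

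The factor $2$ is needed for a different reason, which your plan does not isolate. The one genuinely bad configuration is a wall edge $(v,w)$, from an in-degree-$2$ vertex to an out-degree-$2$ vertex, that is modelled by the single edge $(a_i,b_i)$; that is, $\mu(v)=a_i$ and $\mu(w)=b_i$. Then two distinct branch vertices, one merging and one splitting, would have to be placed inside $V_i$. This is impossible, for instance, when $|V_i|=1$, and such a $V_i$ is allowed since a single vertex is strongly connected. The paper chooses $\gridImm{k}$ inside $\gridImm{2k}$ (odd-indexed radial pairs and modified cycles $C'_i$) so that, for every in-degree-$2$ branch vertex $v$, the edge leaving $v$ is modelled by a path of length at least $2$ in $\gridImm{2k}$. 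Its trail in $\torso{x}$ therefore has an internal vertex and cannot be just $(a_i,b_i)$. Your choice of sub-wall, with each edge passing through an intermediate pair, would serve exactly this purpose. You then need the local merge/split construction above rather than relocation along the wall.
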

\begin{proof}
	Let $\mu'$ be an immersion model of \(\gridImm{2k}\) in \(\torso{x}\).
	Let $C'_1=C_1$, and for every $i \in \{2,\ldots,k\}$ we define the simple cycle $C'_i$ in \(\gridImm{2k}\) as
	\[p^+_{1,2i-2} \xrightarrow{P_1} p^+_{1,2i-1}\xrightarrow{C_{2i-1}}q^+_{1,2i-1}\xrightarrow{Q_1}q^+_{1,2i-2}\xrightarrow{C_{2i-2}} p^+_{2,2i-2} \xrightarrow{P_2} p^+_{2,2i-1} \ldots q^+_{2k,2i-2} \xrightarrow{C_{2i-2}} p^+_{1,{2i-2}}.\]
	
	Consider a subgraph $W$ of \(\gridImm{2k}\) obtained by taking the union of all edge and vertex sets of the elements of $\bigcup\{P_{2i-1},Q_{2i-1},C'_{i} \ \colon \ i \in [k]\}.$ 
	Define 
	\begin{align*}
	&V^-:=\{p^-_{2i-1,2j-1},q^-_{2i-1,2j} \ \colon \ i \in [k],j \in [k-1]\} \\ 
	&V^+:=\{p^+_{2i-1,2j},q^+_{2i-1,2j-1} \ \colon \ i \in [k],j \in [k-1]\}.
	\end{align*}
	Clearly, $W$ contains \(\gridImm{k}\) as an immersion, which in turn gives rise to an immersion model $\mu$ of \(\gridImm{k}\) in $\torso{x}$ where the image of the vertex set of \(\gridImm{k}\) is $\mu'\left(V^+ \cup V^-\right).$ We treat $V^+ \cup V^-$ also as the set of vertices of \(\gridImm{k}\).
	
	Note that for every $v \in V^+$, no out-neighbour $w$ of $v$ in \(\gridImm{2k}\) belongs to $V^-$, as for every $(i,j) \in [k] \times [k-1]$ the out-neighbour of $p^+_{2i-1,2j}$ is $p^-_{2i-1,2j}$, and the out-neighbour of $q^+_{2i-1,2j-1}$ is $p^-_{2i,2j-1}.$
	Thus, if $w$ is an out-neighbour of $v$ in \(\gridImm{k}\), the trail $\mu(v,w)$ is obtained by concatenating more than one trail given by $\mu'$, implying that $\mu(v,w)$ has an internal vertex.

	If $x$ is a leaf of $T$, $\mu$ is a model of \(\gridImm{k}\) in $D$.
	Thus, we can assume that $y_1,\ldots,y_\ell$ are the children of $x$.
	Fix $i \in [\ell]$.
	Since the out-degree of the auxiliary vertex $a_i$ and the in-degree of the auxiliary vertex $b_i$ are equal to 1, if there exists a vertex $v$ in $\gridImm{k}$ such that $\mu(v)=a_i$ (resp.~$\mu(v)=b_i$), then $v \in V^+$ (resp.~$v \in V^-$).
	Thus, there is no edge $(v,w)$ of $\gridImm{k}$ such that $\mu(v)=a_i$ and $\mu(w)=b_i$, as in that case $\mu(v,w)=(a_ib_i)$ would not have an internal vertex.
	Having established that property, we can modify $\mu$ to be an immersion model of \(\gridImm{k}\) in $D$ in a rather straightforward way.
	
	We assume first that there is an edge $(v,w)$ in $\gridImm{k}$ such that $\mu(v,w)$ uses the edge $a_ib_i$ but $\mu(v)\neq a_i$ and $\mu(w)\neq b_i$.
	Let $e$ and $f$ be the edges of \(\torso{x}\) that, respectively
	 directly precede and succeed $a_ib_i$ in $\mu(v,w)$.
	Since $\bigcup\{\beta(z) \ \colon \ z \textrm{ is a descendant of } y_i\}$ induces a strongly connected subgraph $D'$ of $D$, $D'$ contains a $\head{\torsoEdgeMap{x}(e)}$-$\tail{\torsoEdgeMap{x}(f)}$ path $(r_0,\ldots,r_m)$.
	We replace the subsequence $(e,a_ib_i,f)$ of $\mu(x,y)$ by $(\torsoEdgeMap{x}(e),r_0,\ldots,r_m,\torsoEdgeMap{x}(f))$.
	
	Thus, we consider the case in which $\mu(v,w)$ uses the edge $a_ib_i$ and $\mu(v)= a_i$ (the case $\mu(w)= b_i$ is symmetric).
	As we observed, in this case $v \in V^+$ and $\mu(w) \neq b_i$.
	Let $u_1,u_2 \in V^-$ be such that $(u_1,v),(u_2,v)$ are the edges of $\gridImm{k}$.	Let $e_1,e_2 \in E(\torso{x})$ be, respectively, last elements of $\mu(u_1,v)$ and $\mu(u_2,v)$, and let $f$ be the second (following $a_ib_i$) element of $\mu(v,w)$.
	Again, if $D'$ is a strongly connected subgraph $D$ induced by the sum of bags in the subtree rooted in $y_i$, $D'$ must contain an $\head{\torsoEdgeMap{x}(e_1)}$-$\tail{\torsoEdgeMap{x}(f)}$ path $R$, and a $\head{\torsoEdgeMap{x}(e_2)}$-$r$ path $S$ for some vertex $r$ that lays on $R$ (it may happen that $r=\head{\torsoEdgeMap{x}(e_2)}$ and $S$ is trivial).
	We set $\mu(v)$ to be $r$, replace $e_1$ and $e_2$ in the trails by, respectively, $\torsoEdgeMap{x}(e_1)$ followed by a prefix of $R$ that ends in $r$, and by $\torsoEdgeMap{x}(e_2)$ followed by $S$.
	Similarly, we replace $(a_ib_i,f)$ in $\mu(v,w)$ by the suffix of $R$ that starts in $r$, followed by $\torsoEdgeMap{x}(f)$.
	
	Since for different $i,i' \in [\ell]$ the bags of descendants of $i$ and of $i'$ are disjoint, and $\torsoEdgeMap{x}$ is a bijection, we never use the same edge of $D$ twice in the replacing procedure.
	Thus, after replacing in $\mu$ every vertex that is initially mapped to an auxiliary vertex, and every edge of $\torso{x}$ that does not belong to $D$, we get an immersion model $\mu$ of $\gridImm{k}$ in $D$.
	
	Last, we note that all these modifications can be performed in time polynomial in the size of $D$.
\end{proof}

\paragraph*{Directed treewidth and related notions.} As we discussed in \cref{sec:intro}, our reasoning ultimately relies on the duality between directed treewidth and topological minors of cylindrical walls, provided by the Directed Grid Theorem of Kawarabayashi and Kreutzer~\cite{kawarabayashi2022directed}.
Let us first recall the definition of directed tree decompositions and directed treewidth.
We use the definition proposed by Kawarabayashi and Kreutzer~\cite{kawarabayashi2022directed}.

\begin{definition}[{directed \treewidth~\cite{kawarabayashi2022directed}}]
	\label{def:dtw}
	A \emph{directed tree-decomposition} of a digraph \(D\) is a triple \(\Brace{T, \beta, \gamma}\), where \(T\) is a rooted tree, and \(\beta\colon V(T) \to 2^{V(D)}\) and \(\gamma\colon E(T) \to 2^{V(D)}\) are functions mapping every node $x$ of \(T\) to its \emph{bag} $\beta(x)\subseteq \V{D}$, and every edge $e$ of \(T\) to its \emph{guard} \(\gamma(e)\subseteq D\), so that the following conditions are satisfied.
	\begin{enumerate}[label=\textbf{(T\arabic*)},leftmargin=1.2cm]
		\item \label{item:directed-tree-width:beta}
		\(\Set{\beta(x) \colon x \in V(T)}\) is a near-partition of \(V(D)\); and
		\item \label{item:directed-tree-width:guard}
		for every \(e \in \E{T}\), there is no cycle in \(D - \gamma(e)\) visiting both \(\beta(\treeLower{T}{e})\) and~\(\beta(\treeUpper{T}{e})\).
	\end{enumerate}
	For \(x \in V(T)\) we define \(\Gamma(x) \coloneqq \beta(x) \cup \bigcup \set{\gamma(e) : e \sim x}\).
	The \emph{width} of \(\Brace{T, \beta, \gamma}\) is the least integer \(w\) such that \(\Abs{\Gamma(t)} \leq w + 1\) for all \(t \in V(T)\).
	The \emph{directed \treewidth} of \(D\) is the smallest width of any directed tree-decomposition of \(D\).
\end{definition}

With this definition in place, the Directed Grid Theorem reads as follows.

\begin{theorem}[{Directed Grid Theorem~\cite{kawarabayashi2015directed,kawarabayashi2022directed}}]
	\label{thm:gridtheorem}
	There is a function \(\thmFkt{thm:gridtheorem}{g}{}\colon \N\to \N\) such that for every digraph $D$, one of the following conditions holds:
	\begin{itemize}
		\item \(D\) contains \(\gridImm{k}\) as a topological minor; or
		\item \(D\) admits a directed tree-decomposition of width at most \(\thmFkt{thm:gridtheorem}{g}{k}\).
	\end{itemize}
\end{theorem}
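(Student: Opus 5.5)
This statement is imported from the literature rather than proved from scratch, so the plan is to reduce it to the formulation of Kawarabayashi and Kreutzer~\cite{kawarabayashi2015directed,kawarabayashi2022directed}. In their formulation, a digraph of large directed treewidth contains a large cylindrical \emph{grid} as a \emph{butterfly minor}. Their directed treewidth is exactly the one in \cref{def:dtw}, so the second outcome needs no change. The only work is to convert the first outcome into a topological minor of $\gridImm{k}$, at the cost of enlarging the function.

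\textbf{Step 1: from the grid to the wall.} I would show that the cylindrical grid of order $2k$ contains $\gridImm{k}$ as a topological minor (a subdivision). I would use the same re-routing as in the proof of \cref{lem:grid_imm_in_torsos}.
\begin{itemize}
\item Keep the paths $P_{2i-1}$ and $Q_{2i-1}$ for $i\in[k]$.
\item For each $i$, build a new concentric cycle $C'_i$ that uses alternately pieces of $C_{2i-2}$ and $C_{2i-1}$, joined along the radial paths.
\item The split vertex pairs $u^+,u^-$ of the wall are then realised by distinct grid vertices that are consecutive along the kept radial paths.
\end{itemize}
In this way every wall edge becomes a path, and these paths are internally vertex-disjoint. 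This is the step I expect to take the most care: the wall vertices must be chosen so that no two connecting paths share an interior vertex.

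\textbf{Step 2: from butterfly minor to topological minor.} I would use the standard fact that if $H$ has every vertex of in-degree at most $1$ or out-degree at most $1$, then $H$ being a butterfly minor of $D$ implies $H$ is a topological minor of $D$. The proof idea is as follows.
\begin{itemize}
\item A butterfly model assigns to each vertex of $H$ a branch set. This branch set is an in-arborescence joined to an out-arborescence.
\item When the in-degree of the vertex in $H$ is at most $1$, its single incoming edge can be routed through the out-arborescence to the distinct outgoing edges, using disjoint branches. The case of out-degree at most $1$ is symmetric.
\item Choosing a branching point as the image vertex then yields internally disjoint paths.
\end{itemize}
The cylindrical grid itself does not satisfy the degree condition, since its inner vertices have in- and out-degree $2$. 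The cylindrical wall does satisfy it. So I would apply this fact to the wall of Step~1, combined with transitivity: a butterfly minor of $D$ that contains the wall as a topological minor yields the wall as a butterfly minor of $D$.

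\textbf{Conclusion.} Combining the two steps, I would take $\thmFkt{thm:gridtheorem}{g}{k}$ to be the Kawarabayashi--Kreutzer bound for grids of order $2k$.
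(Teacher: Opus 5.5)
Your route is the paper's. The paper only cites Kawarabayashi and Kreutzer and remarks that the largest cylindrical grid contained as a butterfly minor and the largest cylindrical wall contained as a topological minor differ by at most a factor of $2$; your two steps spell out exactly this. Step~1 does work when you transplant the construction from the proof of the torso lemma, which is carried out in the wall of order $2k$, to the grid of order $2k$. For $2\leq i\leq k-1$, the segments that $C'_i$ shares with $P_{2j-1}$ and $Q_{2j-1}$ are single grid edges. In the chosen subgraph the tail of each such edge has in-degree $2$ and out-degree $1$, and its head has in-degree $1$ and out-degree $2$, so they serve as $u^+$ and $u^-$.

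The one thing to fix is the fact you invoke in Step~2, which is false under the hypothesis you state. Consider the out-star with three leaves; it satisfies your hypothesis, since its centre has in-degree $0$. It is a butterfly minor of the digraph with edges $c_1\to t_1$, $c_1\to c_2$, $c_2\to t_2$, $c_2\to t_3$: contract $(c_1,c_2)$, whose head has in-degree $1$. It is not a topological minor of that digraph, because no vertex there has out-degree $3$.

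Your branching-point argument tacitly needs the relevant part of the out-arborescence to have a single branching vertex. That holds when a vertex of in-degree at most $1$ has out-degree at most $2$, and symmetrically. So the correct hypothesis is that every vertex has in-degree at most $1$ and out-degree at most $2$, or out-degree at most $1$ and in-degree at most $2$. The cylindrical wall satisfies this. With this correction, your transitivity argument yields the theorem with $g(k)$ equal to the Kawarabayashi--Kreutzer bound for grids of order $2k$.
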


We remark that the original formulation of Kawarabayashi and Kreutzer speaks about containing a cylindrical grid of order \(k\) as a \emph{butterfly minor}, which is a slightly different containment notion for digraphs.
However, it is standard and easy to see that the largest size of a cylindrical grid contained as a butterfly minor, and the largest size of a cylindrical wall contained as a topological minor, differ by a multiplicative factor at most $2$.
Therefore, the original formulation of Kawarabayashi and Kreutzer is equivalent to the one above.

We remark that the original proof of Kawarabayashi and Kreutzer yields a non-elementary function~$\thmFkt{thm:gridtheorem}{g}{}$.
This was recently improved to an elementary bound by Hatzel, Kreutzer, Milani, and Muzi~\cite{COSS2024FOCS}.

As shown by Campos, Lopes, Maia, and Sau~\cite{DBLP:journals/siamdm/CamposLMS22}, the proof of Directed Grid Theorem can be modified to yield the following fixed-parameter algorithm.
\begin{theorem}[\cite{DBLP:journals/siamdm/CamposLMS22}]\label{thm:grid-algorithmic}
There is a fixed-parameter algorithm to find, for any digraph \(D\) and parameter~$k$, either
\begin{itemize}
\item a topological minor model of \(\gridImm{k}\) in $D$; or
\item a directed tree-cut decomposition of $D$ of width at most \(\thmFkt{thm:gridtheorem}{g}{k}\).
\end{itemize}
\end{theorem}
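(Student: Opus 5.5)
Since this result is quoted from~\cite{DBLP:journals/siamdm/CamposLMS22}, my plan is to make the proof of \cref{thm:gridtheorem} constructive rather than to build a new argument. The decomposition returned in the second outcome is the one whose width is bounded by \(\thmFkt{thm:gridtheorem}{g}{k}\), that is, a decomposition \((T,\beta,\gamma)\) as in \cref{def:dtw}, with vertex-guards \(\gamma(e)\) and width measured by \(\Abs{\Gamma(x)}\). The steps, in order, are as follows.

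\textbf{Step 1: search for a highly linked set.} Run the standard approximate dichotomy for directed treewidth. Build a decomposition greedily, and at each step try to split the current piece \(W\) by a balanced separator of size at most \(c\cdot k'\), with \(k'\) chosen so that the later steps can run. Finding such a separator is a vertex-disjoint path problem between bounded-size sets, so it is solvable via Menger's Theorem (\cref{thm:menger}) plus enumeration over subsets of a bounded-size set. This enumeration is where the dependence on \(k\) enters and keeps the running time fixed-parameter. Either every split succeeds, giving a directed tree-decomposition of width \(\Oh(k')\), or we find a well-linked set \(W\) of size \(\Theta(k')\). This is the Johnson--Robertson--Seymour--Thomas approximation argument.

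\textbf{Step 2: constructive Kawarabayashi--Kreutzer argument.} From the well-linked set, follow the Kawarabayashi--Kreutzer proof step by step. First extract a large path system; then, via repeated Ramsey-type cleaning (\cref{thm:BipRamsey,thm:ColourRamsey}), obtain a large web; then a fence; and finally route through the fence to build a cylindrical grid. All these objects are obtained by flow computations and by bounded-depth combinatorial selections on sets whose size depends only on \(k\). Hence each step has an algorithmic counterpart running in time \(h(k)\cdot\mathrm{poly}(|D|)\).

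\textbf{Step 3: output.} The final object is a butterfly-minor model of a cylindrical grid. By the factor-\(2\) correspondence remarked after \cref{thm:gridtheorem}, this is converted in polynomial time into a topological minor model of \(\gridImm{k}\).

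\textbf{Main obstacle.} I expect Step 2 to be the hardest, in particular the well-linkedness steps that in the original proof are existential rather than constructive. Such a step may assert that some subsystem of paths is linked without saying how to find it. My first attempt would be to replace each such assertion by exhaustive search over the bounded-size candidate families, checking linkedness with flows. Where a step instead asserts that some subsystem of a large path system is linked, I would use only the edge-disjoint/vertex-disjoint Menger computations, which are polynomial.

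\textbf{Reading of the second outcome.} I would not try to output a directed tree-cut decomposition in the sense of \cref{def:dtcw}: that version is false. Take a bidirected \(K_{2,n}\). It has bounded directed treewidth and, being planar, contains no large cylindrical wall as a topological minor. Yet each leaf carries onions rooted at a centre, so for large \(n\) it contains a large onion-star as an immersion. By \cref{lem:gridImm_from_onion_star} it then contains a large cylindrical wall as an immersion, and so by \cref{thm:imm_implies_large_dtcw} its directed tree-cutwidth is unbounded. So for the literal wording neither outcome holds on this graph. The proof therefore certifies the statement with the second outcome read as a decomposition in the sense of \cref{def:dtw}, which is what the bound \(\thmFkt{thm:gridtheorem}{g}{k}\) refers to.
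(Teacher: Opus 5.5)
Your overall route is the one the paper relies on. The paper gives no proof of \cref{thm:grid-algorithmic}; it simply invokes Campos, Lopes, Maia and Sau, whose argument is an algorithmic rereading of the Kawarabayashi--Kreutzer proof.

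You are also right about the second outcome. It has to be a directed tree-decomposition in the sense of \cref{def:dtw}, exactly as in \cref{thm:gridtheorem}. This is also how the paper uses the statement: in the proof of \cref{thm:bd_degree_case}, the output is passed to the algorithmic part of \cref{thm:cyw_dtw}, which takes a directed tree-decomposition as input. Your bidirected $K_{2,n}$ example does refute the literal wording. However, planarity is not why it excludes large walls as topological minors, since cylindrical walls are themselves planar. The correct reason is that $K_{2,n}$ has treewidth $2$, while the underlying graph of \(\gridImm{k}\) has treewidth growing with $k$. Equivalently, use bounded directed treewidth together with monotonicity of directed treewidth under topological minors.

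The substantive problem is that you have put the difficulty in the wrong step. Kawarabayashi and Kreutzer already noted that their proof yields an XP algorithm. The two main technical contributions of the cited paper both concern your Step~1, which you treat as routine.

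First, the Johnson--Robertson--Seymour--Thomas approximation as published runs in time $n^{O(k)}$. Your fix of guessing how a terminal set of size $O(k)$ is split and then applying Menger's Theorem is the right idea, provided balance is taken two-sided (no path from one part to the other in $D-S$). The condition ``every strong component of $D-S$ contains at most half of $W$'' is not a single cut condition.

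More seriously, when no such separator exists you only obtain a haven-type set: every small $S$ leaves a strong component containing more than half of $W$. That is not yet a well-linked set in the sense the Kawarabayashi--Kreutzer argument starts from. There, any two equal-size subsets $A,B\subseteq W$ must be joined by $|A|$ vertex-disjoint $A$--$B$ paths, and the haven property yields this in general only when $|A|=|B|$ is about $|W|/2$. Extracting such a well-linked set in fixed-parameter time needs a separate argument; it is the second main algorithm of the cited paper. Your Step~1 asserts this outcome without one, so that is the gap to fill, not Step~2.
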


In fact, in this paper we find it convenient not to work with the notion of directed treewidth, as defined in \cref{def:dtw}, but with a functionally equivalent parameter called \emph{directed branchwidth}~\cite{thesisWiederrecht,wiederrecht2019note}.

\begin{definition}[Directed Branchwidth~\cite{wiederrecht2019note,thesisWiederrecht}]
	\label{def:dbw}
	A \emph{directed branch-decomposition} of a digraph \(D\) is a tuple \((T,\lambda)\) where \(T\) is a subcubic tree and \(\lambda\) is a bijection between the vertices of \(D\) and the leaves of \(T\).
	
	Now define \(\gamma\colon E(T)\to 2^{V(D)}\) mapping every edge \(e=xy\) of \(T\) to a minimum vertex set \(\gamma(e) \subseteq \V{D}\), which we call the \emph{vertex-guard}, so that the following condition is satisfied: \(D-\gamma(e)\) does not contain any cycle containing both a vertex of \(\lambda(\treeSplit{T}{x}{e})\) and a vertex of \(\lambda(\treeSplit{T}{y}{e})\).
	
	The \emph{width} of \((T,\lambda)\) is the maximum size of \(\gamma(e)\) over all edges \(e \in \E{T}\).
	
	The \emph{directed branchwidth} of \(D\), denoted \(\HSwidth{D}\), is the minimum width over all valid directed branch-decompositions of \(D\).
\end{definition}

%


This concept is inspired by the parameter \emph{cyclewidth}, which was proposed by Hatzel, Rabinovich, and Wiederrecht~\cite{HatzelRW19}.
The original definition of \cyclewidth proposed differs by measuring the contribution of an edge \(e = xy \in E(T)\) to the width differently: instead of considering the minimum size of a vertex hitting sets for cycles visiting both \(\lambda(\treeSplit{T}{x}{e})\) and \(\lambda(\treeSplit{T}{y}{e})\), they considered vertex-disjoint packings of such cycles, where the contribution of every cycle is equal to the number of times it crosses the cut \(\Brace{\lambda(\treeSplit{T}{x}{e}),\lambda(\treeSplit{T}{y}{e})}\).
As proved in~\cite{HatzelRW19}, cyclewidth defined in this way is functionally equivalent to directed treewidth, in the sense that each of these parameters is always bounded by a function of the other.
See also~\cite{improvedCyclewidthBound,linearCyclewidthBound} for improved bounds.

Wiederrecht~\cite{wiederrecht2019note} showed that directed branchwidth is functionally equivalent to directed treewidth as well, even with a linear function relating the two parameters.

\begin{restatable}[{\cite[Theorem 3.5]{wiederrecht2019note}}]{theorem}{dtwHSw}
	\label{thm:cyw_dtw}
	Let \(\thmFkt{thm:cyw_dtw}{g}{k} \coloneqq 9k + 1\) and \(\thmFkt{thm:cyw_dtw}{h}{k} \coloneqq k+1\). Then for every digraph \(D\), we have
	\begin{equation*}
		\HSwidth{D} \leq \thmFkt{thm:cyw_dtw}{h}{\dtw{D}}\qquad\text{and}\qquad
		\dtw{D} \leq \thmFkt{thm:cyw_dtw}{g}{\HSwidth{D}}\text{.} 
	\end{equation*}
	Moreover, given a directed tree-decomposition of $D$ of width $k$, a directed branch-decomposition of $D$ of width at most $\thmFkt{thm:cyw_dtw}{h}{k}$ can be computed in polynomial time.
\end{restatable}

We remark that the algorithmic claim of \cref{thm:cyw_dtw} is not stated explicitly in \cite{wiederrecht2019note}, but follows easily from the construction in the proof.

\section{Large cylindrical wall immersions force large tree-cutwidth}

In this section we give a proof of \cref{thm:imm_implies_large_dtcw}, providing the easy direction of the duality between directed tree-cutwidth and immersions of directed cylindrical walls. 

First, let us establish an expected fact: directed tree-cutwidth is monotone under taking immersions.

\begin{restatable}{theorem}{ThmMonotone}
\label{thm:immersion-monotone}
Suppose \(D\) and \(D'\) are digraphs such that \(D\) contains \(D'\) as an immersion.
Then, the \dtcw of \(D'\) is at most the \dtcw of \(D\).
\end{restatable}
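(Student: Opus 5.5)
Take a directed tree-cut decomposition $(T,\beta,\gamma,\theta)$ of $D$ of width $w$ and an immersion model $\pi$ of $D'$ in $D$, and build a decomposition of $D'$ of width at most $w$ on the same tree $T$. For bags, set $\beta'(x)\coloneqq\{v\in V(D')\colon \pi(v)\in\beta(x)\}$. Since $\pi$ is injective on vertices, $\{\beta'(x)\}$ is a near-partition of $V(D')$ and $|\beta'(x)|\le|\beta(x)|\le w$. For guards, define the pullback of an edge set $F\subseteq E(D)$ as $\pi^{-1}(F)\coloneqq\{e\in E(D')\colon \pi(e)\text{ uses an edge of }F\}$. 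Because the trails $\pi(e)$ are pairwise edge-disjoint, each edge of $F$ lies in at most one trail, so $|\pi^{-1}(F)|\le|F|$. Set $\gamma'(e)\coloneqq\pi^{-1}(\gamma(e))$ and $\theta'(x)\coloneqq\pi^{-1}(\theta(x))$. The width is then at most $w$, and it remains to check \ref{item:tp:edgeguards} and \ref{item:dtcw:onionguards}.

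For the edge-guards, let $C'$ be a cycle of $D'$ crossing the cut induced by $e\in E(T)$ and avoiding $\gamma'(e)$. Concatenating the trails $\pi(f)$ for $f\in E(C')$ gives a closed trail $C$ in $D$; it uses pairwise distinct edges by edge-disjointness, so it is a cycle in this paper's sense, since cycles may repeat vertices. Moreover $C$ passes through $\pi(u)$ for every $u\in V(C')$. By the definition of $\beta'$, $C$ therefore visits both sides of the cut induced by $e$ in $D$, and it avoids $\gamma(e)$ because $C'$ avoids $\gamma'(e)$. This contradicts the guard property in $D$.

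For the onion-guards, fix $x$ and suppose $\theta(x)$ hits all out-onions of $x$; the in-onion case is symmetric, and we keep the same orientation choice for $\theta'(x)$. Let $Z'$ be an out-onion of $x$ in $D'$ avoiding $\theta'(x)$, i.e.\ an immersion model of an onion in $D'$ with root $r$ and tip $t$. Composing with $\pi$ gives an immersion model $Z$ of an onion in $D$ with root $\pi(r)$ and tip $\pi(t)$: each edge of the onion is realised by a concatenation of trails $\pi(f)$, and these concatenations are pairwise edge-disjoint trails, which can be shortcut to paths if needed. Since $\pi(r)$ and $\pi(t)$ lie in the bags of the same nodes as $r$ and $t$, the placement condition defining out-onions transfers verbatim, so $Z$ is an out-onion of $x$ in $D$. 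As $Z'$ avoids $\theta'(x)$, the model $Z$ avoids $\theta(x)$, a contradiction.

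The only delicate step is confirming that composing immersion models yields valid trails and cycles. Edge-disjointness of the trails $\pi(f)$ guarantees that concatenations never reuse an edge, and since cycles are allowed to revisit vertices, no further care is needed.
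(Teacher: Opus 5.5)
Your proof is correct and follows essentially the same route as the paper: the same tree, bags pulled back through the immersion model, each guard replaced by the set of edges of $D'$ whose image trails meet the original guard, and cycles of $D'$ pushed forward to cycles of $D$. You also write out the onion-guard check that the paper leaves to the reader: composing the onion model with the immersion model gives an onion model in $D$ whose root and tip lie in the same nodes' bags, so it is an out-onion (resp.\ in-onion) of the same node and must meet $\theta(x)$.
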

\begin{proof}
	Let \((T, \beta, \gamma, \theta)\) be a \dtcdecomp of \(D\) of optimum width, and
	let \(\mu\) be an immersion model of \(D'\) in \(D\).
	Let us define a new bag function \(\beta' \colon V(T) \rightarrow 2^{V(D')}\) by setting \[\beta'(x) \coloneqq \mu^{-1}(\beta(x)),\qquad\text{for every node }x\text{ of }T.\]
	Further, for a set of edges \(F \subseteq E(D)\) we define
	\[\zeta(F) \coloneqq \set{e \in E(D') \mid \mu(e) \cap F \neq \emptyset}.\]
	Observe that for any \(F \subseteq E(D)\) we have \(\Abs{\zeta(F)} \leq \Abs{F}\), as the trails used by \(\mu\) are
	pairwise edge-disjoint by definition.
	So, we finally define \(\gamma' \colon E(T) \rightarrow 2^{E(D')}\),
	and \(\theta' \colon V(T) \rightarrow 2^{E(D')}\) by setting, for each $e\in E(T)$ and $x\in V(T)$
	\begin{equation*}
		\gamma'(e) \coloneqq \zeta(\gamma(e))\qquad\text{and}\qquad
		\theta'(x) \coloneqq \zeta(\theta(x))\text{.}
	\end{equation*}
	We now argue that \((T, \beta', \gamma', \theta')\) is a \dtcdecomp of \(D'\). First, to argue \cref{item:dtcw:tp} we have to check that $(T, \beta', \gamma')$ is a tree-partition of $D'$.
	Clearly, \(\beta'\) is a near-partition of \(V(D')\), so \cref{item:tp:bags} holds.
	To argue \cref{item:tp:edgeguards}, consider any edge \(e = xy\) of \(T\).
	Let \(A \coloneqq \beta(\treeSplit{T}{x}{e})\), \(B \coloneqq \beta(\treeSplit{T}{y}{e})\),
	\(A' \coloneqq \beta'(\treeSplit{T}{x}{e})\), and \(B' \coloneqq \beta'(\treeSplit{T}{y}{e})\). Further,
	let \(C\) be a cycle in \(D'\) crossing the cut \((A',B')\),
	and let \(F \coloneqq \gamma(e)\) and \(F' \coloneqq \gamma'(e)\).
	Observe that \(\mu(E(C))\) is the edge set of a cycle (not necessarily simple) in \(D\)
	that crosses the cut $(A,B)$.
	Therefore, $C$ must contain an edge \(f \in F\).
	This edge must be contained in some \(\mu(f')\) where \(f' \in E(C)\),
	and therefore we have $f'\in F'\cap E(C)$; so $C$ is hit by $F'$. This establishes \cref{item:tp:edgeguards} and thereby also \cref{item:dtcw:tp}.
	The proof of \cref{item:dtcw:onionguards} follows similar lines as that of \cref{item:tp:edgeguards}; we leave it to the reader.
	
	From the construction it follows immediately that the width of \((T, \beta', \gamma', \theta')\)
	is at most that of \((T, \beta, \gamma, \theta)\). As the width of \((T, \beta, \gamma, \theta)\) is equal to the directed tree-cutwidth of $D$, this proves that the directed tree-cutwidth of $D'$ is bounded by the directed tree-cutwidth of $D$.
\end{proof}

Next, we check that cylindrical walls have large \dtcw.

\begin{lemma}\label{lemma:wall-dtcw}
	There exists a function \(\thmFkt{lemma:wall-dtcw}{f}{} \colon \N \rightarrow \N\),
	such that for any positive integer \(k\), the \dtcw
	of \(\gridImm{\thmFkt{lemma:wall-dtcw}{f}{k}}\) is at least \(k\).
\end{lemma}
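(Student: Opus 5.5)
Fix $k$, take $N=f(k)$ large, and suppose towards a contradiction that $W\coloneqq\gridImm{N}$ has a \dtcdecomp $(T,\beta,\gamma,\theta)$ of width less than $k$. The plan follows the overview: find a balanced node, delete its onion-guard, and exhibit a crossing onion of the forbidden orientation inside what remains.

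\textbf{Step 1: a balanced node.} Let $B\subseteq V(W)$ be the set of degree-$3$ vertices of $W$. Since bags have size less than $k$, the usual orientation argument on $T$ gives a node $x$ such that every component of $T-x$ contains at most $|B|/2$ vertices of $B$ in its bags. Walk along $T$ towards a component holding more than half of $B$; the walk cannot go back and forth along one edge, so it ends at such an $x$.

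\textbf{Step 2: a large robust piece avoiding the guards.} By symmetry, assume $\theta(x)$ hits all out-onions of $x$. The set $\theta(x)$, together with the edge-guards of the fewer than $k$ relevant incident edges, is not enough to block everything. Deleting fewer than $k$ edges from $W$ leaves many of the $N$ concentric cycles and many of the radial paths $P_i,Q_i$ untouched. Choose $N$ so that $\Omega(N)$ cycles and $\Omega(N)$ pairs $(P_i,Q_i)$ survive entirely, giving a subwall $W'$ of order $\Omega(N/k)$ that avoids $\theta(x)$. The key claim is an onion-abundance lemma. For any $u\in V(W')$ of out-degree $2$ in $W$ and any $v\in V(W')$ of in-degree $2$ (roughly $u\in P^-\cup Q^-$, $v\in P^+\cup Q^+$, with extremal cycles handled separately), $W'$ contains an onion immersion model with root $u$ and tip $v$. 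To build it, route two edge-disjoint $u$-$v$ paths leaving $u$ along its two out-edges: one along the radial path and one along the cycle. Then use the rich set of concentric cycles and radial paths to reach $v$ through its two distinct in-edges. Finally add a $v$-$u$ path edge-disjoint from both, which is available because a cylindrical wall is strongly connected and there is slack in the unused cycles. I expect this routing lemma to be the main technical obstacle. The first attempt would be an explicit construction: go from $u$ to a far cycle, travel around it, and come back along two disjoint radial paths of opposite orientation into the two in-neighbours of $v$. If that becomes too fiddly, fall back to a Menger-based argument (\cref{thm:menger}) showing $2$ edge-disjoint $u$-$v$ paths together with an edge-disjoint return path.

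\textbf{Step 3: contradiction.} Every vertex of $B\cap V(W')$ lies either in $\beta(x)$, which has fewer than $k$ vertices, or in some component of $T-x$. Consider $B$-type roots ($P^-\cup Q^-$) and tips ($P^+\cup Q^+$) in $W'$, each a constant fraction of $V(W')$. Take a root $u$ in a component $K$ of $T-x$ or in $\beta(x)$, and a tip $v$ in a different component $K'\neq K$. The onion from Step 2 is then an out-onion of $x$: its tip lies in a component not containing the root. It avoids $\theta(x)$, which is a contradiction. Hence all roots lie in one component $K$ together with $\beta(x)$, and all tips lie in that same $K$. Otherwise the root-side reasoning applied to a root in $\beta(x)$ or in $K$ would already produce a forbidden onion. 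This puts almost all of $B\cap V(W')$ into a single component. Choosing $W'$ to contain more than half of $B$ then contradicts Step 1. If only a fraction of $B$ survives, first apply Step 1 with weights restricted to a subwall chosen beforehand, or note that the subwall can be taken to cover all but $O(kN)$ of the $\Theta(N^2)$ branch vertices. The in-onion case is symmetric, with roots and tips swapped. Taking $f(k)=O(k^2)$, or any sufficiently large polynomial, suffices.
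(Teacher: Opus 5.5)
\textbf{Verdict: there is a genuine gap.} Your outline (balanced node, a subwall avoiding $\theta(x)$, onions forcing almost everything into one component) matches the paper's. However, the key claim of your Step~2 is false, and Step~3 uses it in exactly the form that fails.

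\textbf{The claim fails.} It is not true that every $u$ of out-degree $2$ and every $v$ of in-degree $2$ are the root and tip of an onion model. This fails even for interior vertices, and even in the whole wall.
\begin{itemize}
\item Every vertex of the wall has total degree $3$. So the three paths of an onion model are internally vertex-disjoint, and the return path together with either forward path forms a simple directed cycle.
\item Draw the wall on an annulus with all $P_a,Q_a$ straight and radial (split edges included), and all remaining cycle edges turning in the same sense. A directed cycle then has positive total turning, since radial paths alone contain no cycle. Being a simple closed curve, it winds exactly once.
\item Take $u=q^-_{i,b}$ and $v=q^+_{i,j}$ with $j<b$. Both lie on $Q_i$, so each forward path turns by a multiple of $2\pi$. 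The forward path leaving $u$ along $C_b$ turns by a positive amount, hence by exactly $2\pi$.
\item Therefore the return path turns by $0$, so it never leaves $Q_i$. From $v$, $Q_i$ only leads further inward, so it never reaches $u$.
\end{itemize}
By the same reasoning, $p^-_{i,b}$ and $q^+_{i,j}$ form no onion unless $b=j$. Your routing sketch (``slack in the unused cycles'') overlooks this topological obstruction. The Menger fallback cannot repair it either: Menger gives two edge-disjoint $u$--$v$ paths, but says nothing about a $v$--$u$ path edge-disjoint from both.

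\textbf{Consequence for Step~3, and what is missing.} A root in one component of $T-x$ and a tip in another need not span any onion, so no conflict with $\theta(x)$ arises. Your argument does not exclude a decomposition that separates precisely the non-onion pairs. Two ingredients are missing.
\begin{itemize}
\item \emph{Explicit onion constructions for a large, structured family of pairs.} This is what Lemma~\ref{lem:another-onion-in-the-wall} provides: $q^+_{i,j}$ is a tip for the roots $p^-_{a,b}$ ($a\neq i$, $b\le j$) and $q^-_{a,b}$ ($a\neq i$, $b>j$), and analogously for $p^+_{i,j}$.
\item \emph{A counting argument that uses only this family and absorbs the fewer than $k$ vertices of $\beta(x)$.} Any two tips have at least $2(\ell-2)>k-1$ common root-neighbours on the innermost and outermost cycles. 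Some tip lies in $\beta(K)$ for a component $K$ of $T-x$, since there are more than $k-1$ tips. Every other tip then shares with it a root-neighbour outside $\beta(x)$, which must lie in $\beta(K)$; so that tip lies in $\beta(K)\cup\beta(x)$. Finally, every root has more than $k-1$ tip-neighbours, so it also lies in $\beta(K)\cup\beta(x)$.
\end{itemize}
This argument only needs that onions whose root and tip lie in different components of $T-x$ are hit, which holds for either orientation of $\theta(x)$. So your in/out case split is unnecessary, and so are the edge-guards. Your bound on ``fewer than $k$ relevant incident edges'' is also unjustified, since $x$ may have unboundedly many incident edges.
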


To prove \cref{lemma:wall-dtcw} we need the following technical lemma describing roots and tips of onion immersion models in a cylindrical wall.

\begin{lemma} 
	\label{lem:another-onion-in-the-wall}
	Let $k \geq 2$ be an integer, and consider the cylindrical wall $\gridImm{k}$.
	We define $B_k$ to be the bipartite graph, with bipartition classes $P^- \cup Q^-$ and $P^+\cup Q^+$, so that vertices $u \in P^- \cup Q^-$ and $v \in P^+\cup Q^+$ are adjacent if and only if there exists an onion immersion model in $\gridImm{k}$ that maps the root to $u$ and the tip to $v$.
	Let $i \in [k], j\in [k-1]$.
	Then the neighbourhood of $q^+_{i,j}$ in $B_k$ contains
	\begin{itemize}
		\item $p^-_{a,b}$ for all $a \in [k] \setminus \{i\}$ and $b \in \{1,\ldots,j\}$, and
		\item $q^-_{a,b}$ for all $a \in [k] \setminus \{i\}$ and $b \in \{j+1,\ldots,k\}$.
	\end{itemize}
Similarly, for $i \in [k]$ and $j \in \{2,\ldots,k\}$, the neighbourhood of $p^+_{i,j}$ in $B_k$ contains
\begin{itemize}
	\item $p^-_{a,b}$ for all $a \in [k] \setminus \{i\}$ and $b \in \{1,\ldots,j-1\}$, and
	\item $q^-_{a,b}$ for all $a \in [k] \setminus \{i-1\}$ and $b \in \{j,\ldots,k\}$,
\end{itemize}
where we follow the convention that if $i=1$, then $i-1=k$.
	
\end{lemma}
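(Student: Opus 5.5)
We will construct the onion models explicitly, with the two parallel root-to-tip trails and the tip-to-root trail pairwise edge-disjoint. Only the first statement (about $q^+_{i,j}$) needs a full argument; the one about $p^+_{i,j}$ is obtained the same way, or from the cyclic and reflection symmetries of $\gridImm{k}$ that swap the roles of the $P$- and $Q$-paths. The shift from $i$ to $i-1$ in the second list comes from the cyclic order $P_1,Q_1,\dots,P_k,Q_k$ on each cycle.

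Fix the root $u$ among the listed $p^-_{a,b}$ or $q^-_{a,b}$, and the tip $v=q^+_{i,j}$. Every vertex in $P^-\cup Q^-$ has out-degree $2$ and in-degree $1$. Every vertex in $P^+\cup Q^+$ has in-degree $2$ and out-degree $1$. So the two trails from $u$ to $v$ must leave $u$ along its two distinct out-edges, namely the continuation of its radial path and its cycle edge. They must also enter $v$ along its two distinct in-edges, namely the $Q_i$-edge from $q^-_{i,j+1}$ and the cycle-$C_j$ edge. The return trail must leave $v$ via its unique out-edge and enter $u$ via its unique in-edge.

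\textbf{Root on $P_a$, $a\neq i$, $b\le j$.} The first trail starts with the radial edge. It follows $P_a$ outward from layer $b$ to layer $j+1$, then goes along $C_{j+1}$ to $Q_i$, then descends $Q_i$ one step into $q^+_{i,j}$. The second trail starts with the cycle edge at layer $b$. It travels around $C_b$ to some $P$-path other than $P_a$ and $P_i$, climbs that path to layer $j$, and finishes along $C_j$ into $q^+_{i,j}$. The return trail leaves $v$ through its out-edge and descends $Q_i$ towards layer $1$. It then moves along $C_1$ or a low cycle to a $P$-path that climbs to $P_a$'s layer $b$, so that it enters $u$ through its in-edge. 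When $b=1$ this in-edge comes from $C_1$ itself.

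\textbf{Root on $Q_a$, $b\ge j+1$.} The analogous routing works, with $Q_a$ descending towards layer $j+1$ and the outer cycles used in place of the inner ones. The boundary layers $j=1$ and $b=k$ need small separate adjustments, because $C_1$ and $C_k$ are not split.

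The main obstacle is edge-disjointness. Since $k\ge2$ gives at least two $P$-paths, we choose the intermediate columns and the cycle segments so that the three trails use different radial columns or different cycle arcs. Segments of $C_b$, $C_j$ and $C_{j+1}$ are traversed in the cycle direction, so we route the second trail on $C_b$ only between $P_a$ and its chosen column, and the return trail on a different cycle. The degenerate cases need this checked separately: $b=j$, and adjacent columns such as $a=i+1$. I expect them to require picking the detour column cyclically beyond $i$. I would draw these cases on \cref{fig:wall} to confirm that they are disjoint.
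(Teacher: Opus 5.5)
Your reduction of the $p^+_{i,j}$ half to the $q^+_{i,j}$ half is sound. The map $p^\pm_{i,j}\mapsto q^\pm_{i,k+1-j}$, $q^\pm_{i,j}\mapsto p^\pm_{i+1,k+1-j}$ is an automorphism of $\gridImm{k}$, and pulling the first statement back along it gives exactly the second, including the shift to $i-1$. The paper instead writes out two further explicit models.

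The gap is in the one routing you actually specify. Your second root-to-tip trail leaves $p^-_{a,b}$ along $C_b$, climbs a column $P_c$ with $c\notin\{a,i\}$, and ends along $C_j$. For edge-disjointness, $c$ must lie strictly between $a$ and $i$ in the cyclic order of the cycles, for two reasons. First, if the $C_b$-segment from $P_a$ to $P_c$ passes column $i$, it uses $q^+_{i,b}\to q^-_{i,b}$ (for $b=1$, the out-edge $q^+_{i,1}\to p^-_{i+1,1}$). Your return trail needs that edge on its way down $Q_i$. Second, if $a$ lies cyclically in $(c,i]$, the $C_j$-segment into $q^+_{i,j}$ uses $p^+_{a,j}\to p^-_{a,j}$. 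Your first trail uses that edge whenever $b<j$.

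No such $c$ exists when $k=2$, since there is no third $P$-path, yet the lemma includes $k=2$. None exists either when $i\equiv a+1\pmod k$. In that case with $b<j$, every $c\notin\{a,i\}$ collides with your first trail, so the proposed fallback of a detour column beyond $i$ cannot work. The case $a=i+1$ that you flagged is actually harmless for $k\ge 3$; the bad one is $a=i-1$.

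The missing idea is simply not to exclude $P_i$. Route the second trail as $p^-_{a,b}\xrightarrow{C_b}p^-_{i,b}\xrightarrow{P_i}p^-_{i,j}$, followed by the single $C_j$-edge $p^-_{i,j}\to q^+_{i,j}$. Route the return trail as $q^+_{i,j}\xrightarrow{Q_i}q^+_{i,b}\xrightarrow{C_b}p^-_{a,b}$, which uses exactly the complementary arc of $C_b$. Together with your first trail, the three trails use $P_a$, $P_i$, $Q_i$ on disjoint layer ranges and disjoint arcs of $C_b$, $C_j$, $C_{j+1}$. Edge-disjointness is then immediate for every $k\ge 2$, including $b=j$, $b=1$ and $j=k-1$. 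This is the paper's construction, and your longer return trail via $C_1$ and $P_a$ would also be compatible with it.

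Finally, the case of a root $q^-_{a,b}$ with $b>j$ is only asserted ("the analogous routing works"), not constructed. One valid choice, used in the paper, is the return trail $q^+_{i,j}\xrightarrow{C_j}p^-_{i+1,j}\xrightarrow{P_{i+1}}p^+_{i+1,b}\xrightarrow{C_b}q^-_{a,b}$. It is paired with the two forward trails $q^-_{a,b}\xrightarrow{C_b}q^-_{i,b}\xrightarrow{Q_i}q^+_{i,j}$ and $q^-_{a,b}\xrightarrow{Q_a}q^+_{a,j}\xrightarrow{C_j}q^+_{i,j}$.
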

\begin{proof}
	The four different cases of the construction are depicted in \cref{fig:another-onion-in-the-wall}.
	First, we analyse the neighbourhood of $q^+_{i,j}$ in $B_k$, where $j\leq k-1$. 
	To build an onion immersion model with tip $q^+_{i,j}$ and root $p^-_{a,b}$ with $a\neq i$ and $b\leq j$, we define paths 
	
	\begin{tabular}{ccccccccc}
		$W_1=$&$q^+_{i,j}$&$\xrightarrow{Q_i}$&$q^+_{i,b}$&$\xrightarrow{C_b}$&$p^-_{a,b},$&& & \\
		$W_2=$&$p^-_{a,b}$&$\xrightarrow{P_a}$&$p^+_{a,j+1}$&$\xrightarrow{C_{j+1}}$&$q^-_{i,j+1}$&$\xrightarrow{Q_i}$& $q^+_{i,j},$&\textrm{and} \\
		$W_3=$&$p^-_{a,b}$&$\xrightarrow{C_b}$&$p^-_{i,b}$&$\xrightarrow{P_i}$&$p^-_{i,j}$&$\xrightarrow{C_j}$&$q^+_{i,j}$.& 
	\end{tabular}
	
	\begin{figure}
		\begin{tikzpicture}[scale=0.6, label distance=-2mm]
			\def\dist{1.5}
			\node (centre) at (0,0) {};
			\node[vertex] (p-1-1-m) at ($(0,0)$) {};
			\node[vertex,label=315:{\(p^{-}_{2,1}\)}] (p-2-1-m) at ($(p-1-1-m)+(270:{2*\dist})$) {};
			\node[vertex] (p-3-1-m) at ($(p-2-1-m)+(270:{2*\dist})$) {};
			\node[vertex,color=red,label=315:{\(p^{-}_{4,1}\)}] (p-4-1-m) at ($(p-3-1-m)+(270:{2*\dist})$) {};
			
			\node[vertex] (q-1-1-p) at ($(p-1-1-m)+(270:{\dist})$) {};
			\node[vertex,label=315:{\(q^{+}_{2,1}\)}] (q-2-1-p) at ($(q-1-1-p)+(270:{2*\dist})$) {};
			\node[vertex] (q-3-1-p) at ($(q-2-1-p)+(270:{2*\dist})$) {};
			\node[vertex] (q-4-1-p) at ($(q-3-1-p)+(270:{2*\dist})$) {};
			
			\foreach \i in {1,2,3,4}
			{
				\node[vertex] (p-\i-2-p) at ($(p-\i-1-m)+(0:{\dist})$) {};
				\node[vertex] (p-\i-2-m) at ($(p-\i-2-p)+(0:{\dist})$) {};
				\node[vertex] (p-\i-3-p) at ($(p-\i-2-m)+(0:{\dist})$) {};
				\node[vertex] (p-\i-3-m) at ($(p-\i-3-p)+(0:{\dist})$) {};
				\node[vertex] (p-\i-4-p) at ($(p-\i-3-m)+(0:{\dist})$) {};
			}
			\node[vertex,label=315:{\(p^{-}_{2,2}\)}] (p-2-2-m) at ($(p-2-2-p)+(0:{\dist})$) {};
				\node[vertex,label=315:{\(p^{+}_{4,3}\)}] (p-4-3-p) at ($(p-4-2-m)+(0:{\dist})$) {};
			\foreach \i in {1,2,3,4}
			{
				\node[vertex] (q-\i-2-m) at ($(q-\i-1-p)+(0:{\dist})$) {};
				\node[vertex] (q-\i-2-p) at ($(q-\i-2-m)+(0:{\dist})$) {};
				\node[vertex] (q-\i-3-m) at ($(q-\i-2-p)+(0:{\dist})$) {};
				\node[vertex] (q-\i-3-p) at ($(q-\i-3-m)+(0:{\dist})$) {};
				\node[vertex] (q-\i-4-m) at ($(q-\i-3-p)+(0:{\dist})$) {};
			}
			
			\node[vertex,color=red,label=315:{\(q^{+}_{2,2}\)}] (q-2-2-p) at ($(q-2-2-m)+(0:{\dist})$) {};
			\node[vertex,label=315:{\(q^{-}_{2,3}\)}] (q-2-3-m) at ($(q-2-2-p)+(0:{\dist})$) {};
			\foreach \x/\y in {q-4-2-m/p-1-2-p,q-4-4-m/p-1-4-p}{
				\begin{scope}
					\clip ($(\x)+(-0.2,-1)$) -- ($(\y)+(-0.2,1)$) --($(\y)+(5,1)$) -- ($(\x)+(5,-1)$) -- cycle;
					\draw[directededge,out=260,in=100] (\x) to (\y);
				\end{scope}
				\begin{scope}
					\clip ($(\x)+(-0.2,-1)$) -- ($(\y)+(-0.2,1)$) --($(\y)+(-5,1)$) -- ($(\x)+(-5,-1)$) -- cycle;
					\draw[directededge,out=260,in=100,dashed] (\x) to (\y);
				\end{scope}
			}
			\draw[directededge,line width=2.5,color=red,out=260,in=100,dashed] (q-4-1-p) to (p-1-1-m);
			\draw[directededge,line width=2.5,color=red,out=260,in=100,dashed] (q-4-3-m) to (p-1-3-p);
			\foreach \i in {1,2,3,4}{

					
					\draw[directededge] (p-\i-4-p) to (q-\i-4-m);
					\draw[directededge] (p-\i-3-m) to (p-\i-4-p);
					\draw[directededge] (q-\i-4-m) to (q-\i-3-p);
			}
			
			\draw[directededge,line width=2.5,color=red] (p-2-1-m) to (p-2-2-p);
			\draw[directededge,line width=2.5,color=red] (p-2-2-p) to (p-2-2-m);
			\draw[directededge,line width=2.5,color=red] (q-2-2-m) to (q-2-1-p);
			\draw[directededge,line width=2.5,color=red] (q-2-2-p) to (q-2-2-m);
			\draw[directededge,line width=2.5,color=red] (p-1-1-m) to (q-1-1-p);
			\draw[directededge,line width=2.5,color=red] (p-3-1-m) to (q-3-1-p);
			\draw[directededge,line width=2.5,color=red] (p-4-1-m) to (q-4-1-p);
			\draw[directededge] (p-2-1-m) to (q-2-1-p);
			
			\draw[directededge] (p-1-1-m) to (p-1-2-p);
			\draw[directededge] (p-1-2-p) to (p-1-2-m);
			\draw[directededge] (p-1-2-m) to (p-1-3-p);
			\draw[directededge,line width=2.5,color=red] (p-1-3-p) to (p-1-3-m);
			
			\draw[directededge] (q-1-2-m) to (q-1-1-p);
			\draw[directededge] (q-1-2-p) to (q-1-2-m);
			\draw[directededge] (q-1-3-m) to (q-1-2-p);
			\draw[directededge,line width=2.5,color=red] (q-1-3-p) to (q-1-3-m);
			
			\draw[directededge] (p-2-2-m) to (p-2-3-p);
			\draw[directededge,line width=2.5,color=red] (p-2-3-p) to (p-2-3-m);
			
			\draw[directededge,line width=2.5,color=red] (q-2-3-m) to (q-2-2-p);
			\draw[directededge,line width=2.5,color=red] (q-2-3-p) to (q-2-3-m);
			
			\draw[directededge] (p-3-1-m) to (p-3-2-p);
			\draw[directededge] (p-3-2-p) to (p-3-2-m);
			\draw[directededge] (p-3-2-m) to (p-3-3-p);
			\draw[directededge] (p-3-3-p) to (p-3-3-m);
			
			\draw[directededge] (q-3-2-m) to (q-3-1-p);
			\draw[directededge] (q-3-2-p) to (q-3-2-m);
			\draw[directededge] (q-3-3-m) to (q-3-2-p);
			\draw[directededge] (q-3-3-p) to (q-3-3-m);
			
			\draw[directededge,line width=2.5,color=red] (p-4-1-m) to (p-4-2-p);
			\draw[directededge,line width=2.5,color=red] (p-4-2-p) to (p-4-2-m);
			\draw[directededge,line width=2.5,color=red] (p-4-2-m) to (p-4-3-p);
			\draw[directededge,line width=2.5,color=red] (p-4-3-p) to (p-4-3-m);
			
			\draw[directededge] (q-4-2-m) to (q-4-1-p);
			\draw[directededge] (q-4-2-p) to (q-4-2-m);
			\draw[directededge] (q-4-3-m) to (q-4-2-p);
			\draw[directededge,line width=2.5,color=red] (q-4-3-p) to (q-4-3-m);
			
			\foreach[evaluate={\j=int(\i+1);}] \i in {1,2,3}{
				\draw[directededge,line width=2.5, color=red] (q-\i-1-p) to (p-\j-1-m);
				\draw[directededge] (q-\i-2-m) to (p-\j-2-p);
				\draw[directededge] (q-\i-4-m) to (p-\j-4-p);
				}
			\draw[directededge,line width=2.5, color=red] (q-1-3-m) to (p-2-3-p);
			\draw[directededge] (q-2-3-m) to (p-3-3-p);
			\draw[directededge] (q-3-3-m) to (p-4-3-p);			
					
			\draw[directededge,line width=2.5,color=red] (p-2-2-m) to (q-2-2-p);
			\draw[directededge] (p-1-2-m) to (q-1-2-p);
			\draw[directededge] (p-3-2-m) to (q-3-2-p);
			\draw[directededge] (p-4-2-m) to (q-4-2-p);
			
			\draw[directededge,line width=2.5,color=red] (p-2-3-m) to (q-2-3-p);
			\draw[directededge,line width=2.5,color=red] (p-1-3-m) to (q-1-3-p);
			\draw[directededge] (p-3-3-m) to (q-3-3-p);
			\draw[directededge,line width=2.5,color=red] (p-4-3-m) to (q-4-3-p);

		\end{tikzpicture}\hskip -2.5cm
		\begin{tikzpicture}[scale=0.6, label distance=-2mm]
			\def\dist{1.5}
			\node (centre) at (0,0) {};
			\node[vertex] (p-1-1-m) at ($(0,0)$) {};
			\node[vertex] (p-2-1-m) at ($(p-1-1-m)+(270:{2*\dist})$) {};
			\node[vertex] (p-3-1-m) at ($(p-2-1-m)+(270:{2*\dist})$) {};
			\node[vertex] (p-4-1-m) at ($(p-3-1-m)+(270:{2*\dist})$) {};
			
			\node[vertex] (q-1-1-p) at ($(p-1-1-m)+(270:{\dist})$) {};
			\node[vertex] (q-2-1-p) at ($(q-1-1-p)+(270:{2*\dist})$) {};
			\node[vertex] (q-3-1-p) at ($(q-2-1-p)+(270:{2*\dist})$) {};
			\node[vertex] (q-4-1-p) at ($(q-3-1-p)+(270:{2*\dist})$) {};
			
			\foreach \i in {1,2,3,4}
			{
				\node[vertex] (p-\i-2-p) at ($(p-\i-1-m)+(0:{\dist})$) {};
				\node[vertex] (p-\i-2-m) at ($(p-\i-2-p)+(0:{\dist})$) {};
				\node[vertex] (p-\i-3-p) at ($(p-\i-2-m)+(0:{\dist})$) {};
				\node[vertex] (p-\i-3-m) at ($(p-\i-3-p)+(0:{\dist})$) {};
				\node[vertex] (p-\i-4-p) at ($(p-\i-3-m)+(0:{\dist})$) {};
			}
			\foreach \i in {1,2,3,4}
			{
				\node[vertex] (q-\i-2-m) at ($(q-\i-1-p)+(0:{\dist})$) {};
				\node[vertex] (q-\i-2-p) at ($(q-\i-2-m)+(0:{\dist})$) {};
				\node[vertex] (q-\i-3-m) at ($(q-\i-2-p)+(0:{\dist})$) {};
				\node[vertex] (q-\i-3-p) at ($(q-\i-3-m)+(0:{\dist})$) {};
				\node[vertex] (q-\i-4-m) at ($(q-\i-3-p)+(0:{\dist})$) {};
			}
			
			\node[vertex,color=red,label=315:{\(q^{+}_{2,2}\)}] (q-2-2-p) at ($(q-2-2-m)+(0:{\dist})$) {};
			\node[vertex,label=315:{\(p^{-}_{3,2}\)}] (p-3-2-m) at ($(p-3-2-p)+(0:{\dist})$) {};
			\node[vertex,label=315:{\(q^{+}_{3,2}\)}] (q-3-2-p) at ($(q-3-2-m)+(0:{\dist})$) {};
			
			\node[vertex,label=315:{\(q^{-}_{2,4}\)}] (q-2-4-p) at ($(q-2-3-p)+(0:{\dist})$) {};
			\node[vertex,label=315:{\(p^{+}_{3,4}\)}] (p-3-4-m) at ($(p-3-3-m)+(0:{\dist})$) {};
			\node[vertex,color=red,label=315:{\(q^{-}_{3,4}\)}] (q-3-4-p) at ($(q-3-3-p)+(0:{\dist})$) {};
			
			\foreach \x/\y in {q-4-1-p/p-1-1-m,q-4-3-m/p-1-3-p}{
				\begin{scope}
					\clip ($(\x)+(-0.2,-1)$) -- ($(\y)+(-0.2,1)$) --($(\y)+(5,1)$) -- ($(\x)+(5,-1)$) -- cycle;
					\draw[directededge,out=260,in=100] (\x) to (\y);
				\end{scope}
				\begin{scope}
					\clip ($(\x)+(-0.2,-1)$) -- ($(\y)+(-0.2,1)$) --($(\y)+(-5,1)$) -- ($(\x)+(-5,-1)$) -- cycle;
					\draw[directededge,out=260,in=100,dashed] (\x) to (\y);
				\end{scope}
			}
			\draw[directededge,line width=2.5,color=red,out=260,in=100,dashed] (q-4-2-m) to (p-1-2-p);
			\draw[directededge,line width=2.5,color=red,out=260,in=100,dashed] (q-4-4-m) to (p-1-4-p);
			
			\draw[directededge] (p-1-1-m) to (p-1-2-p);
			\draw[directededge] (p-2-1-m) to (p-2-2-p);
			\draw[directededge,line width=2.5,color=red] (p-2-2-p) to (p-2-2-m);
			\draw[directededge] (q-2-2-m) to (q-2-1-p);

			\draw[directededge] (p-1-1-m) to (q-1-1-p);
			\draw[directededge] (p-3-1-m) to (q-3-1-p);
			\draw[directededge] (p-4-1-m) to (q-4-1-p);
			\draw[directededge] (p-2-1-m) to (q-2-1-p);
			\draw[directededge] (q-1-1-p) to (p-2-1-m);
			\draw[directededge] (q-3-1-p) to (p-4-1-m);
			\draw[directededge] (q-2-1-p) to (p-3-1-m);
			
			\draw[directededge] (p-1-1-m) to (p-1-2-p);
			\draw[directededge,line width=2.5,color=red] (p-1-2-p) to (p-1-2-m);
			\draw[directededge] (p-1-2-m) to (p-1-3-p);
			\draw[directededge] (p-1-3-p) to (p-1-3-m);
			\draw[directededge] (p-1-3-m) to (p-1-4-p);
			
			\draw[directededge] (q-1-2-m) to (q-1-1-p);
			\draw[directededge,line width=2.5,color=red] (q-1-2-p) to (q-1-2-m);
			\draw[directededge] (q-1-3-m) to (q-1-2-p);
			\draw[directededge] (q-1-3-p) to (q-1-3-m);
			\draw[directededge] (q-1-4-m) to (q-1-3-p);
			
			\draw[directededge] (p-2-2-m) to (p-2-3-p);
			\draw[directededge] (p-2-3-p) to (p-2-3-m);
			\draw[directededge] (p-2-3-m) to (p-2-4-p);
			
			\draw[directededge,line width=2.5,color=red] (q-2-2-p) to (q-2-2-m);
			\draw[directededge,line width=2.5,color=red] (q-2-3-m) to (q-2-2-p);
			\draw[directededge,line width=2.5,color=red] (q-2-3-p) to (q-2-3-m);
			\draw[directededge,line width=2.5,color=red] (q-2-4-m) to (q-2-3-p);
			
			\draw[directededge] (p-3-1-m) to (p-3-2-p);
			\draw[directededge,line width=2.5,color=red] (p-3-2-p) to (p-3-2-m);
			\draw[directededge,line width=2.5,color=red] (p-3-2-m) to (p-3-3-p);
			\draw[directededge,line width=2.5,color=red] (p-3-3-p) to (p-3-3-m);
			\draw[directededge,line width=2.5,color=red] (p-3-3-m) to (p-3-4-p);
			
			\draw[directededge] (q-3-2-m) to (q-3-1-p);
			\draw[directededge,line width=2.5,color=red] (q-3-2-p) to (q-3-2-m);
			\draw[directededge,line width=2.5,color=red] (q-3-3-m) to (q-3-2-p);
			\draw[directededge,line width=2.5,color=red] (q-3-3-p) to (q-3-3-m);
			\draw[directededge,line width=2.5,color=red] (q-3-4-m) to (q-3-3-p);
			
			\draw[directededge] (p-4-1-m) to (p-4-2-p);
			\draw[directededge,line width=2.5,color=red] (p-4-2-p) to (p-4-2-m);
			\draw[directededge] (p-4-2-m) to (p-4-3-p);
			\draw[directededge] (p-4-3-p) to (p-4-3-m);
			\draw[directededge] (p-4-3-m) to (p-4-4-p);
			
			\draw[directededge] (q-4-2-m) to (q-4-1-p);
			\draw[directededge,line width=2.5,color=red] (q-4-2-p) to (q-4-2-m);
			\draw[directededge] (q-4-3-m) to (q-4-2-p);
			\draw[directededge] (q-4-3-p) to (q-4-3-m);
			\draw[directededge] (q-4-4-m) to (q-4-3-p);

			\draw[directededge] (q-1-3-m) to (p-2-3-p);
			\draw[directededge] (q-2-3-m) to (p-3-3-p);
			\draw[directededge] (q-3-3-m) to (p-4-3-p);

			\draw[directededge,line width=2.5,color=red] (p-2-2-m) to (q-2-2-p);
			\draw[directededge,line width=2.5,color=red] (p-1-2-m) to (q-1-2-p);
			\draw[directededge] (p-3-2-m) to (q-3-2-p);
			\draw[directededge,line width=2.5,color=red] (p-4-2-m) to (q-4-2-p);
			\draw[directededge,line width=2.5,color=red] (q-2-2-m) to (p-3-2-p);
			\draw[directededge,line width=2.5,color=red] (q-1-2-m) to (p-2-2-p);
			\draw[directededge,line width=2.5,color=red] (q-3-2-m) to (p-4-2-p);
			
			\draw[directededge] (p-2-3-m) to (q-2-3-p);
			\draw[directededge] (p-1-3-m) to (q-1-3-p);
			\draw[directededge] (p-3-3-m) to (q-3-3-p);
			\draw[directededge] (p-4-3-m) to (q-4-3-p);
			
			\draw[directededge] (q-2-4-p) to (p-3-4-m);
			\draw[directededge,line width=2.5,color=red] (q-1-4-m) to (p-2-4-p);
			\draw[directededge,line width=2.5,color=red] (q-3-4-m) to (p-4-4-p);
			\draw[directededge,line width=2.5,color=red] (p-1-4-p) to (q-1-4-m);
			\draw[directededge,line width=2.5,color=red] (p-2-4-p) to (q-2-4-m);
			\draw[directededge,line width=2.5,color=red] (p-3-4-p) to (q-3-4-m);
			\draw[directededge,line width=2.5,color=red] (p-4-4-p) to (q-4-4-m);

			\end{tikzpicture} \vskip -2cm
			
			\begin{tikzpicture}[scale=0.6, label distance=-2mm]
			\def\dist{1.5}
			\node (centre) at (0,0) {};
			\node[vertex] (p-1-1-m) at ($(0,0)$) {};
			\node[vertex,label=315:{\(p^{-}_{2,1}\)}] (p-2-1-m) at ($(p-1-1-m)+(270:{2*\dist})$) {};
			\node[vertex,color=red,label=315:{\(p^{-}_{3,1}\)}] (p-3-1-m) at ($(p-2-1-m)+(270:{2*\dist})$) {};
			\node[vertex] (p-4-1-m) at ($(p-3-1-m)+(270:{2*\dist})$) {};
			
			\node[vertex] (q-1-1-p) at ($(p-1-1-m)+(270:{\dist})$) {};
			\node[vertex,label=315:{\(q^{+}_{2,1}\)}] (q-2-1-p) at ($(q-1-1-p)+(270:{2*\dist})$) {};
			\node[vertex] (q-3-1-p) at ($(q-2-1-p)+(270:{2*\dist})$) {};
			\node[vertex] (q-4-1-p) at ($(q-3-1-p)+(270:{2*\dist})$) {};
			
			\foreach \i in {1,2,3,4}
			{
				\node[vertex] (p-\i-2-p) at ($(p-\i-1-m)+(0:{\dist})$) {};
				\node[vertex] (p-\i-2-m) at ($(p-\i-2-p)+(0:{\dist})$) {};
				\node[vertex] (p-\i-3-p) at ($(p-\i-2-m)+(0:{\dist})$) {};
				\node[vertex] (p-\i-3-m) at ($(p-\i-3-p)+(0:{\dist})$) {};
				\node[vertex] (p-\i-4-p) at ($(p-\i-3-m)+(0:{\dist})$) {};
			}
			\foreach \i in {1,2,3,4}
			{
				\node[vertex] (q-\i-2-m) at ($(q-\i-1-p)+(0:{\dist})$) {};
				\node[vertex] (q-\i-2-p) at ($(q-\i-2-m)+(0:{\dist})$) {};
				\node[vertex] (q-\i-3-m) at ($(q-\i-2-p)+(0:{\dist})$) {};
				\node[vertex] (q-\i-3-p) at ($(q-\i-3-m)+(0:{\dist})$) {};
				\node[vertex] (q-\i-4-m) at ($(q-\i-3-p)+(0:{\dist})$) {};
			}
			\node[vertex,color=red,label=315:{\(p^{+}_{2,3}\)}] (p-2-3-p) at ($(p-2-2-m)+(0:{\dist})$) {};
			\node[vertex,label=315:{\(p^{+}_{3,3}\)}] (p-3-3-p) at ($(p-3-2-m)+(0:{\dist})$) {};
			\node[vertex,label=315:{\(q^{-}_{2,3}\)}] (q-2-3-m) at ($(q-2-2-p)+(0:{\dist})$) {};
			
			\foreach \x/\y in {q-4-2-m/p-1-2-p,q-4-4-m/p-1-4-p}{
				\begin{scope}
					\clip ($(\x)+(-0.2,-1)$) -- ($(\y)+(-0.2,1)$) --($(\y)+(5,1)$) -- ($(\x)+(5,-1)$) -- cycle;
					\draw[directededge,out=260,in=100] (\x) to (\y);
				\end{scope}
				\begin{scope}
					\clip ($(\x)+(-0.2,-1)$) -- ($(\y)+(-0.2,1)$) --($(\y)+(-5,1)$) -- ($(\x)+(-5,-1)$) -- cycle;
					\draw[directededge,out=260,in=100,dashed] (\x) to (\y);
				\end{scope}
			}
			\draw[directededge,line width=2.5,color=red,out=260,in=100,dashed] (q-4-1-p) to (p-1-1-m);
			\draw[directededge,line width=2.5,color=red,out=260,in=100,dashed] (q-4-3-m) to (p-1-3-p);

			\draw[directededge] (p-1-1-m) to (p-1-2-p);
			\draw[directededge,line width=2.5,color=red] (p-2-1-m) to (p-2-2-p);
			\draw[directededge,line width=2.5,color=red] (p-2-2-p) to (p-2-2-m);
			\draw[directededge,line width=2.5,color=red] (q-2-2-m) to (q-2-1-p);

			\draw[directededge,line width=2.5,color=red] (p-1-1-m) to (q-1-1-p);
			\draw[directededge,line width=2.5,color=red] (p-3-1-m) to (q-3-1-p);
			\draw[directededge,line width=2.5,color=red] (p-4-1-m) to (q-4-1-p);
			\draw[directededge] (p-2-1-m) to (q-2-1-p);
			\draw[directededge,line width=2.5,color=red] (q-1-1-p) to (p-2-1-m);
			\draw[directededge,line width=2.5,color=red] (q-3-1-p) to (p-4-1-m);
			\draw[directededge,line width=2.5,color=red] (q-2-1-p) to (p-3-1-m);
			
			\draw[directededge] (p-1-1-m) to (p-1-2-p);
			\draw[directededge] (p-1-2-p) to (p-1-2-m);
			\draw[directededge] (p-1-2-m) to (p-1-3-p);
			\draw[directededge,line width=2.5,color=red] (p-1-3-p) to (p-1-3-m);
			\draw[directededge] (p-1-3-m) to (p-1-4-p);
			
			\draw[directededge] (q-1-2-m) to (q-1-1-p);
			\draw[directededge] (q-1-2-p) to (q-1-2-m);
			\draw[directededge] (q-1-3-m) to (q-1-2-p);
			\draw[directededge,line width=2.5,color=red] (q-1-3-p) to (q-1-3-m);
			\draw[directededge] (q-1-4-m) to (q-1-3-p);
			
			\draw[directededge,line width=2.5,color=red] (p-2-2-m) to (p-2-3-p);
			\draw[directededge,line width=2.5,color=red] (p-2-3-p) to (p-2-3-m);
			\draw[directededge] (p-2-3-m) to (p-2-4-p);
			
			\draw[directededge,line width=2.5,color=red] (q-2-2-p) to (q-2-2-m);
			\draw[directededge,line width=2.5,color=red] (q-2-3-m) to (q-2-2-p);
			\draw[directededge,line width=2.5,color=red] (q-2-3-p) to (q-2-3-m);
			\draw[directededge] (q-2-4-m) to (q-2-3-p);
			
			\draw[directededge,line width=2.5,color=red] (p-3-1-m) to (p-3-2-p);
			\draw[directededge,line width=2.5,color=red] (p-3-2-p) to (p-3-2-m);
			\draw[directededge,line width=2.5,color=red] (p-3-2-m) to (p-3-3-p);
			\draw[directededge,line width=2.5,color=red] (p-3-3-p) to (p-3-3-m);
			\draw[directededge] (p-3-3-m) to (p-3-4-p);
			
			\draw[directededge] (q-3-2-m) to (q-3-1-p);
			\draw[directededge] (q-3-2-p) to (q-3-2-m);
			\draw[directededge] (q-3-3-m) to (q-3-2-p);
			\draw[directededge,line width=2.5,color=red] (q-3-3-p) to (q-3-3-m);
			\draw[directededge] (q-3-4-m) to (q-3-3-p);
			
			\draw[directededge] (p-4-1-m) to (p-4-2-p);
			\draw[directededge] (p-4-2-p) to (p-4-2-m);
			\draw[directededge] (p-4-2-m) to (p-4-3-p);
			\draw[directededge,line width=2.5,color=red] (p-4-3-p) to (p-4-3-m);
			\draw[directededge] (p-4-3-m) to (p-4-4-p);
			
			\draw[directededge] (q-4-2-m) to (q-4-1-p);
			\draw[directededge] (q-4-2-p) to (q-4-2-m);
			\draw[directededge] (q-4-3-m) to (q-4-2-p);
			\draw[directededge,line width=2.5,color=red] (q-4-3-p) to (q-4-3-m);
			\draw[directededge] (q-4-4-m) to (q-4-3-p);

			\draw[directededge,line width=2.5,color=red] (q-1-3-m) to (p-2-3-p);
			\draw[directededge] (q-2-3-m) to (p-3-3-p);
			\draw[directededge,line width=2.5,color=red] (q-3-3-m) to (p-4-3-p);

			\draw[directededge] (p-2-2-m) to (q-2-2-p);
			\draw[directededge] (p-1-2-m) to (q-1-2-p);
			\draw[directededge] (p-3-2-m) to (q-3-2-p);
			\draw[directededge] (p-4-2-m) to (q-4-2-p);
			\draw[directededge] (q-2-2-m) to (p-3-2-p);
			\draw[directededge] (q-1-2-m) to (p-2-2-p);
			\draw[directededge] (q-3-2-m) to (p-4-2-p);
			
			\draw[directededge,line width=2.5,color=red] (p-2-3-m) to (q-2-3-p);
			\draw[directededge,line width=2.5,color=red] (p-1-3-m) to (q-1-3-p);
			\draw[directededge,line width=2.5,color=red] (p-3-3-m) to (q-3-3-p);
			\draw[directededge,line width=2.5,color=red] (p-4-3-m) to (q-4-3-p);
			
			\draw[directededge] (q-2-4-p) to (p-3-4-m);
			\draw[directededge] (q-1-4-m) to (p-2-4-p);
			\draw[directededge] (q-3-4-m) to (p-4-4-p);
			\draw[directededge] (p-1-4-p) to (q-1-4-m);
			\draw[directededge] (p-2-4-p) to (q-2-4-m);
			\draw[directededge] (p-3-4-p) to (q-3-4-m);
			\draw[directededge] (p-4-4-p) to (q-4-4-m);
			\end{tikzpicture} \hskip -2.5cm
			\begin{tikzpicture}[scale=0.6, label distance=-2mm]
			\def\dist{1.5}
			\node (centre) at (0,0) {};
			\node[vertex] (p-1-1-m) at ($(0,0)$) {};
			\node[vertex] (p-2-1-m) at ($(p-1-1-m)+(270:{2*\dist})$) {};
			\node[vertex] (p-3-1-m) at ($(p-2-1-m)+(270:{2*\dist})$) {};
			\node[vertex] (p-4-1-m) at ($(p-3-1-m)+(270:{2*\dist})$) {};
			
			\node[vertex] (q-1-1-p) at ($(p-1-1-m)+(270:{\dist})$) {};
			\node[vertex] (q-2-1-p) at ($(q-1-1-p)+(270:{2*\dist})$) {};
			\node[vertex] (q-3-1-p) at ($(q-2-1-p)+(270:{2*\dist})$) {};
			\node[vertex] (q-4-1-p) at ($(q-3-1-p)+(270:{2*\dist})$) {};
			
			\foreach \i in {1,2,3,4}
			{
				\node[vertex] (p-\i-2-p) at ($(p-\i-1-m)+(0:{\dist})$) {};
				\node[vertex] (p-\i-2-m) at ($(p-\i-2-p)+(0:{\dist})$) {};
				\node[vertex] (p-\i-3-p) at ($(p-\i-2-m)+(0:{\dist})$) {};
				\node[vertex] (p-\i-3-m) at ($(p-\i-3-p)+(0:{\dist})$) {};
				\node[vertex] (p-\i-4-p) at ($(p-\i-3-m)+(0:{\dist})$) {};
			}
			\foreach \i in {1,2,3,4}
			{
				\node[vertex] (q-\i-2-m) at ($(q-\i-1-p)+(0:{\dist})$) {};
				\node[vertex] (q-\i-2-p) at ($(q-\i-2-m)+(0:{\dist})$) {};
				\node[vertex] (q-\i-3-m) at ($(q-\i-2-p)+(0:{\dist})$) {};
				\node[vertex] (q-\i-3-p) at ($(q-\i-3-m)+(0:{\dist})$) {};
				\node[vertex] (q-\i-4-m) at ($(q-\i-3-p)+(0:{\dist})$) {};
			}
			
			\node[vertex,label=315:{\(p^{-}_{2,2}\)}] (p-2-2-m) at ($(p-2-2-p)+(0:{\dist})$) {};
			\node[vertex,color=red,label=315:{\(p^{+}_{2,3}\)}] (p-2-3-p) at ($(p-2-2-m)+(0:{\dist})$) {};
			\node[vertex,label=315:{\(q^{+}_{3,2}\)}] (q-3-2-p) at ($(q-3-2-m)+(0:{\dist})$) {};
			\node[vertex,label=315:{\(q^{-}_{1,3}\)}] (q-1-3-m) at ($(q-1-2-p)+(0:{\dist})$) {};
			
			\node[vertex,label=315:{\(q^{-}_{1,4}\)}] (q-1-4-p) at ($(q-1-3-p)+(0:{\dist})$) {};
			\node[vertex,label=315:{\(p^{+}_{2,4}\)}] (p-2-4-m) at ($(p-2-3-m)+(0:{\dist})$) {};
			\node[vertex,color=red,label=315:{\(q^{-}_{3,4}\)}] (q-3-4-p) at ($(q-3-3-p)+(0:{\dist})$) {};
			
			\foreach \x/\y in {q-4-1-p/p-1-1-m,q-4-3-m/p-1-3-p}{
				\begin{scope}
					\clip ($(\x)+(-0.2,-1)$) -- ($(\y)+(-0.2,1)$) --($(\y)+(5,1)$) -- ($(\x)+(5,-1)$) -- cycle;
					\draw[directededge,out=260,in=100] (\x) to (\y);
				\end{scope}
				\begin{scope}
					\clip ($(\x)+(-0.2,-1)$) -- ($(\y)+(-0.2,1)$) --($(\y)+(-5,1)$) -- ($(\x)+(-5,-1)$) -- cycle;
					\draw[directededge,out=260,in=100,dashed] (\x) to (\y);
				\end{scope}
			}
			\draw[directededge,line width=2.5,color=red,out=260,in=100,dashed] (q-4-2-m) to (p-1-2-p);
			\draw[directededge,line width=2.5,color=red,out=260,in=100,dashed] (q-4-4-m) to (p-1-4-p);
			
			\draw[directededge] (p-1-1-m) to (p-1-2-p);
			\draw[directededge] (p-2-1-m) to (p-2-2-p);
			\draw[directededge,line width=2.5,color=red] (p-2-2-p) to (p-2-2-m);
			\draw[directededge] (q-2-2-m) to (q-2-1-p);

			\draw[directededge] (p-1-1-m) to (q-1-1-p);
			\draw[directededge] (p-3-1-m) to (q-3-1-p);
			\draw[directededge] (p-4-1-m) to (q-4-1-p);
			\draw[directededge] (p-2-1-m) to (q-2-1-p);
			\draw[directededge] (q-1-1-p) to (p-2-1-m);
			\draw[directededge] (q-3-1-p) to (p-4-1-m);
			\draw[directededge] (q-2-1-p) to (p-3-1-m);
			
			\draw[directededge] (p-1-1-m) to (p-1-2-p);
			\draw[directededge,line width=2.5,color=red] (p-1-2-p) to (p-1-2-m);
			\draw[directededge] (p-1-2-m) to (p-1-3-p);
			\draw[directededge] (p-1-3-p) to (p-1-3-m);
			\draw[directededge] (p-1-3-m) to (p-1-4-p);
			
			\draw[directededge] (q-1-2-m) to (q-1-1-p);
			\draw[directededge,line width=2.5,color=red] (q-1-2-p) to (q-1-2-m);
			\draw[directededge] (q-1-3-m) to (q-1-2-p);
			\draw[directededge,line width=2.5,color=red] (q-1-3-p) to (q-1-3-m);
			\draw[directededge,line width=2.5,color=red] (q-1-4-m) to (q-1-3-p);
			
			\draw[directededge,line width=2.5,color=red] (p-2-2-m) to (p-2-3-p);
			\draw[directededge,line width=2.5,color=red] (p-2-3-p) to (p-2-3-m);
			\draw[directededge,line width=2.5,color=red] (p-2-3-m) to (p-2-4-p);
			
			\draw[directededge] (q-2-2-p) to (q-2-2-m);
			\draw[directededge] (q-2-3-m) to (q-2-2-p);
			\draw[directededge] (q-2-3-p) to (q-2-3-m);
			\draw[directededge] (q-2-4-m) to (q-2-3-p);
			
			\draw[directededge] (p-3-1-m) to (p-3-2-p);
			\draw[directededge] (p-3-2-p) to (p-3-2-m);
			\draw[directededge] (p-3-2-m) to (p-3-3-p);
			\draw[directededge] (p-3-3-p) to (p-3-3-m);
			\draw[directededge] (p-3-3-m) to (p-3-4-p);
			
			\draw[directededge] (q-3-2-m) to (q-3-1-p);
			\draw[directededge,line width=2.5,color=red] (q-3-2-p) to (q-3-2-m);
			\draw[directededge,line width=2.5,color=red] (q-3-3-m) to (q-3-2-p);
			\draw[directededge,line width=2.5,color=red] (q-3-3-p) to (q-3-3-m);
			\draw[directededge,line width=2.5,color=red] (q-3-4-m) to (q-3-3-p);
			
			\draw[directededge] (p-4-1-m) to (p-4-2-p);
			\draw[directededge,line width=2.5,color=red] (p-4-2-p) to (p-4-2-m);
			\draw[directededge] (p-4-2-m) to (p-4-3-p);
			\draw[directededge] (p-4-3-p) to (p-4-3-m);
			\draw[directededge] (p-4-3-m) to (p-4-4-p);
			
			\draw[directededge] (q-4-2-m) to (q-4-1-p);
			\draw[directededge,line width=2.5,color=red] (q-4-2-p) to (q-4-2-m);
			\draw[directededge] (q-4-3-m) to (q-4-2-p);
			\draw[directededge] (q-4-3-p) to (q-4-3-m);
			\draw[directededge] (q-4-4-m) to (q-4-3-p);

			\draw[directededge,line width=2.5,color=red] (q-1-3-m) to (p-2-3-p);
			\draw[directededge] (q-2-3-m) to (p-3-3-p);
			\draw[directededge] (q-3-3-m) to (p-4-3-p);

			\draw[directededge] (p-2-2-m) to (q-2-2-p);
			\draw[directededge,line width=2.5,color=red] (p-1-2-m) to (q-1-2-p);
			\draw[directededge] (p-3-2-m) to (q-3-2-p);
			\draw[directededge,line width=2.5,color=red] (p-4-2-m) to (q-4-2-p);
			\draw[directededge] (q-2-2-m) to (p-3-2-p);
			\draw[directededge,line width=2.5,color=red] (q-1-2-m) to (p-2-2-p);
			\draw[directededge,line width=2.5,color=red] (q-3-2-m) to (p-4-2-p);
			
			\draw[directededge] (p-2-3-m) to (q-2-3-p);
			\draw[directededge] (p-1-3-m) to (q-1-3-p);
			\draw[directededge] (p-3-3-m) to (q-3-3-p);
			\draw[directededge] (p-4-3-m) to (q-4-3-p);
			
			\draw[directededge,line width=2.5,color=red] (q-2-4-p) to (p-3-4-m);
			\draw[directededge] (q-1-4-m) to (p-2-4-p);
			\draw[directededge,line width=2.5,color=red] (q-3-4-m) to (p-4-4-p);
			\draw[directededge,line width=2.5,color=red] (p-1-4-p) to (q-1-4-m);
			\draw[directededge,line width=2.5,color=red] (p-2-4-p) to (q-2-4-m);
			\draw[directededge,line width=2.5,color=red] (p-3-4-p) to (q-3-4-m);
			\draw[directededge,line width=2.5,color=red] (p-4-4-p) to (q-4-4-m);

			\end{tikzpicture}
		\caption{The construction of onion immersion models corresponding to four different cases considered in the proof (top left, top right, bottom left, bottom right).}
		\label{fig:another-onion-in-the-wall}
	\end{figure}

	As $b \leq j \leq k-1$, it is straightforward to verify that $W_1$, $W_2$ and $W_3$ are well-defined paths.
	Moreover, a direct inspection shows that they are pairwise internally vertex-disjoint, and thus also edge-disjoint. 
	Thus, $W_1,W_2$, and $W_3$ form an onion immersion model with root $p^-_{a,b}$ and tip~$q^+_{i,j}$.
	
	Next, we argue that for each $a\neq i$ and $b>j$, we may find an onion immersion model with root $q^-_{a,b}$ and tip $q^+_{i,j}$.
	
	\begin{tabular}{ccccccccc}
		$W_1=$&$q^+_{i,j}$&$\xrightarrow{C_j}$&$p^-_{i+1,j}$&$\xrightarrow{P_{i+1}}$&$p^+_{i+1,b}$&$\xrightarrow{C_b}$&$ q^-_{a,b}$, \\
		$W_2=$&$q^-_{a,b}$&$\xrightarrow{C_b}$&$q^-_{i,b}$&$\xrightarrow{Q_i}$& $q^+_{i,j},$&\textrm{and}& \\
		$W_3=$&$q^-_{a,b}$&$\xrightarrow{Q_a}$&$q^+_{a,j}$&$\xrightarrow{C_j}$&$q^+_{i,j}$.&&
	\end{tabular}
	
	Again, as $b>j$, these paths are well-defined and a direct inspection shows that they are internally vertex-disjoint. So they constitute an onion immersion model with root root $q^-_{a,b}$ and tip $q^+_{i,j}$.
	
	
	Next, consider a vertex $p^+_{i,j}$ where $j>1$. To argue that there is an onion immersion model with tip $p^+_{i,j}$ and root $p^-_{a,b}$, for any $a\neq i$ and $b<j$,  we consider the following paths:
	
	\begin{tabular}{ccccccccc}
		$W_1=$&$p^+_{i,j}$&$\xrightarrow{C_j}$&$q^-_{i,j}$&$\xrightarrow{Q_i}$&$q^+_{i,b}$&$\xrightarrow{C_b}$&$ p^-_{a,b}$, \\
		$W_2=$&$p^-_{a,b}$&$\xrightarrow{P_a}$&$p^+_{a,j}$&$\xrightarrow{C_j}$&$p^+_{i,j}$,&\textrm{and}& \\
		$W_3=$&$p^-_{a,b}$&$\xrightarrow{C_b}$&$p^-_{i,b}$&$\xrightarrow{P_i}$& $p^+_{i,j}$.&& 
	\end{tabular}

	And to argue that there is an onion immersion model with tip $p_{i,j}^+$ and root $q^-_{a,b}$, for any $a\neq i-1$ and $b\geq j$, we consider the following paths:
	
	
	\begin{tabular}{ccccccccc}
		$W_1=$&$p^+_{i,j}$&$\xrightarrow{P_i}$&$p^+_{i,b}$&$\xrightarrow{C_b}$&$q^-_{a,b},$&& & \\
		$W_2=$&$q^-_{a,b}$&$\xrightarrow{Q_a}$&$q^+_{a,j-1}$&$\xrightarrow{C_{j-1}}$&$p^-_{i,j-1}$&$\xrightarrow{P_i}$& $p^+_{i,j},$&\textrm{and} \\
		$W_3=$&$q^-_{a,b}$&$\xrightarrow{C_b}$&$q^-_{i-1,b}$&$\xrightarrow{Q_{i-1}}$&$q^-_{i-1,j}$&$\xrightarrow{C_j}$&$p^+_{i,j}$.& 
	\end{tabular}
	
	And again, a direct inspection shows that
	in all these cases, the paths are well-defined and internally vertex-disjoint, so they give us the desired onion immersion models.
\end{proof}

\begin{proof}[Proof of \cref{lemma:wall-dtcw}]
	We set \(\thmFkt{lemma:wall-dtcw}{f}{k}=k^2+1\). The case $k=1$ is straightforward to verify, so from now on we consider only $k \geq 2$.
	For the sake of contradiction, let $(T, \beta, \gamma, \theta)$ be a directed tree-cut decomposition of $\gridImm{k^2+1}$ of width at most $k-1$. 
	Note that $\left|V\left(\gridImm{k^2+1}\right)\right| = 4(k^2+1)k^2$.
	
	First, we observe that there must exist a node $x^\star \in V(T)$ such that for every neighbour $x$ of $x^\star$, the set $\beta(\treeSplit{T}{x}{xx^\star})$ contains at most $2(k^2+1)k^2$ vertices of $\gridImm{k^2+1}$.
	Indeed, to find such a node $x^\star$ it suffices to orient every edge $e=xy$ of $T$ towards $x$ or $y$ depending on which of the sets $\beta(T^x_{xy}),\beta(T^y_{xy})$ is larger (with ties broken arbitrarily), and select $x^\star$ to be a sink in this orientation (which exists since $T$ is a tree).

	As $|\theta(x^\star)| \leq k-1$, there exist sets $\cC' \subseteq \{C_1,\ldots,C_{k^2+1}\}$, and $\cP' \subseteq \{(P_1,Q_1),\ldots,(P_{k^2+1},Q_{k^2+1})\}$, each of size $\ell\coloneqq k^2-k+2$, such that every cycle in $\cC'$, and every path in every pair in $\cP'$ is disjoint with $\theta(x^\star)$. 
	Consider the digraph $D$ obtained by taking the union of all the edge and vertex sets of all the cycles from $\cC'$ and paths from the pairs in $\cP'$, and dissolving all vertices whose indegrees and outdegrees in the resulting subdigraph are at most one. 
	(Dissolving such a vertex means either removing it, if it has in-degree $0$ or out-degree~$0$, or replacing it, together with the incident edges, by a single edge from the unique in-neighbour to the unique out-neighbour.)
	Clearly, $\gridImm{k^2+1}$ contains $D$ as an immersion and $E(D) \cap \theta(x^\star) = \emptyset$.
	So in particular, there is no onion immersion model in $D$ whose root lies in a bag placed in one of the components of $T-x^\star$ and tip in the other.
	
	We observe that $D$ is isomorphic to the cylindrical wall $\gridImm{\ell}$.
	In what follows, the sets $Q^+$, $Q^-$, $P^+$, $P^-$ are with respect to $D$, with this isomorphism in mind.
	Note that each of these sets is of size $\ell(\ell-1)$.
	Define the graph $B_\ell$ as in~\cref{lem:another-onion-in-the-wall}.
	Observe that if two vertices $u,v \in V(D)$ are adjacent in the graph $B_\ell$, there must exist a neighbour $x$ of $x^\star$ such that $u,v \in \beta(T^x_{xx^\star}) \cup \beta(x^\star)$.
	
	By \cref{lem:another-onion-in-the-wall}, any two vertices $u,v \in P^+\cup Q^+$ (resp.~$u,v \in P^- \cup Q^-$) have at least $2(\ell-2) > k-1$ common neighbours in $B_\ell$, as they must have at least $\ell-2$ common neighbours on the first cycle and at least $\ell-2$ on the last one. 
	Since $|P^+\cup Q^+| = 2\ell(\ell-1)>k-1$, there exist $u \in P^+\cup Q^+$ and a neighbour $x$ of $x^\star$ such that $u \in \beta(T^x_{xx^\star})$.
	Denote by $N(u) \subseteq P^-\cup Q^-$ the neighbourhood of $u$ in~$B_\ell$, and note that $|N(u)|\geq \ell(\ell-1)$. Clearly, $N(u) \subseteq \beta(T^x_{xx^\star}) \cup \beta(x^\star)$, and since $|N(u)| > k-1$, there must be $w \in N(u)$ that belongs to $\beta(T^x_{xx^\star})$ as well.
	
	Since every $u' \in P^+\cup Q^+$ has more than $k-1$ common neighbours with $u$ in $B_\ell$, all elements of $P^+\cup Q^+$ must belong to $\beta(T^x_{xx^\star}) \cup \beta(x^\star)$.
	Similarly, since every $w' \in P^-\cup Q^-$ has more than $k-1$ common neighbours with $w$ in $B_\ell$, all elements of $P^-\cup Q^-$ must belong to $\beta(T^x_{xx^\star}) \cup \beta(x^\star)$, and therefore $V(D) \subseteq \beta(T^x_{xx^\star}) \cup \beta(x^\star)$. As \[|V(D) \setminus \beta(x^\star)|\geq 4\ell(\ell-1)-(k-1)>2(k^2+1)k^2 \geq |\beta(T^x_{xx^\star})|,\] we get a contradiction.
\end{proof}

We can now complete the proof of~\cref{thm:imm_implies_large_dtcw}.

\ThmgridImmImplhighDTCW*
\begin{proof}
	Set \(\thmFkt{thm:imm_implies_large_dtcw}{f}{k} \coloneqq \thmFkt{lemma:wall-dtcw}{f}{k}\).
	Let \(D\) be a digraph containing an immersion model of \(\gridImm{\thmFkt{thm:imm_implies_large_dtcw}{f}{k}}\).
	By \cref{thm:immersion-monotone}, the \dtcw of \(D\) is at least that of \(\gridImm{\thmFkt{thm:imm_implies_large_dtcw}{f}{k}}\).
	Now \cref{lemma:wall-dtcw} implies that \dtcw of \(\gridImm{\thmFkt{thm:imm_implies_large_dtcw}{f}{k}}\)
	is at least \(k\).
	This ends the proof.
\end{proof}

\section{Harvesting onions}\label{sec:harvesting}

In this section we recall the key tool introduced in \cite{bozyk2022digraphs}: the Onion Harvesting Lemma. In essence, the results says from two families of edge-disjoint trails that have a large mutual intersection and all share one endpoint, one can always extract a large onion-star immersion model. We will use this statement several times.

For technical reasons, we need to strengthen a bit the result from \cite{bozyk2022digraphs}. Precisely, we will use the following formulation.

\begin{theorem}[Congested Onion Harvesting]
	\label{lem:onion_harvesting}
	There exists a function \(\thmFkt{lem:onion_harvesting}{g}{} \colon \N \times \N \to \N\) such that the following holds.
	Let \(D\) be a digraph, \(x\) be a vertex of \(D\), \(k\) and \(c\) be a positive integers,
	and let \(\mathcal{P}\) and~\(\mathcal{Q}\) be two families of trails in \(D\), each of size at least \(\thmFkt{lem:onion_harvesting}{g}{k,c}\), satisfying the following:
	\begin{itemize}
		\item \(\Start{P} = \End{Q} = x\) for all \(P \in \mathcal{P}\) and \(Q\in \mathcal{Q}\),
		\item \(\E{P} \cap \E{Q} \neq \emptyset\) for all \(P \in \mathcal{P}\) and \(Q \in \mathcal{Q}\), 
		\item $x$ only occurs once and each other vertex only occurs at most \(c\) times within every single trail \(P \in \mathcal{P}\) or \(Q \in \mathcal{Q}\), and
		\item the trails of \(\mathcal{P}\) are pairwise edge-disjoint and the trails of \(\mathcal{Q}\) are pairwise edge-disjoint.
	\end{itemize}
	Then \(D\) contains an onion-star immersion model of order \(k\) with centre \(x\).
	Moreover, there exists a fixed-parameter algorithm (with parameter $k$) to find such a model in~$D$.
\end{theorem}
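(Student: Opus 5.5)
My plan is to reduce the congested statement to the original Onion Harvesting Lemma of~\cite{bozyk2022digraphs}. That lemma is the special case $c=1$, where every trail is a path visiting each vertex at most once. The reduction is a standard vertex-splitting trick that converts trails of bounded congestion into paths in an auxiliary digraph $D'$. I then need $D'$ to immerse into $D$, so that an onion-star found in $D'$ can be transported back.

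\textbf{Construction of $D'$.} For every vertex $v\neq x$, replace $v$ by a bidirected clique (or a long enough bidirected path) on $c$ copies $v^{(1)},\dots,v^{(c)}$, with each edge of the clique having high multiplicity, say $M$ parallel copies in each direction. Every original edge $(u,v)$ becomes an edge from some copy of $u$ to some copy of $v$.

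\textbf{Routing the trails.} For a fixed trail $P$, the $j$-th visit of $P$ to $v$ should go through $v^{(j)}$. The difficulty is that an edge's endpoint copy is a single global choice, while different trails visit $v$ at different positions. I would handle this by expanding each original edge $e=(u,v)$ into a gadget: a fresh vertex $w_e$ in the middle, with edges from every copy of $u$ to $w_e$ and from $w_e$ to every copy of $v$. This changes nothing essential, since only the single original edge is shared by a $P$ and a $Q$. Each trail $P$ now becomes a path $P'$ in $D'$: the $j$-th visit to $v$ uses $v^{(j)}$ and gadget edges chosen accordingly.

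\textbf{Checking the hypotheses in $D'$.} Paths from $\mathcal P$ must remain pairwise edge-disjoint, but two trails may both use the gadget edge $u^{(j)}\to w_e$ with the same $j$. To avoid this, I keep the core edge $e$ itself as a single edge $(w_e^{\mathrm{in}},w_e^{\mathrm{out}})$. Since $e$ is used by at most one trail of $\mathcal P$, its attachment edges are also used by at most one trail of $\mathcal P$, so $\mathcal P'$ is edge-disjoint, and likewise $\mathcal Q'$. Two paths $P'$ and $Q'$ still share the core edge of the common original edge, so they intersect. All of $\mathcal P'$ start at $x$ and all of $\mathcal Q'$ end at $x$, as $x$ is not split. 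The original lemma with $g(k,c)$ taken as its bound for $k$ therefore yields an onion-star immersion model of order $k$ centred at $x$ in $D'$.

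\textbf{Projecting back to $D$.} This is where the high-multiplicity edges inside split vertices are a problem. Contracting the copies of $v$ back to $v$, and mapping gadget attachments to nothing and core edges to $e$, sends trails in $D'$ to walks in $D$. Edge-disjointness in $D'$ of the core edges gives edge-disjointness in $D$, because only core edges map to real edges. Walks can be shortcut to trails. Distinct branch vertices of the onion-star in $D'$ might, however, project to the same vertex of $D$; this is the main obstacle. My first remedy is to note that an onion-star of order $k$ has $2k+1$ branch vertices, each carrying three edge-disjoint paths to or from the centre. Choosing order $k'=k(2c+\dots)$ in $D'$ and discarding onions whose non-centre branch vertex collides (only the copies $v^{(\cdot)}$ and the gadget vertices $w_e$ can collide, and gadget vertices of degree $2$ cannot be roots or tips) would leave $k$ onions per side with distinct projections. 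A greedy selection works because each $D$-vertex has at most $c$ copies plus gadgets, giving $g(k,c)\coloneqq g_{\mathrm{orig}}(ck)$.

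\textbf{Algorithm.} The construction is polynomial, and the original lemma comes with a fixed-parameter algorithm, which yields the algorithmic claim.
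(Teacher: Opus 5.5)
Your routing step is fine. With the core edge kept single, every $P'$ and $Q'$ is a genuine path, each family stays pairwise edge-disjoint, and every $P'$ still shares an edge with every $Q'$. The gap is in projecting back.

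Your collision bound rests on the claim that gadget vertices have degree $2$ and so cannot be roots or tips. In your own construction this is false for $c\ge 2$. The vertex $w_e^{\mathrm{in}}$ has in-degree $c$ and out-degree $1$, so it can be the tip of an onion: two edge-disjoint paths enter it from two different copies of $u$, and the return path leaves through the core edge. Symmetrically, $w_e^{\mathrm{out}}$ can be a root. This is not specific to your gadget. Wherever the route of $P_e$ and the route of $Q_e$ merge before the common edge $e=(u,v)$, the merge vertex has in-degree at least $2$ and out-degree at least $1$, and it projects to $u$. There is one such vertex per out-edge of $u$. So the number of vertices of $D'$ that can act as tips and project onto a single vertex of $D$ is unbounded, and the greedy choice giving $g(k,c)=g_{\mathrm{orig}}(ck)$ does not work.

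The collapse can in fact be total. Take any digraph satisfying the hypotheses and glue to it, at $x$, a vertex $u$ with $2K$ parallel edges from $x$, plus edges $e_i=(u,v_i)$ and $(v_i,x)$ for $i\in[K]$.
\begin{itemize}
\item In $D'$, for each $i$, enter $w_{e_i}^{\mathrm{in}}$ once via $u^{(1)}$ and once via $u^{(2)}$, using distinct parallel $x$--$u$ edges, and return through $e_i$ and $(v_i,x)$.
\item This gives an out-onion-star of order $K$ centred at $x$ all of whose tips project to $u$.
\item Combined with the in-onions of an onion-star of $D'$ supported on the rest of the digraph, it is a valid onion-star of order $K$ in $D'$.
\item Yet the glued part contains no out-onion-star of order $2$ at all, since each $v_i$ has in-degree $1$.
\end{itemize}
You use the original lemma as a black box, so nothing prevents it from returning exactly this onion-star. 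No discarding then recovers $k$ distinct tips.

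A fix along your lines would have to bound the number of potential roots and tips of $D'$ lying over each vertex of $D$. That seems incompatible with letting $P'$ and $Q'$ share an edge while arriving from different copies. The paper instead reopens the proof of the original lemma. It observes that path-ness is used only in the auxiliary Lemma~3.2 of that proof and re-proves that lemma for trails (\cref{lem:single-out}). The only real change is that each trail may contain up to $c$ arcs whose tail is the new tip $y$. Hence one keeps $(c+1)k$ candidate trails of $\mathcal{Q}$ and can still select $k$ of them avoiding all crossings with tail $y$.
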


The variant presented in \cite{bozyk2022digraphs} is for $c=1$.
That is, each of $\mathcal{P}$ and $\mathcal{Q}$ consists of edge-disjoint paths that can visit every vertex at most once.
The formulation above allows every vertex except for $x$ to be visited at most $c$ times, where $c$ is a parameter on which the bounds can depend.
We use it for $c=2$.
In~\cref{sec:appendix} we argue that the argumentation of \cite{bozyk2022digraphs} can be easily amended to also yield~\cref{lem:onion_harvesting}. Also, the algorithmic statement is not explicitly mentioned in \cite{bozyk2022digraphs}, both follows easily from the proof; we argue this in~\cref{sec:appendix} as well. 

For convenience, we now establish a basic corollary of~\cref{lem:onion_harvesting}: To find a large onion-star immersion model it suffices to find a large out-onion-star immersion model and a large in-onion-star immersion model with coinciding centres.

\begin{lemma}
	\label{lem:two_onion_stars}
	There is a function \(\thmFkt{lem:two_onion_stars}{f}{} \colon \N \to \N\) such that the following holds.
	Let \(D\) be a digraph, \(k\) be a positive integer, and \(x \in \V{D}\) be such that there
  is an out-onion-star model immersion of order \(\thmFkt{lem:two_onion_stars}{f}{k}\)
  centred at \(x\) as well as an in-onion-star model immersion of order \(\thmFkt{lem:two_onion_stars}{f}{k}\) centred at \(x\).
	
	Then \(D\) contains an onion-star immersion model of order \(t\) centred at \(x\).
	Moreover, there exists a fixed-parameter algorithm (with parameter $k$) to find such a model in~$D$.
\end{lemma}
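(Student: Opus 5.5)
The plan is to reduce to the Congested Onion Harvesting Lemma (\cref{lem:onion_harvesting}) with $c=2$, by building two families of trails starting and ending at $x$ that pairwise share an edge. Let $N\coloneqq \thmFkt{lem:two_onion_stars}{f}{k}$, to be fixed at the end. Let the out-onion-star model have onions $Z^{\mathrm{out}}_1,\dots,Z^{\mathrm{out}}_N$ with root $x$ and tips $y_1,\dots,y_N$. Each $Z^{\mathrm{out}}_i$ consists of two $x$-$y_i$ paths and one $y_i$-$x$ path, all edge-disjoint. Similarly, the in-onion-star model has onions $Z^{\mathrm{in}}_1,\dots,Z^{\mathrm{in}}_N$ with tip $x$ and roots $z_1,\dots,z_N$. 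Each $Z^{\mathrm{in}}_j$ consists of two $z_j$-$x$ paths and one $x$-$z_j$ path. Inside each model all trails are edge-disjoint, but the two models may share edges arbitrarily.

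For each $i$, I would form the closed trail $P_i$ that goes $x\to y_i$ along one of the two forward paths of $Z^{\mathrm{out}}_i$ and returns along the back path $y_i\to x$. Call the unused forward path the \emph{spare} $S_i$. For each $j$, I would form the closed trail $Q_j$ that goes $x\to z_j$ along the forward path of $Z^{\mathrm{in}}_j$ and returns along one of its two back paths, with the other back path $R_j$ spare. Each of these trails is a concatenation of two paths, so every vertex other than $x$ occurs at most twice. The occurrences of $x$ itself can be handled by shortcutting at internal visits of $x$: this keeps only the segment from the first departure to the last return and preserves the relevant edges, so $x$ occurs only as the endpoint. Starting and ending at $x$ is harmless, since in \cref{lem:onion_harvesting} the trails $P$ start at $x$ and the trails $Q$ end at $x$. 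The families $\{P_i\}$ and $\{Q_j\}$ are each pairwise edge-disjoint.

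The remaining condition is that every $P_i$ meets every $Q_j$ in an edge, which is not automatic. I would handle it by a Ramsey step, which I expect to be the main obstacle. Colour the pairs $(i,j)$ according to whether the onion models $Z^{\mathrm{out}}_i$ and $Z^{\mathrm{in}}_j$ share an edge. By \cref{thm:BipRamsey}, we obtain large index sets $I,J$ that are either complete or anti-complete.

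In the anti-complete case, the selected out-onions and in-onions are edge-disjoint, and their union directly gives an onion-star of order $k$ centred at $x$. We only need $|I|,|J|\ge k$, and no harvesting is required.

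In the complete case, every pair shares an edge, but the shared edge may lie in a spare path rather than in $P_i$ or $Q_j$. To fix this, I would refine the colouring into finitely many colours according to which of the three constituent paths of $Z^{\mathrm{out}}_i$ meets which of the three of $Z^{\mathrm{in}}_j$. By repeated bipartite Ramsey (or \cref{thm:ColourRamsey} after a product encoding), we can pass to large $I',J'$ on which one fixed pair of constituent paths always intersects. We then choose the closed trails $P_i$, $Q_j$ so that they contain those constituents. This is always possible, since every constituent lies on some closed trail of the above form: the back path always does, and either forward path can be chosen. The result is families $\{P_i\}_{i\in I'}$ and $\{Q_j\}_{j\in J'}$ that pairwise share an edge, with congestion $c=2$. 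Applying \cref{lem:onion_harvesting} then yields an onion-star of order $k$ centred at $x$.

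Accordingly, I would set $N$ via iterated $\bRamsey$ applied to $\max(k,\thmFkt{lem:onion_harvesting}{g}{k,2})$. All steps are constructive, and the harvesting step is fixed-parameter, which gives the algorithmic claim.
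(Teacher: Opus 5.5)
You have the paper's overall structure: bipartite Ramsey on whether an out-onion and an in-onion share an edge, the anti-complete case giving the onion-star directly, and the complete case feeding \cref{lem:onion_harvesting} with $c=2$. Your refinement by which constituent paths meet (iterated bipartite Ramsey over the at most $9$ constituent pairs) is a legitimate replacement for the paper's degree-plus-pigeonhole selection of one of $T_1(O),T_2(O)$. Two steps, however, fail as written.

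First, \cref{lem:onion_harvesting} requires that $x$ occur only once in each trail, but your $P_i$ and $Q_j$ are closed at $x$, so $x$ occurs twice. Saying that starting and ending at $x$ is harmless does not address this. To meet the hypothesis you must delete the edge $\ell_i$ of $P_i$ entering $x$ and the edge $\ell'_j$ of $Q_j$ leaving $x$. A pair can then lose its only common edge. For instance, the tip-to-root path of $Z^{\mathrm{out}}_i$ and a root-to-tip path of $Z^{\mathrm{in}}_j$ may meet exactly in their common last edge into $x$, and fixing a constituent pair gives no protection against this. Handling this is exactly the role of the edges $\ell(O)$ and the auxiliary digraph $G_2$ in the paper's proof.

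The repair is short. The onions within each star are pairwise edge-disjoint, and $Z^{\mathrm{out}}_i$ (resp.\ $Z^{\mathrm{in}}_j$) has only two edges leaving (resp.\ entering) $x$. Hence the pairs $(i,j)$ with $\ell_i\in E(Z^{\mathrm{in}}_j)$ or $\ell'_j\in E(Z^{\mathrm{out}}_i)$ form a bipartite graph of maximum degree at most $3$. You can therefore pass to index sets of a quarter of the size that contain no such pair. Only then should you run your refinement, on the trimmed onions $Z^{\mathrm{out}}_i-\ell_i$ and $Z^{\mathrm{in}}_j-\ell'_j$.

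Second, in the anti-complete case edge-disjointness is not enough. An onion-star model needs its $2k$ non-centre vertices to be distinct, and a tip $y_i$ may coincide with a root $z_j$ even when the two onions are edge-disjoint; their union is then not an onion-star model. Since each $y_i$ equals at most one $z_j$, you can fix this by pre-selection, as the paper does by halving the out-star so that its tips avoid the roots of the chosen in-onions. Alternatively, require $|I|,|J|\ge 2k$; your stated requirement $|I|,|J|\ge k$ does not suffice.
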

\begin{proof}
	We set $\thmFkt{lem:two_onion_stars}{f}{k} \coloneqq 2 \cdot \bRamsey(3((\thmFkt{lem:onion_harvesting}{g}{k,2}-1)2^{8 \thmFkt{lem:onion_harvesting}{g}{k,2}}+1)).$
	
	Let $\OO'_i$ and $\OO'_o$ be the respective families of onion immersion models that make up the in-onion-star and the out-onion-star immersion models of order \(\thmFkt{lem:two_onion_stars}{f}{k}\) in $D$ that are centred at $x$.
	In what follows, we treat the elements of $\OO'_i$ and of $\OO'_o$ as subgraphs of $D$.
	Each of these subgraph is the union of three edge-disjoint trails which we can assume without loss of generality to be paths.
	
	By definition of an in-onion-star immersion, the elements of $\OO'_i$ are pairwise edge-disjoint, and the roots of the elements of $\OO'_i$ are pairwise distinct. 
	Analogously, the elements of $\OO'_o$ are pairwise edge-disjoint, and the tips of the elements of $\OO'_o$ are pairwise distinct.
	We arbitrarily choose a subfamily $\OO_i$ of \(\thmFkt{lem:two_onion_stars}{f}{k}/2=\bRamsey(3((\thmFkt{lem:onion_harvesting}{g}{k,2}-1)2^{8 \thmFkt{lem:onion_harvesting}{g}{k,2}}+1))\) elements of $\OO'_i$, and then choose a subset $\OO_o \subseteq \OO'_o$ of the same size so that the set of all root vertices of the elements in $\OO_i$ is disjoint with the set of all tip vertices of the elements in $\OO_o$. 
	
	Next, we construct a bipartite graph $G_1$ with bipartition classes $\OO_i$ and $\OO_o$ so that the elements of $\OO_i$ and $\OO_o$ are adjacent in $G_1$ if they are not edge-disjoint.
	By \cref{thm:BipRamsey}, there exist $\textsf{In} \subseteq \OO_i$ and $\textsf{Out} \subseteq \OO_o$, each of size $3((\thmFkt{lem:onion_harvesting}{g}{k,2}-1)2^{8 \thmFkt{lem:onion_harvesting}{g}{k,2}}+1)$, that are either anticomplete or complete to each other in $G_1$. 
	
	In the first case, since we have already guaranteed that the roots and tips of onion immersion models other than $x$ are pairwise different, we get that the elements of $\textsf{In}$ and $\textsf{Out}$ form an immersion model of an onion-star of order $3((\thmFkt{lem:onion_harvesting}{g}{k,2}-1)2^{8 \thmFkt{lem:onion_harvesting}{g}{k,2}}+1) \geq k$ in $D$ centred at $x$.
	
	Therefore, assume we have $\textsf{In} \subseteq \OO_i$ and $\textsf{Out} \subseteq \OO_o$, each of size $3((\thmFkt{lem:onion_harvesting}{g}{k,2}-1)2^{8 \thmFkt{lem:onion_harvesting}{g}{k,2}}+1)$ such that each $I \in \textsf{In}$ and $O \in \textsf{Out}$ are adjacent in $G_1$. 
	Note, however, that since all the roots and tips of the elements of $\textsf{In} \cup \textsf{Out}$ other than $x$ are pairwise distinct, so must be the elements of $\textsf{In} \cup \textsf{Out}$.  
	
	For every $O \in \textsf{In}$ (resp.~$O \in \textsf{Out}$) we denote by $\ell(O)$ the unique edge of $O$ whose tail (resp. head) is $x$.
	We construct a digraph $G_2$ on vertices $\textsf{In} \cup \textsf{Out}$ by adding an edge $(O_1,O_2)$ if $O_1$ contains $\ell(O_2)$.
	As the elements of $\textsf{In}$ (resp. $\textsf{Out}$) are pairwise edge-disjoint, $G_2$ is bipartite with bipartition classes $\textsf{In}$ and $\textsf{Out}$. Moreover, the indegree of each vertex is at most one.
	
	Let $\textsf{In}'$ be a subset of $\textsf{In}$ of size $|\textsf{In}|/3=( \thmFkt{lem:onion_harvesting}{g}{k,2}-1)2^{8 \thmFkt{lem:onion_harvesting}{g}{k,2}}+1$ that minimizes \(|\Out{G_2}{\textsf{In}'}|\).
	Since the number of edges incoming to $\textsf{Out}$ is at most $|\textsf{Out}|=|\textsf{In}|$, we have that \(|\Out{G_2}{\textsf{In}'}| \leq |\textsf{In}|/3\). However, $|\In{G_2}{\textsf{In}'}|\leq |\textsf{In}|/3$ as well, because every vertex has indegree at most~$1$ in $G_2$.
	
	Therefore, we can choose a subset $\textsf{Out}' \subseteq \textsf{Out}$ of size $|\textsf{In}|/3=|\textsf{Out}|/3=( \thmFkt{lem:onion_harvesting}{g}{k,2}-1)2^{8 \thmFkt{lem:onion_harvesting}{g}{k,2}}+1$ such that there are no edges between $\textsf{In}'$ and $\textsf{Out}'$. 
	
	Define $\textsf{In}'_{\ell} \coloneqq \{O - \ell(O) \colon O \in \textsf{In}'\}$, and $\textsf{Out}'_{\ell} \coloneqq \{O - \ell(O) \colon O \in \textsf{Out}'\}$.
	We know that for each $(O_i,O_o) \in \textsf{In}'_{\ell} \times \textsf{Out}'_{\ell}$ it still holds that $O_i$ and $O_o$ share an edge, as otherwise their corresponding onion immersion models would be adjacent in $G_2$. 
	We arbitrarily restrict $\textsf{Out}'_{\ell}$ to have only $4\thmFkt{lem:onion_harvesting}{g}{k,2}$ elements.
	For $O \in \textsf{In}'_{\ell} \cup \textsf{Out}'_{\ell}$, denote by $T_1(O)$ and $T_2(O)$ the two inclusion-wise maximal trails contained in $O$.
	We create a bipartite graph $G_3$ with bipartition classes 
	\begin{align*}
	\cP_{\textsf{In}}\coloneqq & \ \bigcup_{O \in \textsf{In}'_{\ell}}\{T_1(O),T_2(O)\}, \\
	\cQ_{\textsf{Out}}\coloneqq & \ \bigcup_{O \in \textsf{Out}'_{\ell}}\{T_1(O),T_2(O)\},
	\end{align*}
	and such that two trails from $\cP_{\textsf{In}}$ and $\cQ_{\textsf{Out}}$ are adjacent if they share an edge. 
	Without loss of generality we assume that for each $O \in \textsf{In}'_{\ell} \cup \textsf{Out}'_{\ell}$ the degree of $T_1(O)$ in $G_3$ is greater or equal that the degree of~$T_2(O)$. 
	
	For every $(O_i,O_o) \in \textsf{In}'_{\ell} \times \textsf{Out}'_{\ell}$, since $O_i$ and $O_o$ share at least one edge, there exists at least one edge in $G_3$ with one endpoint in $\{T_1(O_i),T_2(O_i)\}$ and the other in $\{T_1(O_o),T_2(O_o)\}$.
	Hence, for every $O_i \in \textsf{In}'_{\ell}$, $T_1(O_i)$ has degree at least $|\cQ_{\textsf{Out}}|/4=2|\textsf{Out}'_{\ell}|/4=2 \thmFkt{lem:onion_harvesting}{g}{k,2}$ in $G_3$.
	
	For every $O_i \in \textsf{In}'_{\ell}$ we remove $T_2(O_i)$ from $\cP_{\textsf{In}}$.
	Since $|\textsf{Out}'_{\ell}| = 8 \thmFkt{lem:onion_harvesting}{g}{k,2}$, there are at most $2^{8 \thmFkt{lem:onion_harvesting}{g}{k,2}}$ possible neighbourhoods that the elements of $\cP_{\textsf{In}}$ might have in $\cQ_{\textsf{Out}}$, in $G_3$. 
	
	Now, since there are $|\textsf{In}'_{\ell}|=( \thmFkt{lem:onion_harvesting}{g}{k,2}-1)2^{8 \thmFkt{lem:onion_harvesting}{g}{k,2}}+1$ vertices left in $\cP_{\textsf{In}}$, we know that we can find $\thmFkt{lem:onion_harvesting}{g}{k,2}$ of them, denote their set by $\cP$, that have the same neighbourhood in $G_3$, say $\cQ'$. As we already noticed, $|\cQ'| \geq 2\thmFkt{lem:onion_harvesting}{g}{k,2}$.
	
	Lastly, let $\cQ\coloneqq \cQ' \setminus \{T_2(O) \colon O \in \textsf{Out}'_{\ell} \textrm{ and } T_1(O),T_2(O) \in \cQ'\}$. Clearly, $|\cQ| \geq \thmFkt{lem:onion_harvesting}{g}{k,2}$.
	We arbitrarily trim the sets $\cP$ and $\cQ$ to contain $\thmFkt{lem:onion_harvesting}{g}{k,2}$ elements each. 
	Observe that $\cP \cup \cQ$ induces a biclique in $G_3$ with $\thmFkt{lem:onion_harvesting}{g}{k,2}$ vertices on either side, and that, for every $O \in \textsf{In}'_{\ell} \cup \textsf{Out}'_{\ell}$, this biclique contains at most one of $T_1(O)$ and $T_2(O)$. 
	Now it is enough to note that
	\begin{itemize}
		\item \(\End{P} = \Start{Q} = x\) for all \(P \in \mathcal{P}\) and \(Q\in \mathcal{Q}\),
		\item \(\E{P} \cap \E{Q} \neq \emptyset\) for all \(P \in \mathcal{P}\) and \(Q \in \mathcal{Q}\),
		\item the trails of \(\mathcal{P}\) are pairwise edge-disjoint and the trails of \(\mathcal{Q}\) are pairwise edge-disjoint, and
		\item each trail of \(\mathcal{P}\cup \mathcal{Q}\) is the union of two paths, at most one of which visits $x$.
	\end{itemize}
	Therefore, we may use \cref{lem:onion_harvesting} with $c=2$ to conclude that $D$ contains an onion-star immersion model of order $k$ with centre $x$.
	Finally, thanks to the algorithmic statement of \cref{lem:onion_harvesting}, the reasoning given above can be turned into a fixed-parameter algorithm in a straightforward manner.
\end{proof}

\section{Potato Theorem}
\label{sec:potato}


In this section we prove the following crucial structural result. Intuitively, it states that every digraph can be tree-decomposed into flow-connected pieces using cuts of small porosity.
This idea is formalised by the notion of \linked tree-partitions, see~\cref{def:linked}.


\PotatoLemma*


\subsection{Auxiliary properties of cuts}
We start by providing several auxiliary lemmas and constructions related to cuts in digraphs, which are useful for working with them later on.
We start with some results on porosity.

\begin{lemma}[Porosity-uncrossing-lemma]
	\label{lemma:porosity-boolean}
  Let \(D\) be a digraph, \((A,B), (A',B')\) be cuts in \(D\), and $p,p'$ be the porosity of \((A,B)\) and \(A',B'\), respectively.
  Then \((A \cup A', B \cap B'), (A \cap A', B \cup B'), (A \cap B', B \cup A'), (A' \cap B, B' \cup A)\)
  are all cuts in \(D\) of porosity at most \(p+p'\).
\end{lemma}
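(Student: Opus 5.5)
The plan is to take a hitting set $F$ of size $p$ for the cycles crossing $(A,B)$ and a hitting set $F'$ of size $p'$ for the cycles crossing $(A',B')$, and to show that $F\cup F'$ witnesses the bound for each of the four cuts. Since $|F\cup F'|\le p+p'$, this proves the lemma. A cut is an unordered bipartition, so all four pairs are genuine bipartitions of $V(D)$: their two sides are complementary, because $V(D)\setminus(A\cup A') = B\cap B'$ and similarly for the other three.

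The key step is to show that every cycle crossing one of these four cuts also crosses $(A,B)$ or $(A',B')$. Equivalently, take a cycle $C$ crossing neither. Then $V(C)$ lies entirely in one side of $(A,B)$ and entirely in one side of $(A',B')$. Hence $V(C)$ is contained in one of the four ``quadrants'' $A\cap A'$, $A\cap B'$, $B\cap A'$, $B\cap B'$.

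Each of the four cuts in the statement has one side equal to a single quadrant and the other side equal to the union of the other three. For instance, $(A\cup A', B\cap B')$ has $B\cap B'$ as one side and $A\cup A' = (A\cap A')\cup(A\cap B')\cup(B\cap A')$ as the other. So every cut of this form keeps each quadrant entirely within one side, and a cycle contained in a single quadrant cannot cross it.

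Taking the contrapositive, any cycle crossing one of the four cuts crosses $(A,B)$ or $(A',B')$. It is therefore hit by $F$ or by $F'$, hence by $F\cup F'$.

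I do not expect a real obstacle here. The only care needed is to check, for each of the four listed cuts, that it is a union of quadrants versus the complementary quadrants, and to note that a cycle is a connected vertex set only in the weak sense used here. The argument only uses that $V(C)$ is a set lying within one side of each of the two cuts, so connectivity plays no role.
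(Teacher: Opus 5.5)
Your proposal is correct and matches the paper's proof: both take $F\cup F'$ as the hitting set and show that any cycle crossing one of the four new cuts must cross $(A,B)$ or $(A',B')$. The paper argues this directly for $(A\cap A', B\cup B')$ and gets the other three by symmetry, whereas your quadrant argument treats all four cuts at once; the difference is only in presentation.
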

\begin{proof}
  Since the porosity of \((A,B)\) is the same as that of \((B,A)\),
  and the roles of \((A,B)\) and \((A',B')\) are symmetric,
  it suffices to argue about \((A \cap A', B \cup B')\).

  Let \(F\) be the set of \(p\) edges hitting all cycles visiting  both \(A\) and \(B\), and let \(F'\) be the set of \(p'\) edges hitting all cycles visiting both \(A'\) and \(B'\).
  We argue next that \(F \cup F'\) hits all cycles containing a vertex of both \(A \cap A'\) and \(B \cup B'\).
  Let \(C\) be such a cycle.
  It contains a vertex \(a \in A \cap A'\), and a vertex \(b \in B \cup B'\).
  If \(b \in B\), then the cycle contains a vertex of both \(A\) and \(B\) and thus is hit by \(F\).
  If \(b \in B'\) then \(C\) contains a vertex of \(A'\) and a vertex of \(B'\) and thus is hit by \(F'\).
\end{proof}

For the next \namecref{lemma:porosity-onecl}, we need the following definition. Let $D$ be a digraph and $X\subseteq V(D)$ be a subset of vertices of $D$.
The \emph{closure} of \(X\) is the set defined as
\begin{align*}
	\onecl{X}{D} \coloneqq \{v \in V(D)\ \mid\ \ & \textrm{there exists } x \in X \text{ and paths } P_1,P_2\\  & \text{such that }
	P_1 \text{ is an } (x,v)\text{-path and }
	P_2 \text{ is a } (v,x)\text{-path}\}
	\text{.}
\end{align*}
If \(D\) is clear from the context it is omitted.
Note that the closure of \(X\) is equal to the union of all the strongly connected components of \(D\) that intersect \(X\).

We note the following observation that explains how the porosity of a cut behaves under restriction to a closed set.

\begin{lemma}\label{lemma:porosity-onecl}
  Let \(D\) be a digraph and let \((A,B)\) be a cut in \(D\).
  Let \(X \subseteq A\) and \(A' \coloneqq \onecl{X}{D[A]}\).
  Then the porosity of the cut \((A', V(D) \setminus A')\)
  is at most the porosity of the cut \((A,B)\).
\end{lemma}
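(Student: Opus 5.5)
The plan is to show directly that a minimum hitting set $F$ for the cycles crossing $(A,B)$ also hits every cycle crossing $(A',V(D)\setminus A')$. Then $F$ witnesses that the porosity of $(A',V(D)\setminus A')$ is at most $|F|$, which is the porosity of $(A,B)$.

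Let $C$ be a cycle in $D$ that visits a vertex $u\in A'$ and a vertex $w\notin A'$. I split into two cases according to whether $C$ leaves $A$.

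\emph{Case 1: $C$ visits a vertex of $B$.} Since $u\in A'\subseteq A$, the cycle $C$ visits both $A$ and $B$, so it crosses $(A,B)$. Hence $F$ hits $C$.

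\emph{Case 2: $C$ lies entirely in $D[A]$.} I claim this case cannot occur. Since $C$ is a closed walk in $D[A]$ through both $u$ and $w$, the vertices $u$ and $w$ lie in the same strongly connected component of $D[A]$. As noted after the definition, $\onecl{X}{D[A]}$ is the union of the strongly connected components of $D[A]$ that intersect $X$. The vertex $u$ belongs to such a component, and $w$ lies in that same component, so $w\in A'$. This contradicts the choice of $w\notin A'$.

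The only point needing care is Case 2, namely the identification of the closure with a union of strongly connected components of $D[A]$. Concretely, if $x\in X$ has paths to and from $u$ inside $D[A]$, then concatenating these with the two arcs of $C$ between $u$ and $w$ gives walks in $D[A]$ from $x$ to $w$ and from $w$ to $x$. These walks can be shortened to paths, so $w\in\onecl{X}{D[A]}$. Everything else is immediate.
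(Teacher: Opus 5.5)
Your proof is correct and follows essentially the same argument as the paper: take a hitting set $F$ for cycles crossing $(A,B)$, note that a cycle crossing $(A',V(D)\setminus A')$ cannot lie entirely in $D[A]$ (otherwise its vertex outside $A'$ would lie in the closure), so it must visit $B$ and is therefore hit by $F$. Your explicit walk-concatenation justification of the closure step is a welcome detail that the paper leaves implicit.
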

\begin{proof}
  Let \(F\) be any set of edges hitting all cycles visiting both \(A\) and \(B\).
  We now argue that \(F\) is also a hitting set for cycles visiting both \(A'\) and \(V(D) \setminus A'\).

  Let \(C\) be a cycle visiting a vertex \(a \in A' \subseteq A\) and \(b \in V(D) \setminus A'\).
  This cycle clearly contains a vertex of \(A\).
  Observe, however, that \(C\) cannot be entirely contained in $D[A]$, since then \(b\) would also be a member of the closure of \(X\) within \(D[A]\), that is, of $A'$.
  So \(C\) must visit \(B\), and thus, is a cycle crossing the cut $(A,B)$, that therefore is hit by \(F\).
\end{proof}

\begin{lemma}
  \label{lemma:potato_uncrossing}
  Let \(D\) be a digraph and \(k\) be a non-negative integer.
  Let \(X\) be a subset of vertices of \(D\) which is \(k\)-flow-connected.
  Suppose \((A, B)\) and \((A', B')\) are cuts in \(D\) that both split \(X\), and moreover \(\Abs{\Out{}{A}} = k\) and \(\Abs{\Out{}{A'}} = k\).
  Then one of the following outcomes holds:
  \begin{enumerate}[label=\textbf{(C\arabic*)},leftmargin=1.2cm]
    \item \(A \cap X \subseteq B' \cap X\) or \(B' \cap X \subseteq A \cap X\), or \label{lemma:potato_uncrossing:out_uninteresting}
    \item \((A \cap A', B \cup B')\) and \((A \cup A', B \cap B')\) are both cuts of order \(k\) that split \(X\). \label{lemma:potato_uncrossing:out_uncrossing}
  \end{enumerate}
\end{lemma}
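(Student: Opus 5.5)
The plan is to combine submodularity of the out-degree function with a lower bound coming from $k$-flow-connectivity of $X$. Write $\delta^+(Y) \coloneqq \Abs{\Out{}{Y}}$ for the number of edges with tail in $Y$ and head outside $Y$. I would first record the standard submodularity inequality
\[
\delta^+(A\cap A') + \delta^+(A\cup A') \le \delta^+(A) + \delta^+(A') = 2k .
\]
This follows by the usual edge-counting argument: each edge leaving $A\cap A'$ or $A\cup A'$ is counted on the right-hand side at least as often as on the left.

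Second, I would show that any set $Y$ with $Y\cap X\neq\emptyset$ and $X\setminus Y\neq\emptyset$ satisfies $\delta^+(Y)\ge k$. Pick $u\in Y\cap X$ and $v\in X\setminus Y$. By $k$-flow-connectivity there are $k$ edge-disjoint $u$-$v$ paths, and each must use a distinct edge leaving $Y$ (this is \cref{thm:menger} read in the easy direction).

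Now I split into cases according to whether $A\cap A'$ and $A\cup A'$ split $X$.

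\textbf{Both split $X$.} Both have out-degree at least $k$, and by submodularity their sum is at most $2k$. Hence both equal exactly $k$. This gives \ref{lemma:potato_uncrossing:out_uncrossing}, since $(A\cap A', B\cup B')$ and $(A\cup A',B\cap B')$ are then cuts of order $k$ splitting $X$.

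\textbf{$A\cap A'$ does not split $X$.} Then either $A\cap A'\cap X=\emptyset$ or $X\subseteq A\cap A'$. The second option is impossible, because $(A,B)$ splits $X$, so $B\cap X\neq\emptyset$. Hence $A\cap A'\cap X=\emptyset$, that is, $A\cap X\subseteq B'\cap X$, which is \ref{lemma:potato_uncrossing:out_uninteresting}.

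\textbf{$A\cup A'$ does not split $X$.} Since $A\cap X\neq\emptyset$, the union cannot miss $X$, so $X\subseteq A\cup A'$. This means $B\cap B'\cap X=\emptyset$, i.e.\ $B'\cap X\subseteq A\cap X$, again \ref{lemma:potato_uncrossing:out_uninteresting}.

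The only step requiring any care is the submodularity inequality, specifically checking that an edge from $A\setminus A'$ to $A'\setminus A$ (or the reverse) is handled correctly in the edge-by-edge count. The rest is bookkeeping. If $k=0$ everything is trivial, since every cut has order at least $0$.
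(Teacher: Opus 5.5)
Your proof is correct and is essentially the paper's argument: a lower bound of $k$ on the out-degree of any set splitting $X$, coming from $k$-flow-connectivity, combined with submodularity of the out-degree function, which forces $A\cap A'$ and $A\cup A'$ to both have out-degree exactly $k$. The paper phrases the case distinction contrapositively: if \textbf{(C1)} fails, then $A\cap A'\cap X$ and $B\cap B'\cap X$ are both nonempty, so both uncrossed cuts split $X$. That is exactly what your three-way case split establishes.
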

\begin{proof}
  If~\cref{lemma:potato_uncrossing:out_uninteresting} does not hold, then \(A \cap A' \cap X = (A \cap X) \setminus (B' \cap X) \neq \emptyset\) and \(B \cap B' \cap X = (B' \cap X) \setminus (A \cap X) \neq \emptyset\).
  Thus \((A \cap A', B \cup B')\) and \((A \cup A', B \cap B')\) are both cuts in \(D\) that split \(X\).
  Since $X$ is $k$-flow-connected, both these cuts must have order at least \(k\).
  However, by submodularity we have \(\Abs{\Out{}{A \cap A'}} + \Abs{\Out{}{A \cup A'}} \leq \Abs{\Out{}{A}} + \Abs{\Out{}{A'}} = 2k\), so in fact they must both have order exactly \(k\).
  This shows~\cref{lemma:potato_uncrossing:out_uncrossing}.
\end{proof}

\paragraph*{Laminar families.}
We also work with \emph{laminar} families of cuts.
The following definitions are usually applied to cuts in digraphs, but it is convenient to develop the terminology in a slightly more abstract manner.

Let $U$ be some finite universe (typically, the vertex set of a digraph).
A \emph{cut} of $U$ is an ordered bi-partition of $U$, i.e.~a pair $(A,B)$ of disjoint subsets of $U$ with $U=A\cup B$.
A cut is \emph{non-trivial} if both $A$ and $B$ are non-empty.
We say that two cuts $(A,B)$ and $(A',B')$ \emph{cross} if all of the four sets
\begin{equation*}
	A\cap A',\quad A\cap B',\quad B\cap A',\quad  B\cap B',
\end{equation*}
are non-empty.
Note that non-trivial cuts $(A,B)$ and $(A',B')$ do not cross if and only if among the four sets $A,B,A',B'$ there is some non-trivial inclusion relation: one of the sets contains another one.
A family $\FF$ of cuts of $U$ is \emph{laminar} if the cuts of $\FF$ are non-trivial and pairwise non-crossing.
Note that for technical reasons, we disallow trivial cuts in laminar families.

It is also useful to \say{localise} the above definition to some subset of the universe. 
If $S\subseteq U$ is a non-empty subset of the universe, then we say that a cut $(A,B)$ of $U$ \emph{splits} $S$ if both $A\cap S$ and $B\cap S$ are non-empty. 
We also define the cut \emph{induced} on $S$ by $(A,B)$ as $(A,B)|_S\coloneqq (A\cap S,B\cap S)$.
Note that $(A,B)$ splits $S$ if and only if $(A,B)|_S$ is a non-trivial cut of $S$. For a family $\FF$ of cuts of $U$, define
\begin{equation*}
	\FF_S\coloneqq \{(A,B)|_S\colon (A,B)\in \FF)\}.
\end{equation*}
We say that $\FF$ is \emph{laminar on $S$} if $|\FF|=|\FF_S|$ and $\FF|_S$ is a laminar family of cuts of $S$.
Note that the first condition is equivalent to saying that the elements of $\FF$ induce pairwise different cuts on $S$, while the latter condition entails that all the elements of $\FF$ split $S$ (for trivial cuts are disallowed in laminar families).
We also say that two cuts $(A,B),(A',B')$ of $U$ \emph{cross on $S$} if $(A,B)|_S,(A',B')|_S$ are crossing cuts of $S$.


\newcommand*{\familyTree}[1]{\mathsf{LaminarTree}(#1)}
A laminar family \(\FF\) of cuts of a universe $U$ naturally yields a rooted tree $T$ on \(\Abs{\FF}+1\) nodes constructed as follows.
Fix an element \(v\) of \(U\).
We define \(\familyTree{\FF,v} \coloneqq \Brace{T,\lambda, \eta}\) where \(T\) is a rooted tree, \(\lambda\colon \V{T} \to 2^{U}\), and \(\eta \colon E(T) \rightarrow \FF\) is a bijection.
As \(\Brace{A,B}\) is a cut if and only if \(\Brace{B,A}\) is a cut, up to inverting every cut in $\FF$ we may assume that \(v \not\in A\) for all members \(\Brace{A,B} \in \FF\).
Now we define a partial order \(\preccurlyeq\) on the elements of \(\FF\): we set \((A,B) \preccurlyeq (A',B')\) if \(A \subseteq A'\).
As \(A \not\subseteq A'\) and \(A' \not\subseteq A\) imply \(A \cap A' = \emptyset\), we get that \(\preccurlyeq\) is a tree-order (a partial order where any two incomparable elements do not have any common smaller element).
Therefore, we may use \(\preccurlyeq\) to build \(T\) as follows:
\begin{align*}
	\V{T} &\coloneqq \Set{x_F \mid F \in \FF} \cup \Set{x_{\variablestyle{max}}},\\
	\E{T} &\coloneqq \Set{(x_F,x_{F'}) \mid F' \preccurlyeq F \text{ and there is no }F''\in \FF\setminus\Set{F,F'}\text{ such that } F' \preccurlyeq F'' \preccurlyeq F} \\
	& \qquad \cup \Set{(x_{\variablestyle{max}},x_F) \mid F \text{ is a maximal element of }\preccurlyeq}.
\end{align*}
We choose \(x_{\variablestyle{max}}\) to be the root of \(T\).
The function \(\eta\) is defined by setting, for every edge \((x,y)\) of~$T$, \(\eta((x,y))\) to be the unique cut \(F \in \FF\) such that \(y = x_F\).
The function \(\lambda\) is set accordingly: \(\lambda(x_{\variablestyle{max}}) = U\)
and \(\lambda(x_F) = A\), where \((A,B) = F\) and \(v \not \in A\).

It is easy to see that no matter which element \(v\) is chosen at the beginning of the construction, the undirected tree obtained from $T$ by forgetting the orientation of the edges is always the same.
However, different choices of $v$ lead to different rootings of this tree and different mappings $\lambda$ and $\eta$, assigning the cuts of $\FF$ and their sides to the edges and the nodes of $T$. 

\subsection{Important cuts}

In our proof of \cref{lemma:potato}, we crucially rely on the notion of an \emph{important cut}, defined below.

\begin{definition}
 Let $D$ be a digraph and $X,Y\subseteq V(D)$ be disjoint subsets of vertices. We say that an \(X\)-\(Y\)-cut $(A,B)$ is \emph{important} if the following conditions hold:
 \begin{itemize}
 	\item every vertex of $A$ is reachable from $X$ by a path whose all vertices belong to $A$; and
 	\item there is no $X$-$Y$-cut $(A',B')$ such that \(A \subsetneq A'\), every vertex of $A'$ is reachable from $X$ by a path whose all vertices belong to $A'$, and \(\Abs{\Out{}{A'}} \leq \Abs{\Out{}{A}}\).
 \end{itemize}
\end{definition}

We remark that usually in the literature, it is the set of edges $F=\Out{}{A}$ crossing the cut in the relevant direction that is called an important cut, rather than the bipartition $(A,B)$. However, the two definitions are easily equivalent.

The key property of important cuts is that there are not so many of them of small order. More precisely, we rely on the following upper bound due to Chen, Liu, Lu, O'Sullivan, and Razgon~\cite{importantSep2008}.
See also~\cite[Section~8.2]{PAbook} for an extended discussion, because the result is implicit in~\cite{importantSep2008}.

\begin{theorem}[{\cite{importantSep2008}}]
  \label{thm:important_cuts}
  Let \(D\) be a digraph, \(k\) be a non-negative integer, and \(X, Y\) be disjoint subsets of \(V(D)\).
  Then,  in \(D\) there are at most \(4^k\) distinct important \(X\)-\(Y\)-cuts $(A,B)$ with \(\Abs{\Out{}{A}}\leq k\).
\end{theorem}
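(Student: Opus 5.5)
The plan is to prove the bound through a stronger, measure-based statement: the number of important \(X\)-\(Y\)-cuts \((A,B)\) with \(\Abs{\Out{}{A}}\leq k\) is at most \(2^{2k-\lambda}\). Here \(\lambda\) denotes the minimum of \(\Abs{\Out{}{A}}\) over all \(X\)-\(Y\)-cuts \((A,B)\). The stated bound \(4^k\) follows because \(\lambda\geq 0\). I will argue by induction on the measure \(\mu\coloneqq 2k-\lambda\). If \(\lambda>k\), there are no cuts of order at most \(k\) at all, so the count is \(0\); this also covers every case with \(\mu<k\).

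\textbf{Step 1 (furthest minimum cut).} By submodularity, \(\Abs{\Out{}{A\cup A'}}+\Abs{\Out{}{A\cap A'}}\leq \Abs{\Out{}{A}}+\Abs{\Out{}{A'}}\). Hence the union of two minimum \(X\)-\(Y\)-cut sides is again a minimum cut side, so there is a unique inclusion-maximal minimum side \(A_{\max}\). Replacing it by its part reachable from \(X\) inside itself does not increase the order, so we may assume every vertex of \(A_{\max}\) is reachable from \(X\) within \(A_{\max}\).

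Next I show that every important cut \((A,B)\) satisfies \(A_{\max}\subseteq A\). Submodularity gives \(\Abs{\Out{}{A\cup A_{\max}}}\leq \Abs{\Out{}{A}}+\lambda-\Abs{\Out{}{A\cap A_{\max}}}\leq\Abs{\Out{}{A}}\). Moreover, every vertex of \(A\cup A_{\max}\) is reachable from \(X\) inside this set. So importance of \((A,B)\) forces \(A\cup A_{\max}=A\).

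\textbf{Step 2 (branching).} If \(\Out{}{A_{\max}}=\emptyset\), then no vertex outside \(A_{\max}\) is reachable from \(X\) without leaving \(A_{\max}\) through an out-edge. Hence \(A=A_{\max}\) for every important cut, and there is at most one. Otherwise, pick an edge \(e=(u,v)\in\Out{}{A_{\max}}\). Every important cut \((A,B)\) falls into one of two branches.
\begin{itemize}
\item \emph{Branch \(v\in A\).} Then \((A,B)\) is an important \((A_{\max}\cup\{v\})\)-\(Y\)-cut in \(D\) with the same bound \(k\). By maximality of \(A_{\max}\), the new minimum cut value \(\lambda'\) is strictly larger than \(\lambda\).
\item \emph{Branch \(v\notin A\).} Then \(e\in\Out{}{A}\), and \((A,B)\) is an important \(X\)-\(Y\)-cut in \(D-e\) of order at most \(k-1\). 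The minimum cut value drops by exactly one, since \(e\) leaves a minimum side.
\end{itemize}
In both branches \(\mu\) decreases by at least one. The induction hypothesis therefore gives at most \(2\cdot 2^{\mu-1}=2^{\mu}\) important cuts in total.

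\textbf{Main obstacle.} I expect the hardest part to be verifying that the maps in Step 2 really send important cuts to important cuts in the branch instances. The concern is that a larger side \(A'\supsetneq A\) in the smaller instance might not be a competitor in \(D\). For the first branch, reachability from \(A_{\max}\cup\{v\}\) inside \(A'\) lifts to reachability from \(X\), because \(A_{\max}\) itself is reachable from \(X\) internally. For the second branch, a competitor \(A'\) in \(D-e\) with \(v\notin A'\) has \(\Abs{\Out{D}{A'}}=\Abs{\Out{D-e}{A'}}+1\), so it would also beat \((A,B)\) in \(D\). If instead \(v\in A'\), I would first check whether this can happen at all, and otherwise compare orders directly, using that \(e\) no longer counts in \(D-e\).

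Finally, the map from important cuts to branches is injective, since the cut \((A,B)\) is kept unchanged. This completes the count.
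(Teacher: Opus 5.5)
The paper does not reprove this bound; it only cites it, and your argument is correct and is essentially the standard proof from the cited sources (induction on $2k-\lambda$, branching on an edge that leaves the furthest minimum cut). The two loose ends close in one line each. In the second branch, a competitor $A'\supsetneq A$ with $v\in A'$ has $\Out{D}{A'}=\Out{D-e}{A'}$, so $\Abs{\Out{D}{A'}}<\Abs{\Out{D}{A}}$ and it already beats $A$ in $D$. And shrinking $A_{\max}$ to its reachable part leaves its out-edge set unchanged, so $v$ still lies outside the maximal minimum side; this is what the first branch actually needs to get $\lambda'>\lambda$.
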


Note that in the discussion above, the definition of an important cut is not symmetric: in an important $X$-$Y$-cut $(A,B)$ we care about the number of edges going from $A$ (the part containing $X$) to $B$ (the part containing~$Y$), and about the reachability of every vertex of $A$ from $X$.
To facilitate speaking about the other direction as well, we consider the following definition:
a \emph{$k$-key $X$-$Y$-cut} is an $X$-$Y$-cut \((A,B)\) that is either important in $D$ and satisfies $\Abs{\Out{}{A}}\leq k$, or is important in the digraph $\Rev(D)$ obtained from $D$ by reverting the orientation of every edge and satisfies $\Abs{\In{}{A}}\leq k$. Note that $(A,B)$ being important in $\Rev(D)$ means that (i) every vertex of $A$ can reach $X$ by a path entirely contained in $D[A]$, and (ii) there is no \(X\)-\(Y\)-cut \((A',B')\)
such that \(A \subsetneq A'\), every vertex of $A'$ can reach $X$ by a path entirely contained in $D[A']$, and \(\Abs{\In{}{A'}} \leq \Abs{\In{}{A}}\).

\begin{corollary}\label{coro:key_cuts}
  Let \(D\) be a digraph, \(k\) be a non-negative integer, and \(X, Y\) be disjoint subsets of \(V(D)\).
  Then, in \(D\) there are at most \(2\cdot4^k\) distinct $k$-key \(X\)-\(Y\)-cuts.
\end{corollary}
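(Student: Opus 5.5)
This follows directly from \cref{thm:important_cuts}, applied once to $D$ and once to $\Rev(D)$. By definition, a $k$-key $X$-$Y$-cut $(A,B)$ is of one of two kinds:
\begin{enumerate}[label=(\roman*)]
	\item an important $X$-$Y$-cut in $D$ with $\Abs{\Out{D}{A}}\leq k$;
	\item an important $X$-$Y$-cut in $\Rev(D)$ with $\Abs{\In{D}{A}}\leq k$.
\end{enumerate}
So the set of $k$-key $X$-$Y$-cuts is the union of two families, and it suffices to bound each by $4^k$.

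For family (i), \cref{thm:important_cuts} applied to $D$, $k$, $X$, $Y$ gives at most $4^k$ important $X$-$Y$-cuts $(A,B)$ with $\Abs{\Out{D}{A}}\leq k$.

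For family (ii), apply \cref{thm:important_cuts} to $\Rev(D)$. Reversal maps edges with tail in $A$ bijectively to edges with head in $A$, so $\Out{\Rev(D)}{A}$ corresponds to $\In{D}{A}$ and the condition $\Abs{\Out{\Rev(D)}{A}}\leq k$ is exactly $\Abs{\In{D}{A}}\leq k$. The vertex set is unchanged, so $X$-$Y$-cuts of $\Rev(D)$ are the same bipartitions as $X$-$Y$-cuts of $D$. The reformulation of importance in $\Rev(D)$ stated before the corollary is just the definition of importance read in the reversed digraph, so family (ii) is exactly the set of important $X$-$Y$-cuts $(A,B)$ of $\Rev(D)$ with $\Abs{\Out{\Rev(D)}{A}}\leq k$. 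By \cref{thm:important_cuts} there are at most $4^k$ of these.

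A union bound now gives at most $4^k+4^k=2\cdot 4^k$ distinct $k$-key $X$-$Y$-cuts. A cut lying in both families is counted only once, which only helps the bound. The one point needing care is the bookkeeping of the orientation swap, namely that out-edges of $A$ in $\Rev(D)$ are precisely in-edges of $A$ in $D$.
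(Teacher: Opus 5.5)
Your proof is correct and is the argument the paper intends. The corollary is stated there without proof, as an immediate consequence of \cref{thm:important_cuts} applied once to $D$ and once to the reversed digraph, with the two bounds of $4^k$ added; your check that out-edges of $A$ in the reversed digraph are exactly the in-edges of $A$ in $D$ is the only bookkeeping required.
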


\subsection{Partial tree-partitions}
\label{sec:partial_potatoes}

The most crucial step in the proof of \nameref{lemma:potato} is to decompose \(k\)-flow connected parts of the given graph in a tree-like fashion each and then re-assemble these decompositions into one for the entire graph.
To that end, in this section we will show that a \(k\)-flow connected set can be decomposed into \((k + 1)\)-flow connected parts in a tree-like fashion,
and expose each cut of the input set exposed by the decomposition tree as a bounded porosity cut in the entire host digraph.



Let us define more formally what type of object we want to build.
Intuitively, we want the final \say{local} shape to match the target tree partition on a subset of vertices (so the near-partition simply
partitions the subset), but we need to record global connectivity information. We do it by labelling edges of the tree 
with cuts in the host digraph in a way that stays consistent with the near-partition of the subset.
We need this edge labelling by cuts and the near-partition to be consistent with each other.
%
%

\begin{definition}
	Given a digraph \(D\) and a set of vertices \(S \subseteq \V{D}\) a \emph{partial tree-partition of \(S\) in \(D\)} is a tuple \(\mathcal{T} =(T,\beta,\eta)\) such that
	\begin{itemize}
		\item \(T\) is a rooted tree,
		\item \(\beta:\V{T} \to 2^S\) is a near-partition of \(S\),
		\item \(\eta:\E{T} \to \FF\) is a bijection, where \(\FF\) is some family of cuts in \(D\),
    \item \(\FF\) is a family of cuts in \(D\) that is laminar on \(S\),
    \item if \(T\) has an edge, then \(T\) is the underlying tree of \(\familyTree{\FF|_S, s}\), for some vertex \(s\) of~\(S\)
		\item for every edge \(e \in E(T)\) we have \(\beta(\treeLower{(T)}{e}) \subseteq A\),
		and \(\beta(\treeUpper{(T)}{e}) \subseteq B\), where  \((A,B) = \eta(e)\).
	\end{itemize}
\end{definition}
Note that for the final bullet point, the ordering of the cut \((A,B)\) matters.

Let \((T,\beta,\eta)\) be a partial-tree partition of some set \(S\) in a digraph \(D\),
and let \(\FF \coloneq \eta(E(T))\).
We consider a natural ancestor/descendant relation on the cuts of $\FF$ induced by $T$, defined as follows.
Let $\kappa_{\mathcal{T}}\colon \FF\to V(T)$ be defined by the function composition \(\kappa_{\mathcal{T}} \coloneqq \head{} \cdot \eta^{-1} \).
Thus, $\kappa_{\mathcal{T}}$ is a bijection between $\FF$ and the non-root nodes of $T$.
For two cuts $(A,B),(A',B')\in \FF$, we say that $(A',B')$ is an \emph{ancestor} of $(A,B)$ in $T$, and that
$(A,B)$ is a \emph{descendant} of $(A',B')$ in $T$, if $\kappa_{\mathcal{T}}(A',B')$ is an ancestor of $\kappa_{\mathcal{T}}(A,B)$ in $T$.
We also say that $(A,B)$ and $(A',B')$ are \emph{incomparable} in $T$ if neither of them is an ancestor of the other.
%

In the next lemma, we establish that for \(k\)-flow-connected sets
there always exists a partial-tree partition.

\begin{lemma}
	\label{lem:partial_potatoes}
	Let \(D\) be a digraph and \(S \subseteq \V{D}\) a \(k\)-flow-connected set.
	Then, there exists a partial tree-partition \((T, \beta, \eta)\) of \(S\) in \(D\),
  where each \(\beta(x)\), for \(x \in V(T)\) is a \((k + 1)\)-flow-connected set.
	Furthermore, all cuts \(\eta(e)\), for \(e \in E(T)\), have order \(k\).
\end{lemma}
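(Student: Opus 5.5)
The plan is to take an inclusion-wise maximal family $\FF$ of cuts of $D$ that are of order exactly $k$ and laminar on $S$, and read off the partial tree-partition from $\familyTree{\FF|_S,s}$ for a fixed $s\in S$.

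Concretely, let $\FF$ be any family of cuts of $D$ such that every member splits $S$, has order at most $k$ (hence exactly $k$, since $S$ is $k$-flow-connected and the cut splits $S$), and $\FF$ is laminar on $S$. Choose $\FF$ maximal under inclusion; the empty family qualifies, so such an $\FF$ exists. Fix $s\in S$ and orient each cut $(A,B)\in\FF$ so that $s\in B$. Because $\FF$ is laminar on $S$, the restrictions are pairwise distinct, so $\familyTree{\FF|_S,s}$ yields a rooted tree $T$ with a bijection between $E(T)$ and $\FF|_S$, and hence with $\FF$. Let $\eta$ be this bijection. For bags, take $\beta(x_{\max}) \coloneqq S\setminus\bigcup_{(A,B)\in\FF}(A\cap S)$. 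For a non-root node $x_F$ with $F=(A,B)$, take $\beta(x_F)\coloneqq (A\cap S)\setminus\bigcup\{A'\cap S : (A',B')\prec F\}$, i.e.\ $A\cap S$ minus the $A$-sides of its children.

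The tree-order property, namely that incomparable $A$-sides are disjoint on $S$, makes these bags a near-partition of $S$. It also gives $\beta(\treeLower{T}{e})=A\cap S\subseteq A$ and $\beta(\treeUpper{T}{e})\subseteq B$ for $\eta(e)=(A,B)$. With this, all bullets of the definition of a partial tree-partition are checked directly. If $\FF=\emptyset$, $T$ is a single node with bag $S$.

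The main step is showing that every bag $\beta(x)$ is $(k+1)$-flow-connected. Suppose $u,v\in\beta(x)$ are distinct and have no $u$-$v$-flow of order $k+1$. By Menger's Theorem (\cref{thm:menger}) there is a cut $(C,C')$ with $u\in C$, $v\in C'$ and $|\Out{}{C}|\le k$. Since it splits $S$, it has order exactly $k$. We would like to add $(C,C')$ to $\FF$, contradicting maximality. Its restriction to $S$ differs from every $F|_S$, since any $F\in\FF$ has $u,v$ on the same side (they share a bag), whereas $(C,C')$ separates them. It may, however, cross some members of $\FF$ on $S$. Here we use \cref{lemma:potato_uncrossing}: whenever $(C,C')$ crosses $(A,B)\in\FF$ on $S$, outcome (C2) applies after orienting both cuts so that their small sides are out-sides. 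Note that the order condition is symmetric, so we may need to pair $C$ with $A$ or with $B$ depending on which direction has $k$ out-edges; this bookkeeping is the delicate part. Under (C2) we replace $(C,C')$ by $(C\cap A,\cdot)$ or $(C\cup A,\cdot)$, choosing whichever still separates $u$ from $v$. Since $u,v$ lie together inside or outside $A$, one of the two choices keeps them separated, and the new cut still has order $k$ and splits $S$.

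Termination is the main obstacle. I expect to argue that the replacement no longer crosses $(A,B)$ on $S$ and does not create new crossings with other members of $\FF$, because those are laminar with $(A,B)$. I would use a potential argument: the number of members of $\FF$ crossed on $S$ strictly decreases. To do this, I would check that if $F'\in\FF$ does not cross $(C,C')$ and does not cross $(A,B)$, then it does not cross $(C\cap A,\cdot)$ or $(C\cup A,\cdot)$; this is a standard laminarity calculation on $S$. I would also need to handle the case where the order-$k$ direction of a member is $\In{}{}$ rather than $\Out{}{}$, by applying \cref{lemma:potato_uncrossing} in $\Rev(D)$ or to complements. Once no crossings remain, the modified cut is new on $S$ and laminar with $\FF$, which contradicts maximality.
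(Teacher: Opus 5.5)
Your proposal is correct and follows essentially the paper's argument: a maximal family of order-$k$ cuts laminar on $S$, the tree of $\mathcal{F}|_S$ rooted at $s$ with bags obtained by removing the children's sides, and an uncrossing loop via \cref{lemma:potato_uncrossing}. In that loop you orient each member of $\mathcal{F}$ so that its out-side has $k$ edges, take $\cap$ or $\cup$ according to where $u,v$ lie, and the number of crossed members of $\mathcal{F}$ strictly drops. One caution: your ``standard laminarity calculation'' (a member of $\mathcal{F}$ that is laminar with both $(C,C')$ and $(A,B)$ stays laminar with $C\cap A$ and $C\cup A$ on $S$) genuinely needs that $(C,C')$ and $(A,B)$ cross on $S$, and can fail without that hypothesis; it does hold in the only situation where you apply it.
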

\begin{proof}
	If \(S = \emptyset\), then \(S\) is already \((k + 1)\)-flow-connected,
	and we simply set
	\begin{itemize}
		\item \(T\) to be a single-node tree,
		\item \(\beta\) to map the single node to the empty set,
		\item \(\eta\) to be the empty mapping, and
	\end{itemize} 
	Let us assume henceforth that \(S\) is not empty, and fix any \(s \in S\).
	Note that since \(S\) is $k$-flow-connected, every cut of $D$ that splits $S$ has order at least~$k$.
	
	Let \(\FF\) be an inclusion-wise maximal family of cuts of $D$ of order \(k\) that is laminar on \(S\).
	By reverting every cut in \(\FF\) if necessary we may assume that for each $(A,B)\in \FF$, we have $s\notin A$ (and thus $s\in B$).
	Let \(\sim_\FF\) be the following equivalence relation on \(S\):
	\(u \sim_\FF v\) if and only if no cut in \(\FF\) splits \(\Set{u,v}\).
	Let \(\XX\) be the set of equivalence classes under~\(\sim_\FF\).
	
	\begin{claim}
		\label{potlemma:claim:connectivity}
		Every element \(X\) of \(\XX\) is \((k + 1)\)-flow-connected in $D$.
	\end{claim}
	\begin{claimproof}
		Suppose this not the case and we have \(u,v \in X\) and a cut
		\((A, B)\) of order \(k\) in \(D\) such that \(u \in A\) and  \(v \in B\).
		Without loss of generality, assume that \(\Abs{\Out{}{A}} = k\).
		Since no cut in \(\FF\) splits \(\Set{u,v}\) while \((A, B)\) does,
		this in particular means that \((A, B) \not\in \FF\) and \((B,A)\notin \FF\).
		Let us exploit this
		fact to exhibit a larger family of \(k\)-cuts that is laminar on \(S\).
		
		We construct a sequence of cuts \(F_0, F_1, \dots\).
		Let us start with \(F_0 = (A, B)\).
		We ensure that our sequence maintains the following invariants on each cut \(F_i = (A_i, B_i)\):
		\begin{itemize}
			\item \(\Abs{\Out{}{A_i}}= k\);
			\item \(u \in A_i\) and \(v \in B_i\); and
			\item when \(i > 0\), if a cut \((A',B') \in \FF\) does not cross
			with \(F_{i - 1}\) on \(S\), then it also does not cross with \(F_i\) on \(S\).
		\end{itemize}
		Suppose we have already defined \(F_i = (A_i, B_i)\).
		If the family \(\FF \cup \Set{F_i}\) is laminar on \(S\), then the process terminates and we have arrived at a contradiction with the maximality of $\FF$; note here that $F_i$ splits $\{u,v\}$ while no other cut of $\FF$ does, so the cuts of $\FF \cup \Set{F_i}$ have pairwise different intersections with~$S$.
		So, suppose that \((A_i, B_i)\) crosses some \((A', B') \in \FF\) on $S$. Without loss of generality assume that
		\(\Abs{\Out{}{A'}} = k\).
		Apply~\cref{lemma:potato_uncrossing} to \((A_i , B_i)\),
		\((A', B')\) and the set \(S\).
		In outcome~\cref{lemma:potato_uncrossing:out_uninteresting} we get either:
		\(A_i \cap S \subseteq B' \cap S\), or \(B' \cap S \subseteq A_i \cap S\).
		Either way, we get a contradiction that the two cuts were crossing on \(S\).
		Thus we must get outcome~\cref{lemma:potato_uncrossing:out_uncrossing}.
		If \(u \in A'\) then let us set \(F_{i + 1} =(A_{i+1,B_{i+1}})\coloneqq (A_i \cap A', B_i \cup B')\),
		and otherwise let us set \(F_{i + 1} =(A_{i+1,B_{i+1}})\coloneqq (A_i \cup A', B_i \cap B')\).
		By~\cref{lemma:potato_uncrossing}, \(F_{i + 1}\) is a cut of order k.
		By construction and by the fact that $(A',B')\in \FF$ does not split $\{u,v\}$, we have \(u \in A_{i+1}\) and \(v \in B_{i+1}\). 
		Thus first two invariants hold. We also note that, clearly, \(C_{i + 1}\)
		does not cross \((A', B')\).
		We must argue the third invariant.
		
		Let \((A'', B'')\in \FF\) be a cut that does not cross \(F_i\) on \(S\).
		Without loss of generality, let \(A''\) be the side that does not contain \(u\) and \(v\).
		Assume first that \(A'\) is the side of \((A', B')\) not containing \(u\) and \(v\).
		Then, by construction, \(A_{i + 1} = A_{i} \cup A'\).
		If \(A_{i}\cap S\) contains \(A''\cap S \) then so does \(A_{i + 1}\cap S\), and $(A'',B'')$ and $F_{i+1}$ do not cross on $S$.
		So \(A_i\cap S\) and \(A''\cap S\) are disjoint. 
		Now, there are three cases to consider: 
		\begin{itemize}
			\item If \(A'\cap S\) and \(A''\cap S\) are disjoint as well, then so are \(A_{i + 1}\cap S\) and \(A''\cap S\), and again $(A'',B'')$ and $F_{i+1}$ do not cross on $S$.
			\item If \(A'\cap S \subseteq A''\cap S\), then \(A_i\cap S\) and \(A'\cap S\) are disjoint (due to \(A_i\cap S\) and \(A''\cap S\) being disjoint), so $F_i$ and $(A',B')$ were not crossing on $S$ to begin with; a contradiction.
			\item Finally, if \(A''\cap S \subseteq A'\cap S\), then \(A''\cap S \subseteq A_{i + 1}\cap S\) and again, $(A'',B'')$ and $F_{i+1}$ do not cross on~$S$.
		\end{itemize}    
		The case in which \(B'\) is the side of \((A', B')\) not containing \(u\) and \(v\) follows by a symmetric argument.
		
		Since in each step the cut \(F_{i + 1}\) crosses on $S$ with at least one cut less than \(F_{i}\) does, the process must terminate in a finite number of steps giving
		the desired contradiction.
	\end{claimproof}
	
	As $\FF$ is laminar on $S$, we have that \(\FF|_S\) is laminar, hence we may define
	\begin{equation*}
		(T,\lambda,\eta') \coloneqq \familyTree{\FF|_S,s}.
	\end{equation*}
	Let $r$ be the root of $T$. 
	We endow $T$ with a bag function \(\beta \colon V(T)\to 2^{S}\) defined as follows:
	\begin{equation*}
		\beta(t) \coloneqq \lambda(t) \setminus \bigcup_{c\colon \text{ child of } t\text{ in }T} \lambda(c),\qquad\text{for every }t\in V(T)\text{.}
	\end{equation*}
	\begin{claim}\label{potlemma:near-partition}
		For every node \(t \in V(T)\), the set \(\beta(t)\) is either empty or satisfies
		\(\beta(t) \in \XX\).
		Moreover, for every element \(X \in \XX\) there is an unique node
		\(t_X \in V(T)\) such that \(\beta(t_X) = X\).
	\end{claim}
	\begin{claimproof}
		First, we verify that \(\{\beta(t)\colon t\in V(T)\}\) is a near-partition of \(S\).
		Consider any \(u \in S\) and suppose \(u \in \beta(t)\)
		and \(u \in \beta(t')\) for some distinct $t,t'\in V(T)$.
		It follows that \(u \in \lambda(t)\) and \(u \in \lambda(t')\),
		so either \(t\) is a descendant of \(t'\)
		or \(t'\) is a descendant of \(t\).
		Without loss of generality, assume the former.
		Let \(t''\) be the child of \(t'\) that is an ancestor of $t$.
		It follows that \(u \in \lambda(t'')\subseteq \lambda(t')\), and
		thus, by construction, \(u \not \in \beta(t)\); a contradiction.
		So \(\{\beta(t)\colon t\in V(T)\}\)	is a near-partition of \(S\).
		
		Note that for any node \(t \in V(T)\) we have
		\[\bigcup_{t'\colon \text{ a descendant of } t\text{ in }T} \beta(t') = \lambda(t)\text{.}\]
		Hence, if \(e=(t,c) \in E(T)\) (so \(t\) is the parent of \(c\)) and
		\((A,B) = \eta((t,c)) \in \FF|_S\) with \(s \not \in A\), then we have
		\(A = \lambda(z)\) and consequently
		\(A = \beta(\treeLower{(T)}{e})\), \(B = \beta(\treeUpper{(T)}{e})\).
		
		Observe now, that if $u \sim_\FF v$, then there must exist $t\in V(T)$ with $u,v\in \beta(t)$.
		Indeed, otherwise there would be an edge $e\in E(T)$ such that $u$ and $v$ belong to the opposite sides of the cut $(\beta(\treeLower{(T)}{e}),\beta(\treeUpper{(T)}{e}))=\eta'(e)$, implying that $\{u,v\}$ would be split by $\eta'(e)\in \FF|_S$;
		a contradiction to $u\sim_\FF v$.
		Consequently, every element of $\XX$ is entirely contained in a single element of the near-partition \(\{\beta(t)\colon t\in V(T)\}\).
		
		It now remains to show that if $u,v\in S$ are such that \(u \nsim_\FF v\), then $u,v$ belong to different parts of the near-partition \(\{\beta(t)\colon t\in V(T)\}\).
		Since $u\nsim_\FF v$, there is a cut \((A,B) \in \FF|_S\) such that $u$ and $v$ belong to the opposite sides of $(A,B)$.
		Since $\eta$ is a bijection, there is an edge \(e \in E(T)\) such that \(\eta'(e) = (A,B)\).
		It follows that the nodes whose bags contain $u$ and $v$ must belong to different components of \(T \setminus e\), so these nodes in particular must be different.
		This ends the proof.
	\end{claimproof}
	
	Choosing \(\eta\) as the unique extension in \(\FF\) of the cuts in \(\eta'\) to cuts in \(D\) finishes this proof
  (note that this is possible as \(\Abs*{\FF} = \Abs*{\FF|_S}\))
\end{proof}

Note that \((T,\beta,\eta)\) obtained by the proof above yields a decomposition of \(S\) that behaves well in \(D[S]\), however the cuts of $\FF = \eta(\E{T})$ are laminar only on $S$, and not necessarily laminar in all of $D$.
The next lemma constructs a new family of cuts extending \(\FF|_S\) but being well-behaved with respect to \(T\) in the entire graph.

\begin{lemma}
	\label{lem:partial_potatoes_uncrossed}
	Let \(D\) be a digraph, and \(S \subseteq \V{D}\) a \(k\)-flow-connected set.
  Let \((T, \beta, \eta')\) be a partial tree-partition of \(S\),
  such that all cuts \(\eta'(e)\), for \(e \in E(T)\), have order \(k\).
	Then, there exists a partial tree-partition \((T, \beta, \eta)\) of \(S\) in \(D\),
  such that the family of cuts \(\eta(E(T))\) is a laminar family
  of cuts.
	Furthermore, the porosity of \(\eta(e)\), for \(e \in E(T)\), is at most \(k^2\cdot 4^{k+1} + 2k\).
\end{lemma}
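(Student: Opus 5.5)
We keep $T$ and $\beta$ and only modify the cuts. Every edge $e$ receives a new cut $\eta(e)=(A_e,B_e)$ with $A_e\cap S=A'_e\cap S$ and $B_e\cap S=B'_e\cap S$, where $(A'_e,B'_e)=\eta'(e)$. This keeps every defining condition of a partial tree-partition except laminarity in all of $D$. The target structure is clear. If $e$ is an ancestor of $f$, we want $A_f\subseteq A_e$. If $e$ and $f$ are incomparable, we want $A_e\cap A_f=\emptyset$. (Here $A$ denotes the side avoiding the root bag.) So we aim for sides that are nested or disjoint, following the tree order of $T$.

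The construction uses the key cuts of \cref{coro:key_cuts}. Fix an edge $e$ of $T$. Let $X_e\coloneqq\beta(\treeLower{T}{e})=A'_e\cap S$ and $Y_e\coloneqq S\setminus X_e$. Since $S$ is $k$-flow-connected, every $X_e$-$Y_e$-cut has order at least $k$. Hence every minimum $X_e$-$Y_e$-cut has order exactly $k$, in one of the two directions. Define $\widehat{A}_e$ as the \emph{furthest} minimum cut, that is, the unique $k$-key $X_e$-$Y_e$-cut of order $k$ that is maximal on the $X_e$ side. Choose the direction in which $\eta'(e)$ has order $k$; this cut exists by submodularity, since the union of two minimum-order cuts is again minimum. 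Use a closure operation in the style of \cref{lemma:porosity-onecl} to keep the side reachable from $X_e$. Maximal extremal cuts have a useful property. For nested $X_f\subseteq X_e$, uncrossing as in \cref{lemma:potato_uncrossing} shows that $\widehat{A}_f$ and $\widehat{A}_e$ interact in a controlled way. However, the two cuts may point in different directions, one of $\Out{}{\cdot}$ and $\In{}{\cdot}$ small. So we cannot rely on submodularity alone, and we switch to porosity.

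We then define
\[
A_e\coloneqq \widehat{A}_e\cap\bigcap_{f\text{ ancestor of }e}\widehat{A}_f\setminus\bigcup_{g\text{ incomparable with }e}\widehat{A}_g,
\]
or a closure-corrected version of it. This family is nested and disjoint in the required pattern, and it restricts correctly to $S$ because each $\widehat{A}_g$ restricts to $X_g$. By \cref{lemma:porosity-boolean}, the porosity of $(A_e,B_e)$ is at most the sum of the porosities of all the cuts involved. Each involved cut has order $k$, and hence porosity at most $k$. The naive sum is therefore unbounded, and this is the main obstacle: we must show that only boundedly many of the $\widehat{A}_g$ actually matter.

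The plan for that step is as follows. We argue that any $\widehat{A}_g$ meeting $\widehat{A}_e$ in a nontrivial way, without being nested in it or disjoint from it, yields a key cut for a fixed pair of sets related to $e$. A natural choice is to take $X_e$ against a fixed vertex of its parent bag. One can then check that modifying along $\widehat{A}_g$ produces an important cut of order at most $k$ with respect to that pair. By \cref{coro:key_cuts} there are at most $2\cdot 4^k$ such cuts for each direction and each of the boundedly many anchoring choices. A cut that is nested with $\widehat{A}_e$ or disjoint from it contributes nothing. For ancestors, only the parent's cut is needed, by induction top-down. Counting then gives at most $k\cdot 4^{k+1}$ relevant cuts, which accounts for the $k^2\cdot4^{k+1}$ term. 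The remaining $2k$ comes from $\widehat{A}_e$ itself and the parent intersection. Completing this counting argument rigorously is the crux. We expect it to need a careful choice of the anchoring sets, so that distinct crossing $g$ give distinct important cuts.
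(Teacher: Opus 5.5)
Your target structure is right, and choosing furthest minimum cuts is a valid substitute for the paper's first cleaning step. If $f$ is a descendant of $e$ and both have small out-order, then $\widehat{A}_e\cup\widehat{A}_f$ is an $X_e$-$Y_e$-cut and $\widehat{A}_e\cap\widehat{A}_f$ is an $X_f$-$Y_f$-cut. Submodularity plus $k$-flow-connectivity forces both to have order $k$, and maximality gives $\widehat{A}_f\subseteq\widehat{A}_e$. So intersecting over ancestors costs only two sets.

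\textbf{The gap.} It is the step you flag yourself. Removing $\bigcup_g\widehat{A}_g$ over unboundedly many incomparable $g$ comes with no porosity bound, and the anchoring you suggest cannot supply one. Since $S$ is $k$-flow-connected, every $X_e$-$y$-cut with $y\in S\setminus X_e$ has at least $k$ crossing edges in each direction. Hence the $k$-key $X_e$-$y$-cuts are just the (at most two) furthest minimum cuts, and there is no supply of distinct important cuts to charge the crossing $g$'s to. You also cannot uncross $\widehat{A}_e$ against an incomparable $\widehat{A}_g$ directly. Digraph cut functions are not posimodular for two cuts of small out-order, so $\widehat{A}_e\setminus\widehat{A}_g$ need not have small order.

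\textbf{The missing idea.} Make the trimming depend only on the cut itself, using sources outside $S$, where small cuts and many important cuts genuinely occur.
\begin{enumerate}
\item For $(A,B)$ with $\Abs{\Out{}{A}}=k$, let $Z(A)$ be the at most $k$ tails of edges of $\Out{}{A}$.
\item For each $z\in Z(A)\setminus S$, take all $k$-key $\{z\}$-$(S\cap A)$-cuts $(E,F)$; there are at most $2\cdot4^k$ of them by \cref{coro:key_cuts}.
\item Delete from $A$ the closure of $Z(A)\cap E$ in $D[E]$ for each such cut, then keep only the closure of $S\cap A$ in what remains.
\end{enumerate}
The result is a Boolean combination of at most $2k\cdot4^k+1$ cuts of porosity at most $k$. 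By \cref{lemma:porosity-boolean,lemma:porosity-onecl} its porosity is at most $2k^2 4^k+k$.

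Disjointness for incomparable $A,A'$ is then derived rather than enforced. Suppose $v$ lies in both trimmed sets.
\begin{itemize}
\item $v$ lies on a closed walk inside the trimmed $A'$ through some $w\in S\cap A'$. Since $w\notin A$, the walk leaves $A$ through some $z\in Z(A)$, necessarily with $z\notin S$.
\item The part of $A'$ reachable from $z$ (or reaching $z$, depending on the direction of $A'$) is a $\{z\}$-$(S\cap A)$-cut of order at most $k$ containing that walk.
\item This cut extends to a $k$-key cut $(E,F)$. Then $v$ lies in the closure of $z$ in $D[E]$, so $v$ was already deleted from $A$, a contradiction.
\end{itemize}

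You still need the analogous argument that these trimmed sets remain nested along same-direction ancestor chains. With it, the intersection over all ancestors involves only two of them, which gives the bound $2(2k^2 4^k+k)=k^2\cdot4^{k+1}+2k$.
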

\begin{proof}
  If \(S = \emptyset\) there is nothing to be done.
  Otherwise, let \(\FF' \coloneq \eta'(E(T))\) and let \(s \in S\) be any
  vertex such that \(T\) is the underlying tree of \(\familyTree{\FF'|_S,s}\).
	The first step is to clean the family \(\FF'\) so that cuts having the same \say{direction} are also laminar in \(D\).
	\begin{claim}
		\label{potlemma:claim:same_orientation_laminar}
		There exists a family \(\FF\) of cuts of $D$ of order \(k\) and a bijection \(\nu: \FF' \to \FF\), such that
		\begin{itemize}
			\item for every \((A,B) \in \FF'\), the cut \((A_{\nw}, B_{\nw}) = \nu((A,B))\in \FF\)
			fulfils
			\begin{equation*}
				A \cap S = A_{\nw} \cap S\qquad\text{and}\qquad B \cap S = B_{\nw} \cap S, \text{ and}
			\end{equation*}
			\item for all \((A, B),(A', B') \in \FF\),
			where \(\nu^{-1}((A,B))\) is a descendant of \(\nu^{-1}((A',B'))\) in \(T\),
			\begin{itemize}
				\item if \(\Abs{\Out{}{A}} = \Abs{\Out{}{A'}} = k\), then \(A \subseteq A'\); and
				\item if \(\Abs{\In{}{A}} = \Abs{\In{}{A'}} = k\), then \(A \subseteq A'\).
			\end{itemize}
		\end{itemize}
	\end{claim}
	\begin{claimproof}
		Consider any \((A,B) \in \FF'\).
		Assume first that \(\Abs{\Out{}{A}} = k\).
		Let \(\mathcal{A}\) be the set of all ancestors \((A',B')\) of \((A,B)\) in \(T = T'\) such that
		\(\Abs{\Out{}{A'}} = k\) as well. ($\mathcal{A}$ includes $(A,B)$ as well.)
		We set
		\begin{equation*}
			A_{\nw} \coloneqq \bigcap_{(A',B') \in \mathcal{A}} A'\qquad\textrm{and}\qquad
			B_{\nw} \coloneqq V(D) \setminus A_{\nw} \qquad\textrm{and}\qquad \nu((A,B))) \coloneqq (A_{\nw},B_{\nw}).
		\end{equation*}
		In case that \(\Abs{\In{}{A}} = k\), we use a symmetric definition, but take $\mathcal{A}$ to be the set of all ancestors $(A',B')$ of $(A,B)$ satisfying \(\Abs{\In{}{A'}} = k\).
		
		We next show that \((A_{\nw}, B_{\nw})\) is a cut of order \(k\).
		To that end it suffices to work with a single ancestor \((A',B')\) of \((A,B)\), and to assume by symmetry that \(\Abs{\Out{}{A}}=\Abs{\Out{}{A'}} = k\).
		By submodularity,
		\begin{equation*}
			\Abs{\Out{}{A \cup A'}} + \Abs{\Out{}{A \cap A'}} \leq \Abs{\Out{}{A}} + \Abs{\Out{}{A'}} = 2k.
		\end{equation*}
		Since \(\FF'\) is laminar on \(S\), and \((A',B')\) is an ancestor of \((A,B)\),
		it follows that \((A \cup A') \cap S = A' \cap S\) and \(A \cap A' \cap S = A \cap S\).
		As a laminar cut family does not contain trivial cuts, we have
		\(A' \cap S \neq S\), \(A' \cap S \neq \emptyset\) and
		\(A \cap S \neq S\), \(A \cap S \neq \emptyset\).
		Therefore, as \(S\) is \(k\)-flow-connected, neither \(\Abs{\Out{}{A \cup A'}}\)
		nor \(\Abs{\Out{}{A \cap A'}}\) can be smaller than \(k\).
		Thus, from the inequality above it follows that, actually, \(\Abs{\Out{}{A \cup A'}}=\Abs{\Out{}{A \cap A'}}=k\).
		By applying this reasoning to each cut of $\mathcal{A}$ consecutively, we conclude that $\Abs{\Out{}{A_\nw}} = k$, as required.
		
		In particular the above argument also shows \(A \cap S = A_{\nw} \cap S\),
		and \(B \cap S = B_{\nw} \cap S\).
		Now let \(\nu((A, B)) = (A_{\nw},B_{\nw}),\nu((A', B')) = (A'_{\nw},B'_{\nw}) \in \FF\), where \((A,B)\) is a descendant of \((A',B')\) in \(T\).
		If \(\Abs{\Out{}{A_{\nw}}} = \Abs{\Out{}{A'_{\nw}}} = k\) or \(\Abs{\In{}{A_{\nw}}} = \Abs{\In{}{A'_{\nw}\}}} = k\),
		then the set \(\mathcal{A}\) of ancestors used for obtaining \(A_{\nw}\) is a subset of the set \(\mathcal{A}'\) used for obtaining \(A'_{\nw}\).
		Thus, \(A_{\nw} \subseteq A'_{\nw}\).
		%
	\end{claimproof}
	
	Note that \(\familyTree{\FF|_S,s} = \familyTree{\FF'|_S,s}\) for the family $\FF$ yielded by \cref{potlemma:claim:same_orientation_laminar},
  as no intersection with \(S\) has changed.
  So we continue working with the cut family \(\FF\).
	
	For a cut \((A,B) \in \FF\) 
	we define a set $Z(A)\subseteq A$ as follows:
	\begin{itemize}
		\item If \(\Abs{\Out{}{A}} = k\), then \(Z(A)\) is the set of at most \(k\) vertices in \(A\) that are the tails of edges starting in \(A\) and ending \(B\).
		\item If \(\Abs{\In{}{A}} = k\), then \(Z(A)\) is the set of at most \(k\) vertices in \(A\) that are the heads of edges starting in \(B\) and ending in \(A\).
	\end{itemize} 
	
	Consider any \((A,B) \in \FF\).
	For a non-empty set \(Z' \subseteq Z(A) \setminus (S \cap A)\), let \(\CC_{A}(Z')\) be the set of
	all $k$-key \(Z'\)-\((S \cap A)\)-cuts in \(D\).
	(In most cases, we investigate \(\CC_{A}(Z')\) for a
	specific cut \((A,B)\), and refer to it as just \(\CC(Z')\).)
	Also, let us define two sets, \(\mu(A)\) and \(\mu'(A)\), by the following formulas:
	\begin{align*}
		\mu'(A) &\coloneqq A \setminus \bigcup\limits_{z \in Z(A) \setminus (S \cap A)} \ \bigcup\limits_{(E,F) \in \CC_A(\Set{z})} \onecl{Z(A) \cap E}{D[E]}\\
		\mu(A) &\coloneqq \onecl{S \cap A}{D[\mu'(A)]}\text{.}
	\end{align*}
	Note that by definition,
	\[S\cap A\subseteq \mu(A)\subseteq \mu'(A)\subseteq A.\]
	First, let us investigate the porosity of the cut \((\mu(A), V(D) \setminus \mu(A))\).
	
	\begin{claim}\label{claim:potato_cycle_order}
		The porosity of the cut \((\mu(A), V(D) \setminus \mu(A))\) is at most \(2 k^2 \cdot 4^k + k\).
	\end{claim}
	\begin{claimproof}
		By~\cref{coro:key_cuts}, \((\mu'(A), V(D) \setminus \mu'(A))\)
		is a boolean combination of at most \( 2 k \cdot 4^k + 1\) different cuts,
		each of order at most \(k\), and thus also of porosity at most \(k\).
		So by~\cref{lemma:porosity-boolean}, the porosity of
		\((\mu'(A), V(D)\setminus \mu'(A))\) is at most \(2 k^2 \cdot 4^k + k\).
		Finally, by \cref{lemma:porosity-onecl}, the claim follows.
	\end{claimproof}
	
	With \(\mu(A)\) at hand, we almost have a globally laminar family of cuts, as the next two claims observe.
	
	\begin{claim}\label{cl:incomparable}
		If two cuts \((A,B), (A',B') \in \FF\) 
		are incomparable in~\(T\), then \(\mu(A) \cap \mu(A') = \emptyset\).
	\end{claim}
	\begin{claimproof}
		\begin{figure}[h!]
			\centering
			\begin{tikzpicture}[scale=1.2]
				\node (A) at (-2.5,0) {};
				\draw ($(A)+(0:2.9)$) edge[edge,closed,curve through={($(A)+(30:3)$) ($(A)+(90:2)$) ($(A)+(150:3)$) ($(A)+(180:2.7)$) ($(A)+(210:3)$) ($(A)+(270:2)$)}] ($(A)+(-30:3)$);
				\node (A-label) at ($(A)+(90:2.5)$) {\(A\)};
				
				\node (Ap) at (2.5,0) {};
				\draw ($(Ap)+(0:2.7)$) edge[edge,CornflowerBlue,closed,curve through={($(Ap)+(30:3)$) ($(Ap)+(90:2)$) ($(Ap)+(150:3)$) ($(Ap)+(180:2.9)$) ($(Ap)+(210:3)$) ($(Ap)+(270:2)$)}] ($(Ap)+(330:3)$);
				\node[CornflowerBlue] (Ap-label) at ($(Ap)+(90:2.5)$) {\(A'\)};
				
				\node[vertex,violet] (v) at ($(A)!0.5!(Ap)$) {};
				\node[violet] (v-label) at ($(v)+(240:0.4)$) {\(v\)};
				
				\node (mpA-a) at ($(A)+(90:1.3)$) {};
				\node (mpA-b) at ($(A)+(160:1)$) {};
				\node (mpA-c) at ($(A)+(200:2)$) {};
				\draw ($(A)+(0:2.9)$) edge[edge,closed,curve through={($(A)+(30:3)$) ($(A)+(90:2)$) ($(A)+(150:3)$) ($(A)+(180:2.7)$) ($(A)+(210:3)$) ($(A)+(270:2)$)}] ($(A)+(-30:3)$);
				\node (A-label) at ($(A)+(90:2.5)$) {\(A\)};
				\node[LavenderMagenta] (SA) at ($(mpA-b)!0.5!(mpA-c)$) {\(S\cap A\)};
				\draw ($(SA)+(0:0.7)$) edge[edge,LavenderMagenta,closed,curve through={($(SA)+(45:1)$) ($(SA)+(90:0.7)$) ($(SA)+(135:0.7)$) ($(SA)+(180:0.7)$) ($(SA)+(225:1)$) ($(SA)+(270:0.7)$)}] ($(SA)+(315:0.7)$);
				\draw ($(mpA-a)+(90:0.4)$) edge[edge,dashed,closed,curve through={($(mpA-a)+(0:0.4)$) ($(v)+(90:0.4)$) ($(v)+(0:0.2)$) ($(v)+(270:0.4)$) ($(mpA-b)+(270:1.3)$) ($(mpA-c)+(270:0.7)$) ($(mpA-c)+(180:0.7)$)}] ($(mpA-b)+(90:1)$);
				
				\node (mpA-label) at ($(A)+(260:1.2)$) {\(\mu'(A)\)};
				
				\node[vertex,AppleGreen] (w) at ($(Ap)+(20:2)$) {};
				\node[AppleGreen] (w-label) at ($(w)+(90:0.3)$) {\(w\)};
				
				\node[vertex,AppleGreen] (z) at ($(v)+(70:0.6)$) {};
				\node (zp) at ($(z)+(350:0.6)$) {};
				\node[AppleGreen] (z-label) at ($(z)+(280:0.3)$) {\(z\)};
				
				\node (App-a) at ($(Ap)+(140:1.3)$) {};
				\node (App-b) at ($(Ap)+(240:1.5)$) {};
				\node[violet] (App-label) at ($(App-b)$) {\(A''\)};
				\draw ($(App-a)+(90:0.4)$) edge[edge,violet,closed,curve through={($(App-a)!0.5!(w)$) ($(w)+(90:0.65)$) ($(w)+(0:0.4)$) ($(w)+(270:0.6)$) ($(App-b)+(10:0.3)$) ($(App-b)+(270:0.3)$) ($(App-b)+(190:0.3)$) ($(v)+(270:0.3)$) ($(v)+(180:0.3)$) ($(z)+(130:0.3)$) ($(z)+(90:0.4)$)}] ($(App-a)+(135:0.6)$);

				\node (mpAp-c) at ($(w)+(260:2)$) {};
				\node (mpAp-b) at ($(mpAp-c)+(200:1)$) {};
				\node (mpAp-a) at ($(mpAp-b)+(220:1.3)$) {};
				\draw ($(w)+(90:0.8)$) edge[edge,CornflowerBlue,dashed,closed,curve through={($(w)+(0:0.7)$) ($(w)!0.5!(mpAp-c)+(315:0.9)$) ($(mpAp-c)+(350:0.7)$) ($(mpAp-c)+(270:0.6)$) ($(mpAp-b)+(270:0.2)$) ($(mpAp-b)+(180:0.2)$) ($(App-b)+(70:0.3)$) ($(v)+(270:0.2)$) ($(v)+(180:0.2)$) ($(z)+(130:0.2)$) ($(z)+(90:0.2)$)}] ($(App-a)$);
				\node[CornflowerBlue] (mpAp-label) at ($(mpAp-b)+(100:0.5)$) {\(\mu'(A')\)};
				
				\draw ($(w)+(90:0.6)$) edge[edge,LavenderMagenta,closed,curve through={($(w)+(0:0.5)$) ($(w)!0.5!(mpAp-c)+(315:0.7)$) ($(mpAp-c)+(350:0.4)$) ($(mpAp-c)+(250:0.3)$) ($(mpAp-c)+(160:0.5)$)}] ($(w)+(180:0.5)$);

				\node (E-d) at ($(v)+(250:1)$) {};
				\node[PastelOrange] (E-label) at ($(App-b)+(200:1)$) {\(E\)};
				\draw ($(w)+(90:0.8)$) edge[edge,PastelOrange,closed,curve through={($(w)+(0:0.6)$) ($(w)+(270:0.75)$) ($(mpAp-b)+(350:0.2)$) ($(mpAp-a)+(0:0.5)$) ($(mpAp-a)+(270:0.2)$) ($(mpAp-a)+(190:0.3)$) ($(App-b)+(280:0.7)$) ($(App-b)+(225:0.7)$) ($(App-b)+(170:0.7)$) ($(E-d)+(270:0.2)$) ($(E-d)+(180:0.2)$) ($(z)+(130:0.5)$) }] ($(App-a)+(90:0.8)$);

				\draw[edge,AppleGreen,out=45,in=170] (v) to (z);
				\draw[directededge,AppleGreen,out=350,in=180] (z) to (zp.east);
				\draw (zp) edge[edge,AppleGreen,quick curve through={($(zp)+(0:0.2)$) .. ($(v)!0.5!(Ap)+(90:0.3)$) .. ($(Ap)+(90:0.6)$) .. ($(w)!0.5!(Ap)+(90:0.3)$)}] (w);
				\draw[directededge,AppleGreen] ($(w)+(185:0.3)$) to (w);
				
				\draw (w) edge[edge,PastelOrange,quick curve through={($(w)+(150:0.4)$) .. ($(Ap)$) .. ($(v)+(350:0.7)$)}] (v.east);
				\draw[directededge,PastelOrange] ($(v)+(350:0.3)$) to (v);
				
				\node[LavenderMagenta,fill=white,rounded corners] (SAp) at ($(mpAp-c)+(82:0.6)$) {\(S\cap A'\)};
				
				\node[PastelOrange] (Q) at ($(Ap)+(315:0.3)$) {\(Q\)};
			\end{tikzpicture}
			\caption{Depiction of objects used in the proof of~\cref{cl:incomparable}.}
			\label{fig:incomparable}
		\end{figure}
		Refer to~\cref{fig:incomparable} for an illustration of the objects used in this proof.
		
		Suppose for the sake of contradiction that \(v\) is an element of \(\mu(A) \cap \mu(A')\).
		Without loss of generality, assume that \(\Abs{\Out{}{A}} = k\).
		From the definition of closure,
		it follows that there is a path \(P\) from \(v\) to an element \(w \in S \cap A'\) contained
		entirely within \(D[\mu'(A')]\) and a path \(Q\) from \(w\) to \(v\) contained
		entirely within \(D[\mu'(A')]\).
		Since \(S \cap A \cap A' = \emptyset\) (as $(A,B),(A',B')$ are incomparable in the tree \(T\)),
		and \(v \in \mu(A) \subseteq A\), the path \(P\) starts in a vertex
		of \(A\), and ends in a vertex not in \(A\). So it must contain a vertex
		\(z \in Z(A)\). Note that we have \(z \not \in S \cap A\), as \(P\)
		is contained within \(\mu'(A') \subseteq A'\) and \(A' \cap S \cap A = \emptyset\).

		Observe that \((A',B')\) is a \(\Set{z}\)-\((S \cap A)\)-cut, as \(z \in A'\) and \(S \cap A \cap A' = \emptyset\). 
		Define $A''$ to be:
		\begin{itemize}
			\item  the set of all vertices reachable from \(\Set{z}\) within \(D[A']\), if \(\Abs{\Out{}{A'}} = k\);
			\item the set of all vertices that can reach \(\Set{z}\) within \(D[A']\), if \(\Abs{\In{}{A'}} = k\).
		\end{itemize}
		In either case we have $\Abs{\Out{}{A''}} \leq k$ or $\Abs{\In{}{A''}} \leq k$, respectively, and
		$(A'',V(D)\setminus A'')$ is still a \(\Set{z}\)-\((S \cap A)\)-cut. Moreover, the paths
		\(P\) and \(Q\) are contained within \(D[A'']\) regardless of the orientation
		of the cut \((A', B')\); in particular \(v \in A''\).
		
		Let \((E,F)\) be a $k$-key \(\Set{z}\)-\((S \cap A)\)-cut that
		extends the cut \((A'', V(D) \setminus A'')\) (so with \(A'' \subseteq E\)).
		The paths \(P\) and \(Q\) show that \(v\) belongs to
		\(\onecl{\set{z}}{D[E]}\), and so \(v\) does
		not belong to \(\mu'(A)\) by construction of $\mu'(A)$. This is a contradiction.
	\end{claimproof}

	\begin{claim}\label{claim:potato_nesting}
		Let \((A,B), (A',B') \in \FF\). 
		If \((A,B)\) is a descendant of \((A', B')\) in \(T\), and
		\(\Abs{\Out{}{A}} = \Abs{\Out{}{A'}} = k\) or \(\Abs{\In{}{A}} = \Abs{\In{}{A'}} = k\),
		then \(\mu(A) \subseteq \mu(A')\).
	\end{claim}
	\begin{claimproof}
		\begin{figure}[!ht]
			\centering
			\begin{tikzpicture}
				\node (Ap) at (0,0) {};
				
				\node[vertex,AppleGreen] (zA) at ($(Ap)+(200:1)$) {};
				\node[AppleGreen] (zA-label) at ($(zA)+(180:0.4)$) {\(z_A\)};
				
				\node[vertex,AppleGreen] (zAp) at ($(Ap)+(270:3)$) {};
				\node[AppleGreen] (zAp-label) at ($(zAp)+(190:0.4)$) {\(z_{A'}\)};
				
				\node (Ap-z) at ($(Ap)+(190:4)$) {};
				
				\node[vertex,violet] (w) at ($(Ap)+(80:1)$) {};
				\node[violet] (w-label) at ($(w)+(30:0.5)$) {\(w\)};
				\node (w-n) at ($(w)+(70:0.6)$) {};
				\node (w-s) at ($(w)+(0:0.6)$) {};
				\node (w-np) at ($(w-n)+(100:1.4)$) {};
				\node (w-sp) at ($(w-s)+(335:0.6)$) {};
				
				\node (A-a) at ($(zA)+(170:0.7)$) {};
				\node (A-x) at ($(Ap)+(355:1)$) {};
				\node (A-b) at ($(w)+(135:2)$) {};
				\node (A-c) at ($(w)+(40:3)$) {};
				
				\node (Ep-c) at ($(zAp)+(270:0.7)$) {};
				\node (Ep-b) at ($(Ep-c)+(180:1)$) {};
				\node (Ep-a) at ($(Ep-b)+(170:1)$) {};
				
				\node (E-a) at ($(Ap-z)+(225:2)$) {};
				\node (E-b) at ($(Ep-c)+(355:2)$) {};
				
				\node (SA-b) at ($(A-c)+(290:2)$) {};
				
				\node (mpAp-y) at ($(A-x)+(315:1)$) {};
				
				\draw ($(zAp)+(270:0.4)$) edge[edge,CornflowerBlue,closed,curve through={ ($(Ep-b)+(45:0.4)$) ($(Ep-a)+(45:0.4)$) ($(Ap-z)+(200:0.4)$) ($(A-b)+(135:0.6)$) ($(A-c)+(30:1)$)}] ($(SA-b)+(0:1)$);
				\node[CornflowerBlue] (Ap-label) at ($(A-c)+(45:1.3)$) {\(A'\)};
				
				\draw ($(Ap)+(270:0.4)$) edge[edge,closed,curve through={ ($(zA)+(270:0.4)$) ($(A-a)+(225:0.4)$) ($(A-b)+(135:0.4)$) ($(A-b)!0.5!(A-c)+(90:0.8)$) ($(A-c)+(30:0.6)$) ($(w-sp)+(335:0.2)$)}] (A-x);
				\node (A-label) at ($(A-a)+(90:2)$) {\(A\)};
				
				\draw ($(w)+(225:0.3)$) edge[edge,dashed,closed,curve through={ ($(w-n)$) ($(w-np)$) ($(w-np)!0.5!(A-c)+(90:0.2)$) ($(A-c)+(45:0.4)$) ($(A-c)+(290:0.6)$) ($(w-sp)!0.5!(A-c)+(335:0.4)$) ($(w-sp)+(45:0.4)$) ($(w-sp)$)}] ($(w-s)$);
				\node (mA-label) at ($(w-n)+(45:0.8)$) {\(\mu'(A)\)};
				
				\draw ($(w-n)+(190:0.2)$) edge[edge,dashed,CornflowerBlue,closed,curve through={ ($(w-np)+(135:0.2)$) ($(w-np)!0.5!(A-c)+(90:0.6)$) ($(A-c)+(45:0.6)$) ($(SA-b)+(315:0.6)$) ($(mpAp-y)+(325:0.5)$) (mpAp-y)}] ($(w-s)+(190:0.2)$);
				\node[CornflowerBlue] (mAp-label) at ($(mpAp-y)+(45:1)$) {\(\mu'(A')\)};
				
				\draw ($(A-c)+(110:0.4)$) edge[edge,LavenderMagenta,closed,curve through={($(A-c)+(45:0.3)$) ($(A-c)!0.5!(SA-b)+(45:0.7)$) ($(SA-b)+(45:0.5)$) ($(SA-b)+(315:0.3)$) ($(SA-b)+(225:0.5)$)}] ($(A-c)+(225:0.5)$);
				\node[LavenderMagenta] at ($(A-c)!0.8!(SA-b)+(180:0.1)$) {\(S\cap A'\)};
				
				\draw ($(w)+(45:0.1)$) edge[edge,violet,closed,curve through={ ($(A-x)$) ($(mpAp-y)+(45:0.2)$) ($(mpAp-y)+(315:0.3)$) ($(E-b)!0.5!(Ep-c)$) ($(Ep-c)+(270:0.2)$) ($(Ep-b)+(90:0.1)$) ($(Ep-a)+(270:0.2)$) ($(Ep-a)+(150:0.2)$) ($(A-a)+(135:0.6)$) ($(zA)+(90:1.5)$) ($(w)+(135:0.2)$)}] ($(w)+(90:0.1)$);
				\node[violet] (Ep-label) at ($(Ap-z)+(0:1)$) {\(E'\)};
				
				\draw ($(w)+(45:0.2)$) edge[edge,PastelOrange,closed,curve through={ ($(A-x)+(45:0.1)$) ($(mpAp-y)+(45:0.3)$) ($(mpAp-y)+(315:0.4)$) ($(E-b)$) ($(Ep-c)+(270:0.4)$) ($(E-a)$) ($(A-a)+(135:0.8)$) ($(zA)+(90:1.7)$)}] ($(w)+(90:0.2)$);
				\node[PastelOrange] (E-label) at ($(E-a)+(10:0.4)$) {\(E\)};
				
				\draw[edge,AppleGreen,out=200,in=80] (w) to (zA);
				\draw[directededge,AppleGreen,out=250,in=30] (zA) to (zAp);
				
				\draw[directededge,AppleGreen,bend left] (zAp) to (Ep-c.center);
				
				\draw[directededge,PastelOrange,out=135,in=315] (zAp) to (w);
				
				\node[AppleGreen] (P) at ($(zA)!0.5!(zAp)+(180:0.3)$) {\(P\)};
				\node[PastelOrange] (Q) at ($(mpAp-y)+(180:1.1)$) {\(Q\)};
			\end{tikzpicture}
			\caption{Depiction of objects used in the proof of~\cref{claim:potato_nesting}.}
      \label{fig:potato_nesting}
		\end{figure}
    Refer to \cref{fig:potato_nesting} for an illustration of the objects used in this proof.
    
		It suffices to prove that \(\mu'(A) \subseteq \mu'(A')\).
		Without loss of generality, let us assume that \(\Abs{\Out{}{A}} = \Abs{\Out{}{A'}} = k\).
		By the property achieved in~\cref{potlemma:claim:same_orientation_laminar}, we have \(A \subseteq A'\).
		
		For the sake of contradiction, suppose that there exists a vertex \(w \in \mu'(A) \setminus \mu'(A')\).
		Note that this in particular means that \(w \in A \subseteq A'\).
		Since \(w \not \in \mu'(A')\), there must be (as per definition of \(\mu'(A')\))
		an element \(z' \in Z(A') \setminus (S \cap A')\) and a $k$-key cut \((E,F) \in \CC_{A'}(\Set{z'})\),
		where \(S \cap A' \subseteq F\), such that
		\(w \in \onecl{Z(A') \cap E}{D[E]}\).
		As \(w \in \onecl{Z(A')}{D[E]}\), there are two paths \(P\) and \(Q\), leading from \(w\) to some \(z_{A'} \in Z(A')\) and back to $w$, respectively, that are entirely contained within \(D[E]\).
		Consider a path $P'$ obtained by extending \(P\) by an edge exiting \(A'\) whose tail is \(z_{A'}\).
		Then $P'$ starts in \(w\) and ends outside of \(A'\) which is a superset of \(A\).
		Hence, $P'$ must pass through a vertex from \(Z(A)\), and this vertex cannot be the
		last vertex of $P'$.
		Call this vertex \(z_A\).
		Since \(z_A \in E\) and \(S \cap A \subseteq S \cap A' \subseteq F\), it follows that \(z_A \not \in S \cap A\).
		Observe then, that \((E,F)\) is an \(\Set{z_A}\)-\((S \cap A)\)-cut of order at most \(k\).
		If \(\Abs{\Out{}{E}} \leq k\), then let \(E'\) be the set of all
		vertices reachable from \(z_A\) within \(D[E]\); and if \(\Abs{\In{}{E}} \leq k\),
		then let \(E'\) be the set of all vertices that can reach \(z_A\) within \(D[E]\).
		Note that the order of the cut \((E', V(D) \setminus E')\) is still at most $k$: in the first case we have \(\Abs{\Out{}{E'}}\leq \Abs{\Out{}{E}} \leq k\) and in the second case we have \(\Abs{\In{}{E'}}\leq \Abs{\In{}{E}} \leq k\). Moreover, regardless of the orientation of the cut \((E,F)\),
		the paths \(P\), \(Q\) are contained within \(D[E']\); in particular \(w \in E'\).
		Let \((E'', F'') \in \CC_{A}(\Set{z_A})\) be a $k$-key \(\Set{z_A}\)-\((S \cap A)\)-cut
		that extends \((E', V(D) \setminus E')\) (so with $E''\supseteq E'$).
		The paths \(P, Q\) witness that \(w \in \onecl{\Set{z_A}}{D[E]} \subseteq \onecl{Z(A) \cap E''}{D[E'']}\).
		But this implies that \(w \not \in \mu'(A)\), which contradicts our assumption.
	\end{claimproof}
	
	Finally, we can define a family of cuts that is globally laminar and of bounded  porosity.
	Consider any \((A,B) \in \FF\).
	Let \(\mathcal{A} \subseteq \FF\) be the set of all ancestors of \((A,B)\) in the tree \(T\).
	(In particular \((A,B) \in \mathcal{A}\), and $s\notin A'$ for each $(A',B')\in \mathcal{A}$.)
	We define
	\begin{equation*}
		\sigma(A) \coloneqq \bigcap_{(A',B') \in \mathcal{A}} \mu(A')\text{.}
	\end{equation*}
	It might seem at first glance that the set \(\sigma(A)\) is unwieldy to work with,
	as it is defined as an intersection of an unbounded number of sets.
	However, thanks to~\cref{claim:potato_nesting}, each set \(\sigma(A)\) is an intersection of at most two sets of the form \(\mu(A')\).
	We use this observation to establish a bound on the porosity of the cuts imposed by the sets $\sigma(A)$.
	
	\begin{claim}\label{cl:sigma-porosity}
		For every cut $(A,B)\in \FF$, the porosity of the cut $(\sigma(A),V(D)\setminus \sigma(A))$ is at most \(k^2 \cdot 4^{k + 1} + 2k\).
	\end{claim}
	\begin{claimproof}
		Let $\mathcal{A}^{\mathsf{out}}\coloneqq \{(A',B')\in \mathcal{A} \mid \Abs{\Out{}{A'}}=k\}$ and\mbox{ 
			$\mathcal{A}^{\mathsf{in}}\coloneqq \{(A',B')\in \mathcal{A} \mid \Abs{\In{}{A'}}=k\}$.}
		Observe that by \cref{claim:potato_nesting}, all the elements of the family $\{\mu(A')\colon (A',B')\in \mathcal{A}^{\mathsf{out}}\}$ are pairwise comparable by inclusion; and the same also holds for the family $\{\mu(A')\colon (A',B')\in \mathcal{A}^{\mathsf{in}}\}$.
		Therefore, $\sigma(A)$ is the intersection of at most two sets of the form $\mu(A')$ for some $(A',B')\in \mathcal{A}$.
		As by~\cref{claim:potato_cycle_order}, each such cut $(\mu(A'),V(D)\setminus \mu(A')))$ has porosity at most \(2k^2 \cdot 4^{k} + k\), it follows from~\cref{lemma:porosity-boolean} that the porosity of $(\sigma(A),V(D)\setminus \sigma(A))$ is at most \(4k^2 \cdot 4^k + 2k=k^2 \cdot 4^{k + 1} + 2k\).
	\end{claimproof}

	
	For a cut $(A,B)\in \FF$, we define $\sigma((A,B))\coloneqq (\sigma(A),V(D)\setminus \sigma(A))$.
	Note that by construction we have $S\cap A=S\cap \sigma(A)$, hence $(A,B)|_S=\sigma((A,B))|_S$.
	Therefore, if we define
	\begin{equation*}
		\sigma(\FF)\coloneqq \{\sigma((A,B))\colon (A,B)\in \FF\},
	\end{equation*}
	then $\sigma(\FF)|_S=\FF|_S$.
	In particular, $\sigma(\FF)|_S$ is laminar on $S$ and we have that $\familyTree{\FF|_S,s} = \familyTree{\sigma(\FF)|_S,s}$.
	Additionally, the family of cuts $\sigma(\FF)$ is globally laminar.
	
	\begin{claim}\label{cl:sigma-laminar}
		$\sigma(\FF)$ is a laminar family of cuts of $D$.
	\end{claim}
	\begin{claimproof}
		Consider any two cuts $(A,B),(A',B')\in \FF$.
		If these cuts are comparable in $T$, say $(A',B')$ is an ancestor of $(A,B)$, then by construction we have $\sigma(A)\subseteq \sigma(A')$.
		On the other hand, if $(A,B)$ and $(A',B')$ are incomparable in $T$, then, by~\cref{cl:incomparable}, it follows that $\sigma(A)$ and $\sigma(A')$ are disjoint.
	\end{claimproof}
	
	Thus we can finally define the mapping \(\eta\) by the following formula.
  For any edge \(e \in E(T)\), let \((X,Y) = \eta'(e)\),
  and let \((A,B)\) be the unique element of \(\FF\)
  such that \(A \cap S = X \cap S\). We define
	\begin{equation*}
		\eta(e) \coloneqq \sigma((A,B))= (\sigma(A), V(D) \setminus \sigma(A))
	\end{equation*}
	to obtain the demanded properties.
\end{proof}

\subsection{Proof of the Potato Theorem}

We have now gathered all the ingredients necessary for proving the Potato Theorem.

\PotatoLemma*

\begin{proof}[Proof of~\cref{lemma:potato}]
  We prove the assertion by induction on the desired flow-connectivity~\(k\).
  The base case is \(k = 1\).
  Let \(D\) be an arbitrary digraph, and let \(\CC = \Set{C_1, C_2, \dots, C_\ell}\)
  be the strongly connected components of \(D\).
  Let \(T\) be any rooted tree with \(\ell\) nodes.
  Fix any bijection \(\beta \colon V(T) \to \CC\) and let \(\gamma \colon E(T) \ni v \mapsto \emptyset \subseteq E(D)\).
  Clearly, any cycle in \(D\) must be fully contained within only one element of \(\CC\).
  Further, any two vertices of a single strongly connected component lie on a common cycle, hence
  every \(C_i \in \CC\) is \(1\)-flow-connected.
  Thus, \((T, \beta, \gamma)\) is a \(1\)-\linked tree-partition of \porosity~\(0\).
  Hence we may set \(\thmFkt{lemma:potato}{f}{1} \coloneqq 0\).

  Now, let us assume that the statement holds for \(k\) and our goal is to prove it for \(k + 1\).
  Let \(D\) be an arbitrary digraph.
  Our goal is to obtain a tree-partition \((T_{\final}, \beta_{\final}, \gamma_{\final})\) of \(D\) where each bag \(\beta_{\final}(x)\) for $x\in V(T_\final)$ is \((k + 1)\)-flow-connected.
  By induction hypothesis for \(D\) and \(k\) we obtain a $k$-\linked tree-partition~\(\TT^{\initial}=(T, \beta, \gamma)\) of \porosity at most
  \(\thmFkt{lemma:potato}{f}{k}\). By rooting $\TT^\initial$ at any node~$r$ and applying~\cref{lem:rooting_potdecomps}, we may assume that $\TT^\initial$ is \coherent and rooted at $r$.
  Note that this operation does not break the $k$-flow-connectedness of the bags, for a subset of a $k$-flow-connected set is also $k$-flow-connected, and also does not increase the porosity of the tree-partition.

  
  For every node \(x \in V(T)\) the set of vertices \(\beta(x)\) is \(k\)-flow-connected by the induction assumption.
  Let \((T_x, \beta_x, \eta'_x)\) be the outcome of \cref{lem:partial_potatoes}.
  Now, we can apply~\cref{lem:partial_potatoes_uncrossed} to obtain a partial tree-partition \((T_x,\beta_x,\eta_x)\) of \(\beta(x)\) in \(D\).
  Thus we obtain \(\set{(T_x, \beta_x, \eta_x) \colon x \in V(T)}\),
  where each \(\beta_x\) has in its image only \((k + 1)\)-flow-connected sets by~\cref{lem:partial_potatoes},
  and each \(\eta_x(E(T_x))\) is a laminar family of cuts consistent with the near-partition \(\beta_x\) by~\cref{lem:partial_potatoes_uncrossed}.


	\begin{figure}
		\centering
		\begin{tikzpicture}
			\def\innerdist{0.7}
			\def\outerdist{3}
			\tikzstyle{child} = [draw,thick,isosceles triangle,rotate=90,scale=0.9]	
			
			\node (before) at (-3,3) {
				\begin{tikzpicture}
				\node (centre) at (0,0) {};
				
				\node[draw,thick,ellipse,minimum height=2.7cm,minimum width=3.2cm] (A-centre) at ($(centre)+(90:3)$) {};
				\node[vertex,CornflowerBlue] (A-1) at ($(A-centre)+(120:0.8)$) {};
				\node[vertex,CornflowerBlue] (A-2) at ($(A-1)+(190:\innerdist)$) {};
				\node[vertex,CornflowerBlue] (A-3) at ($(A-1)+(250:\innerdist)$) {};
				\node[vertex,CornflowerBlue] (A-4) at ($(A-1)+(320:\innerdist)$) {};
				\node[vertex,CornflowerBlue] (A-5) at ($(A-3)+(230:\innerdist)$) {};
				\node[vertex,CornflowerBlue] (A-6) at ($(A-3)+(275:\innerdist)$) {};
				\node[vertex,CornflowerBlue] (A-7) at ($(A-4)+(260:\innerdist)$) {};
				\node[vertex,CornflowerBlue] (A-8) at ($(A-4)+(330:\innerdist)$) {};
				\node[vertex,CornflowerBlue] (A-9) at ($(A-4)+(10:\innerdist)$) {};
				\node[vertex,CornflowerBlue] (A-10) at ($(A-8)+(270:\innerdist)$) {};
				\node[vertex,CornflowerBlue] (A-11) at ($(A-8)+(330:\innerdist)$) {};
				\draw[edge,CornflowerBlue] (A-1) to (A-2);
				\draw[edge,CornflowerBlue] (A-1) to (A-3);
				\draw[edge,CornflowerBlue] (A-1) to (A-4);
				\draw[edge,CornflowerBlue] (A-3) to (A-5);
				\draw[edge,CornflowerBlue] (A-3) to (A-6);
				\draw[edge,CornflowerBlue] (A-4) to (A-7);
				\draw[edge,CornflowerBlue] (A-4) to (A-8);
				\draw[edge,CornflowerBlue] (A-4) to (A-9);
				\draw[edge,CornflowerBlue] (A-8) to (A-10);
				\draw[edge,CornflowerBlue] (A-8) to (A-11);
				
				\node[draw,thick,ellipse,minimum height=2.7cm,minimum width=3.2cm] (B-centre) at ($(A-centre)+(270:1)+(210:3)$) {};
				\node[vertex,CornflowerBlue] (B-1) at ($(B-centre)$) {};
				\node[vertex,CornflowerBlue] (B-2) at ($(B-1)+(100:\innerdist)$) {};
				\node[vertex,CornflowerBlue] (B-3) at ($(B-1)+(10:\innerdist)$) {}; 
				\node[vertex,CornflowerBlue] (B-4) at ($(B-1)+(270:\innerdist)$) {};
				
				\draw[edge,CornflowerBlue] (B-1) to (B-2);
				\draw[edge,CornflowerBlue] (B-1) to (B-3);
				\draw[edge,CornflowerBlue] (B-1) to (B-4);
				
				\node[draw,thick,ellipse,minimum height=2.7cm,minimum width=3.2cm] (C-centre) at ($(A-centre)+(270:1)+(330:3)$) {};
				\node[vertex,CornflowerBlue] (C-1) at ($(C-centre)$) {};
				\node[vertex,CornflowerBlue] (C-2) at ($(C-1)+(90:\innerdist)$) {};
				\node[vertex,CornflowerBlue] (C-3) at ($(C-1)+(210:\innerdist)$) {};
				\node[vertex,CornflowerBlue] (C-4) at ($(C-1)+(330:\innerdist)$) {};
				\node[vertex,CornflowerBlue] (C-5) at ($(C-2)+(180:\innerdist)$) {};
				\node[vertex,CornflowerBlue] (C-6) at ($(C-2)+(30:\innerdist)$) {};
				\node[vertex,CornflowerBlue] (C-7) at ($(C-6)+(320:\innerdist)$) {};
				\node[vertex,CornflowerBlue] (C-8) at ($(C-3)+(180:\innerdist)$) {};
				\node[vertex,CornflowerBlue] (C-9) at ($(C-3)+(280:\innerdist)$) {};
				\node[vertex,CornflowerBlue] (C-10) at ($(C-8)+(80:\innerdist)$) {};
				\node[vertex,CornflowerBlue] (C-11) at ($(C-4)+(270:\innerdist)$) {};
				\node[vertex,CornflowerBlue] (C-12) at ($(C-4)+(10:\innerdist)$) {};
				
				\draw[edge,CornflowerBlue] (C-1) to (C-2);
				\draw[edge,CornflowerBlue] (C-1) to (C-3);
				\draw[edge,CornflowerBlue] (C-1) to (C-4);
				\draw[edge,CornflowerBlue] (C-2) to (C-5);
				\draw[edge,CornflowerBlue] (C-2) to (C-6);
				\draw[edge,CornflowerBlue] (C-6) to (C-7);
				\draw[edge,CornflowerBlue] (C-3) to (C-8);
				\draw[edge,CornflowerBlue] (C-3) to (C-9);
				\draw[edge,CornflowerBlue] (C-8) to (C-10);
				\draw[edge,CornflowerBlue] (C-4) to (C-11);
				\draw[edge,CornflowerBlue] (C-4) to (C-12);
				
				\node[child] (T-B-1) at ($(B-centre)+(210:3)$) {};
				\node[child] (T-B-2) at ($(B-centre)+(250:3)$) {};
				\node[child] (T-B-3) at ($(B-centre)+(290:3)$) {};
				\node[child] (T-B-4) at ($(B-centre)+(320:3)$) {};
				
				\draw[edge] (A-centre) to (B-centre);
				\draw[edge] (A-centre) to (C-centre);
				
				\draw[edge] (T-B-1.east) to (B-centre);
				\draw[edge] (T-B-2.east) to (B-centre);
				\draw[edge] (T-B-3.east) to (B-centre);
				\draw[edge] (T-B-4.east) to (B-centre);
				
				\node[child] (T-C-1) at ($(C-centre)+(240:3)$) {};
				\node[child] (T-C-2) at ($(C-centre)+(275:3)$) {};
				\node[child] (T-C-3) at ($(C-centre)+(300:3)$) {};
				
				\draw[edge] (T-C-1.east) to (C-centre);
				\draw[edge] (T-C-2.east) to (C-centre);
				\draw[edge] (T-C-3.east) to (C-centre);
				
				\node (parent) at ($(A-centre)+(110:2)$) {};
				\node (parent-p) at ($(parent)+(110:0.5)$) {};
				\draw[edge] (A-centre) to (parent.center);
				\draw[edge,dotted] (parent-p.center) to (parent.center);
				\end{tikzpicture}};
				
			\node (after) at (3,-4) {
				\begin{tikzpicture}
				\node (centre) at (0,0) {};
				
				\node
				 (A-centre) at ($(centre)+(90:3)$) {};
				\node[vertex,CornflowerBlue] (A-1) at ($(A-centre)+(120:0.8)$) {};
				\node[vertex,CornflowerBlue] (A-2) at ($(A-1)+(190:\innerdist)$) {};
				\node[vertex,CornflowerBlue] (A-3) at ($(A-1)+(250:\innerdist)$) {};
				\node[vertex,CornflowerBlue] (A-4) at ($(A-1)+(320:\innerdist)$) {};
				\node[vertex,CornflowerBlue] (A-5) at ($(A-3)+(230:\innerdist)$) {};
				\node[vertex,CornflowerBlue] (A-6) at ($(A-3)+(275:\innerdist)$) {};
				\node[vertex,CornflowerBlue] (A-7) at ($(A-4)+(260:\innerdist)$) {};
				\node[vertex,CornflowerBlue] (A-8) at ($(A-4)+(330:\innerdist)$) {};
				\node[vertex,CornflowerBlue] (A-9) at ($(A-4)+(10:\innerdist)$) {};
				\node[vertex,CornflowerBlue] (A-10) at ($(A-8)+(270:\innerdist)$) {};
				\node[vertex,CornflowerBlue] (A-11) at ($(A-8)+(330:\innerdist)$) {};
				\draw[edge,CornflowerBlue] (A-1) to (A-2);
				\draw[edge,CornflowerBlue] (A-1) to (A-3);
				\draw[edge,CornflowerBlue] (A-1) to (A-4);
				\draw[edge,CornflowerBlue] (A-3) to (A-5);
				\draw[edge,CornflowerBlue] (A-3) to (A-6);
				\draw[edge,CornflowerBlue] (A-4) to (A-7);
				\draw[edge,CornflowerBlue] (A-4) to (A-8);
				\draw[edge,CornflowerBlue] (A-4) to (A-9);
				\draw[edge,CornflowerBlue] (A-8) to (A-10);
				\draw[edge,CornflowerBlue] (A-8) to (A-11);
				
				\node
				 (B-centre) at ($(A-centre)+(270:1)+(210:3)$) {};
				\node[vertex,CornflowerBlue] (B-1) at ($(B-centre)$) {};
				\node[vertex,CornflowerBlue] (B-2) at ($(B-1)+(100:\innerdist)$) {};
				\node[vertex,CornflowerBlue] (B-3) at ($(B-1)+(10:\innerdist)$) {}; 
				\node[vertex,CornflowerBlue] (B-4) at ($(B-1)+(270:\innerdist)$) {};
				
				\draw[edge,CornflowerBlue] (B-1) to (B-2);
				\draw[edge,CornflowerBlue] (B-1) to (B-3);
				\draw[edge,CornflowerBlue] (B-1) to (B-4);
				
				\node
				 (C-centre) at ($(A-centre)+(270:1)+(330:3)$) {};
				\node[vertex,CornflowerBlue] (C-1) at ($(C-centre)$) {};
				\node[vertex,CornflowerBlue] (C-2) at ($(C-1)+(90:\innerdist)$) {};
				\node[vertex,CornflowerBlue] (C-3) at ($(C-1)+(210:\innerdist)$) {};
				\node[vertex,CornflowerBlue] (C-4) at ($(C-1)+(330:\innerdist)$) {};
				\node[vertex,CornflowerBlue] (C-5) at ($(C-2)+(180:\innerdist)$) {};
				\node[vertex,CornflowerBlue] (C-6) at ($(C-2)+(30:\innerdist)$) {};
				\node[vertex,CornflowerBlue] (C-7) at ($(C-6)+(320:\innerdist)$) {};
				\node[vertex,CornflowerBlue] (C-8) at ($(C-3)+(180:\innerdist)$) {};
				\node[vertex,CornflowerBlue] (C-9) at ($(C-3)+(280:\innerdist)$) {};
				\node[vertex,CornflowerBlue] (C-10) at ($(C-8)+(80:\innerdist)$) {};
				\node[vertex,CornflowerBlue] (C-11) at ($(C-4)+(270:\innerdist)$) {};
				\node[vertex,CornflowerBlue] (C-12) at ($(C-4)+(10:\innerdist)$) {};
				
				\draw[edge,CornflowerBlue] (C-1) to (C-2);
				\draw[edge,CornflowerBlue] (C-1) to (C-3);
				\draw[edge,CornflowerBlue] (C-1) to (C-4);
				\draw[edge,CornflowerBlue] (C-2) to (C-5);
				\draw[edge,CornflowerBlue] (C-2) to (C-6);
				\draw[edge,CornflowerBlue] (C-6) to (C-7);
				\draw[edge,CornflowerBlue] (C-3) to (C-8);
				\draw[edge,CornflowerBlue] (C-3) to (C-9);
				\draw[edge,CornflowerBlue] (C-8) to (C-10);
				\draw[edge,CornflowerBlue] (C-4) to (C-11);
				\draw[edge,CornflowerBlue] (C-4) to (C-12);
				
				\node[child,CornflowerBlue] (T-B-1) at ($(B-centre)+(210:3)$) {};
				\node[child,CornflowerBlue] (T-B-2) at ($(B-centre)+(250:3)$) {};
				\node[child,CornflowerBlue] (T-B-3) at ($(B-centre)+(290:3)$) {};
				\node[child,CornflowerBlue] (T-B-4) at ($(B-centre)+(320:3)$) {};
				
				\draw[edge,in=30,out=260,CornflowerBlue] (A-6) to (B-2);
				\draw[edge,in=90,out=20,CornflowerBlue] (A-8) to (C-6);
				
				\draw[edge,in=180,out=90,CornflowerBlue] (T-B-1.east) to (B-2);
				\draw[edge,in=190,out=90,CornflowerBlue] (T-B-2.east) to (B-1);
				\draw[edge,in=260,out=90,CornflowerBlue] (T-B-3.east) to (B-4);
				\draw[edge,in=10,out=90,CornflowerBlue] (T-B-4.east) to (B-4);
				
				\node[child,CornflowerBlue] (T-C-1) at ($(C-centre)+(240:3)$) {};
				\node[child,CornflowerBlue] (T-C-2) at ($(C-centre)+(275:3)$) {};
				\node[child,CornflowerBlue] (T-C-3) at ($(C-centre)+(300:3)$) {};
				
				\draw[edge,in=280,out=90,CornflowerBlue] (T-C-1.east) to (C-8);
				\draw[edge,in=260,out=90,CornflowerBlue] (T-C-2.east) to (C-11);
				\draw[edge,in=330,out=90,CornflowerBlue] (T-C-3.east) to (C-12);
				
				\node (parent) at ($(A-centre)+(110:2)$) {};
				\node (parent-p) at ($(parent)+(110:0.5)$) {};
				\draw[edge,CornflowerBlue,out=90,in=290] (A-1) to (parent.center);
				\draw[edge,dotted,CornflowerBlue] (parent-p.center) to (parent.center);
				\end{tikzpicture}};
			
			\node[rotate=-45,scale=2] at ($(before)!0.7!(after)$) {\(\Rightarrow\)};
		\end{tikzpicture}
		\label{fig:potdecomp_combination}
    \caption{Illustration of the tree-partition of \(D\) obtained by combining the constructed trees \(T_{x}\), for $x\in V(T)$, into a single tree.
      Black tree on the top-left is the tree \(T\) of the initial decomposition, the blue trees on the top-left are the trees \(T_{x}\) associated with the corresponding nodes \(x \in V(T)\).
      The blue tree on the bottom-right depicts the
      final tree-partition of \(D\).
    }
	\end{figure}

	Now we want to build a single $(k+1)$-\linked tree-partition of \(D\) out of the set \(\set{ (T_x, \beta_x, \eta_x) \ \colon \ x \in V(T)}\).
  We do so by adding, for every edge \((x,y) \in E(T)\)
  a single edge connecting some node of \(T_{x}\) with
  some node of \(T_{y}\).

  Consider any edge \((x',x) \in E(T)\); so \(x'\) is the parent of \(x\) in $T$.
  Denote \(R \coloneqq \beta(\subtree{T}{x})\) for brevity.
  Also, let~$T'$ be the tree $T_{x'}$ with the orientation of the edges forgotten; thus $T'$ is an undirected tree.
  We now use~\(R\) to construct a new orientation of (a subset of) the edges of \(T'\) (possibly different from the orientation of $T_{x'}$).
  Consider any edge \(f=tt' \in E(T')\) and let \((A,B)\) be the cut associated with $f$ by $\eta_{x'}$, where $A\supseteq \beta_{x'}(\treeSplit{T}{t}{f})$ and $B\supseteq \beta_{x'}(\treeSplit{T}{t'}{f})$.
  There are three possibilities:
  \begin{itemize}
    \item \(R\) is a subset of \(A\): we then orient the edge \(f\) towards \(t\).
    \item \(R\) is a subset of \(B\): we then orient the edge \(f\) towards \(t'\).
    \item \(R\) is neither a subset of \(A\) nor of \(B\): we leave \(f\) unoriented.
  \end{itemize}
  In the last case we say that \(x\) is \emph{ambiguous} over the edge \(f\).
  Let \(\FktO{Amb}{f}\) denote the set of all children~\(x\) of~\(x'\) that are ambiguous over the edge~\(f\). 
  
  The laminarity of the family of cuts $\eta_{x'}(E(T_{x'}))$ implies that there is a connected subtree $T^{\star}_x$ of $T'$ so that the edges within $T^{\star}_x$ are left unoriented, and all the other edges of $T'$ are oriented towards $T^{\star}_x$.
  (It may happen that $T^{\star}_y$ consists of one node, which is then the unique sink in the orientation of $T'$ that is total.)
  

  We now connect the trees $T_{x'}$ and $T_{x}$ by making the root of \(T_{x}\) a child of an arbitrarily chosen node of \(T^{\star}_{x'}\).
  By performing this operation for all the edges of the tree \(T\),
  we combine all the trees \(\set{T_{x} \colon x \in V(T)}\) into a single (rooted) tree \(T_{\final}\), with \(\bigcup_{x \in V(T)} V(T_{x})\) as the node set.
  We endow $T_\final$ with a bag function $\beta_\final$ constructed naturally from the partial tree-partitions:
  for a node $t$ of $T_\final$, we set $\beta_\final(t)\coloneqq \beta_{x}(t)$, where $x\in V(T)$ is such that $t\in V(T_{x})$.
  It follows straightforwardly from the construction that $\{\beta_\final(t)\colon t\in V(T_\final)\}$ is a near-partition of the vertex set of $D$.
  It also follows immediately from~\cref{potlemma:claim:connectivity} that for each $t\in V(T)$, the set $\beta_\final(t)$ is $(k+1)$-flow-connected.
  
  To finish the description of the final tree-partition of \(D\),
  we need to specify the edge-guard function \(\gamma_{\final}\) and bound the cardinalities of the edge-guards.
  We define it as follows.
  For a node $x$ of $T$, set
  \(W_x \coloneqq \gamma(e)\) where $e$ is the edge connecting the parent of $x$ with $x$; if $x$ is the root of $T$, then set \(W_x \coloneqq \emptyset\).
  Let \(f=(t',t)\) be an edge of \(T_{\final}\).
  If \(t' \in V(T_{x}), t \in V(T_{y})\) for some $x\neq y$, then we set \(\gamma_{\final}(f) \coloneqq \gamma((x,y))\).
  Otherwise, we have that \(t',t\in V(T_{x})\) for some $x\in V(T)$.
  Let \((A,B) \coloneqq \eta_{x}(f)\) and let \(H\)
  be a minimum size set of edges that hits all the cycles crossing the cut $(A,B)$.
  We set
  \begin{equation*}
    \gamma_{\final}(f) \coloneqq H\cup W_x \cup \bigcup_{y \in \FktO{Amb}{f}} W_y\text{.}
  \end{equation*}
  This completes the description of
  \((T_{\final}, \beta_{\final}, \gamma_{\final})\) of \(D\).

  We now argue that \((T_{\final}, \beta_{\final}, \gamma_{\final})\)
  is indeed a correct tree-partition of \(D\).
  Clearly, \(T_{\final}\) is a tree, and we have already argued that $\{\beta_\final(t)\colon t\in V(T_\final)\}$ is a near-partition of the vertex set of $D$; so this established \cref{item:tp:bags}.
  To argue \cref{item:tp:edgeguards}, we need to prove that for each edge $f=(t',t)\in E(T_\final)$, the set \(\gamma_{\final}(f)\)
  intersects every cycle visiting both sets
  \(\beta_{\final}(\treeLower{(T_{\final})}{f})\) and \(\beta_{\final}(\treeUpper{(T_{\final})}{f})\).
  This clearly holds if \(t' \in V(T_{x})\) and \(t \in V(T_{y})\) for distinct \(x,y \in V(T)\)
  by induction, for $(T,\beta,\gamma)$ is a tree-partition of $D$.
  So we may assume that \(t,t' \in V(T_{x})\) for some node $x$ of $T$.
  Let \((A,B) \coloneqq \eta_{x}(f)\).
  Further, let \(X \coloneqq \beta_{\final}(\treeLower{(T_{\final})}{f})\)
  and \(Y \coloneqq V(D) \setminus X=\beta_{\final}(\treeUpper{(T_{\final})}{f})\).
  Observe that by construction, \(Y\) is the union of the following sets:
  \begin{itemize}
  	\item \(\beta(\treeUpper{T}{x})\),
  	\item \(\beta(x) \cap B\),
  	\item sets \(\beta(\subtree{T}{y})\) for all children \(y\) of \(x\) such that \(\beta(\subtree{T}{y}) \subseteq B\), and
  	\item sets \(\beta(\subtree{T}{y})\) for $y$ ranging over some subset of \(\FktO{Amb}{f}\)
  	(i.e.~some subset of the children of \(x\) that are ambiguous over $f$).
  \end{itemize} 
  Similarly, \(X\) is the union of the following sets:
  \begin{itemize}
  	\item \(\beta(x) \cap A\),
  	\item sets \(\beta(\subtree{T}{y})\) for all children \(y\) of \(x\) such that  \(\beta(\subtree{T}{y}) \subseteq A\), and
  	\item sets \(\beta(\subtree{T}{y})\) for $y$ ranging over a subset of \(\FktO{Amb}{f}\) complementary to that used for \(Y\).
  \end{itemize} 
  Let \(C\) be a cycle visiting both a vertex of \(X\) and a vertex \(Y\).
  If \(C\) contains a vertex of \(\beta(\treeUpper{T}{x})\), then $C$ must be hit by $W_x$, because $C$ cannot be entirely
  contained within \(\beta(\treeUpper{T}{x})\) due to this set being disjoint with $X$. So assume that \(C\) is disjoint with \(\beta(\treeUpper{T}{x})\).
  If \(C\) contains a vertex of \(\beta(\treeLower{T}{y})\)
  for some node \(y\in \FktO{Amb}{f}\), then \(C\) cannot be entirely contained in \(\beta(\treeLower{T}{y})\) due to this set being disjoint with $X$ or $Y$, and consequently $C$ is hit by $W_y$.
  Thus, the only possibility remaining is that \(C\) contains
  both a vertex of \(A\) and a vertex of \(B\), and is thus hit by the set \(H\).
  Since all the mentioned hitting set were included in the definition of \(\gamma_{\final}(f)\), we conclude that in any case, \(C\) is hit by \(\gamma_{\final}(f)\).
  
  This concludes the verification of~\cref{item:tp:edgeguards}, so $(T_\final,\beta_\final,\gamma_\final)$ is indeed a rooted tree-partition of $D$.
  As we have already argued, the set $\beta_\final(t)$ is $(k+1)$-flow-connected for each $t\in V(T_\final)$, hence $(T_\final,\beta_\final,\gamma_\final)$ is $(k+1)$-\linked.
  It remains to bound the porosity of $(T_\final,\beta_\final,\gamma_\final)$.


  To this end, consider any edge $f=(t',t)\in E(T_\final)$; we have to bound the size of \(\gamma_{\final}(f)\).
  Again, if~\(t\) and~\(t'\) belong to trees $T_{x}$ and $T_{y}$ for $x\neq y$, then
  \(\Abs{\gamma_{\final}(f)} = \Abs{\gamma((x,y))} \leq \thmFkt{lemma:potato}{f}{k}\).
  So, assume that \(t,t' \in V(T_{x})\) for some node $x$ of $T$,
  and let \((A,B) \coloneqq \eta_{x}(f)\).
  Clearly, \(\Abs{W_x} \leq \thmFkt{lemma:potato}{f}{k}\).
  By~\cref{claim:potato_cycle_order}, \(\Abs{H} \leq k^2 \cdot 4^{k + 1} + 2 k\).
  All that remains is to bound the size of \(\FktO{Amb}{f}\). 
  For this, we make use of the \coherence of $(T,\beta,\gamma)$ --- a property that we established in the second paragraph of the proof using~\cref{lem:rooting_potdecomps}.
  
  \begin{claim}\label{cl:bound-ambiguous}
  	For every node $x\in V(T)$ and edge $f\in V(T_{x})$, we have $\Abs{\FktO{Amb}{f}}\leq k^2 \cdot 4^{k + 1} + 2 k$.
  \end{claim}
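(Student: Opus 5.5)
The idea is to charge each ambiguous child to an edge of a small hitting set for the cycles crossing the cut $(A,B)=\eta_x(f)$. Here the relevant bag is the one at the parent node $x$, and the ambiguous children are children $y$ of $x$. By \cref{lem:partial_potatoes_uncrossed} (via \cref{cl:sigma-porosity}), the cut $(A,B)$ has porosity at most $k^2\cdot 4^{k+1}+2k$. So let $H$ be a set of at most that many edges hitting every cycle that visits both $A$ and $B$. It then suffices to show that for every $y\in\FktO{Amb}{f}$, the set $R_y\coloneqq\beta(\subtree{T}{y})$ contains both endpoints of some edge of $H$. Since the sets $R_y$ for distinct children $y$ of $x$ are pairwise disjoint, each edge of $H$ can lie inside at most one $R_y$, and so $\Abs{\FktO{Amb}{f}}\le \Abs{H}$.

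Now fix an ambiguous child $y$. By definition, $R_y\not\subseteq A$ and $R_y\not\subseteq B$, so $R_y$ meets both $A$ and $B$. The decomposition was made coherent by \cref{lem:rooting_potdecomps}, and $y$ is a non-root node, so $D[R_y]$ is strongly connected. Pick $a\in R_y\cap A$ and $b\in R_y\cap B$. Strong connectivity inside $D[R_y]$ gives an $a$-$b$-path and a $b$-$a$-path, both using only vertices of $R_y$. Their concatenation is a closed walk in $D[R_y]$ that visits both $A$ and $B$. I would extract from it a genuine cycle (no repeated edges) that still crosses the cut. One way is to take a shortest closed walk through $a$ and $b$ inside $D[R_y]$. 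Alternatively, note that any closed walk decomposes into cycles, and some edge of the walk goes from $A$ to $B$; the cycle in the decomposition that contains this edge then visits both sides. That cycle $C_y$ lies entirely in $D[R_y]$ and crosses $(A,B)$, so $H$ contains an edge of $C_y$, and both endpoints of that edge lie in $R_y$.

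The charging argument then finishes the proof: distinct ambiguous children receive distinct edges of $H$, because the $R_y$ are disjoint and each charged edge has both endpoints in its $R_y$. The only point needing care is the cycle extraction. The paper allows cycles to repeat vertices but not edges, so the decomposition argument works directly. Beyond that, one should note that coherence is what guarantees strong connectivity of $D[R_y]$ for every child $y$; this is exactly why the proof of \cref{lemma:potato} passes to a coherent tree-partition first.
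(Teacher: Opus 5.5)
Your argument is correct and is essentially the paper's proof in dual form. The paper notes that the cycles $C_y\subseteq D[R_y]$, for $y\in\FktO{Amb}{f}$, are pairwise vertex-disjoint and cross $(A,B)$, so the porosity of $(A,B)$ is at least $\Abs{\FktO{Amb}{f}}$; this is exactly your injection $y\mapsto h_y\in H\cap E(C_y)$, and your citation of \cref{cl:sigma-porosity} is the right source for the bound. Drop the ``shortest closed walk through $a$ and $b$'' option, since it need not give a cycle: with edges $(a,u),(b,u),(u,v),(v,a),(v,b)$, every closed walk through $a$ and $b$ uses $(u,v)$ twice. Your decomposition argument, taking the cycle that contains an $A$-to-$B$ edge of the closed walk, is fine.
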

  \begin{claimproof}
  Consider any child $y\in \FktO{Amb}{f}$ and let \(R_y \coloneqq \beta(\treeLower{T}{y})\).
  By the \coherence of $(T,\beta,\gamma)$, each set $R_y$
  induces a strongly connected subgraph of $D$.
  By the ambiguity of \(y\) over the edge \(f\), it follows that $(A,B)$ splits $R_y$.
  Moreover, the sets \(\{R_y\colon y\in \FktO{Amb}{f}\}\) are pairwise vertex-disjoint. Hence, for each \(y\in \FktO{Amb}{f}\) we can find a cycle contained in \(R_y\)
  that visits both \(A\) and \(B\).
  These cycles are pairwise vertex-disjoint and thus also
  pairwise edge-disjoint, showing that the porosity of \((A,B)\)
  is at least \(|\FktO{Amb}{f}|\).
  However, by~\cref{claim:potato_cycle_order}, the porosity of $(A,B)$ is at most $k^2 \cdot 4^{k + 1} + 2 k$.
  Hence we get \(\Abs{\FktO{Amb}{f}}\leq k^2 \cdot 4^{k + 1} + 2 k\).
\end{claimproof}

  Thus, by~\cref{cl:bound-ambiguous} we have
  \begin{align*}
    \Abs*{\bigcup_{y \in \FktO{Amb}{f}} W_y} \leq \ & \Abs{\FktO{Amb}{f}} \cdot \thmFkt{lemma:potato}{f}{k}\\ 
    \leq \ & \Paren{k^2 \cdot 4^{k + 1} + 2 k} \cdot \thmFkt{lemma:potato}{f}{k}\text{.}
  \end{align*}
  This yields the final bound:
  \begin{align*}
    \Abs*{\gamma_{\final}(f)} \leq \ & \Paren{k^2 \cdot 4^{k + 1} + 2 k}+\thmFkt{lemma:potato}{f}{k}+ \Paren{k^2 \cdot 4^{k + 1} + 2 k} \cdot \thmFkt{lemma:potato}{f}{k}\\
    = \ & \Paren{\thmFkt{lemma:potato}{f}{k} + 1} \cdot \Paren{k^2 \cdot 4^{k+1} + 2 k + 1} - 1\text{.}
  \end{align*}
  
  Therefore, we can set \(\thmFkt{lemma:potato}{f}{k + 1} \coloneqq \Paren{\thmFkt{lemma:potato}{f}{k} + 1} \cdot \Paren{k^2 \cdot 4^{k + 1} + 4k + 1} - 1\).
  
We note that the above reasoning can yield the algorithmic counterpart of the lemma.
Indeed, 
The base case of the induction can be performed in time polynomial in the size of~$D$, so can be finding the set~$\FF'$ of cuts and modifying it into~$\FF$ (\cref{potlemma:claim:same_orientation_laminar}).
Defining the sets $\mu(A)$ and $\mu'(A)$ for every cut $(A,B) \in \FF$ can be done in fixed-parameter time, as finding important cuts can be done in fixed-parameter time \cite{PAbook,importantSep2008}.
Having this, for each $x \in V(T)$ we can construct $(T_{x}, \beta_{x},\eta_{x})$, as well as $T_\final$ and $\beta_\final$ in a straightforward~way. 

The place that benefits from more advanced algorithmic insight is the construction of $\gamma_\final$ for an edge of $T_\final$ that connects two vertices from $T_{x}$ for some $x \in V(T)$. 
Observe that we require computing a minimum set $H$ of edges that hits all the cycles crossing a cut $(A,B)$, but trying to adapt the proof steps directly only yields a set of size bounded by a function of $k$, without guaranteeing its minimality.
We note, however, that $H$ can be still computed fixed-parameter time, using the algorithm of Chitnis, Cygan, Hajiaghayi and Marx~\cite{DBLP:journals/talg/ChitnisCHM15} for the \textsc{Directed Subset Feedback Vertex Set}.
We define a line digraph $L(D)$ of $D$ to have the vertex set $E(D)$ and an edge from $(x,y)$ to $(z,w)$ if $y=z$. We also let $S \subseteq V(L(D))=E(D)$ to be the edges crossing $(A,B)$.
Then, by~\cite{DBLP:journals/talg/ChitnisCHM15}, we can find a minimum set $W$ of vertices of $L(D)$ that hits all the cycles that contain an element of $S$; clearly $W$ corresponds to a set of arcs of $D$ that define~$H$. 

Last, we note that the number of inductive steps in the proof of \cref{lemma:potato} is bounded by $k$.
\end{proof}


\section{Bounded degree case}\label{sec:boundeddeg}

With all the tools prepared, we now start working towards the proof of \cref{thm:grid_or_small_dtcw}.
In this section we treat the simplest setting where all the vertices have bounded cycle-degree.
As expected, in this setting directed tree-cutwidth and directed branchwidth are functionally related, so the result becomes a straightforward consequence of the Directed Grid Theorem (\cref{thm:gridtheorem}), using the equivalence of directed branchwidth and directed treewidth provided by \cref{thm:cyw_dtw}.

\begin{lemma}
	\label{lem:cyclewidth_to_dtcw}
	Let \(D\) be a digraph with maximum cycle-degree at most \(d\) and directed branchwidth at most~\(w\).
	Then \(D\) has \dtcw at most \(\max \set{3 d \cdot w, 1}\).
\end{lemma}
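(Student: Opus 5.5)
Starting from a directed branch-decomposition $(T,\lambda)$ of $D$ of width at most $w$, with vertex-guards $\gamma(e)$ as in \cref{def:dbw}, we turn it into a \dtcdecomp using the same tree $T$. Define $\beta(x)\coloneqq\{\lambda^{-1}(x)\}$ for each leaf $x$ and $\beta(x)\coloneqq\emptyset$ for each internal node. Then every bag has size at most $1$, which explains the $\max\{\cdot,1\}$ in the bound. The tree $T$ is subcubic, so every node is incident to at most three edges of $T$.

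\textbf{Edge-guards.} Fix an edge $e=xy$ of $T$. Every cycle crossing the cut $(\beta(\treeSplit{T}{x}{e}),\beta(\treeSplit{T}{y}{e}))$ passes through some vertex $v\in\gamma(e)$. By \cref{cor:cycleDeg-hitting}, for each such $v$ there is a set $F_v$ of at most $d$ edges hitting every cycle that contains an edge adjacent to $v$. Any cycle through $v$ uses an edge incident to $v$, so it is hit by $F_v$. Hence
\[
\gamma'(e)\coloneqq\bigcup_{v\in\gamma(e)}F_v
\]
hits all cycles crossing the cut induced by $e$, and $|\gamma'(e)|\le d\cdot w$. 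This establishes \cref{item:tp:edgeguards}.

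\textbf{Onion-guards.} Fix a node $x$ of $T$ and set
\[
\theta(x)\coloneqq\bigcup_{e\sim x}\gamma'(e),
\]
so that $|\theta(x)|\le 3dw$. We claim $\theta(x)$ hits every out-onion of $x$ (and in fact every in-onion too). Let $Z$ be an out-onion of $x$, with tip in the bag of a node $y\neq x$ and root in the bag of a node $z$, where $y,z$ do not lie in the same component of $T-x$. Since $y\neq x$, the node $y$ lies in some component $K$ of $T-x$, attached to $x$ by an edge $e$, and the root of $Z$ lies outside $\beta(K)$. The onion model $Z$ contains a cycle through both root and tip: take one root-to-tip trail followed by the tip-to-root trail. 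This closed walk is a cycle in the paper's sense (it may repeat vertices but not edges), and it crosses the cut induced by $e$. Therefore it is hit by $\gamma'(e)\subseteq\theta(x)$, and so $Z$ is hit.

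\textbf{Main obstacle.} The only step needing care is the onion-guard argument, and it is easy because onions are cycle-containing and $T$ has bounded degree. Edge cases are routine: if $d=0$ the graph is acyclic and all guards may be empty, and if $D$ has at most one vertex the trivial decomposition of width at most $1$ works. Collecting the bounds, the width is at most $\max\{1,dw,3dw\}=\max\{3dw,1\}$.
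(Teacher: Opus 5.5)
Your proof is correct and follows the paper's argument essentially step for step. You use the same subcubic tree with singleton leaf bags, edge-guards formed as unions of the at-most-$d$ cycle-hitting sets over the vertex-guard, and onion-guards formed as the union of the at most three incident edge-guards, justified by the closed trail inside each crossing onion. Your verification of the onion-guard property is, if anything, spelled out more explicitly than in the paper.
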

\begin{proof}
	Let \((\hat{T},\hat{\lambda})\) be an directed branch-decomposition of \(D\) of width at most \(w\).
	Thus there exists a vertex-guard \(\hat{\gamma}(e)\) of size at most \(w\) for every \(e \in \E{T}\).
	For a vertex \(v \in V(D)\), let \(h(v)\) be a set of at most \(d\) edges of \(D\)
	that hits all cycles containing \(v\).
	
	We define a \dtcdecomp \(\Brace{T,\beta,\gamma,\theta}\) of \(D\) using the following formulas, for all $x\in V(T)$ and $e\in E(T)$:
	\begin{align*}
		T &\coloneqq \hat{T};\\
		\beta(x) &\coloneqq \begin{cases*}
			\Set{\hat{\lambda}(x)} & \text{if \(x\) is a leaf,}\\
			\emptyset & \text{otherwise;}
		\end{cases*}\\
		\gamma(e) &\coloneqq \bigcup \set{h(v) \colon v \in \hat{\gamma}(e)};\\
		\theta(x) &\coloneqq \bigcup_{\inc{e}{x}} \gamma(e).
	\end{align*}
	
	We now argue that this yields a valid \dtcdecomp.
	\Cref{item:tp:bags} is obvious, as the mapping \(\hat{\lambda}\) bijectively maps \(V(D)\) to the set of leaves of \(T\).
	To argue \cref{item:tp:edgeguards}, take any edge \(e \in E(T)\). The cuts induced by this edge in \((T,\beta)\)
	and in \((T,\hat{\lambda})\) are the same. Thus, any cycle \(C\) crossing this cut contains a vertex \(v \in \hat{\lambda}(e)\) by the definition of directed branchwidth.
	Thus it contains some edge \(f \in h(v)\), and thus also of \(\gamma(e)\); so this establishes \cref{item:tp:edgeguards}, and hence also \cref{item:dtcw:tp}.
	To argue~\cref{item:dtcw:onionguards}, consider any \(x \in V(T)\) and observe that every in-onion of $x$ and every out-onion of $x$ contains a cycle that crosses one of the cuts induced in $(T,\beta)$ by some edge $e$ of $T$ incident to $x$.
	So this cycle, and therefore also the onion, is hit by \(\gamma(e)\subseteq \theta(x)\).
	
	We finish by providing an upper bound on the width of the decomposition obtained.
	Note that for every edge $e$ of $T$ we have \(\Abs{\gamma(e)} \leq d \cdot w\), as it is the union of at most \(w\) edge sets (as \(\Abs{\hat{\gamma}(e)} \leq w\)),	each of size at most \(d\).
	It follows that \(\Abs{\theta(x)} \leq 3d \cdot w\) for every node $x$, as the tree \(T=\widehat{T}\) is subcubic.
	Finally, by construction we also have \(\Abs{\beta(x)} \leq 1\), providing the desired bound on the width.
\end{proof}

\begin{theorem}
	\label{thm:bd_degree_case}
	There exists a function \(\thmFkt{thm:bd_degree_case}{g}{}\colon \N\times \N\to \N\) such that for every non-negative integer~\(d\), positive integer~\(k\), and digraph~\(D\) with maximum cycle-degree at most~\(d\), one of the following~holds:
	\begin{itemize}
		\item \(D\) contains \(\gridImm{k}\) as an immersion, or
		\item \(\dtcwFkt{D} \leq \thmFkt{thm:bd_degree_case}{g}{d, k}\).
	\end{itemize}
	Moreover, if $d=d(k)$, there exists a fixed-parameter algorithm to find, for every digraph \(D\) with maximum cycle-degree at most $d$, and parameter $k$, either an immersion model of \(\gridImm{k}\) in \(D\), or a directed tree-cut decomposition of $D$ of width at most \(\thmFkt{thm:bd_degree_case}{g}{d, k}\).
\end{theorem}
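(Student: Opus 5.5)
The plan is to combine the Directed Grid Theorem (\cref{thm:gridtheorem}) with \cref{thm:cyw_dtw} and \cref{lem:cyclewidth_to_dtcw}. The only bridge still needed is from topological minors to immersions. A topological minor model of \(\gridImm{k}\) in \(D\) is in particular an immersion model, because internally vertex-disjoint paths are edge-disjoint and \(\gridImm{k}\) has no parallel edges (the paper notes this after defining topological minors). So bounded cycle-degree is used only in the final conversion to a directed tree-cut decomposition, and the dichotomy itself comes straight from the vertex-based grid theorem.

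Concretely, I would apply \cref{thm:gridtheorem} to \(D\) with parameter \(k\). If \(D\) contains \(\gridImm{k}\) as a topological minor, it contains \(\gridImm{k}\) as an immersion, and we are done. Otherwise \(\dtw{D}\le \thmFkt{thm:gridtheorem}{g}{k}\). Then \cref{thm:cyw_dtw} gives
\[
\HSwidth{D}\le \thmFkt{thm:cyw_dtw}{h}{\thmFkt{thm:gridtheorem}{g}{k}}=\thmFkt{thm:gridtheorem}{g}{k}+1 .
\]
Feeding this into \cref{lem:cyclewidth_to_dtcw} with cycle-degree bound \(d\) yields
\[
\dtcwFkt{D}\le \max\{3d(\thmFkt{thm:gridtheorem}{g}{k}+1),1\}.
\]
I would take this right-hand side as \(\thmFkt{thm:bd_degree_case}{g}{d,k}\).

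For the algorithmic part, the steps are as follows.
\begin{enumerate}
\item Run the fixed-parameter algorithm of \cref{thm:grid-algorithmic}. It returns either a topological minor model, which is already an immersion model, or a directed tree-decomposition of width at most \(\thmFkt{thm:gridtheorem}{g}{k}\). (The statement of \cref{thm:grid-algorithmic} says ``directed tree-cut decomposition'', but by context this means a directed tree-decomposition.)
\item Convert this into a directed branch-decomposition of width at most \(\thmFkt{thm:gridtheorem}{g}{k}+1\) in polynomial time, using the algorithmic clause of \cref{thm:cyw_dtw}.
\item Run the construction in the proof of \cref{lem:cyclewidth_to_dtcw}. It needs, for every vertex \(v\), a set \(h(v)\) of at most \(d\) edges hitting all cycles through \(v\). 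By \cref{cor:cycleDeg-hitting} and Menger's theorem, this is a minimum cut between the out-edges and in-edges of \(v\): split \(v\) into a source and a sink and compute a max-flow. This takes polynomial time.
\end{enumerate}

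The only mildly delicate point is the last step: computing the vertex-guards \(\hat\gamma(e)\) used in the proof of \cref{lem:cyclewidth_to_dtcw}. I would sidestep computing them as minimum sets. The conversion in \cref{thm:cyw_dtw} produces guards explicitly, of size at most the stated width, and the proof of \cref{lem:cyclewidth_to_dtcw} only needs valid vertex-guards of bounded size, not minimum ones. If that turns out not to be directly available, the fallback is to compute a minimum vertex hitting set for cycles crossing a cut via directed subset feedback vertex set with parameter \(\thmFkt{thm:gridtheorem}{g}{k}+1\), which is fixed-parameter tractable. Everything else is routine composition of bounds.
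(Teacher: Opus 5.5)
Your proposal is correct and follows the paper's proof essentially verbatim: the paper chains the same three results, namely the Directed Grid Theorem, the linear equivalence of directed treewidth and directed branchwidth from \cref{thm:cyw_dtw}, and \cref{lem:cyclewidth_to_dtcw}, reaching the same bound \(\max\{3d\cdot(\thmFkt{thm:gridtheorem}{g}{k}+1),1\}\), and it treats the algorithmic part as a straightforward combination of the same ingredients. Your extra remarks are sound elaborations of details the paper leaves implicit: computing the sets \(h(v)\) by max-flow, obtaining bounded-size vertex-guards (via directed subset feedback vertex set if needed), and reading ``directed tree-cut decomposition'' in \cref{thm:grid-algorithmic} as a directed tree-decomposition.
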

\begin{proof}
	Let \(D\) be a digraph with maximum cycle-degree at most \(d\),	and assume that it does not contain \(\gridImm{k}\) as an immersion.
	It follows that it does not contain \(\gridImm{k}\) as a topological minor.
	By~\cref{thm:gridtheorem}, \(D\)~has directed tree-width at most \(\thmFkt{thm:gridtheorem}{g}{k}\); let \(k' \coloneqq \thmFkt{thm:gridtheorem}{g}{k}\).
	By~\cref{thm:cyw_dtw} it follows that the directed branchwidth of \(D\) is at most \(\thmFkt{thm:cyw_dtw}{h}{k'}\); let \(\ell \coloneqq \thmFkt{thm:cyw_dtw}{h}{k'}\).
	Finally, by~\cref{lem:cyclewidth_to_dtcw} we infer that the directed tree-cutwidth of \(D\) is at most	\(\max\{3d \cdot \ell, 1\}\).
	
	Thus, we conclude that the theorem holds with \(\thmFkt{thm:bd_degree_case}{g}{d,k} \coloneqq \max \set{3d \cdot \thmFkt{thm:cyw_dtw}{h}{\thmFkt{thm:gridtheorem}{g}{k}}, 1}\).
	
	Last, the construction of \cref{lem:cyclewidth_to_dtcw} and the algorithmic claim in~\cref{thm:cyw_dtw} 
	can be combined with \cref{thm:grid-algorithmic} in a straightforward way to yield the algorithmic statement.
\end{proof}

\section{Small councils}\label{sec:councils}


\newcommand*{\migrationAux}[1]{\Fkt{\mathcal{M}_d}{#1}}

In this section we consider the case when the considered digraph $D$ contains only a bounded number of vertices with high cycle-degree, which are moreover well-connected to each other (formally, form a $k$-flow-connected set for a large $k$). For this, it will be convenient to define an auxiliary migration digraph \(\migrationAux{D}\) that captures the connectivity in $D$ outside of the set of high-degree vertices.

Formally, fix a constant $d\in \N$, let $D$ be a digraph, and let $Z$ be the set of all vertices in $D$ whose cycle-degree is at least $d$. 
Let \(I \coloneqq \In{D}{Z}\) be the set of all in-edges of \(Z\) and \(O \coloneqq \Out{D}{Z}\) be the set of all out-edges of \(Z\).
Let \(D'\) be obtained from \(D-Z\) by adding a vertex set \(T \coloneqq \Set{t_i \mid e_i \in I}\) together with edges \(\Set{(\tail{e_i},t_i) \mid e_i \in I}\) and similarly a vertex set \(S \coloneqq \Set{s_i \mid e_i \in O}\) together with edges \(\Set{(s_i,\head{e_i}) \mid e_i \in O}\).
Note that \(\migrationAux{D} \coloneqq \Brace{D',S, T}\) is a migration digraph.

We show that if this migration digraph allows for a small hitting set of all tripods of one type, then we can decompose the entire graph into a \dtcdecomp of bounded width.

\begin{lemma}
	\label{lem:tripod_hitting}
	Let \(D\) be a digraph with no \(\gridImm{k}\)-immersion and with at most \(z\) vertices of cycle-degree at least \(d\), for some \(k,d,z\in \N\).
	If there is a hitting set of size at most \(x \in \N\) for all out-tripods (or for all in-tripods) of \(\migrationAux{D} = \Brace{D',S,T}\), then there is a \dtcdecomp of \(D\) of width at most \(\thmFkt{lem:tripod_hitting}{h}{d,x,z,k}\).
	
	Moreover, if $d=d(k)$, $x=x(k)$ and $z=z(k)$, there is a fixed-parameter algorithm with parameter $k$ to find a directed tree-cut decomposition of $D$ of width at most $\thmFkt{lem:tripod_hitting}{h}{d,x,z,k}$.
\end{lemma}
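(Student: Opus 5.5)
By symmetry (reversing all edges swaps in- and out-tripods, and a directed tree-cut decomposition of $\Rev(D)$ gives one of $D$ with the roles of in- and out-onions swapped), we only treat the case of a hitting set $X$ of size at most $x$ for all out-tripods of $\migrationAux{D}$. Identify $X$ with a set $X_D$ of at most $x$ edges of $D$: edges $(\tail{e_i},t_i)$ map to $e_i$, edges $(s_i,\head{e_i})$ map to $e_i$, and other edges map to themselves. Let $Z$ be the set of at most $z$ high cycle-degree vertices. The plan has three steps. First decompose $D-Z$ using the bounded-degree case. Then attach the vertices of $Z$ as a single bag near a suitable node. Finally use $X_D$ to build one onion-guard that works for every node in the out-hitting orientation.

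\emph{Step 1.} The digraph $D-Z$ has maximum cycle-degree below $d$ and, being a subgraph of $D$, excludes a $\gridImm{k}$-immersion. By \cref{thm:bd_degree_case} it has a directed tree-cut decomposition; better, by going through \cref{thm:gridtheorem,thm:cyw_dtw,lem:cyclewidth_to_dtcw}, we get a subcubic tree $\hat T$ with singleton leaf bags and edge-guards of size $O(d\cdot w)$ for $D-Z$, where $w$ depends only on $k$. Form $T$ by adding a new node $x_Z$ with $\beta(x_Z)=Z$, adjacent to an arbitrary node of $\hat T$. The bag sizes are then at most $\max\{z,1\}$.

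\emph{Step 2 (edge-guards).} A cycle of $D$ crossing a cut of $\hat T$ either avoids $Z$, in which case it is hit by the old guard, or passes through $Z$. To hit cycles through $Z$, add to every edge-guard a hitting set for cycles that leave $Z$, travel in $D-Z$, and return while meeting both sides of the cut. Naively this set is unbounded. The key observation I would try to establish is the following: in $D'-X$, the set of out-edges from $Z$ (the sources $S$) and the set of in-edges into $Z$ (the sinks $T$) interact tree-like, because there is no out-tripod. Concretely, for every vertex $c$ of $D'-X$, all $S$-to-$c$ paths come essentially from one source in $S$; otherwise two sources reach $c$ and $c$ reaches some sink, giving an out-tripod. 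So $D'-X$ splits into ``private territories'' of individual out-edges of $Z$. Each $Z$-to-$Z$ excursion avoiding $X$ then lives in the territory of its first edge. Such an excursion can cross a cut of $\hat T$ only if that territory is split. Since $\hat T$ is a branch decomposition, I would re-route: attach each territory's excursion through its own first edge. That is, add to the guard of every cut the first edges of the territories it splits, together with $X_D$. Bounding the number of territories split by a single cut is where I would use the bounded vertex-guard $\hat\gamma(e)$. A territory split by the cut must contain a cycle (via $Z$) through a vertex of $\hat\gamma(e)$, and since those vertices have cycle-degree below $d$, at most $d\cdot w$ territories are involved. This gives guards of size $O(dw+x)$.

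\emph{Step 3 (onion-guards).} Declare every node out-hitting, and set $\theta(y)=X_D\cup\bigcup_{e\sim y}\gamma(e)$. Consider an out-onion of $y$ avoiding $X_D$. If neither its root nor its tip lies in $Z$, its two root-to-tip paths and its return edge form cycles that cross an incident cut, so it is hit by $\gamma$ as in \cref{lem:cyclewidth_to_dtcw}. The hard case is when the onion's cycles use $Z$. Taking its two edge-disjoint root-to-tip trails and one tip-to-root trail, the tip must be reached from $Z$ along two different out-edges of $Z$, since the tip has only one territory. These two trails, continued by a tip-to-$Z$ trail, yield an out-tripod in $D'-X$ after un-contracting, which is a contradiction. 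Making this rerouting honest is the main obstacle: the trails are edge-disjoint rather than vertex-disjoint, and a tripod requires vertex-disjoint branches. I would handle this by taking the first vertex where the two paths from distinct sources meet as the centre. If both trails enter through the same out-edge of $Z$, the onion lies inside one territory, and I would show it is then hit by the guards of Step 2. Constructiveness follows from the algorithmic versions of the cited theorems. The final width is a function $h(d,x,z,k)$.
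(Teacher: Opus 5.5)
Your Step 2 fails, and the failure is structural rather than a matter of detail. Because $x_Z$ is a leaf of $T$, the set $Z$ lies on one side of every cut of $T$. A $Z$-to-$Z$ excursion therefore crosses every cut that has its territory on the other side, not only the cuts that split the territory. Your bound through $\hat\gamma(e)$ does not help either: vertex-guards of a decomposition of $D-Z$ only hit cycles inside $D-Z$, never cycles through $Z$.

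Concretely, let $D$ consist of a vertex $z$ and $n\ge d\ge 2$ directed triangles $z\to a_i\to b_i\to z$. Then $Z=\{z\}$, and the migration digraph is a disjoint union of paths $s_i\to a_i\to b_i\to t_i$, so $X=\emptyset$ hits all tripods. Only $z$ has in- or out-degree larger than $1$, so $D$ contains no onion and no cylindrical wall of order $k\ge 2$ as an immersion. The edge $x_Z y$ of your tree induces the cut $(\{z\},V(D)\setminus\{z\})$, which is crossed by $n$ edge-disjoint triangles. Yet your rule assigns it the guard $X_D=\emptyset$, since it splits no territory. Likewise, $D-Z$ is acyclic, so $\hat\gamma\equiv\emptyset$ and $\hat T$ may split all $n$ territories $\{a_i,b_i\}$ at a single edge.

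No choice of $\hat T$ rescues this. In any decomposition of this $D$ of width $w$, each branch at the node whose bag contains $z$ meets at most $w$ triangles. So that node must have degree growing with $n$, whereas in your tree every node has degree at most $4$.

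The missing idea is to let the tripod-free structure dictate the top level of the decomposition. The paper treats in-tripods; your case is the mirror image, grouping by the unique out-edge of $Z$ instead of the unique in-edge. It deletes the hitting set $F$ and shows that every strong component of $(D-F)-Z$ reachable from $Z$ reaches at most one in-edge $f\notin F$ of $Z$. It then groups $V(D)\setminus Z$ into unions of strong components $B_f$, $B_\emptyset$, $B_{\mathsf{rest}}$, such that every cycle leaving a group $B$ is hit by $F\cup h(B)$ with $|h(B)|\le 1$.

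Each $D[B]$ is decomposed separately by \cref{thm:bd_degree_case}. All these decompositions are hung from one centre node with bag $Z$ and unbounded degree, and $F\cup h(B)$ is added to every guard inside $T_B$. The only genuinely new onion-guard is the one at the centre, namely $F$ for in-onions. This is where the tripod condition is used a second time: the only edge of $D-F$ leaving $B_f\cup B_\emptyset$ is $f$, and no edge of $D-F$ leaves $B_\emptyset$, which rules out in-onions of the centre avoiding $F$.

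By contrast, in your bounded-degree tree, correct edge-guards would already give the onion-guards for free. So the ``hard case'' of your Step 3 is not where the difficulty lies.
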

\begin{proof}
	Let \(Z\) be the set of at most \(z\) vertices of cycle-degree at least \(d\).
  Without loss of generality, let us assume that there is a hitting set \(F'\subseteq E(D)\) of size \(x\) for all in-tripods of \(\migrationAux{D}\).
	The proof for out-tripods works analogously.
	
	We define \(\thmFkt{lem:tripod_hitting}{h}{d,x,z,k} \coloneqq
	x + \thmFkt{thm:bd_degree_case}{g}{d,k} + z + 1\).

  Define the following set of edges \(F \subseteq E(D)\).
  For any edge \(e' \in F'\):
  \begin{itemize}
    \item if \(e' \in E(D)\), then add \(e'\) to \(F\);
    \item if \(e' = (\tail{e_i},t_i)\) for some \(e_i \in \In{D}{Z}\), then add \(e_i\) to \(F\);
    \item if \(e' = (s_j, \head{e_j})\) for some \(e_j \in \Out{D}{Z}\), then add \(e_j\) to \(F\).
  \end{itemize}
  By construction we clearly have \(\Abs{F} \leq \Abs{F'} = x\).
	Let \[D^\star \coloneqq (D- F) -Z.\]
	We consider a topological ordering \(\preccurlyeq\) on the strongly connected components of \(D^\star\) (called further \emph{strong components} for brevity). Thus, for every edge $(u,v)$ of $D^\star$, if $R_u$ and $R_v$ are the strong components containing $u$ and $v$ respectively, then we have $R_u\preccurlyeq R_v$.
	
	We now build a partition \(\mathcal{B}\) of $V(D)\setminus Z$ as follows.
  Let \(\RR\) be the set of all strong components \(R\) of \(D^\star\)
  satisfying the following property: there exists a strong component \(R'\) of \(D^\star\)
  which is incident to an edge of \(\Out{}{Z} \setminus F\) and satisfies \(R' \preccurlyeq R\).
  That is, \(\RR\) is the upwards closure of components incident to \(\Out{}{Z} \setminus F\).
  Let \(B_{\mathsf{rest}}\) be the union of all the remaining strong components of \(D^\star\).

  \begin{claim}
    \label{claim:atmost_1_edge}
    Let \(A\) be a strong component of \(D^\star\).
    If there is an edge not in \(F\) starting in \(Z\) and ending in some vertex of \(A\),
    then in \(D \setminus F\), there is at most one edge of \(\In{}{Z} \setminus F\)  that can be reached from $A$ by a directed path whose all internal vertices do not belong to $Z$.
  \end{claim}
  \begin{claimproof}
    Suppose towards a contradiction that the claim was false.
    Then, there are two edges \(f_1, f_2 \in E(D) \setminus F\)
    with \(\head{f_i} \in Z, \tail{f_i} \in V(D) \setminus Z\),
    and we can get from \(A\) both to \(\tail{f_1}\) and to \(\tail{f_2}\) by a directed path in \(D \setminus F\) that avoids $Z$ except for the last vertex.
    Thus, the subpaths that reach \(\tail{f_i}\) are actually contained within \(D^\star = (D \setminus Z) \setminus F\).
    Let \(e \in E(D) \setminus F\) be such that
    \(\head{e} \in A\) and \(\tail{e} \in Z\).
    Let \(s \in \migrationAux{D}\) be the vertex corresponding
    to the edge \(e\), and \(t_1, t_2 \in \migrationAux{D}\)
    be the vertices corresponding to the edges \(f_1, f_2\).
    Clearly, in \(\migrationAux{D} \setminus F'\),
    \(s\) can reach both \(t_1\) and \(t_2\) (as \(A\) is a strong component).
    This gives us an in-tripod in \(\migrationAux{D}\) that avoids \(F'\); this is a contradiction.
  \end{claimproof}
  This claim implies that we can induce a near-partition of \(\RR\) via \(\In{}{Z} \setminus F\).
  That is, for each edge \(f \in \In{}{Z} \setminus F\),
  let \(\BB_f\) be the set of all elements of \(\RR\) that can reach \(\tail{f}\) in \(D^\star\);
  additionally, let \(B_{\emptyset}\) be the set of all elements of \(\RR\)
  that cannot reach (in \(D^\star\)) any \(\tail{f}\), for \(f \in \In{}{Z} \setminus F\).
  By~\cref{claim:atmost_1_edge}, the family
  \(\set{\BB_f \colon f \in \In{}{Z} \setminus F} \cup \set{\BB_{\emptyset}}\)
  is a near-partition of \(\RR\).
  This is the basis of our construction of \(\BB\). In $\BB$, we collect:
  \begin{enumerate}
    \item \(B_f \coloneqq \bigcup \BB_f\), for \(f\in \In{}{Z} \setminus F\) provided that \(\BB_f\) is non-empty;
    \item \(B_{\emptyset} \coloneqq \bigcup \BB_{\emptyset}\), provided that \(\BB_{\emptyset}\) is non-empty;
    \item \(B_{\mathsf{rest}}\), provided that \(B_{\mathsf{rest}}\) is non-empty.
  \end{enumerate}

	See~\cref{fig:fdeleted_structure}
	for an illustration of this construction.

  For each set \(B \in \BB\), define
  \[
    h(B) \coloneqq \begin{cases}
      \set{f},& \text{ if \(B = B_f\) for some \(f \in \In{}{Z} \setminus F\),}\\
      \emptyset,& \text{ if \(B = B_{\mathsf{rest}}\) or \(B = B_{\emptyset}\).}
    \end{cases}
  \]

  \begin{claim}
    \label{claim:cycle_hitting}
    Let \(B \in \BB\). Suppose \(C\) is a cycle in $D$
    visiting a vertex of \(B\) and of \(V(D) \setminus B\).
    Then \(C\) contains an edge of \(F \cup h(B)\).
  \end{claim}
  \begin{claimproof}
    If \(C\) contains an edge of \(F\) we are immediately done.
    So, assume that \(C\) does not contain any edge of \(F\).
    As \(B\) is the union of a collection of strong components of \(D^\star = (D \setminus F) \setminus Z\),
    and \(C\) visits both a vertex of \(B\) and
    a vertex outside of \(B\), it follows that $C$ must contain a vertex
    of \(Z\).

    Let \(b \in B\) be a vertex visited by \(C\).
    Let \(e\) be an edge in \(C\) that is the last edge before
    \(C\) enters the vertex \(b\) that is an element of \(\Out{}{Z} \setminus F\).
    Clearly this edge enters some strong component \(A\) of \(D^\star\),
    and all the edges between $e$ and $b$ along $C$ are contained in \(D^\star\),
    showing that \(b\) is reachable from \(A\) in~$D^\star$.
    This means that the strong component of \(b\) in \(D^\star\)
    is an element of \(\RR\). Thus, \(B \neq B_{\mathsf{rest}}\).
    
    Let \(f\) be the first edge of \(C\) occurring
    after \(C\) leaves the vertex \(b\) that is an element of
    \(\In{}{Z} \setminus F\).
    Thus, the strong component of \(b\) in \(D^\star\)
    can reach the edge \(f\) by a path avoiding $F$ or $Z$, and by~\cref{claim:atmost_1_edge}
    it follows that \(B = B_f\), and so \(C\) is hit by \(\set{f} = h(B)\).
  \end{claimproof}

	Clearly, every \(B \in \mathcal{B}\) induces a subgraph \(D_B\coloneqq D[B]\) of \(D\) with maximum cycle-degree less than \(d\).
	So, by~\cref{thm:bd_degree_case} and as no \(D_B\) contains a \(\gridImm{k}\)-immersion, there is a
  \dtcdecomp \(\Brace{T_B,\beta_B,\gamma_B,\theta_B}\) for \(D_B\) of width at most \(\thmFkt{thm:bd_degree_case}{g}{d,k}\).
	The next step is to combine these decompositions into a \dtcdecomp \(\Brace{T,\beta,\gamma,\theta}\) for \(D\).
	
	Let \(T\) be the tree obtained from the union over all \(T_B\), for \(B \in \mathcal{B}\), by adding an extra node \(s\) connected to exactly one (arbitrary chosen) node of each \(T_B\). We endow $T$ with a bag function $\beta$ defined naturally:
	\begin{align*}
		\beta(t) &\coloneqq \begin{cases}
			\beta_B(t), & \text{ if \(t \in \V{T_B}\) for some \(B\in \BB\),}\\
			Z, & \text{ if \(t = s\).}
		\end{cases} 
	\end{align*}
	
	We build the edge-guards as follows:
	\begin{align*}
		\gamma(e) & \coloneqq 
			\begin{cases}
				\gamma_B(e) \cup h(B) \cup F, &\text{ if \(e \in \E{T_B}\) for some \(B \in \BB\),}\\
				F \cup h(B),&\text{ if \(\inc{e}{s}\) and \(\inc{e}{a}\) for some \(a \in \V{T_B}\).}
			\end{cases}
	\end{align*}
	
	For the onion-guards we add the hitting set and the edges into \(Z\) to the old onion-guards:
	\begin{align*}
		\theta(t) &\coloneqq \begin{cases}
			\theta_B(t) \cup F \cup h(B), &\text{ if \(t \in T_B\) for some \(B \in \BB\),}\\
			F,&\text{ if \(t = s\).}
		\end{cases}
	\end{align*}
	
	%
	
	Note that if $d=d(k)$, $z=z(k)$, and $x=x(k)$, the above can be modified in a straightforward way to obtain a fixed-parameter algorithm (with respect to $k$) to find \(\Brace{T,\beta,\gamma,\theta}\). What is left to verify is that \(\Brace{T,\beta,\gamma,\theta}\) is indeed a \dtcdecomp of $D$ of width at most \(\thmFkt{lem:tripod_hitting}{h}{d,x,z,k}\).
	
	\paragraph{Edge-guards}
	
	First, note that \(\Abs{\Fkt{\gamma}{e}} \leq \thmFkt{lem:tripod_hitting}{h}{d,x,z,k}\) for all \(e \in \E{T}\). We are left with verifying the cycle-hitting property of the sets $\Fkt{\gamma}{e}$.
	
	Consider the case that \(e \in \E{T_B}\) for some \(B \in \BB\).
  Let \(C\) be a cycle crossing the cut induced by \(e\).
  If \(C\) contains an edge of \(F\) then it is hit by \(\gamma(e)\),
  as it contains \(F\), so assume that is not the case.
  If \(C\) is contained in \(B\), then
  it is hit by \(\gamma_B(e) \subseteq \gamma(e)\),
  as \((T_B,\beta_B,\gamma_B,\theta_B)\) is a \dtcdecomp.
  As \(V(D) \setminus B\) is contained on one side
  of the cut induced by \(e\), to cross this cut
  \(C\) must contain a vertex of \(B\) and a vertex outside of \(B\).
  Now, by~\cref{claim:cycle_hitting},
  \(C\) is hit by \(F \cup h(B) \subseteq \gamma(e)\)
  and we are done.

  Now consider the case of \(e\) being an edge connecting \(s\)
  and a node of some \(T_B\). Thus, the cut we consider is
  \((B, V(D) \setminus B)\).
  This is exactly the setup of~\cref{claim:cycle_hitting},
  so \(C\) is hit by \(F \cup h(B) = \gamma(e)\).
	
	\paragraph{Onion-guards}
	Again, note that \(\Abs{\Fkt{\theta}{t}} \leq \thmFkt{lem:tripod_hitting}{h}{d,x,z}\) for all \(t \in \V{T}\). We are left with verifying the onion-hitting property of the sets $\Fkt{\theta}{t}$.
	
  For the node \(s\), we argue that all the in-onions of $s$ are hit.
  We consider the following cases. Let \(e\) be an edge of \(T\)
  incident to \(s\), connecting it to some node of \(T_B\), \(B \in \BB\).
  We thus we are considering the cut \((B, V(D) \setminus B)\).
  Let \(\mu\) be an onion immersion model, with root in \(B\)
  and tip in \(V(D) \setminus B\).
  We can reinterpret \(\mu\) as a strongly connected subdigraph
  that contains both a vertex of \(B\) and a vertex outside of \(B\).
  Thus, there is a cycle containing a vertex of \(B\) and a
  vertex outside of \(B\) contained in this subdigraph.
  By~\cref{claim:cycle_hitting},
  this cycle is hit by \(F \cup h(B)\).
  Thus, if \(B = B_{\mathsf{rest}}\) or \(B = B_{\emptyset}\) we are done,
  as in these cases \(F \cup h(B) = F = \theta(s)\).
  So, assume that \(B = B_f\) for some \(f \in \In{}{Z} \setminus F\).
  Let us analyse what edges can there be between \(B_f\) and other members \(B' \in \BB\)
  (outside of the set \(F\)).
  Observe that there cannot be any edges between \(B_f\) to \(B_{f'}\)
  for \(f' \in \In{}{Z} \setminus F\) distinct from \(f\),
  as then some strong component of \(\RR\) would reach both \(f\) and \(f'\)
  contradicting~\cref{claim:atmost_1_edge}.
  Likewise, there cannot be an edge from \(B_f\) to \(B_{\mathsf{rest}}\)
  as this would contradict the definition of \(B_{\mathsf{rest}}\).
  And finally there cannot be an edge from \(B_\emptyset\)
  to \(B_f\) as this would expose a component
  of \(B_\emptyset\) that can reach \(f\).
  Furthermore, there is a single edge connecting
  \(B_f\) to \(Z\) --- the edge \(f\).
  As the same applies to all other components
  \(B_{f'}\) for \(f' \in \In{}{Z} \setminus F\),
  see~\cref{fig:fdeleted_structure},
  this implies that \(\Out{D \setminus F}{B_f \cup B_\emptyset} = \set{f}\).
  Thus, as \(\mu\) contains two edge-disjoint paths
  from the root to the tip,
  it follows that the tip of \(\mu\) must be an element of \(B_\emptyset\).
  But \(\Out{D \setminus F}{B_\emptyset} = \emptyset\),
  which contradicts the existence of the path
  connecting the tip to the root within \(\mu\).
  This shows that there is no in-onion that avoids \(F\)
  at~\(s\).

  \begin{figure}[!ht]
    \centering
    \begin{tikzpicture}
    	\node (centre) at (0,0) {};
    	\node[edge,LavenderMagenta,ellipse,minimum height=1.5cm,minimum width=2.5cm] (Z) at ($(centre)+(0,2)$) {\(Z\)};
    	
    	\node[edge,ellipse,minimum height=1.5cm,minimum width=2.5cm] (B-r) at ($(centre)+(160:3)$) {\(B_{\mathsf{rest}}\)};
    	\node[edge,ellipse,minimum height=1.5cm,minimum width=2.5cm] (B-0) at ($(centre)+(20:3)$) {\(B_{\emptyset}\)};
    	
    	\node[edge,CornflowerBlue,ellipse,minimum height=1.5cm,minimum width=2.5cm] (B-f) at ($(centre)+(0,0)$) {\(B_f\)};
    	\node[edge,CornflowerBlue,ellipse,minimum height=1.5cm,minimum width=2.5cm] (B-fp) at ($(B-f)+(0,-2)$) {\(B_{f'}\)};
    	
    	\draw[directededge,dashed,LavenderMagenta,bend right] ($(Z)+(170:0.8)$) to ($(B-r)+(60:0.4)$);
    	\draw[directededge,dashed,LavenderMagenta,bend right] ($(B-0)+(120:0.5)$) to ($(Z)+(10:0.7)$);
    	
    	\draw[directededge,dashed,out=175,in=5] ($(B-0)+(180:0.7)$) to ($(B-r)+(360:0.7)$);
    	
    	\draw[directededge,PastelOrange!50,line width=3pt,bend right] ($(B-r)+(315:0.6)$) to ($(B-f)+(180:0.7)$);
    	\draw[directededge,PastelOrange!50,line width=3pt,bend right] ($(B-f)+(0:0.8)$) to ($(B-0)+(225:0.5)$);
    	
    	\draw[directededge,PastelOrange!50,line width=3pt,bend right] ($(B-r)+(290:0.5)$) to ($(B-fp)+(180:0.7)$);
    	\draw[directededge,PastelOrange!50,line width=3pt,bend right] ($(B-fp)+(0:0.8)$) to ($(B-0)+(250:0.4)$);
    	
    	\draw[directededge,dashed,out=180,in=45] ($(B-0)+(200:0.7)$) to ($(B-f)+(45:0.5)$);
    	\draw[directededge,dashed,out=125,in=0] ($(B-f)+(135:0.5)$) to ($(B-r)+(-20:0.7)$);
    	
    	\draw[directededge,dashed,bend right] ($(B-f)+(250:0.4)$) to ($(B-fp)+(110:0.4)$);
    	\draw[directededge,dashed,bend right] ($(B-fp)+(70:0.4)$) to ($(B-f)+(300:0.4)$);
    	
    	\draw[directededge,dashed,out=225,in=225] ($(B-r)+(225:0.5)$) to ($(B-fp)+(225:0.5)$);
    	\draw[directededge,dashed,out=315,in=315] ($(B-fp)+(315:0.5)$) to ($(B-0)+(315:0.5)$);
    	
    	\draw[directededge,LavenderMagenta,out=20,in=315] ($(B-f)+(20:0.6)$) to ($(Z)+(315:0.5)$);
    \end{tikzpicture}
    \caption{Depiction of the structure of the digraph \(D \setminus F\).
      Dashed arrows represent non-edges,
      while \textcolor{PastelOrange}{thick orange edges} represent arbitrary amount of edges (possibly zero) in the prescribed direction.
      The \textcolor{LavenderMagenta}{magenta} edge is the edge \textcolor{LavenderMagenta}{\(f\)}.
    }
    \label{fig:fdeleted_structure}
  \end{figure}

  For a node \(x \in V(T_B)\) for \(B \in \BB\),
  we hit onions in the same orientation
  as in the original \dtcdecomp \((T_B, \beta_B, \gamma_B, \theta_B)\).
  Let \(e\) be an edge incident to \(x\).
  Let \(\mu\) be an onion immersion model.
  Note that we can reinterpret \(\mu\) as a strongly connected
  subdigraph of \(D\).
  If this subdigraph contains
  a vertex of \(B\) and of \(V(D) \setminus B\)
  then it also contains a cycle with a vertex of \(B\) and a vertex outside of \(B\).
  By~\cref{claim:cycle_hitting},
  this cycle is hit by \(F \cup h(B) \subseteq \theta(x)\).
  So we are left with considering onion immersion models
  fully contained in \(B\) or fully contained in \(V(D)\setminus B\).
  In the latter case, the model does not cross the cut
  induced by \(e\); in the former we conclude (as \(\TT_B = (T_B, \beta_B, \gamma_B, \theta_B)\) is a \dtcdecomp)
  that if \(\mu\) has an appropriate orientation
  (the one hit by \(\TT_B\) at \(x\)) then
  \(\mu\) is hit by \(\theta_B(x) \subseteq \theta(x)\);
  in either case we obtain the desired conclusion.
\end{proof}

For the next step, we will need an auxiliary lemma that allows us to ``rebase'' out- and in-onion-star immersions within a flow-connected set, at the cost of losing a bit on their order. 

\begin{lemma}[Onion Transport Lemma]\label{lem:otl}
	There exists a function \(\thmFkt{lem:otl}{h}{}\colon \N\times \N\to \N\) such that for all integers $q,k\in \N$ and every digraph $D$ excluding the onion-star of order $k$ as an immersion, the following holds.
	Suppose $D$ contains a subset of vertices $Z$ that is $\thmFkt{lem:otl}{h}{q,k}$-flow-connected.
	Let $z,z'\in Z$ be distinct vertices, and suppose in $D$ there is an immersion model of an out-onion-star of order $\thmFkt{lem:otl}{h}{q,k}$ rooted at~$z$.
	Then there is also an immersion model of an out-onion-star of order $q$ rooted at $z'$.
	
	Moreover, if $q=q(k)$, there is a fixed-parameter algorithm with parameter $k$ to find such a model in $D$.
\end{lemma}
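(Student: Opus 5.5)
The plan is to reroute every onion of the given out-onion-star so that it starts at $z'$ instead of at $z$, using a large flow from $z'$ to $z$, and to clean up the resulting congestion with the Congested Onion Harvesting Lemma (\cref{lem:onion_harvesting}). The assumption that $D$ excludes the onion-star of order $k$ tells us which outcome of the harvesting step we are in.

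\textbf{Setup.} Let $N\coloneqq\thmFkt{lem:otl}{h}{q,k}$, to be fixed later, and let $\{O_1,\dots,O_N\}$ be the onion models of the out-onion-star centred at $z$, with pairwise distinct tips $t_1,\dots,t_N$. By discarding at most two onions we may assume $z'\notin\{t_i\}$. Since $Z$ is $N$-flow-connected, Menger's Theorem (\cref{thm:menger}) gives an edge-disjoint $z'$-$z$-flow $\mathcal{R}=\{R_1,\dots,R_N\}$ and an edge-disjoint $z$-$z'$-flow $\mathcal{R}'=\{R'_1,\dots,R'_N\}$. An out-onion at $z'$ with tip $t_i$ needs three trails: two edge-disjoint trails from $z'$ to $t_i$ and one from $t_i$ back to $z'$. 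The natural candidate is to send two paths of $\mathcal{R}$ into the two forward trails of $O_i$, and to append one path of $\mathcal{R}'$ to the backward trail of $O_i$. This uses $3$ flow paths per onion, so we need $N\geq 3q$ plus slack. Conflicts arise only where flow paths share edges with onions $O_j$ for $j\neq i$, or with each other across the two families.

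\textbf{Ramsey cleaning and the dichotomy.} Build an auxiliary bipartite "conflict" graph between flow paths and onions, recording whether they share an edge. Apply \cref{thm:BipRamsey} to pass to large subfamilies that are either anti-complete or complete.
\begin{itemize}
\item If anti-complete, the rerouting above gives edge-disjoint models, since flow paths inside each family are edge-disjoint and we choose disjoint index sets. Conflicts between $\mathcal{R}$ and $\mathcal{R}'$ are harmless here: we splice the forward paths and the backward path as trails and then shortcut. This yields an out-onion-star of order $q$ at $z'$.
\item If complete, every path of $\mathcal{R}'$ (suitably truncated) meets every onion. Each onion contains a $z$-$t_i$-$z$ closed trail, hence trails starting at $z$ and trails ending at $z$. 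Together with the portions of the flow paths ending or starting at $z$, these give two families $\mathcal{P},\mathcal{Q}$ of trails starting and ending at $z$, pairwise intersecting, with bounded vertex repetition ($c=2$, as in the proof of \cref{lem:two_onion_stars}). \cref{lem:onion_harvesting} then produces an onion-star of order $k$ centred at $z$, contradicting the hypothesis.
\end{itemize}
So $N$ is chosen as an iterated bipartite Ramsey number of $\thmFkt{lem:onion_harvesting}{g}{k,2}$ and $3q$. The algorithmic version follows because every step (Menger flows, Ramsey extraction by brute force over bounded sizes, harvesting) is fixed-parameter.

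\textbf{Main obstacle.} I expect the hard step to be making the "complete" case produce trails in the exact form \cref{lem:onion_harvesting} demands: trails of one family starting at the centre, trails of the other ending there, pairwise edge-intersecting, each family edge-disjoint, and the centre visited once. The issue is that the intersection might happen on a part of the onion that does not lie on the chosen trail through $z$. My first attempt is to split each onion into its two forward trails and its backward trail, Ramsey separately on each of the three parts, and keep whichever part gives complete intersection. The intersection must then be witnessed by two trails glued at a single vertex, which is exactly the $c=2$ congestion the lemma tolerates. A secondary difficulty is the partial conflicts between $\mathcal{R}$ and $\mathcal{R}'$ themselves. I would handle these by one more Ramsey/harvesting round at the vertex $z'$, since paths of $\mathcal{R}$ and $\mathcal{R}'$ respectively start and end at $z'$.
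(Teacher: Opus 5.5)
Your plan follows the paper's proof. You take a $z$-$z'$ flow and a $z'$-$z$ flow. You use bipartite Ramsey together with the Congested Onion Harvesting Lemma (with $c=2$, gluing forward and backward onion trails at the tip) to make the flow paths pairwise edge-disjoint and edge-disjoint from the onions. Then you splice two $z'$-$z$ paths onto the forward trails and one $z$-$z'$ path onto the backward trail of each onion. Two details need repair.

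\emph{First}, conflicts between $\mathcal{R}$ and $\mathcal{R}'$ are not harmless. Shortcutting removes repeated vertices within one trail, but it cannot remove an edge shared by two different trails. Any such overlap, whether inside one new onion or across two of them, destroys the immersion model. The extra Ramsey/harvesting round between $\mathcal{R}$ and $\mathcal{R}'$ that you mention at the end is therefore mandatory. The paper does it first and harvests at $z$; harvesting at $z'$ works equally well.

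\emph{Second}, consider a flow path and an onion part that both start at $z$ (or both end at $z$). The glued trail $P^j_i\cdot Q_i$ is then a closed trail through $z$. Since the harvesting lemma requires the centre to occur only once, you must delete its edge at $z$, and a flow path that meets the onion only in that edge no longer meets the truncated trail. So one Ramsey outcome breaks whichever way you define the conflict graph:
\begin{itemize}
\item with whole parts, a complete pair need not be well-crossing after truncation;
\item with truncated trails, an anti-complete pair need not be disjoint from the whole onions.
\end{itemize}
The paper fixes this beforehand by discarding, at a constant-factor loss, flow paths whose edge at $z$ coincides with an onion's edge at $z$. This works because, for each onion part separately, the coincidence relation is a matching: a flow path is a path and so has a single edge at $z$, and the onions are pairwise edge-disjoint.

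With these two repairs your argument goes through. Your remark about discarding the onion whose tip is $z'$ addresses a point the paper leaves implicit.
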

\begin{proof}
	Define
	\[h_k(x)\coloneqq \bRamsey(x,\thmFkt{lem:onion_harvesting}{g}{k,2}).\]
	We set 
	\begin{align*}
		q_1&\coloneqq h_k(h_k(h_k(h_k(2q))))\\
		q_2&\coloneqq 16\cdot q_1\\
		\thmFkt{lem:otl}{h}{q,k}&\coloneqq q_3\coloneqq h_k(q_2).
	\end{align*}
	Let ${\cal O}=(O_i)_{i\in [q_3]}$ be the family of $q_3$ edge-disjoint onion immersion models with root at $z$ that are assumed to exist. For each $i\in [q_3]$, we let $P^1_i, P^2_i$ be the two trails from $z$ to the tip of $O_i$ within $O_i$, and $P^3_i$ be the trail back from the tip of $O_1$ to $z$ within $O_i$. By shortening the trails if necessary, we may assume that $P^1_i,P^2_i,Q_i$ are paths. For $j\in [2]$, let ${\cal P}^j$ be the set of (pairwise edge-disjoint) trails obtained by taking, for each $i\in [q_3]$, the concatenation of $P^j_i$ and $Q_i$ and removing the last edge. We similarly define families ${\cal Q}^j$ for $j\in [2]$, just from the concatenation of $P^j_i$ and $Q_i$ we remove the first edge instead of the last. Note that each trail from the families ${\cal P}^j,{\cal Q}^j$, $j\in [2]$, visits every vertex at most twice. Moreover, the trails in ${\cal P}^j$ start at $z$ and never visit $z$ again, and the trails in ${\cal Q}^j$ end at $z$ and never visit $z$ before.
	
	Since $z,z'\in Z$ and $Z$ is $q_3$-flow-connected, we may find a family ${\cal K}$ of $q_3$ edge-disjoint paths from $z$ to $z'$ and a family ${\cal L}$ of $q_3$ edge-disjoint paths from $z'$ to $z$. By applying \cref{thm:BipRamsey} (in the bipartite intersection graph of $\cal K$ and $\cal L$) and \cref{lem:onion_harvesting} to ${\cal K}$ and ${\cal L}$, we may assume that all the paths in ${\cal K}\cup {\cal L}$ are pairwise edge-disjoint, at the cost of reducing the sizes of these families to $|{\cal K}|=|{\cal L}|=q_2$. (The other conclusion of the Bipartite Ramsey Theorem (\cref{thm:BipRamsey}) yields, through \cref{lem:onion_harvesting}, an immersion model of an onion-star of order $k$, which is a contradiction.) 
	
	Next, by losing at most half of the paths of ${\cal K}$ and at most half of the onions in ${\cal O}$, we may assume that the edges that the paths of ${\cal K}$ use to leave $z$ are all different from the edges that the paths of ${\cal P}^1$ use to leave $z$. By applying this argument also for the pairs of families $({\cal K},{\cal P}^2)$, $({\cal L},{\cal Q}^1)$, and $({\cal L},{\cal Q}^2)$, we may assume that no path of ${\cal K}$ leaves $z$ using the same edge as a path from ${\cal P}^1\cup {\cal P}^2$, and no path of ${\cal L}$ enters $z$ using the same edge as a path from ${\cal Q}^1\cup {\cal Q}^2$. This is at the cost of restricting ${\cal K}$ and ${\cal L}$ to size at least $q_2/16=q_1$, and ${\cal O}$ to size at least $q_3/16\geq q_1$. 
	
	Now, we may apply \cref{thm:BipRamsey} (in the intersection graph) and \cref{lem:onion_harvesting} to the families ${\cal K}$ and ${\cal Q}^1$ in order to assume that the paths in ${\cal K}\cup {\cal Q}^1$ are pairwise edge-disjoint. This is at a cost of losing the inverse of $h$ in the cardinalities of ${\cal K}$ and ${\cal O}$. By performing the same for pairs of families $({\cal K},{\cal Q}^2)$, $({\cal L},{\cal P}^1)$, and $({\cal L},{\cal P}^2)$, we draw the same conclusion about those pairs. This costs restricting ${\cal K}$, ${\cal L}$, and ${\cal O}$ to subfamilies of size $2q$ each.
	
	Now, the paths of ${\cal K}$ and of ${\cal L}$ are all pairwise edge-disjoint, and also edge-disjoint from the onion immersion models in ${\cal O}$. Since $|{\cal K}|=|{\cal L}|=|{\cal O}|=2q$, one can easily match the paths of ${\cal K}$ and ${\cal L}$ with the onions of ${\cal O}$ in order to construct an immersion model of an out-onion-star of order $q$ rooted at $z'$.
\end{proof}

Next, we show how to decompose the digraph assuming the set of vertices of high cycle-degree is small and well-connected.

\begin{lemma}[Pre-small council lemma]
	\label{lem:SCL_flow}
	There are functions \(\thmFkt{lem:SCL_flow}{f}{} : \N^3 \to \N\) and \(\thmFkt{lem:SCL_flow}{g}{} : \N^2 \to \N\) such that for every digraph \(D\) with at most \(z\) vertices of cycle-degree at least \(d\), which moreover form a \(\thmFkt{lem:SCL_flow}{g}{k,d}\)-flow-connected set, it holds that
	\begin{enumerate}[label=\textbf{(Z\arabic*)},leftmargin=1.2cm]
		\item \(D\) contains a \(\gridImm{k}\)-immersion or
		\label{item:SCL_flow:immersion}
		\item \(\dtcwFkt{D} \leq \thmFkt{lem:SCL_flow}{f}{k,d,z}\).
		\label{item:SCL_flow:decomposition}
	\end{enumerate}
	Moreover, if $d=d(k)$ and $z=z(k)$, there is a fixed-parameter algorithm with parameter $k$ to find either a \(\gridImm{k}\)-immersion model in $D$ or a directed tree-cut decomposition of $D$ of width at most $\thmFkt{lem:SCL_flow}{f}{k,d,z}$.
\end{lemma}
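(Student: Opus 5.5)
The plan is to reduce to \cref{lem:tripod_hitting} by applying \cref{thm:tripod-EP} to the migration digraph \(\migrationAux{D}=(D',S,T)\). Assume \(D\) has no \(\gridImm{k}\)-immersion; otherwise \cref{item:SCL_flow:immersion} holds. By \cref{lem:gridImm_from_onion_star}, \(D\) then excludes the onion-star of order \(k_0\coloneqq 2k^2\) as an immersion. Let \(Z\) be the set of vertices of cycle-degree at least \(d\). We may assume \(Z\neq\emptyset\); otherwise \cref{thm:bd_degree_case} finishes directly. Set \(p\coloneqq \thmFkt{lem:otl}{h}{\thmFkt{lem:two_onion_stars}{f}{k_0},k_0}\), apply \cref{thm:tripod-EP} with parameter \(p'\) (to be fixed, polynomial in \(p\) and \(z\)) separately to in-tripods and out-tripods, and take \(\thmFkt{lem:SCL_flow}{g}{k,d}\) at least \(p\). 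If for one of the two types we obtain a hitting set of size at most \(\thmFkt{thm:tripod-EP}{f}{p'}\), then \cref{lem:tripod_hitting} with \(x\coloneqq\thmFkt{thm:tripod-EP}{f}{p'}\) gives \cref{item:SCL_flow:decomposition} with \(\thmFkt{lem:SCL_flow}{f}{k,d,z}\coloneqq\thmFkt{lem:tripod_hitting}{h}{d,x,z,k}\). It therefore remains to rule out the case where both packings exist.

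Suppose we have \(p'\) edge-disjoint in-tripods and \(p'\) edge-disjoint out-tripods in \(\migrationAux{D}\). First, turn each in-tripod into an onion of \(D\) rooted at a vertex of \(Z\). An in-tripod consists of a source \(s\), which corresponds to an edge leaving some \(z_1\in Z\), together with two branches ending at sinks \(t_1\neq t_2\), which correspond to two distinct edges entering vertices \(z_2,z_3\in Z\). Taking \(c\) as the tip, the path \(z_1\to c\) gives one edge of an onion, and the two paths from \(c\) back into \(Z\) give the other edges. If \(z_2=z_3=z_1\), this is an onion with root \(z_1\) and two edge-disjoint return trails, which is an in-onion model centred at \(z_1\) with tip \(c\). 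If the endpoints differ, we close the trails up inside \(Z\) using the flow-connectivity of \(Z\).

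To keep this manageable, we pigeonhole. Since \(|Z|\le z\), by losing a factor of \(z^3\) we may assume all tripods share the same triple \((z_1,z_2,z_3)\). We then route from \(z_2\) and \(z_3\) back to \(z_1\) using families of edge-disjoint paths. These are guaranteed by \(\thmFkt{lem:SCL_flow}{g}{k,d}\)-flow-connectivity and made disjoint from the tripods by the same Ramsey/onion-harvesting cleaning used in the proof of \cref{lem:otl}; any failure of the cleaning yields an onion-star of order \(k_0\), a contradiction. Tips must also be pairwise distinct. Centres can repeat, but every centre \(c\) is outside \(Z\), so it has cycle-degree below \(d\). Since each tripod yields a cycle through \(c\), at most roughly \(d\) tripods share a centre, and we lose only a factor of \(d\).

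This gives an in-onion-star of order \(p\) centred at some \(z_1\in Z\), and symmetrically an out-onion-star of order \(p\) centred at some \(z'\in Z\). Applying \cref{lem:otl}, once directly and once to the reversed digraph for in-stars, moves both to a common centre with order \(\thmFkt{lem:two_onion_stars}{f}{k_0}\). \Cref{lem:two_onion_stars} then produces an onion-star of order \(k_0\), contradicting the absence of a \(\gridImm{k}\)-immersion.

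The main obstacle is the conversion from tripods to onion immersion models. Three difficulties arise together: the branches of a tripod end at possibly different vertices of \(Z\); the tripods must stay edge-disjoint from the connecting flows; and tips must be distinct even though centres of distinct tripods may coincide. My first attempt would be the pigeonhole-plus-harvesting route above, using the bounded cycle-degree outside \(Z\) to control repeated centres. The algorithmic version follows because every step (\cref{thm:tripod-EP}, \cref{lem:otl}, \cref{lem:two_onion_stars}, \cref{lem:tripod_hitting}) is fixed-parameter.
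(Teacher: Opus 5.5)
Your plan matches the paper's proof step for step. The paper applies \cref{thm:tripod-EP} to the migration digraph $\mathcal{M}_d(D)$ and uses \cref{lem:tripod_hitting} in the hitting-set case. Otherwise it discards repeated centres using bounded cycle-degree outside $Z$, pigeonholes the $Z$-endpoints (a factor $z^3$), routes back by Menger, and uncrosses the routes from the tripods with bipartite Ramsey plus \cref{lem:onion_harvesting} (\cref{claim:uncross}, done there for out-tripods, symmetric to your in-tripods). It concludes with \cref{lem:otl} and \cref{lem:two_onion_stars}.

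Only details need tightening:
\begin{itemize}
\item The onion from an in-tripod has root $c$ and tip $z_1$; you swapped them, though ``in-onion-star centred at $z_1$'' is correct.
\item ``Each tripod yields a cycle through $c$'' does not give edge-disjoint cycles. Argue instead that $d$ edge-disjoint tripods at $c$, together with $d$-flow-connectivity of $Z$, defeat every hitting set of fewer than $d$ edges (\cref{cor:cycleDeg-hitting}). So you need $g(k,d)\ge d$.
\item The return families from $z_2$ and from $z_3$ must be jointly edge-disjoint. Apply Menger after adding an auxiliary vertex, as the paper does.
\item $p'$ is $z^3 d\, h(h(p))$, not polynomial, and you need $g(k,d)\ge 2h(h(p))$.
\end{itemize}
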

\begin{proof}
We define
\begin{align*}
    m &\coloneq \thmFkt{lem:two_onion_stars}{f}{2k^2}\\
    s &\coloneq \thmFkt{lem:otl}{h}{m, 2k^2}\\
    \ell &\coloneq h(h(s))\\
    \thmFkt{lem:SCL_flow}{g}{k,d} &\coloneqq \max \set{s, 2\ell} \\
    \thmFkt{lem:SCL_flow}{f}{k,d,z} &\coloneqq \thmFkt{lem:tripod_hitting}{h}{d,\thmFkt{thm:tripod-EP}{f}{z^3 \cdot d \cdot \ell},z,k},
\end{align*}
where \(h\) is some function we will define later (c.f.~\cref{claim:uncross}).
Let \(Z\) be the set of all vertices of cycle-degree at least \(d\) in \(D\).
By assumption \(\Abs{Z} \leq z\) and the vertices of \(Z\) are pairwise \(\thmFkt{lem:SCL_flow}{g}{k,d}\)-flow connected in \(D\).

We construct an migration digraph  \(\migrationAux{D} = \Brace{D',S, T}\) to which we can apply~\cref{thm:tripod-EP}.
We apply it twice, once for \(S\)-\(T\)-in-tripods and once for \(S\)-\(T\)-out-tripods.
This leaves us with the following two cases.

\paragraph*{Case 1: hitting set of size \(\thmFkt{thm:tripod-EP}{f}{z^3 \cdot d \cdot \ell}\) for all in-tripods or all out-tripods.}
In this case, we apply \cref{lem:tripod_hitting} to obtain a decomposition of width
\(\thmFkt{lem:tripod_hitting}{h}{d,\thmFkt{thm:tripod-EP}{f}{z^3 \cdot d \cdot \ell},z,k} = \thmFkt{lem:SCL_flow}{f}{k,d,z}\).

\paragraph*{Case 2: \(z^3 \cdot d \cdot \ell\) edge-disjoint in-tripods and \(z^3 \cdot d \cdot \ell\) edge-disjoint out-tripods.}

If the family of in-tripods contains $d$ in-tripods with the same centre $c$, we can extract from it a family of $d$ edge-disjoint paths in $D$ that start and end in $Z$ and go through $c$. 
Since $Z$ is $d$-flow-connected, by \cref{cor:cycleDeg-hitting} we get a contradiction with $c$ having cycle-degree smaller than $d$.
Thus, there are \(z^3 \ell\) in-tripods, and, by symmetric argument, \(z^3 \ell\) out-tripods in $D$ with distinct centres.


Finding a tripod in \(\migrationAux{D}\) yields a similar object in \(D\) as follows.
A \emph{\(Z\)-out-tripod} is the union of three edge-disjoint paths \(P_1,P_2,Q\) such that \(P_1\) and \(P_2\) start in \(Z\) and are otherwise disjoint from \(Z\), \(Q\) ends in \(Z\) and is otherwise disjoint from \(Z\) and \(\End{P_1}=\End{P_2}=\Start{Q}\), called the \emph{centre}.
The vertices \(\Start{P_1},\Start{P_2}\) we refer to as the \emph{sources} of the \(Z\)-onion, and \(\End{Q}\), we refer to as its \emph{sink}.
\emph{\(Z\)-in-tripods} are defined accordingly.

\begin{claim}
	\label{claim:PH}
	Having a packing of \(S\)-\(T\)-out-tripods of order \(k\cdot z^3\) in \(\migrationAux{D}\) yields a packing of \(Z\)-out-tripods of order \(k\) such that they all share the same two sources and the same sink.
	The same holds for in-tripods.
\end{claim}

\begin{claimproof}
	We prove the statement for out-tripods, the arguments for in-tripods are symmetric.
	In fact, this is obtained by a rather simple pigeon-hole-argument.
	
	Let \(\mathcal{P}\) be the packing of \(S\)-\(T\)-out-tripods of order \(k\cdot z^3\).
	Each of these sources and the sink of \(P\) correspond to a vertex in \(Z\), by construction of \(\migrationAux{D}\).
	So we consider the projection \(P_Z\) of \(P\) into \(D\), replacing the edges in \(P\cap \Out{D'}{S}\) by out-edges from \(Z\) in \(D\) and the edge in \(P\cap \In{D'}{T}\) by the corresponding in-edge into \(Z\) in \(D\).
	Note that \(P_Z\) is a \(Z\)-tripod and let \(\mathcal{P}_Z = \Set{P_Z\mid P\in \mathcal{P}}\) be the packing of \(Z\)-onions obtained.
	
	To simplify the argument assume that the \(Z\)-onions have a \emph{first} and a \emph{second} source, that is, making them distinguishable in an arbitrary way.
	
	As there is only \(z\) vertices in \(Z\), there is a vertex \(z_1 \in Z\) such that \(k\cdot z^2\) members \(\mathcal{P}_1\) of \(\mathcal{P}_Z\) have \(z_1\) as their first sources.
	Similarly, there is a vertex \(z_2 \in Z\) such that \(k\cdot z\) members \(\mathcal{P}_2\) of \(\mathcal{P}_1\) have \(z_2\) as their second source.
	
	Finally, there is a vertex \(z_3 \in Z\) such that \(k\) members \(\mathcal{P}_3\) of \(\mathcal{P}_2\) have \(z_3\) as their sink.
	
	Thus, the claim holds.
\end{claimproof}
Let us focus on the out-tripod packing. We will use it to exhibit a large out-onion-star
with a centre at some vertex in \(Z\).
By~\cref{claim:PH}, we may assume that we have a packing of size \(\ell\) of
\(Z\)-out-tripods, where each out-tripod has once source in \(z_1 \in Z\) and
the second source in \(z_2 \in Z\) (\(z_1\) and \(z_2\) are not necessarily distinct),
and each tripod has \(z_3 \in Z\) as its sink (again, \(z_3\) may
coincide with one or both of \(z_1,z_2\)). Moreover
all these tripods have distinct centres by construction.
Call this \(Z\)-out-tripod packing \(\TT_{\lout}\).

Let us consider the following auxiliary-digraph \(D_{\laux}\), obtained from \(D\)
by adding a new vertex \(x\), and \(\ell\) parallel edges from \(z_1\) to \(x\),
and \(\ell\) parallel edges from \(z_2\) to \(x\).
Consider removing a set \(F \subseteq E(D_{\laux})\) of size smaller than \(2\ell\).
Since the in-degree of \(x\) is equal to \(2\ell\), there is an in-edge
incident to \(x\) not in \(F\). Without loss of generality,
assume this is an edge from \(z_1\) to \(x\).
As \(Z\) is at least \((2\ell)\)-flow-connected in \(D \subseteq D_{\laux}\), in
\(D_\laux \setminus F\) there is still a \(z_3\)-\(z_1\) path.
Thus, by Menger's Theorem (c.f.~\cref{thm:menger}),
in \(D_{\laux}\) there is a \(z_3\)-\(x\) flow of size \(2\ell\).
This flow must correspond to exactly \(\ell\) paths from \(z_3\) to \(z_1\),
and \(\ell\) paths from \(z_3\) to \(z_2\).
Let \(\PP_1\) be the set of \(\ell\) paths from \(z_3\) to \(z_1\),
and \(\PP_2\) be the set of \(\ell\) paths from \(z_3\) to \(z_2\).
By construction \(\PP_1 \cup \PP_2\) is a set of pairwise edge-disjoint
trails all starting at \(z_3\).
If \(z_3 = z_1\), then we consider \(\PP_1\) to be a family
of \(\ell\) trivial trails (consisting of zero edges),
and similarly for \(z_3 = z_2\) and \(\PP_2\).

\begin{claim}
  \label{claim:uncross}
  There exists a function \(h : \N \rightarrow \N\) such that the following holds.

  Let \(\TT\) be a set of edge-disjoint \(Z\)-out-tripods in \(D\),
  all of which have
  the same vertex \(z \in Z\) as sink, and let \(\PP\) be a set of pairwise
  edge-disjoint trails in \(D\), all starting at \(z\).

  Then, for any positive integer \(n\), if
  \(\size*{\TT} \geq h(n)\),
  and \(\size*{\PP} \geq h(n)\),
  then either
  \begin{itemize}
    \item \(D\) contains an immersion of an onion star of order \(2k^2\), or
    \item there exists a subsets \(\TT' \subseteq \TT, \PP' \subseteq \PP\),
      each of size \(n\),
      such that each tripod in \(\TT'\) is edge-disjoint
      from every trail in \(\PP'\).
  \end{itemize}
\end{claim}
\begin{claimproof}
  Set 
  \begin{equation*}
    h(n) = \bRamsey(\thmFkt{lem:onion_harvesting}{g}{2k^2,2}, \bRamsey(\thmFkt{lem:onion_harvesting}{g}{2k^2,2}, n)).
  \end{equation*}
  Construct two sets of trails \(\QQ_1 = \set{Q^1_T : T \in \TT}, \QQ_2 = \set{Q^2_T : T \in \TT}\) indexed by \(\TT\)
  as follows:
  for each \(Z\)-out-tripod \(T \in \TT_{\lout}\) with the centre \(c \in V(D) \setminus Z\),
  let \(L_1\) be the \(z_1\)-\(c\) path contained in \(T\) where \(z_1 \in Z\) is one
  of the sources of~\(T\),
  let \(L_2\) be the \(z_2\)-\(c\) path contained in \(T\)
  where \(z_2\) is the second source of~\(T\),
  and let \(R\) be the \(c\)-\(z_3\) trail contained in \(T\);
  add \(Q^1_T\) defined as the concatenation of \(L_1\) and \(R\) to~\(\QQ_1\),
  and~\(Q^2_T\) the concatenation of \(L_2\) and~\(R\) to~\(\QQ_2\).
  Thus, each of \(\set{\QQ_1, \QQ_2}\) is a set of pairwise edge-disjoint
  trails in \(D\) all terminating at \(z\).

  Build an auxiliary bipartite graph on \(\TT \cup \PP\) as follows:
  \(T \in \TT\) and \(P \in \PP\) are adjacent if and only if~\(Q^1_T\) and~\(P\) share an edge.
  By~\cref{thm:BipRamsey}, either there is a complete bipartite
  subgraph of size at least \(\thmFkt{lem:onion_harvesting}{g}{2k^2,2}\)
  or an anti-complete bipartite subgraph of size at least \(\bRamsey(\thmFkt{lem:onion_harvesting}{g}{2k^2,2}, n)\).

  Assume we are in the former case.
  Thus, we have a subset \(\TT_c \subseteq \TT\),
  and \(\PP_c \subseteq \PP\) such that for each \(T \in \TT_c\) and \(P \in \PP_c\),
  the trails~\(Q^1_T\) and~\(P\) share an edge, and each is of size \(\thmFkt{lem:onion_harvesting}{g}{2k^2,2}\).
  This is exactly the setup of~\cref{lem:onion_harvesting},
  and in this case we conclude that \(D\) contains an onion star immersion
  of order~\(2k^2\).

  In the latter case, we have \(\TT_0 \subseteq \TT\) and \(\PP_0 \subseteq \PP\) such that for each \(T \in \TT_0\) and \(P \in \PP_0\),
  the trails~\(Q^1_T\) and~\(P\) are edge-disjoint,
  and each is of size \(\bRamsey(\thmFkt{lem:onion_harvesting}{g}{2k^2,2}, n)\).
  Now, build another auxiliary bipartite graph on~\(\TT_0 \cup \PP_0\),
  but now instead focus on \(Q_2\):
  \(T \in \TT_0\) and \(P \in \PP_0\) are adjacent if and only if
  \(Q^2_T\) and \(P\) share an edge.

  By~\cref{thm:BipRamsey}, either there is a complete bipartite
  subgraph of size at least \(\thmFkt{lem:onion_harvesting}{g}{2k^2,2}\)
  or a anti-complete bipartite subgraph of size at least \(n\).

  The former case unfolds exactly as in the previous
  step, with~\cref{lem:onion_harvesting} yielding a
  onion star immersion of order \(2k^2\).
  In the latter case, we obtain the desired subsets,
  as each tripod \(T\) is edge-covered by the two paths \(Q_1\) and \(Q_2\).
\end{claimproof}

Apply~\cref{claim:uncross} to \(\PP_1\) and \(\TT_{\lout}\).
These two set are each of size \(\ell = h(h(s))\).
Thus, we either obtain an onion-star of order \(2k^2\) as an immersion,
and thus also \(\gridImm{k}\),
or a subset \(\TT' \subseteq \TT_{\lout}\) and \(\PP'_1 \subseteq \PP_1\)
each of size \(h(s)\) that are pairwise edge-disjoint.
Again, apply~\cref{claim:uncross} to \(\PP_2\)
and \(\TT'\).
And either we conclude with a \(\gridImm{k}\) as an immersion in \(D\),
or we obtain a subset \(\TT''\) and \(\PP'_2 \subseteq \PP_2\),
each of size \(s\) and pairwise edge-disjoint.

We now use \(\TT'', \PP'_1 \PP'_2\) to assemble an out-onion-star
with a centre \(z_3\) and of order \(s\).
We do it as follows:
for each \(T\in \TT''\) let \(c \in V(D) \setminus Z\)
be the centre of \(T\), further let \(L_i\) be the \(z_i\)-\(c\)
path contained in \(T\).
Extend \(L_i\) by prepending a unique path from \(\PP'_i\),
thus obtaining a \(z_3\)-\(c\) trail.
This is an immersion model of an onion with root at \(z_3\)
and tip at \(c\).
As all tripods have distinct centres, and all objects
we used are pairwise edge-disjoint by construction,
this is a valid immersion model of an out-onion-star of order \(s\),
with centre at a vertex \(z \in Z\).

By a symmetric argument, applied to \(\TT_{\variablestyle{in}}\),
there exists also an immersion model of an in-onion start of
order~\(s\) with centre at \(z' \in Z\).

As the set \(Z\) is at least \(s\)-connected, and \(s = \thmFkt{lem:otl}{h}{m, 2k^2}\)
by~\cref{lem:otl},
either \(D\) contains an onion star of order \(2k^2\) as an immersion, or
there is a common \(z\) that is a centre of an
of both and in-onion-star and an out-onion-star immersion models in \(D\),
each of order \(m = \thmFkt{lem:two_onion_stars}{f}{2k^2}\),
and so by~\cref{lem:two_onion_stars} there is an
onion star immersion of order \(2k^2\) in \(D\).
Thus, \(D\) contains \(\gridImm{k}\).
\end{proof}

Finally, we prove the main result of this section, which is a variant of \cref{lem:SCL_flow} that no longer requires connectivity between the vertices of high cycle-degree.

\begin{lemma}[Small council lemma]
	\label{lem:SCL}
	There is a function \(\thmFkt{lem:SCL}{f}{} \colon \N^3 \to \N\) such that for every $k\in \N$ and a digraph \(D\) with at most \(z\) vertices of cycle-degree at least \(d\), it holds that
	\begin{enumerate}[label=\textbf{(S\arabic*)},leftmargin=1.2cm]
		\item \(D\) contains a \(\gridImm{k}\)-immersion or
		\label{item:SCL:immersion}
		\item \(\dtcwFkt{D} \leq \thmFkt{lem:SCL}{f}{k,d,z}\).
		\label{item:SCL:decomposition}
	\end{enumerate}
	Moreover, if $d=d(k)$ and $z=z(k)$, there is a fixed-parameter algorithm with parameter $k$ to find either a \(\gridImm{k}\)-immersion model in $D$ or a directed tree-cut decomposition of $D$ of width at most $\thmFkt{lem:SCL}{f}{k,d,z}$.
\end{lemma}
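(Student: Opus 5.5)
We reduce to \cref{lem:SCL_flow} by splitting the set \(Z\) of high cycle-degree vertices into well-connected groups, separated by cuts of small order. Set \(g\coloneqq \thmFkt{lem:SCL_flow}{g}{k,d}\) and let \(Z\) be the set of vertices of cycle-degree at least \(d\), so \(|Z|\le z\). We proceed by induction on \(|Z|\), proving a statement with bound \(f(k,d,|Z|)\).

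\textbf{Base case.} If \(Z\) is \(g\)-flow-connected (in particular if \(|Z|\le 1\)), we apply \cref{lem:SCL_flow} directly.

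\textbf{Splitting step.} Otherwise there are \(u,v\in Z\) and, by Menger's Theorem (\cref{thm:menger}), a cut \((A,B)\) with \(u\in A\), \(v\in B\) and \(|\Out{}{A}|<g\). Let \(F=\Out{}{A}\). In \(D-F\) no cycle crosses \((A,B)\), so the porosity of \((A,B)\) is less than \(g\). We apply induction separately to \(D[A]\) and \(D[B]\). Cycle-degree does not increase in subgraphs, so each side has fewer than \(|Z|\) vertices of cycle-degree at least \(d\), since it misses \(u\) or \(v\). Neither side contains a wall immersion unless \(D\) does. This yields decompositions \((T_A,\beta_A,\gamma_A,\theta_A)\) and \((T_B,\beta_B,\gamma_B,\theta_B)\).

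\textbf{Gluing.} We join the two decompositions by an edge \(e_0\) between arbitrary nodes \(a\in T_A\) and \(b\in T_B\), and add \(F\) to every edge-guard and every onion-guard.
\begin{itemize}
\item \emph{Edge-guards.} Every cycle of \(D\) either lies inside one side, or crosses \((A,B)\) and is hit by \(F\). For an edge of \(T_A\), a cycle crossing its induced cut is either inside \(A\), where it is hit by \(\gamma_A\), or it crosses \((A,B)\) and is hit by \(F\). The edge \(e_0\) induces exactly \((A,B)\), so \(\gamma(e_0)\coloneqq F\) suffices.
\item \emph{Onion-guards.} An onion model is strongly connected. If it is not hit by \(F\), it lies entirely in \(A\) or entirely in \(B\). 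An onion inside \(B\) has its root and tip in the same component of \(T-x\) for every \(x\in T_A\), so it is neither an in- nor an out-onion of \(x\). An onion inside \(A\) that crosses at \(x\) with respect to \(T\) crosses in the same sense with respect to \(T_A\), since all of \(B\) sits in one component of \(T-x\). Hence \(\theta_A(x)\cup F\) works with the orientation choice inherited from \(T_A\).
\item \emph{Width.} Bags are unchanged, and all guards grow by at most \(g\).
\end{itemize}

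\textbf{Bounds and algorithm.} One might worry that each induction level adds \(g\) and that the recursion is deep. However, each split strictly decreases the number of vertices of \(Z\) in every part, so the recursion depth is at most \(z\). Along any branch at most \(z\) cut-sets accumulate, giving width at most \(\thmFkt{lem:SCL_flow}{f}{k,d,z}+z\cdot g\). More precisely, each recursive piece has at most \(z\) vertices of \(Z\), and by monotonicity we may use \(\thmFkt{lem:SCL_flow}{f}{k,d,z}\) everywhere. The one point to verify is that the guard accumulation is per-branch, i.e.\ a tree edge at depth \(t\) carries only the \(t\) cuts of its ancestors, which holds by construction. The algorithm computes the min cuts in polynomial time and calls the algorithm of \cref{lem:SCL_flow} at the leaves of the recursion.
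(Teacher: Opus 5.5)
Your proof is correct, and it follows a different route from the paper's.

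\textbf{The paper's route.} The paper does not recurse. It builds one global edge set $W$ by repeatedly adding a separator of fewer than $g$ edges between some ordered pair $a,b\in Z$ that is still connected in $D-W$. Each ordered pair is handled at most once, so $|W|\le z(z-1)(g-1)$. Afterwards, any two vertices of $Z$ in the same strongly connected component of $D-W$ are $g$-flow-connected inside that component. The paper then applies \cref{lem:SCL_flow} to every strong component of $D-W$. It hangs the resulting decompositions from a fresh root with empty bag, and adds the same set $W$ to every edge-guard and onion-guard.

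\textbf{Your route.} You replace this single deletion set by a recursion tree of cuts, each of order less than $g$. You recompute the high cycle-degree set and its flow-connectivity in each piece, and you use that cycle-degree is monotone under subgraphs. A node or edge of the final decomposition inherits only the cut-sets of its ancestors in the recursion. The depth is at most $z-1$, so the additive overhead over the width bound of \cref{lem:SCL_flow} is only $(z-1)(g-1)$. This is linear in $z$, compared with the paper's quadratic $z(z-1)(g-1)$.

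\textbf{Trade-off.} The paper's construction is flat and uniform: one star-shaped gluing with a single set $W$ is checked once, with no induction bookkeeping. Your construction needs the local gluing argument at each level. You supply it correctly:
\begin{itemize}
\item a strongly connected onion model avoiding $F$ lies entirely on one side of $(A,B)$;
\item for $x\in V(T_A)$, the components of $T-x$ meet $V(T_A)$ exactly in the components of $T_A-x$;
\item all of $T_B$ lies in a single component of $T-x$.
\end{itemize}
Together these make $\theta_A(x)\cup F$, with the orientation inherited from $T_A$, a valid onion-guard.
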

\begin{proof}
	Fix
	\begin{equation*}
		\ell\coloneqq \thmFkt{lem:SCL_flow}{g}{k,d}.
	\end{equation*}
	Let \(Z\) be the set of vertices of cycle-degree at least \(d\) in \(D\); then \(|Z|\leq z\).
	
	We construct a set of edges \(W\) in \(D\) by the following iterative procedure.
	\begin{itemize}
		\item Start with \(W\coloneqq \emptyset\).
		\item Find a minimal set \(F\) such that there exist distinct $a,b \in Z$ and an $a$-$b$-path in $D-W$, but no $a$-$b$-path in $(D-W)-F$; if $|F| < \ell$, add $F$ to $W$ and proceed to the next iteration, otherwise $W$ is the final set.  
	\end{itemize}
	Observe that every ordered pair \((a,b)\in Z^2\) is considered at most once in this procedure, because once a set \(F\) intersecting every \(a\)-\(b\)-path is added to \(W\), the condition that there exists an \(a\)-\(b\)-path in \(D-W\) will never hold again.
	Thus, the procedure eventually constructs a set \(W\)~with \[|W|\leq z(z-1)(\ell-1)\] elements,
	that, by Menger's Theorem (\cref{thm:menger}), satisfies the following assertion:
	\begin{quote}
		For every pair of distinct vertices \(a,b\in Z\), either there is no \(a\)-\(b\)-path in \(D-W\), or there is an \(a\)-\(b\)-flow of order \(\ell\) in \(D-W\).
	\end{quote}
	
	\newcommand{\CCs}{{\cal C}}
	
	Let \(\CCs\) be the family of strongly connected components of \(D-W\). Note that for each \(A\in \CCs\), the set \(Z^A\coloneqq V(A)\cap Z\) is \(\ell\)-flow-connected in \(A\). Also, \(|Z^A|\leq |Z|\leq z\) and all the vertices of \(A\) lying outside of \(Z^A\) have cycle-degrees smaller than \(d\). Therefore, we may apply \cref{lem:SCL_flow} to \(A\) and \(Z^A\), thus obtaining either a \(\gridImm{k}\)-immersion model in \(A\) or a \dtcdecomp \(\Brace{T^A,\beta^A,\gamma^A,\theta^A}\) of \(A\) of width at most \(\thmFkt{lem:SCL_flow}{f}{k,d,z}\). If the first case occurs for any \(A\in \CCs\), then we have exposed an \(\gridImm{k}\)-immersion model in \(D\), which is conclusion \cref{item:SCL:immersion}. So from now we may assume that we have constructed a suitable decomposition \(\Brace{T^A,\beta^A,\gamma^A,\theta^A}\) for each \(A\in \CCs\), and we work towards conclusion~\cref{item:SCL:decomposition}.
	
	We combine the decompositions \(\Set{\Brace{T^A,\beta^A,\gamma^A,\theta^A}\colon A\in \CCs}\) into a \dtcdecomp \(\Brace{T,\beta,\gamma,\theta}\) of $D$ as follows:
	\begin{itemize}
		\item For each \(A\in \CCs\), select an arbitrary node \(r^A\) of \(T^A\). Construct \(T\) by taking the disjoint union of all the trees \(\Set{T^A\colon A\in \CCs}\), adding a fresh node $r$, and making it adjacent to all the nodes \(r^A\) for \(A\in \CCs\).
		\item Set \(\beta(r)=\emptyset\), \(\theta(r)=W\), and \(\gamma(rr^A)=W\) for every \(A\in \CCs\).
		\item For each \(A\in \CCs\), node \(x\) of \(T^A\), and edge \(e\) of \(T^A\), set $\theta(x)\coloneqq \theta^A(x)\cup W$ and $\gamma(e)\coloneqq \gamma^A(e)\cup W$.
	\end{itemize}
	Clearly, the width of \(\Brace{T,\beta,\gamma,\theta}\) is bounded by \[\thmFkt{lem:SCL_flow}{f}{k,d,z}+|W|\leq \thmFkt{lem:SCL_flow}{f}{k,d,z}+z(z-1)(\ell-1)= \thmFkt{lem:SCL_flow}{f}{k,d,z}+z(z-1)(\thmFkt{lem:SCL_flow}{g}{k,d}-1).\]
	So, to reach conclusion \cref{item:SCL:decomposition}, set to the right hand side of the above to \(\thmFkt{lem:SCL}{f}{k,d,z}\).
	It suffices to argue that \(\Brace{T,\beta,\gamma,\theta}\) is indeed a \dtcdecomp of \(D\).
	
	It is clear from the construction that \(\{\beta(x)\colon x\in V(T)\}\) is a near-partition of the vertex set of \(D\).
	That \(W\) is an onion-guard for the node \(r\) and also an edge-guard for the edge \(rr^A\), for each \(A\in \CCs\), follows immediately from the observation that there is no directed cycle visiting two subgraphs of \(D-W\) induced by bags of different components of $T - r$.
	
	Next, consider an edge \(e\) of \(T^A\) for some \(A\in \CCs\).
	Observe that for every cycle \(C\) in \(D\) that crosses \(e\) (treated as an edge of \(T\)), either \(C\) also crosses the edge \(rr^A\) and consequently is intersected by \(W\), or \(C\) resides entirely within \(A\) and consequently is intersected by \(\gamma^A(e)\), due to this set being an edge-guard for \(e\) in \(\Brace{T^A,\beta^A,\gamma^A,\theta^A}\).
	In conclusion, \(\gamma(e)=\gamma^A(e)\cup W\) is a suitable edge-guard for $e$ in \(\Brace{T,\beta,\gamma,\theta}\).
	An analogous argument shows that \(\theta(x)=\theta^A(x)\cup W\) is a suitable onion-guard for any node \(x\) of \(T^A\).
	
	Last, the existence of the desired algorithm follows directly from the proof.
\end{proof}

\section{Digraphs of large tree-cutwidth contain large wall immersions}
\label{sec:hard}
%

In this section we complete the proof of our main result, \cref{thm:grid_or_small_dtcw}, that is, we prove that every digraph of large directed tree-cutwidth contains a large wall immersion.
We make use of one more result from the existent work on onion-star immersions~\cite{bozyk2022digraphs}, which says that a large highly connected set of vertices forces the existence of a large onion-star immersion.

\begin{lemma}[\cite{bozyk2022digraphs}]
	\label{lem:onion_star}
	There is a function \(\thmFkt{lem:onion_star}{h}{} \colon \N \to \N\) such that for every digraph \(D\) and $p\in \N$, the following holds: If \(D\) contains a \(\thmFkt{lem:onion_star}{h}{p}\)-flow-connected set of size $2p$, then \(D\) also contains an immersion model of the onion-star of order $p$.
	
	Moreover, there is a fixed-parameter algorithm (with respect to parameter $p$) that given $D$ and a flow-connected set as above, finds an immersion model of the onion-star of order $p$.
\end{lemma}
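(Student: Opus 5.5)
This statement is cited from~\cite{bozyk2022digraphs}, so the plan is to reconstruct the argument from tools already recalled in the paper: Menger's Theorem, Ramsey's Theorem, and the Congested Onion Harvesting Lemma (\cref{lem:onion_harvesting}). Let $X$ be the $\thmFkt{lem:onion_star}{h}{p}$-flow-connected set of size $2p$. The idea is to fix one vertex $x\in X$ as the prospective centre and build, by flow arguments alone, a large out-onion-star and a large in-onion-star centred at $x$. Then \cref{lem:two_onion_stars} merges them into an onion-star of order $p$; we use $X$ having $2p$ vertices to obtain $p$ distinct tips and $p$ distinct roots.

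\textbf{Out-onion-star.} Attach a new sink $t^\ast$ to the vertices of $X\setminus\{x\}$, joining each $y$ to $t^\ast$ by two parallel edges. By Menger's Theorem and flow-connectivity (with $h(p)\ge 4p$), there is an $x$-$t^\ast$ flow of size $2(|X|-1)$. It saturates all the new edges, so we obtain two paths from $x$ to every $y\in X\setminus\{x\}$, and these paths are pairwise edge-disjoint over all $y$. Symmetrically, we obtain one path from every $y$ back to $x$, edge-disjoint among themselves.

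The difficulty is that the forward family and the backward family may share edges. The plan is to uncross them exactly as in the proof of \cref{lem:otl}. Take much larger families: use several disjoint copies of this construction, or iterate over fresh vertices, with flow value $h(p)$ huge. Build the bipartite intersection graph between forward and backward paths. By \cref{thm:BipRamsey}, either a large complete bipartite subgraph appears, and then \cref{lem:onion_harvesting} (with $c=1$ and centre $x$) directly yields an onion-star of order $p$, or a large anticomplete part appears. The anticomplete part, after matching, gives edge-disjoint onions rooted at $x$ with distinct tips in $X$. The in-onion-star is obtained symmetrically.

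\textbf{Main obstacle.} We only have $2p$ vertices of $X$ available as tips and roots, and Ramsey thinning discards many of them. To get around this, I would let the onions' tips be arbitrary vertices rather than elements of $X$: route many paths into a huge flow and use the harvesting lemma to manufacture tips. Alternatively, I would use the full strength of a function $h$ that is huge compared with $p$, so that the Ramsey steps are applied to families of paths indexed by flow multiplicity rather than by tips. After thinning, only $p$ distinct tips and $p$ distinct roots are needed, and disjointness of the tips from the roots is ensured by splitting $X\setminus\{x\}$ into halves. The algorithmic part follows because every step (flows, Ramsey on bounded-size families, harvesting) is constructive in fixed-parameter time.
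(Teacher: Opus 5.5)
There is a genuine gap. Your route through \cref{lem:two_onion_stars} cannot work, for the following reasons.
\begin{itemize}
\item \textbf{Orders.} That lemma needs an out-onion-star and an in-onion-star centred at $x$ of order $f(p)$, where $f$ is its Ramsey-type function. This is far larger than $|X|$. In your anticomplete branch every tip and root is a vertex of $X$, so stars of that order can never be produced.
\item \textbf{The proposed remedies.} \cref{lem:onion_harvesting} only applies when the two families pairwise share an edge, and in that case it already yields the final onion-star. So it cannot ``manufacture tips'' in the disjoint branch. Taking ``disjoint copies'' of the flow is also impossible; you have to raise the multiplicity instead.
\item \textbf{Endpoints in the Ramsey step.} A single application of \cref{thm:BipRamsey} to all forward versus all backward paths may leave forward paths ending only at some vertices $y$ and backward paths starting only at others. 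Then nothing can be matched into onions.
\item \textbf{Counting.} With centre $x\in X$ only $2p-1$ candidates remain, so $X\setminus\{x\}$ cannot be split into two halves of size $p$. This is partly inherited from the statement, which as printed is off by one. An onion-star of order $p$ has $2p+1$ vertices, so the digraph on $2p$ vertices with $h(p)$ parallel edges in both directions between every pair is a counterexample. One needs $|X|\ge 2p+1$, which is also all that the proof of \cref{thm:grid_or_small_dtcw} uses to bound bags by $2q$.
\end{itemize}

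The intended argument needs no stars of large order; the paper only remarks that it uses one application of Menger's Theorem and several of \cref{lem:onion_harvesting}.
\begin{itemize}
\item \textbf{Flows.} Let $Y\coloneqq X\setminus\{x\}$. Use your super-sink construction with $m$ parallel edges from each $y\in Y$, and the symmetric super-source. Whenever $h(p)\ge m|Y|$, this gives families $\mathcal{P}_y$ of $m$ paths from $x$ to $y$ and $\mathcal{Q}_y$ of $m$ paths from $y$ to $x$. All paths of $\bigcup_y\mathcal{P}_y$ are pairwise edge-disjoint, and likewise for $\bigcup_y\mathcal{Q}_y$.
\item \textbf{Pairwise Ramsey.} Process the ordered pairs $(y,y')\in Y^2$ one by one, applying \cref{thm:BipRamsey} to the edge-sharing graph between the current $\mathcal{P}_y$ and $\mathcal{Q}_{y'}$. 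A large complete part consists of paths starting at $x$ and paths ending at $x$ that pairwise share an edge, so \cref{lem:onion_harvesting} with $c=1$ immediately gives an onion-star of order $p$ centred at $x$. Otherwise, replace both families by anticomplete subfamilies.
\item \textbf{Assembly.} Take $m$ to be a $|Y|^2$-fold iterated Ramsey bound. Then at the end every $\mathcal{P}_y$ and $\mathcal{Q}_y$ keeps at least two paths, and all kept paths are pairwise edge-disjoint. Split $Y$ into $Y_1,Y_2$ with $|Y_1|=|Y_2|=p$. For $y\in Y_1$ take two paths of $\mathcal{P}_y$ and one of $\mathcal{Q}_y$ (out-onions). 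For $y\in Y_2$ take one path of $\mathcal{P}_y$ and two of $\mathcal{Q}_y$ (in-onions).
\end{itemize}
This assembles the onion-star of order $p$ directly, and it is exactly where $|X|\ge 2p+1$ is needed. Every step is constructive, which also yields the algorithm.
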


We remark that the algorithmic claim in \cref{lem:onion_star} is not explicitly mentioned in \cite{bozyk2022digraphs}, but follows easily from the (short) proof of this result in \cite{bozyk2022digraphs} and the algorithmic statement of \cref{lem:onion_harvesting}. In fact, \cref{lem:onion_star} is proved by one application of Menger's Theorem and several applications of \cref{lem:onion_harvesting}.

Next, we need to understand how to project onions in the entire graph to onions in the torsos of single bags of a tree-partition.

Let \(D\) be a digraph, $(X,Y)$ a partition of $V(D)$, and \(F\) be a set of edges in \(D\). If $u \in X$ and $v \in Y$, we call the union of three trails, two \(u\)-\(v\)-trails \(P_1,P_2\) and one \(v\)-\(u\)-trail \(P_3\), an \emph{\((F,(X,Y))\)-quasi-onion} if the three trails are edge-disjoint except for possibly sharing edges from \(F\).
We again refer to \(u\) as the \emph{root} of the quasi-onion and to \(v\) as its \emph{tip}.
We call an $(F,(X,Y))$-quasi-onion \emph{minimal} if there is no $(F,(X,Y))$-quasi-onion $O'$ consisting of $u'$-$v'$-trails $P'_1,P'_2$ and a $v'$-$u'$-trail $P'_3$, for some $u' \in X$ and $v' \in Y$, such~that 
\begin{align*}
\bigcup_{i \in [3]} \E{P'_i} \subseteq \bigcup_{i \in[3]} \E{P_i}\qquad\textrm{and}\qquad\sum_{i \in[3]} \Abs{\E{P'_i}} <\sum_{i \in[3]} \Abs{\E{P_i}}.
\end{align*}

\begin{lemma}[Quasi-onion Lemma]
	\label{lem:quasi_onions}
	Let \(D\) be a digraph with a given set of edges \(F\) that forms a matching (i.e.~they do not share any endpoints) and a bipartition \((X,Y)\) of its vertices such that no edge of \(F\) crosses \((X,Y)\).
	Then every minimal \((F,(X,Y))\)-quasi-onion with root \(u \in X\) and tip \(v \in Y\) is an immersion model of an onion.
\end{lemma}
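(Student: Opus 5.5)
We argue by contradiction: take a minimal $(F,(X,Y))$-quasi-onion $O$ with trails $P_1,P_2$ (from $u$ to $v$) and $P_3$ (from $v$ to $u$), and suppose two of the trails share an edge $f\in F$. The aim is to build a strictly smaller quasi-onion $O'$ with $E(O')\subseteq E(O)$. First we normalise. Any trail containing a closed subwalk can be shortcut, and this keeps $E(O')\subseteq E(O)$ while lowering the total length. So by minimality each $P_i$ is a path, and every shared edge lies in $F$. Because $F$ is a matching, a shared edge $f=(a,b)\in F$ has endpoints $a,b$ that are not endpoints of any other edge of $F$. Moreover $f$ does not cross $(X,Y)$, so $a$ and $b$ lie on the same side, say both in $X$ (the case $Y$ is symmetric).

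Next we do the case analysis on which pair of trails shares $f$. Suppose first that $f\in E(P_1)\cap E(P_3)$. Then $P_1$ runs $u\to a\to b\to v$ and $P_3$ runs $v\to a\to b\to u$. Replace the root by $u'\coloneqq b\in X$: the trail $b\xrightarrow{P_1}v$ is a $b$-$v$-trail, and the trail $v\xrightarrow{P_3}b$ (the prefix of $P_3$ ending at $b$) is a $v$-$b$-trail. For the second root-to-tip trail we need a $b$-$v$-trail inside $E(P_2)$ plus leftover edges. We obtain it by concatenating $b\xrightarrow{P_3}u$ with $P_2$, which gives a $b$-$v$ walk. We then reduce it to a trail (or path) using the reduction operation $\red{\cdot}$. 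It remains edge-disjoint from the other two trails apart from $F$-edges, since it uses only edges of the suffix of $P_3$ and of $P_2$. The prefix $u\to a$ of $P_1$ together with $f$ itself is dropped from $P_1$, and the part $a\to b$ of $P_3$ is dropped from $P_3$, so the total length strictly decreases. (If $f\in F$ appears in the new $P_2'$ while also being in $P_1'$, that sharing is still allowed.) The case $f\in E(P_2)\cap E(P_3)$ is identical. If $a,b\in Y$, we symmetrically move the tip to $a$, using prefixes.

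The remaining case is $f\in E(P_1)\cap E(P_2)$, with both root-to-tip trails passing through $a\to b$. If $a,b\in X$, we move the root to $b$: take the suffixes $b\xrightarrow{P_1}v$ and $b\xrightarrow{P_2}v$, and take $P_3$ followed by the prefix $u\xrightarrow{P_1}b$, reduced to a trail, as the new tip-to-root trail. We drop $u\xrightarrow{P_2}b$, and in particular one copy of $f$, so the length strictly drops. If $a,b\in Y$, we symmetrically move the tip to $a$, using the prefixes to $a$ and extending $P_3$ by $a\xrightarrow{P_1}v$.

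The main obstacle is verifying that the new triple is genuinely a quasi-onion. Its new root and tip must be on the correct sides, which holds because $f$ does not cross $(X,Y)$. Its three trails may overlap only in $F$, and the reduction of a concatenation must not reintroduce a non-$F$ overlap. In each construction the new trails are built from pairwise disjoint pieces of the old edge multiset, so overlaps can only come from old $F$-overlaps. The matching condition guarantees that re-routing at the endpoints of $f$ does not collide with another $F$-edge at the same vertex. Finally, once no $F$-edge is shared, the three trails are edge-disjoint trails from root to tip and back with $u\in X$ and $v\in Y$, so $u\neq v$ and $O$ is an onion immersion model.
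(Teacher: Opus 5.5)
Your proposal is correct and is essentially the paper's argument: at a shared $F$-edge $f=(a,b)$ you move the root to $b$ when $f$ lies in $X$ (respectively the tip to $a$ when $f$ lies in $Y$), and reroute with the same suffixes, prefixes and reduced concatenations, which yields a strictly smaller quasi-onion. There is one harmless slip: in the $P_1\cap P_3$ case no part of $P_3$ is discarded (its prefix up to $b$ becomes the new tip-to-root trail and its suffix feeds the new $P_2'$), so the strict decrease comes only from dropping the prefix of $P_1$ up to $b$, which contains $f$; also, as in the paper, the matching hypothesis on $F$ is never actually used.
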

\begin{proof}
	Let \(P_1\) and \(P_2\) be the two \(u\)-\(v\)-trails and \(P_3\) be the \(v\)-\(u\)-trail of a minimal \((F,(X,Y))\)-quasi-onion with root \(u \in X\) and tip \(v \in Y\).
	Note that the minimality asserts that $P_1,P_2$ and $P_3$ are paths. Also, note that since $P_1,P_2,P_3$ constitute a \((F,(X,Y))\)-quasi-onion, every edge shared by two of those trails must have both endpoints in $X$ (\emph{lie} in $X$) or both endpoints in $Y$ (\emph{lie} in $Y$).
	
	First we show that \(P_1\) is disjoint from \(P_2\).
	Assume otherwise and consider the first edge \(f \in F\) along \(P_1\) that it shares with \(P_2\).
	If \(f\) lies in \(Y\), then we define \(v' \coloneqq \tail{f}\) as the new tip and \(P'_1 \coloneqq P_1v'\), \(P'_2 \coloneqq P_2v'\) and \(P'_3 \coloneqq \red{v'P_2 \cdot P_3}\).
	This way the overall number of edges drops by the number of edges in \(v'P_1\) which is at least 1, as it contains \(f\). 
	The paths \(P'_1,P'_2\) and \(P'_3\) therefore yield a smaller $(F,(X,Y))$-quasi-onion with root \(u\) and tip \(v'\), a contradiction.
	
	If \(f\) lies in \(X\), then we define \(u' \coloneqq \head{f}\) as the new root.
	Then, we define \(P'_1 \coloneqq u'P_1\), \(P'_2 \coloneqq u'P_2\) and \(P'_3 \coloneqq \red{P_3 \cdot P_1u}\).
	Again, we obtain a smaller $(F,(X,Y))$-quasi-onion (by the number of edges in \(P_2u'\), in particular $f$) with root \(u'\) and tip \(v\), a contradiction.
	
	So, we may assume that the paths \(P_1\) and \(P_2\) are indeed edge-disjoint.
	We continue to consider the possibility of \(P_i\) intersecting \(P_3\) for \(i \in [2]\).
	Let \(j \coloneqq 3-i\).

	Consider the first intersection edge \(f \in F\) along \(P_i\) with \(P_3\).
	If \(f\) lies in \(Y\), then we define \(v' \coloneqq \tail{f}\) as the new tip.
	Then, we choose \(P'_i \coloneqq P_iv'\), \(P'_j \coloneqq \red{P_j \cdot P_3v'}\) and \(P'_3 \coloneqq v'P_3\).
	This yields a smaller $(F,(X,Y))$-quasi-onion, a contradiction.
	
	If \(f\) lies in \(X\), then we define \(u' \coloneqq \head{f}\) and \(P'_i \coloneqq u'P_i\), \(P'_j \coloneqq \red{u'P_3\cdot P_j}\) and \(P'_3 \coloneqq P_3u'\).
	This again would yield a smaller $(F,(X,Y))$-quasi-onion, a contradiction.
	
	So, indeed, the three paths \(P_1,P_2\) and \(P_3\) are all edge-disjoint and thus form an actual immersion model of an onion.
\end{proof}

Now, we can finally establish our main result.

\ThmImm*
\begin{proof}
	Throughout the proof we argue the combinatorial statement. At the end, we argue how the reasoning can be adapted to yield a fixed-parameter algorithm.
	
	If \(D\) contains an immersion model of \(\gridImm{k}\), then we are done, so we can assume it does not.
	
	Let \(q \coloneqq 2k^2\).
	We define
	\begin{equation*}
	\thmFkt{thm:grid_or_small_dtcw}{f}{k} \coloneqq	3 \cdot \thmFkt{lem:SCL}{f}{2k,2,2q} \cdot \Brace{1 + \thmFkt{lemma:potato}{f}{\thmFkt{lem:onion_star}{h}{q}}} 
		+ \thmFkt{lemma:potato}{f}{\thmFkt{lem:onion_star}{h}{q}}.
	\end{equation*}
	
	As an immersion model of an onion-star of order $q$ would also yield a \(\gridImm{k}\)-immersion model (\cref{lem:gridImm_from_onion_star}), \cref{lem:onion_star} implies that \(D\) does not contain any set of \(2q\) vertices that is \(\thmFkt{lem:onion_star}{h}{q}\)-flow-connected.
	
	By~\cref{lemma:potato}, there is a \(\thmFkt{lem:onion_star}{h}{q}\)-\linked \potdecomp \(\Brace{\hat{T},\hat{\beta},\hat{\gamma}}\) of \(D\) of \porosity \(\thmFkt{lemma:potato}{f}{\thmFkt{lem:onion_star}{h}{q}}\).
	As for every $x \in V(T)$ the bag $\hat{\beta}(x)$ is $\thmFkt{lem:onion_star}{h}{q}$-flow-connected, we have $|\hat{\beta}(x)| \leq 2q$.
	
	We root $\hat{T}$ in an arbitrary node.
	By~\cref{lem:rooting_potdecomps} we can assume that \(\Brace{\hat{T},\hat{\beta},\hat{\gamma}}\) is \coherent.
	This allows us to build a torso \(\torso{x}\) for every \(x \in \V{\hat{T}}\), as described in~\cref{sec:torso}.
	
	Note that since all auxiliary vertices of \(\torso{x}\) have cycle-degree exactly one, \(\torso{x}\) contains at most~\(2q\) vertices of cycle-degree at least two, i.e.~the original vertices of \(\hat{\beta}(x)\).
	
	Therefore, we can apply~\cref{lem:SCL} with parameter $2k$ and, as by~\cref{lem:grid_imm_in_torsos} we cannot get~\cref{item:SCL:immersion}, it yields~\cref{item:SCL:decomposition}, that is, a \dtcdecomp \(\TT_x^0\coloneqq\Brace{T^0_x,\beta^0_x,\gamma^0_x,\theta^0_x}\) of \(\torso{x}\) of width at most \(\thmFkt{lem:SCL}{f}{2k,2,2q}\).
	
	In order to be able to combine these decompositions into one decomposition of the entire graph, we first tidy it up a bit, ensuring that auxiliary vertices for the same subtree are placed into the same bag. To achieve this, we pay a multiplicative factor \(3\) in the width.
	
	\begin{claim}
		\label{claim:aux-vertices-together}
		We can construct a \dtcdecomp \(\TT_x\coloneqq \Brace{T_x,\beta_x,\gamma_x,\theta_x}\) from \(\TT_x^0\) such that for every pair \(a_i\) and \(b_i\) of auxiliary vertices, both of them lie in the same bag. The width of $\TT_x$ is at most \(3\cdot \thmFkt{lem:SCL}{f}{2k,2,2q}\).
	\end{claim}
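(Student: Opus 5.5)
The plan is to modify $\TT_x^0$ locally around each auxiliary pair. For each child $y_i$ of $x$, let $t_i$ be the node of $T^0_x$ whose bag contains $a_i$. We remove $b_i$ from its original bag and place it into $\beta_x(t_i)$, leaving the tree $T_x\coloneqq T^0_x$ unchanged. Moving a vertex changes the cuts induced by the edges on the $T^0_x$-path between the node $t'_i$ originally containing $b_i$ and $t_i$. We repair the guards by adding a small number of edges incident to $a_i$ or $b_i$.

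The key observation is that the edge sets at $a_i$ and $b_i$ are very restricted. The out-degree of $a_i$ is one (the edge $(a_i,b_i)$), and the in-degree of $b_i$ is one (the same edge). The two Chekhov-edges $(b_i,a_i)$ are parallel and have no other role.

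\emph{Edge-guards.} Consider an edge $e$ on the path between $t'_i$ and $t_i$. Any cycle that crosses the new cut but not the old one must visit $b_i$ and must cross the old cut through the displacement of $b_i$. Every cycle through $b_i$ enters $b_i$ via $(a_i,b_i)$. Hence adding the single edge $(a_i,b_i)$ to $\gamma_x(e)$ repairs $e$ with respect to the move of $b_i$. Several pairs may be moved across the same edge $e$, so we must bound how much is added to each guard.

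\emph{Counting.} Before moving, we note that for $e$ on the path between $t'_i$ and $t_i$, the cut of $e$ in $\TT_x^0$ separates $a_i$ from $b_i$. It is therefore crossed by the cycles formed by $(a_i,b_i)$ together with either Chekhov-edge. These are two cycles that share only the edge $(a_i,b_i)$. Thus any edge-guard of such an $e$ must contain $(a_i,b_i)$, or both Chekhov-edges of that pair. Hence the number of pairs separated by $e$ is at most $|\gamma^0_x(e)|$.

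Adding one edge per separated pair at most doubles the guard. To land at factor $3$, we instead replace each separated pair's contribution by $(a_i,b_i)$ and add it alongside the original. This gives the bound $|\gamma_x(e)|\le 2|\gamma^0_x(e)|\le 3\cdot\thmFkt{lem:SCL}{f}{2k,2,2q}$.

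\emph{Bags.} After the moves, bag sizes can grow: a node $t$ receives one $b_i$ per $a_i$ it holds, so $|\beta_x(t)|$ at most doubles.

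\emph{Onion-guards.} Onion-guards are handled the same way. An onion immersion model that becomes an in- or out-onion of a node only because of the move of $b_i$ must use $(a_i,b_i)$, since any onion containing $b_i$ uses the unique in-edge of $b_i$. Its root-to-tip and tip-to-root paths therefore pass through $b_i$ when $b_i$ is the root, tip, or an interior vertex. Conversely, onions whose root or tip is $a_i$ or $b_i$ can only switch type when the pair is separated at that node. The number of such pairs is again charged against $\theta^0_x$ or the adjacent edge-guards: at a node $t$ on the path, the two cycles through the pair both cross a cut incident to $t$. We then add $(a_i,b_i)$ for each pair whose path passes through $t$, keeping the original in/out choice.

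\emph{Main obstacle.} The main obstacle is exactly this onion-guard bookkeeping at interior nodes of the $t'_i$–$t_i$ paths. Such a node can lie on arbitrarily many paths, but only boundedly many of them can use two incident edges whose cuts cross the same pair. My first attempt would be to bound the relevant pairs at $t$ by the sum of the two incident edge-guards, via the cycle argument above, and to absorb that into the constant $3$. If that bound is too weak, I would fall back to choosing, for each pair, whichever of $t_i$ and $t'_i$ is closer, so that each affected node is charged only along edges already forced to contain $(a_i,b_i)$.
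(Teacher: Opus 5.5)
Your treatment of the bags and edge-guards matches the paper. You move $b_i$ into the bag of $a_i$, add $(a_i,b_i)$ to every edge-guard on the $t_i$--$t'_i$ path $P_i$, and charge each such pair to an edge of $O_i$ already in $\gamma^0_x(e)$. This shows that bags and edge-guards at most double, so your detour about ``landing at factor $3$'' there is unnecessary. The genuine gap is the size bound for the onion-guards. You correctly flag it as the main obstacle, but you do not resolve it, and your proposed resolution fails. A node $t$ of $T^0_x$ can have unbounded degree. The paths $P_i$ through $t$ may use pairwise different incident edges, and each of those edge-guards only needs to contain one edge of the corresponding $O_i$. So no sum of boundedly many incident edge-guards controls how many $i$ have $t\in V(P_i)$. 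The fallback of choosing the ``closer'' endpoint changes nothing either: whichever of $a_i,b_i$ you move, the nodes whose in/out-onion status can change are exactly $V(P_i)$.

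The missing idea is that $O_i$ is itself an onion. Its two Chekhov-edges go from the root $b_i$ to the tip $a_i$, and $(a_i,b_i)$ returns. If $t$ is an internal node of $P_i$, then $a_i$ and $b_i$ lie in different components of $T^0_x-t$. Hence $O_i$ is simultaneously an in-onion and an out-onion of $t$, and $\theta^0_x(t)$ must hit it whichever orientation $t$ guards. Since the $O_i$ are pairwise edge-disjoint, at most $|\theta^0_x(t)|$ paths pass through $t$ as an internal node. Paths with $t$ as an endpoint need $a_i$ or $b_i$ in $\beta^0_x(t)$, so there are at most $|\beta^0_x(t)|$ of them. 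These pairs do need $(a_i,b_i)$ added, because statuses at $t_i$ and $t'_i$ can also change. Altogether $|\theta_x(t)|\le 2|\theta^0_x(t)|+|\beta^0_x(t)|$, which is at most three times the width of the original decomposition. This is exactly where the factor $3$ in the claim comes from. It is also the reason the torso contains two parallel Chekhov-edges rather than one: with a single back edge the pair would only form a $2$-cycle, and nothing would bound how many pairs a high-degree node separates.
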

	\begin{claimproof}
	If $x$ is a leaf of $\hat{T}$, there is nothing to prove, so assume that $y_1,\ldots, y_\ell$ are the children of $x$ in $\hat{T}$. 
		For every $i \in [\ell]$, let \(t_i \in \V{T^0_x}\) be such that \(a_i \in \beta^0_x(t_i)\) and let \(t'_i \in \V{T^0_x}\) be such that \(b_i \in \beta^0_x(t'_i)\).
		Let \(P_i\) be the unique \(t_i\)-\(t'_i\)-path in \(T^0_x\).
		
		Recall that we denote by $O_i$ the onion consisting of the three edges between $a_i$ and $b_i$.

	\begin{claim}
		\label{claim:edge_guards_splitting_contain_aux_edge}
		If \(e \in \E{P_i}\), then \(\gamma^0_x(e)\) contains at least one of the auxiliary edges \(\E{O_i}\).
	\end{claim}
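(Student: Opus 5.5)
The idea is to exhibit, for each edge $e$ of $P_i$, a cycle of $\torso{x}$ that crosses the cut induced by $e$ and consists solely of auxiliary edges; the edge-guard property of $\TT^0_x$ then forces $\gamma^0_x(e)$ to contain one of them. Since $e$ lies on the unique $t_i$-$t'_i$-path $P_i$ in $T^0_x$, the nodes $t_i$ and $t'_i$ lie in different components of $T^0_x-e$. Consequently the cut $(\beta^0_x(\treeSplit{T^0_x}{u}{e}),\beta^0_x(\treeSplit{T^0_x}{w}{e}))$ induced by $e=uw$ places $a_i$ and $b_i$ on opposite sides. In particular, $t_i\neq t'_i$ whenever $P_i$ has an edge, so the claim is vacuous otherwise.

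Next I use the onion $O_i$. It consists of the edge $(a_i,b_i)$ and the two Chekhov-edges $(b_i,a_i)$. Combining $(a_i,b_i)$ with either Chekhov-edge yields a directed cycle of length two in $\torso{x}$. This cycle visits $a_i$ and $b_i$, so it crosses the cut induced by $e$.

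By \cref{item:tp:edgeguards} for the tree-partition underlying $\TT^0_x$ (which is a decomposition of $\torso{x}$, so its guards are edge sets of $\torso{x}$), the set $\gamma^0_x(e)$ hits this cycle. It therefore contains $(a_i,b_i)$ or one of the Chekhov-edges, all of which belong to $\E{O_i}$.

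There is no real obstacle here. The only point needing care is that the guards of $\TT^0_x$ are subsets of $\E{\torso{x}}$ rather than of $\E{D}$, which holds because $\TT^0_x$ was obtained from \cref{lem:SCL} applied to $\torso{x}$ itself.

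One could strengthen the claim by using the two cycles through the two distinct Chekhov-edges, which together force either $(a_i,b_i)$ or both Chekhov-edges into the guard. This stronger form is probably what the subsequent merging of $a_i$ and $b_i$ into a common bag will use, at the factor-$3$ cost announced in \cref{claim:aux-vertices-together}.
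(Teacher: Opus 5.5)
Your proof is correct and is essentially the paper's argument: $e$ lies on the $t_i$-$t'_i$-path, so the cut it induces separates $a_i\in\beta^0_x(t_i)$ from $b_i\in\beta^0_x(t'_i)$, and the $2$-cycle inside $O_i$ crosses that cut and must therefore be hit by $\gamma^0_x(e)$. Your closing remark about a stronger form is not needed: the later counting in \cref{claim:aux-vertices-together} uses only this weak version, together with the fact that the onions $O_i$ are pairwise edge-disjoint.
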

	\begin{claimproof}
		This follows from \(O_i\) containing a cycle that visits \(a_i\) and \(b_i\), which  crosses \(e\) and thus has to be hit by the edge-guard \(\gamma^0_x(e)\).
	\end{claimproof}
	
	\begin{claim}
		\label{claim:onion_guards_splitting_contain_aux_edge}
		If \(t \in \V{P_i}\setminus\Set{t_i,t'_i}\), then \(\theta^0_x(t)\) contains at least one of the auxiliary edges \(\E{O_i}\).
	\end{claim}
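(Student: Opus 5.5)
The claim should follow directly from the definition of onion-guards in \cref{def:dtcw}, using the auxiliary onion $O_i$ itself as the witness. The idea is that $O_i$ is an onion immersion model whose root and tip lie in different components of $T^0_x - t$, so the onion-guard at $t$ is forced to hit it.

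First, I check that $O_i$ is an onion immersion model in $\torso{x}$. By the construction of the torso, $O_i$ consists of the edge $(a_i,b_i)$ and the two Chekhov-edges $(b_i,a_i)$. This is exactly an onion with root $b_i$ and tip $a_i$: two parallel edges from root to tip, one edge back. Taking each vertex and each edge to itself is a trivial immersion model, since every trail is a single edge and the three edges are distinct.

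Next, I locate the root and tip relative to $t$. The tip $a_i$ lies in $\beta^0_x(t_i)$ and the root $b_i$ lies in $\beta^0_x(t'_i)$. The node $t$ lies on the unique $t_i$-$t'_i$ path $P_i$ in $T^0_x$ and is distinct from both endpoints. Hence removing $t$ disconnects $t_i$ from $t'_i$, so $t_i$ and $t'_i$ lie in different components of $T^0_x - t$. (Note $t_i\neq t'_i$ here, since otherwise $P_i$ would have no internal nodes.) Both are different from $t$, so by the remark after the definition of in- and out-onions, $O_i$ is simultaneously an in-onion and an out-onion of $t$.

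Finally, \cref{item:dtcw:onionguards} says $\theta^0_x(t)$ hits all out-onions of $t$ or all in-onions of $t$. Whichever orientation $t$ has chosen, $O_i$ is among the onions that must be hit. So $\theta^0_x(t)\cap \E{O_i}\neq\emptyset$, which is the claim. I do not expect any step to be an obstacle; the only point needing care is identifying $b_i$ as the root and $a_i$ as the tip, and that does not affect the argument because $O_i$ counts in both orientations.
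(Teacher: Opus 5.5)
Your proof is correct and follows the same route as the paper: the auxiliary onion $O_i$ (root $b_i$, tip $a_i$) has its root and tip in bags of nodes lying in different components of $T^0_x - t$. It is therefore both an in-onion and an out-onion of $t$, so $\theta^0_x(t)$ must hit it, and since $O_i$ consists only of auxiliary edges, $\theta^0_x(t)$ contains one of them. You also state explicitly that the argument does not depend on which orientation is chosen at $t$, a point the paper leaves implicit.
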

	\begin{claimproof}
		This follows from \(O_i\) containing an onion with tip \(a_i\) and root \(b_i\), which crosses \(t\) and thus has to be hit by the onion-guard \(\theta^0_x(t)\).
	\end{claimproof}
		
		We define \(T_x \coloneqq T^0_x\).
		Then, for every \(t \in \V{T_x} = \V{T^0_x}\) we define 
		\begin{equation*}
			\beta_x(t) \coloneqq \beta^0_x(t)\setminus\Set{b_i\mid a_i \not\in \beta^0_x(t)} \cup \Set{b_i\mid a_i \in \beta^0_x(t).}
		\end{equation*}
		That is, we move each auxiliary vertex \(b_i\) to the bag containing \(a_i\).
		This increases the size of the bags at most by a factor of two.

		For every \(e \in \E{T_x} = \E{T^0_x}\) we define
		\begin{equation*}
			\gamma_x(e) \coloneqq \gamma^0_x(e) \cup \Set{\Brace{a_i,b_i}\mid e\in\E{P_i}}.
		\end{equation*}
		The size of the edge-guard of $e$ grows by the number of paths \(P_i\) the edge $e$ lies on.
	By~\cref{claim:edge_guards_splitting_contain_aux_edge}, these are at most as many as the original edge-guard size.
	So, the edge-guards grow by at most a factor two.
		
		Similarly, for every \(t \in \V{T_x} = \V{T^0_x}\) we define
		\begin{equation*}
			\theta_x(t) \coloneqq \theta^0_x(t) \cup \Set{\Brace{a_i,b_i}\mid t \in \V{P_i}}.
	\end{equation*}
	Each onion-guard grows by one edge for every \(P_i\) its corresponding node $t$ in $T_x=T_x^0$ lies on.
	Thus, if we denote by $n_i$ and $n_e$, respectively, the number of paths $P_i$ that contain $t$ as an internal vertex, and as an endpoint, we have $|\theta_x(t)| \leq |\theta^0_x(t)| + n_i + n_e$. 
	If $t$ is an internal node of \(P_i\), by~\cref{claim:onion_guards_splitting_contain_aux_edge}, $\theta_x^0(t)$ already contains an edge from \(O_i\), thus $n_i \leq |\theta^0_x(t)|$.
	Note that $t$ can be only an endpoint of \(P_i\) if $\beta^0_x(t)$ contains \(a_i\) or \(b_i\). Therefore, $n_e \leq |\beta^0_x(t)|$.
	Thus, the new onion-guards are at most twice their original size plus the maximum bag-size, i.e.~three times the width of $\TT^0_x$.
	
	It remains to show that both the new edge-guards and the new onion-guards are doing their job in hitting all crossing cycles or onions.
	Let \(e\in \E{T_x}\) be an edge in the decomposition and \(C\) be a cycle of \(\torso{x}\) crossing \(e\).
	If \(C\) is a cycle crossing \(e\) in \(\TT^0_x\) as well, then it is hit by \(\gamma^0_x(e) \subseteq \gamma_x(e)\).
	If \(C\) does not cross \(e\) in \(\TT^0_x\), then there is \(i \in [\ell]\) such that \(e \in \E{P_i}\) and \(C\) visits \(b_i\). Thus, $C$ contains the edge \(\Brace{a_i,b_i}\), which ensures that $C$ is hit by \(\gamma_x(e)\).
	
	For the onion-guards, let \(t \in \V{T^0_x} = \V{T_x}\) and assume without loss of generality hat \(\theta^0_x(t)\) hits all in-onions of $t$ in $\TT_x^0$.
	Let \(O\) be an in-onion of \(t\) in \(\TT_x\) (thus $O$ is an onion immersion model in $\torso{x}$).
	If \(O\) is also an in-onion of $t$ in $\TT_x^0$, then it is hit by \(\theta^0_x(t) \subseteq \theta_x(t)\).
	If \(O\) only became an in-onion of $t$ in $\TT_x$ due to moving the auxiliary vertices, then there is \(i \in [\ell]\) such that \(t \in \V{P_i}\) and \(b_i\) is the root or the tip of \(O\).
	In particular, $O$ must contain the edge $(a_i,b_i)$, and hence $O$ is hit by \(\Brace{a_i,b_i} \in \theta_x(t)\).
\end{claimproof}

Based on the cleaned-up \dtcdecomps \(\Brace{T_x,\beta_x,\gamma_x,\theta_x}\) of every torso \(\torso{x}\) of the tree-partition \(\Brace{\hat{T},\hat{\beta},\hat{\gamma}}\), we now construct a \dtcdecomp \(\Brace{T,\beta,\gamma,\theta}\) of the entire graph~\(D\).

\paragraph{Constructing the decomposition tree:}
We start with the union of the trees from all the torso decompositions: \(T'' \coloneqq \bigcup_{x \in \V{\hat{T}}} T_x\).
This yields a forest and we want to connect it into a single tree.
By \cref{claim:aux-vertices-together}, the pairs of auxiliary vertices always lie in the same bag, which we make use of for deciding how to connect the components of \(T''\).

For every vertex \(x \in \V{\hat{T}}\), we add exactly one edge between \(T_x\) and \(T_{y_i}\) for every child \(y_i\) of \(x\) in \(\hat{T}\), so that the resulting graph \(T\) is a tree, as follows.
As \(y_i\) is a child of \(x\), there is a node \(t \in \V{T_x}\) such that the bag \(\beta_x(t)\) contains both \(a_i\) and \(b_i\).
We add an edge between \(t\) and an arbitrary node of \(T_{y_i}\).
These edges correspond one-to-one to the edges of \(\hat{T}\) and we refer to them as \emph{outer edges} of \(T\), while we call the remaining edges, which come from inside the torso decompositions, the \emph{inner edges} of \(T\).


\paragraph{Constructing the bags:}
As every node \(t\) of \(T\) is also a node of some \(T_x\) for some \(x \in \hat{T}\), there is a bag \(\beta_x(t)\).
This bag potentially contains some auxiliary vertices of the torso \(\torso{x}\), thus we define \(\beta(t) \coloneqq \beta_x(t) \cap \V{D}\) in order to remove these.
Clearly the bags do not grow in size with doing this.

\paragraph{Constructing the edge-guards:}
We distinguish between the outer and the inner edges of \(T\).
If \(e \in \E{T}\) is an inner edge of \(T\), then there is a node \(x \in \V{\hat{T}}\) such that \(e \in \E{T_x}\). Thus, there is an edge-guard \(\gamma_x(e)\) assigned to it.
Let \(x'\) and $y_1,\ldots,y_\ell$ be, respectively, the parent and the children of $x \in \hat{T}$ (if they exist).  
We define 
\begin{align*}
	\gamma(e) \coloneqq{} &\big(\torsoEdgeMap{x}(\gamma_x(e) \cap \domain{\torsoEdgeMap{x}})\big) \cup{}\\
	&\hat{\gamma}(x',x) \cup{}\\
	&\bigcup \Set{\hat{\gamma}(x,y_i) \mid i\in [\ell]\text{ and }E(O_i)\cap \gamma_x(e)\neq \emptyset }.
\end{align*}
In other words, $\gamma(e)$ consists of the origins of every edge from $\gamma_x(e)$ that is not an auxiliary edge, 
the edge-guard of $(x',x) \in E(\hat{T})$, 
and edge-guards of $(x,y_i) \in E(\hat{T})$ for every $i \in [\ell]$ for which $\gamma_x(e)$ contains an edge of the onion $O_i$.

Next, consider an outer edge \(e \in \E{T}\) between a node of \(T_x\) and a node of \(T_{x'}\), with $x'$ being the parent of $x$ in $\hat{T}$.
As $e$ corresponds to an edge in \(\hat{T}\), we make its edge-guard inherit the edge-guard from the \potdecomp, i.e., set \(\gamma(e) \coloneqq \hat{\gamma}(x',x)\).
See~\cref{fig:edge_guards} for an illustration of the edge-guards as well as the construction of the tree for the \dtcdecomp.


\begin{figure}[!ht]
	\begin{center}
		\begin{tikzpicture}
			\tikzstyle{outervertex} = [draw=gray, circle, inner sep=0.8cm, thick,fill=gray!30!white]
			\tikzstyle{innervertex} = [draw=LavenderMagenta, circle, inner sep=1.5mm, thick,fill=LavenderMagenta!10!white]
			\tikzstyle{innervertexRIM} = [draw=LavenderMagenta, circle, inner sep=1.5mm, thick]
			\tikzstyle{subsetvertex} = [draw=PastelOrange, circle, inner sep=1.5mm, thick,fill=PastelOrange]
			\def\outerdist{4}
			\node (centre) at (0,0) {};
			\node[outervertex] (c-1) at ($(centre)+(90:2)$) {};
			\node[outervertex] (c-2) at ($(c-1)+(200:{\outerdist})$) {};
			\node[outervertex] (c-3) at ($(c-1)+(340:{\outerdist})$) {};
			\node[outervertex] (c-4) at ($(c-2)+(220:{\outerdist})$) {};
			\node[outervertex] (c-5) at ($(c-2)+(300:{0.9*\outerdist})$) {};
			\node[outervertex] (c-6) at ($(c-3)+(240:{0.9*\outerdist})$) {};
			\node[outervertex] (c-7) at ($(c-3)+(320:{\outerdist})$) {};
			
			\begin{pgfonlayer}{background}
				\draw[line width=5pt,CornflowerBlue!50!white] (c-1) to (c-2);
				
				\draw[edge,gray] (c-1) to (c-2);
				\draw[edge,gray] (c-1) to (c-3);
				
				\draw[edge,gray] (c-2) to (c-4);
				\draw[edge,gray] (c-2) to (c-5);
				
				\draw[edge,gray] (c-3) to (c-6);
				\draw[edge,gray] (c-3) to (c-7);
			\end{pgfonlayer}
			
			\begin{pgfonlayer}{main}
				\node[innervertex] (v-1-1) at ($(c-1)+(270:0.7)$) {};
				\node[innervertex] (v-1-2) at ($(c-1)+(190:0.6)$) {};
				\node[innervertex] (v-1-3) at ($(c-1)+(80:0.7)$) {};
				\node[innervertex] (v-1-4) at ($(c-1)+(360:0.8)$) {};
				
				\begin{scope}
					\clip (v-1-4) circle (2.1mm);
					\node[subsetvertex] at ($(v-1-4)+(340:0.25)$) {};
				\end{scope}
				\begin{scope}
					\clip (v-1-2) circle (2.1mm);
					\node[subsetvertex] at ($(v-1-2)+(200:0.25)$) {};
				\end{scope}
				
				\node[innervertex] (v-2-1) at ($(c-2)+(80:0.7)$) {};
				\node[innervertex] (v-2-2) at ($(c-2)+(225:0.6)$) {};
				\node[innervertex] (v-2-3) at ($(c-2)+(340:0.7)$) {};
				
				\begin{scope}
					\clip (v-2-3) circle (2.1mm);
					\node[subsetvertex] at ($(v-2-3)+(200:0.27)$) {};
					\node[subsetvertex] at ($(v-2-3)+(20:0.27)$) {};
				\end{scope}
				
				\node[innervertex] (v-3-1) at ($(c-3)+(80:0.7)$) {};
				\node[innervertex] (v-3-2) at ($(c-3)+(150:0.6)$) {};
				\node[innervertex] (v-3-3) at ($(c-3)+(220:0.7)$) {};
				\node[innervertex] (v-3-4) at ($(c-3)+(300:0.8)$) {};
				\node[innervertex] (v-3-5) at ($(c-3)+(10:0.8)$) {};
				
				\begin{scope}
					\clip (v-3-3) circle (2.1mm);
					\node[subsetvertex] at ($(v-3-3)+(240:0.25)$) {};
				\end{scope}
				\begin{scope}
					\clip (v-3-4) circle (2.1mm);
					\node[subsetvertex] at ($(v-3-4)+(340:0.25)$) {};
				\end{scope}
				
				\node[innervertex] (v-4-1) at ($(c-4)+(90:0.3)$) {};
				\node[innervertex] (v-4-2) at ($(c-4)+(220:0.6)$) {};
				\node[innervertex] (v-4-3) at ($(c-4)+(345:0.7)$) {};
				
				\node[innervertex] (v-5-1) at ($(c-5)+(270:0.2)$) {};
				\node[innervertex] (v-5-2) at ($(c-5)+(170:0.6)$) {};
				\node[innervertex] (v-5-3) at ($(c-5)+(10:0.7)$) {};
				
				\node[innervertex] (v-6-1) at ($(c-6)$) {};
				\node[innervertex] (v-6-2) at ($(c-6)+(60:0.6)$) {};
				\node[innervertex] (v-6-3) at ($(c-6)+(190:0.7)$) {};
				\node[innervertex] (v-6-4) at ($(c-6)+(335:0.7)$) {};
				
				\node[innervertex] (v-7-1) at ($(c-7)+(100:0.6)$) {};
				\node[innervertex] (v-7-2) at ($(c-7)+(190:0.6)$) {};
				\node[innervertex] (v-7-3) at ($(c-7)+(280:0.7)$) {};
				\node[innervertex] (v-7-4) at ($(c-7)+(30:0.7)$) {};
			\end{pgfonlayer}
			
			\begin{pgfonlayer}{foreground}
				\node[innervertexRIM] at (v-1-1) {};
				\node[innervertexRIM] at (v-1-2) {};
				\node[innervertexRIM] at (v-1-3) {};
				\node[innervertexRIM] at (v-1-4) {};
				
				\node[innervertexRIM] at (v-2-1) {};
				\node[innervertexRIM] at (v-2-2) {};
				\node[innervertexRIM] at (v-2-3) {};
				
				\node[innervertexRIM] at (v-3-1) {};
				\node[innervertexRIM] at (v-3-2) {};
				\node[innervertexRIM] at (v-3-3) {};
				\node[innervertexRIM] at (v-3-4) {};
				\node[innervertexRIM] at (v-3-5) {};
			\end{pgfonlayer}
			
			\begin{pgfonlayer}{main}
				\draw[edge,LavenderMagenta] (v-1-1) to (v-1-2);
				\draw[edge,LavenderMagenta] (v-1-1) to (v-1-3);
				\draw[edge,LavenderMagenta] (v-1-1) to (v-1-4);
				
				\draw[edge,LavenderMagenta] (v-2-1) to (v-2-2);
				\draw[edge,LavenderMagenta] (v-2-1) to (v-2-3);
				
				\draw[edge,LavenderMagenta] (v-3-1) to (v-3-2);
				\draw[edge,LavenderMagenta] (v-3-2) to (v-3-3);
				\draw[edge,LavenderMagenta] (v-3-3) to (v-3-4);
				\draw[edge,LavenderMagenta] (v-3-3) to (v-3-5);
				
				\draw[edge,LavenderMagenta] (v-4-1) to (v-4-2);
				\draw[edge,LavenderMagenta] (v-4-1) to (v-4-3);
				
				\draw[edge,LavenderMagenta] (v-5-1) to (v-5-2);
				\draw[edge,LavenderMagenta] (v-5-1) to (v-5-3);
				
				\draw[edge,LavenderMagenta] (v-6-1) to (v-6-2);
				\draw[edge,LavenderMagenta] (v-6-1) to (v-6-3);
				\draw[edge,LavenderMagenta] (v-6-1) to (v-6-4);
				
				\draw[edge,LavenderMagenta] (v-7-1) to (v-7-2);
				\draw[edge,LavenderMagenta] (v-7-2) to (v-7-3);
				\draw[edge,LavenderMagenta] (v-7-3) to (v-7-4);
			\end{pgfonlayer}
			
			\begin{pgfonlayer}{main}
				\draw[edge,PastelOrange,out=170,in=30] (v-1-2) to (v-2-1);
				\draw[edge,PastelOrange,out=350,in=120] (v-1-4) to (v-3-2);
				\draw[edge,PastelOrange,out=210,in=10] (v-2-3) to (v-4-3);
				\draw[edge,PastelOrange,out=350,in=80] (v-2-3) to (v-5-1);
				\draw[edge,PastelOrange,out=225,in=80] (v-3-3) to (v-6-2);
				\draw[edge,PastelOrange,out=10,in=80] (v-3-4) to (v-7-4);
			\end{pgfonlayer}
			
			\node[circle,fill=white,inner sep=0.2mm] (guard) at ($(c-1)!0.4!(c-2)+(300:0.5)$) {\textcolor{CornflowerBlue}{\(\hat{\gamma}\)}};
			\node (x-1) at ($(v-1-2)!0.5!(v-2-1)+(100:0.35)$) {};
			\node (x-2) at ($(v-2-1)!0.5!(v-2-2)$) {};
			\node (x-3) at ($(v-2-1)!0.6!(v-2-3)$) {};
			\draw[directededge,CornflowerBlue,out=130,in=315] (guard) to (x-1.center);
			\draw[directededge,CornflowerBlue,out=150,in=30] (guard) to (x-2.center);
			\draw[directededge,CornflowerBlue,out=170,in=30] (guard) to (x-3.center);
		\end{tikzpicture}
	\end{center}
	\caption{Illustration of the decomposition and how the new edge-guards use the edge-guards of the sub-decompositions and the potato-decomposition.}
		\label{fig:edge_guards}
\end{figure}

We observe that:
\begin{align}\label{eq:gamma-bound}
\begin{split}
\Abs{\gamma(e)} &{}\leq \Abs{\gamma_x(e)} 
			+ \thmFkt{lemma:potato}{f}{\thmFkt{lem:onion_star}{h}{q}}
			+ \Abs{\gamma_x(e)} \cdot \thmFkt{lemma:potato}{f}{\thmFkt{lem:onion_star}{h}{q}}\\
		&{}\leq 3 \cdot \thmFkt{lem:SCL}{f}{2k,2,2q} \cdot \Brace{1 + \thmFkt{lemma:potato}{f}{\thmFkt{lem:onion_star}{h}{q}}} 
		+ \thmFkt{lemma:potato}{f}{\thmFkt{lem:onion_star}{h}{q}}\\
		&{} = \thmFkt{thm:grid_or_small_dtcw}{f}{k}.
\end{split}
\end{align}


\begin{claim}
	For every \(e = tt' \in \E{T}\), the edge set \(\gamma(e)\) is indeed an edge-guard for \(e\), that is, it hits all cycles visiting both vertices from bags of \(\treeSplit{T}{t}{e}\) and of \(\treeSplit{T}{t'}{e}\).
\end{claim}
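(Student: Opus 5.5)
We split according to whether $e$ is an outer or an inner edge of $T$, and in both cases first describe the cut that $e$ induces in terms of the tree-partition $(\hat{T},\hat{\beta},\hat{\gamma})$.

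\emph{Outer edges.} Suppose $e$ is an outer edge joining a node of $T_{x'}$ and a node of $T_x$, where $x'$ is the parent of $x$ in $\hat{T}$. By construction the component of $T-e$ containing $T_x$ consists exactly of the trees $T_y$ for the descendants $y$ of $x$. Removing auxiliary vertices from the bags does not change which original vertices are covered. Hence the induced cut is $(\hat{\beta}(\subtree{\hat{T}}{x}),\hat{\beta}(\treeUpper{\hat{T}}{x}))$, which is exactly the cut induced by $(x',x)$ in $\hat{T}$, and it is hit by $\hat{\gamma}(x',x)=\gamma(e)$.

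\emph{Inner edges.} Suppose $e\in E(T_x)$, and let $(X_x,Y_x)$ be the cut of $\torso{x}$ induced by $e$ in $\TT_x$. The side of $T-e$ containing a node $t$ of $T_x$ also contains every $T_{y_i}$ attached to $T_x$ on that side. By \cref{claim:aux-vertices-together} and the way outer edges were placed, this means $V_i=\hat{\beta}(\subtree{\hat{T}}{y_i})$ lies on the side of $e$ containing $\{a_i,b_i\}$. The whole of $\hat{\beta}(\treeUpper{\hat{T}}{x})$ lies on the side containing the node of $T_x$ attached to the parent. So the cut induced by $e$ in $D$ is obtained from $(X_x,Y_x)$ by three operations: restrict to $\hat{\beta}(x)$, replace each pair $\{a_i,b_i\}$ by $V_i$, and add the upper part of $\hat{T}$ to one side.

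Now take a cycle $C$ crossing this cut. If $C$ meets $\hat{\beta}(\treeUpper{\hat{T}}{x})$, it cannot be contained there, since that set lies on one side only. Hence $C$ crosses the cut of $(x',x)$ and is hit by $\hat{\gamma}(x',x)$. So assume $C$ lives in $\hat{\beta}(\subtree{\hat{T}}{x})$.

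Project $C$ to $\torso{x}$ as follows. Every maximal subpath of $C$ inside some $V_i$, entered through an edge $g$ and left through an edge $g'$, is replaced by the auxiliary edge $(a_i,b_i)$. Edges of $E^\star$ are replaced by their torso images $\torsoEdgeMap{x}^{-1}$. If $C$ visits $V_i$ several times, each visit contributes a separate traversal of $(a_i,b_i)$, so the projection is a closed walk $C'$ but not necessarily a cycle. To handle this we use the two Chekhov-edges $(b_i,a_i)$ and decompose $C'$ into cycles of $\torso{x}$ that together cover all torso-images of edges of $C$. If $C$ is entirely inside some $V_i$, it crosses $(x,y_i)$ and we treat it as in the next paragraph.

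Since $C$ crosses the cut in $D$, either some cycle of this decomposition crosses $(X_x,Y_x)$, or $C$ meets some $V_i$ while also visiting vertices on the other side of $e$. In the first case $\gamma_x(e)$ hits that cycle. If the hit edge lies in $\domain{\torsoEdgeMap{x}}$, its preimage belongs to $\gamma(e)$ and lies on $C$. If instead it is an auxiliary edge of $O_i$, then $C$ enters $V_i$, and $\hat{\gamma}(x,y_i)\subseteq\gamma(e)$ is included. It remains to show $C$ is hit there. Either $C$ is split inside $V_i$ and crosses $(V_i,V(D)\setminus V_i)$, or $C$ is itself contained in $V_i$ and crosses $e$. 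The latter is impossible, because $V_i$ lies entirely on one side of $e$.

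The second case, where $C$ meets $V_i$ and leaves it, is the main obstacle. The concern is that $C$ only passes through $V_i$ while the crossing of $e$ happens elsewhere, so that $\gamma_x(e)$ might hit only $O_i$ in an irrelevant way. We handle it by showing that $C$ then crosses $(x,y_i)$, so $\hat{\gamma}(x,y_i)$ is relevant precisely when $O_i$ is hit. Because $C$ both enters and leaves $V_i$, it always crosses the cut of $(x,y_i)$. If $\gamma_x(e)$ contains an edge of $O_i$, we are done, since then $\hat{\gamma}(x,y_i)\subseteq\gamma(e)$. Otherwise we rely on the projected cycles through $a_i,b_i$ that cross $(X_x,Y_x)$ being hit by a non-auxiliary edge, whose preimage lies on $C$. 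The care needed is that this edge is genuinely an edge of $C$ and not a Chekhov-edge. Chekhov-edges appear only in the decomposition of $C'$, never in the torso-images of edges of $C$, and each is parallel to the auxiliary pair $O_i$. So if $\gamma_x(e)$ hits a Chekhov-edge, it hits an edge of $O_i$, and that triggers the inclusion of $\hat{\gamma}(x,y_i)$.
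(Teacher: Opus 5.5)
Your treatment of outer edges and your overall plan for inner edges match the paper's proof. That plan is: discard cycles meeting the part of $D$ above $x$ via $\hat{\gamma}(x',x)$, project $C$ into $\torso{x}$, let $\gamma_x(e)$ hit the projection, and lift back either through the origin map or through $\hat{\gamma}(x,y_i)$.

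The gap is in your closing case split. You assert, without justification, that either some cycle of the decomposition of $C'$ crosses $(X_x,Y_x)$, or $C$ enters and leaves some $V_i$, and you call the second alternative the main obstacle. Your argument for that case (``we rely on the projected cycles through $a_i,b_i$ that cross $(X_x,Y_x)$'') presupposes exactly the first alternative, so it is circular. It also ignores that the edge of $\gamma_x(e)$ hitting such a cycle may be an auxiliary edge of a different onion $O_j$.

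The missing observation is that the first alternative \emph{always} holds, so the second case should be deleted. Since $C$ crosses the cut of $e$ in $D$, some edge $g$ of $C$ has tail and head on different sides. Each $V_i$ lies entirely on one side of $e$, and you have already excluded the part above $x$, so $g\in E^\star$. Its torso image joins vertices of $\hat{\beta}(x)\cup\{a_j,b_j\}$ lying on opposite sides of $(X_x,Y_x)$, because $a_j,b_j$ sit on the side of $V_j$. The cycle of your decomposition containing this image therefore crosses $(X_x,Y_x)$ and is hit by $\gamma_x(e)$, and your first-case argument finishes the proof. With this fix, decomposing $C'$ into simple cycles is a slightly more careful way than the paper's to handle repeated visits to the same $V_i$; the paper just picks a cycle in the support of the projection and asserts that it crosses.

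Two smaller points. First, the Chekhov-edges play no role in decomposing $C'$. Any closed walk splits into simple cycles whose edge multisets sum to that of the walk. Moreover, Chekhov-edges point from $b_i$ to $a_i$, so they could not replace extra traversals of $(a_i,b_i)$ anyway. Second, a cycle contained in $V_i$ does \emph{not} cross the cut of $(x,y_i)$, contrary to your remark. That case is simply vacuous because such a cycle cannot cross $e$, as you yourself note later.
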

\begin{claimproof}
	Let \(e = tt'\in E(T)\). If $e$ is an outer edge, for some $(x, x') \in E(\hat{T})$ such that $t \in V(T_x)$, and $t' \in V(T_{x'})$, we have \(\beta(\treeSplit{T}{t}{e}) = \hat{\beta}(\treeSplit{{\hat{T}}}{x}{e})\) and \(\beta(\treeSplit{T}{t'}{e}) = \hat{\beta}(\treeSplit{{\hat{T}}}{x'}{e})\), so the statement follows because $\gamma(e)=\hat\gamma(x,x')$.
	
	Suppose now $e$ is an inner edge. Let $x$ be such that $t,t' \in V(T_x)$, and \(C\) be a cycle visiting vertices from the bags of both \(\treeSplit{T}{t}{e}\) and of \(\treeSplit{T}{t'}{e}\).
	
	If \(C\) visits a vertex that lies in \(\hat{\beta}(\treeUpper{T}{x})\), then it has to contain an edge from \(\hat{\gamma}\Brace{x',x}\), where \(x'\) is the parent of \(x\), by~\cref{item:tp:edgeguards}, so it is hit by \(\hat{\gamma}\Brace{x',x} \subseteq \gamma(e)\).
	
	
	So we can assume that $C$ does not visit vertices from \(\hat{\beta}(\treeUpper{T}{x})\).
	We consider a subgraph of $\torso{x}$ induced by all edges of the form $\variablestyle{origin}^{-1}_x(e)$, for every edge $e \in E(C)$ for which $\variablestyle{origin}^{-1}_x(e)$ exists, and edges $(a_i,b_i)$ for every child $y_i$ of $x$ in $\hat{T}$ such that $C$ intersects $\beta(\subtree{T}{y_i})$.
	This subgraph includes a cycle $C'$ that, if there exists $y_i$ such that $C$ intersects $\beta(\subtree{T}{y_i})$, contains the auxiliary edge $(a_i,b_i)$. If there is no such $y_i$, consider $C'=C$, as $C$ is a cycle in $D[\hat\beta(x)]\subseteq \torso{x}$.
	
	As \(C'\) is a cycle in \(\torso{x}\) that crosses \(e \in E(T_x)\), it is hit by \(\gamma_x(e)\).
	If $C'$ is hit by an edge \((a_i,b_i) \in \gamma_x(e)\), then we have \(C \cap \hat{\gamma}(x,y_i) \neq \emptyset\) and thus, \(C\) is hit by \(\hat{\gamma}(x,y_i)\subseteq \gamma(e)\).
	Otherwise \(C\) is hit by \(\torsoEdgeMap{x}(\gamma_x(e) \cap \domain{\torsoEdgeMap{x}}) \subseteq \gamma(e)\).
\end{claimproof}

\paragraph{Constructing the onion-guards:}
For every node \(t \in \V{T}\) there also is some \(T_x\) with \(x \in \V{\hat{T}}\) and \(t \in T_x\) and therefore there exists an onion-guard \(\theta_x(t)\) in the \dtcdecomp of the torso \(\torso{x}\).
Also, let \(x'\) be the parent of \(x\) in \(\hat{T}\) (unless \(x\) is the root).
We define the onion-guard of $t$ as follows:
\begin{align*}
	\theta(t) \coloneqq{} &\big(\torsoEdgeMap{x}(\theta_x(t) \cap \domain{\torsoEdgeMap{x}})\big) \cup{}\\
	&\hat{\gamma}(x',x)  \cup{}\\
	&\bigcup\Set{\hat{\gamma}(x,c_i) \mid i\in [\ell] \text{ and }\theta_x(t)\cap E(O_i)\neq \emptyset}.
\end{align*}
	We again start by proving that the obtained sizes match our claimed bounds.
	Note that we have
	\begin{align}\label{eq:theta-bound}
	\begin{split}
	\Abs{\theta(t)} &{}\leq \Abs{\theta_x(t)} 
	+ \thmFkt{lemma:potato}{f}{\thmFkt{lem:onion_star}{h}{q}}
			+ \Abs{\theta_x(t)} \cdot \thmFkt{lemma:potato}{f}{\thmFkt{lem:onion_star}{h}{q}}\\
			&{}\leq 3\cdot \thmFkt{lem:SCL}{f}{2k,2,2q} \cdot \Brace{1 + \thmFkt{lemma:potato}{f}{\thmFkt{lem:onion_star}{h}{q}}} 
			+ \thmFkt{lemma:potato}{f}{\thmFkt{lem:onion_star}{h}{q}}\\
			&{} = \thmFkt{thm:grid_or_small_dtcw}{f}{k}.
	\end{split}
	\end{align}

	All that is left to prove is that the new onion-guards do indeed hit the onions. Denote $\TT=(T,\beta,\gamma,\theta)$.
	
	\begin{claim}
		For every \(t \in \V{T}\) the set \(\theta(t)\) hits all in-onions or all out-onions of $t$ in $\TT$.
	\end{claim}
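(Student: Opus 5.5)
The plan is to fix a node $t\in V(T)$ with $t\in V(T_x)$ and let $\theta_x(t)$ hit all in-onions of $t$ in $\TT_x$; the out-onion case is symmetric. We will show that $\theta(t)$ hits all in-onions of $t$ in $\TT$. So let $O$ be an onion immersion model in $D$ that is an in-onion of $t$ in $\TT$. Its root lies in a bag of a component $K$ of $T-t$, and its tip lies in $\beta(t)$ or in a bag of a component $K'\neq K$ of $T-t$.

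\emph{Step 1.} If $O$ visits a vertex of $\hat\beta(\treeUpper{\hat T}{x})$, we use that the tree-partition is coherent and that $O$ is strongly connected. The root and tip of $O$ lie on different sides of $t$. Hence not both of them lie in the part of $T$ corresponding to $\treeUpper{\hat T}{x}$, since that part sits inside a single component of $T-t$ (reached through the outer edge above $T_x$). So $O$ contains a cycle crossing the cut of $(x',x)$, and this cycle is hit by $\hat\gamma(x',x)\subseteq\theta(t)$. The same argument handles a child $y_i$ of $x$ whose subtree $\hat\beta(\subtree{\hat T}{y_i})$ contains both $O$-vertices and vertices outside it, but only when $\theta_x(t)$ already meets $O_i$. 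This suggests handling children through projection rather than directly.

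\emph{Step 2 (projection).} Assume $O$ avoids $\treeUpper{\hat T}{x}$ and avoids every $\hat\gamma(x,y_i)$ with $E(O_i)\cap\theta_x(t)\neq\emptyset$; otherwise we are already done. We project $O$ to $\torso{x}$. Each edge of $E^\star$ is replaced by its $\torsoEdgeMap{x}^{-1}$-preimage. Each maximal subtrail of $O$ inside some $V_i$ is replaced by the edge $(a_i,b_i)$. If the root or tip of $O$ lies in $V_i$, we map it to $a_i$ or $b_i$, using one of the Chekhov-edges $(b_i,a_i)$ where needed to close trails. This gives three trails in $\torso{x}$ that are edge-disjoint except possibly on the edges $(a_i,b_i)$; several trails may pass through the same $V_i$. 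The set $F=\{(a_i,b_i)\}$ is a matching. We take $(X,Y)$ to be the cut of $\torso{x}$ induced by the bag-side structure of $\TT_x$ at $t$: the side containing the root's image versus everything else. Since $a_i,b_i$ lie in a common bag by \cref{claim:aux-vertices-together}, no $F$-edge crosses $(X,Y)$. The projection is therefore an $(F,(X,Y))$-quasi-onion. By \cref{lem:quasi_onions}, a minimal sub-quasi-onion is a genuine onion immersion model $O'$ in $\torso{x}$. We must check that $O'$ is still an in-onion of $t$ in $\TT_x$: its root lies in the root's component and its tip lies outside it. We will get this by choosing $(X,Y)$ as the bipartition ``bags of $K$'' versus the rest, so that the minimal quasi-onion has root in $X$ and tip in $Y$, which is exactly the in-onion condition. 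Since the placement of the $D$-vertices of $O$ in $T$ coincides with their placement in $T_x$, and the auxiliary vertices sit at the attachment node of the outer edge, the component structure of $T-t$ restricted to $T_x$ matches that of $T_x-t$.

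\emph{Step 3.} $O'$ is hit by $\theta_x(t)$. If it is hit at a non-auxiliary edge, then $\torsoEdgeMap{x}$ of that edge lies in $O$ and in $\theta(t)$. If it is hit at an edge of $O_i$, then $\hat\gamma(x,y_i)\subseteq\theta(t)$. Here the edge of $O_i$ in $O'$ is either $(a_i,b_i)$, coming from a subtrail of $O$ through $V_i$, or a Chekhov-edge, which arises only when the root or tip of $O$ is in $V_i$. In both cases $O$ contains a cycle crossing $(V_i,V(D)\setminus V_i)$, because $O$ is strongly connected and has vertices on both sides. That cycle is hit by $\hat\gamma(x,y_i)$.

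The main obstacle is Step 2: the correct construction of the projection when the root or tip of $O$ lies deep inside some $V_i$, and in particular when both lie in the same $V_i$. In the latter case, however, both lie in one component of $T-t$ unless $t$ sits on the attachment path, which we rule out. The other delicate point is verifying that minimality in \cref{lem:quasi_onions} preserves the in-onion orientation with respect to $t$. We would first try to define $(X,Y)$ by the $\TT_x$-side at $t$ and check the Chekhov-edge bookkeeping case by case.
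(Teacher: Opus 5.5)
\textbf{Overall.} Your outline follows the paper's argument. You discard onions meeting the part of $\hat T$ above $x$ via $\hat\gamma(x',x)$, project what remains to $\torso{x}$ as an $(F,(X,Y))$-quasi-onion, apply \cref{lem:quasi_onions}, and lift a hit of $\theta_x(t)$ either to an edge of $D$ or to $\hat\gamma(x,y_i)$. Your root-based cut (root's component versus the rest) is the right choice for in-onions. For the ``symmetric'' out-onion case you must instead cut off the component containing the \emph{tip}. With a root-based cut, minimality can move the tip into $\beta_x(t)$, and the result is then not an out-onion. Chekhov edges are never needed: map a root or tip in $V_i$ to $a_i$ or $b_i$ and route through the $F$-edge $(a_i,b_i)$. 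Projected walks that revisit some $V_i$ should be shortcut to paths.

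\textbf{The gap.} It is in Step 2, and your worry about ``$t$ on the attachment path'' is aimed at the wrong case. A root and tip in the same $V_i$ always lie in one component of $T-t$, because $t\in V(T_x)$. The case that fails is different: the root of $O$ lies in $V_i$, and $t$ is exactly the node of $T_x$ whose bag contains $a_i,b_i$, i.e.\ the endpoint of the outer edge to $T_{y_i}$. In that case:
\begin{itemize}
\item the component $K$ of $T-t$ containing the root is disjoint from $V(T_x)$;
\item the root's image lies in $\beta_x(t)$, and your set ``bags of $K$'' in $\torso{x}$ is empty;
\item so the projected onion is not an in-onion of $t$ in $\TT_x$, and $\theta_x(t)$ need not hit it.
\end{itemize}
Your claim that ``root in $X$, tip in $Y$'' is the in-onion condition therefore breaks here.

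\textbf{The missing observation.} In \cref{claim:aux-vertices-together}, the edge $(a_i,b_i)$ is added to $\theta_x(s)$ for every node $s$ of the $t_i$-$t'_i$ path $P_i$. This includes its endpoint $t_i$, which is precisely this attachment node. Hence $\hat\gamma(x,y_i)\subseteq\theta(t)$. Since $O$ is strongly connected, with its root in $V_i$ and its tip outside $V_i$, it contains a cycle crossing the cut of $(x,y_i)$ and is hit. So your own Step 2 hypothesis (that $O$ avoids every $\hat\gamma(x,y_i)$ with $E(O_i)\cap\theta_x(t)\neq\emptyset$) excludes this case, but only once this fact is stated. The mirror case for out-onions (tip in $V_j$ with $t=t_j$) is handled the same way. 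The paper's write-up does not treat this attachment-node case explicitly either.
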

	\begin{claimproof}
		Let \(x \in V(\hat{T})\) be such that \(t \in \V{T_x}\).
		Note that each onion immersion model for which either root or tip is outside of the bags of the subtree of $\hat{T}$ rooted at \(x\), and the other one is inside, is hit by \(\hat{\gamma}(x',x)\), where $x'$ is the parent of $x$ in $\hat{T}$.
		So we only need to consider those onions that completely live in the subtree of $\hat{T}$ rooted at \(x\).
		For these we make use of the Quasi-onion Lemma~(\cref{lem:quasi_onions}).
		
		Assume in the decomposition $\TT_x$ of $\torso{x}$, the onion-guard \(\theta_x(t)\) hits all out-onions of \(t\).
		We aim to show that then \(\theta(t)\) also hits all out-onions of \(t\) in $\TT$.
		Let \(O\) be an out-onion of $t$ in $\TT$ (so $O$ is an onion immersion model in $D$) and let \(P_1,P_2,P_3\) be the three trails forming it.
		Let \(F \coloneqq \Set{(a_i,b_i) \mid y_i \text{ is a child of } x}\).
		As the mapping $\torsoEdgeMap{x}$ is a bijection, passing $P_1,P_2,P_3$ through $\torsoEdgeMap{x}$ and augmenting them with the edges of $F$, if necessary, naturally yields an \((F,(X,Y))\)-quasi-onion \(O'\) in \(\torso{x}\), for any cut $(X,Y)$ such that the root is in $X$ and the tip is in $Y$.
		
		If the root of \(O\) lies within \(\beta(t)\), then consider the bipartition \((X,Y) \coloneqq (\beta(t), \V{\torso{x}}\setminus\beta(t))\).
		Otherwise, let \(t'\) be the neighbour of \(t\) such that the bags of the subtree \((T_x)^{tt'}_{t'}\) contain the root of \(O\), then consider the bipartition \((X,Y) \coloneqq (\beta_x((T_x)^{tt'}_{t'}),\V{\torso{x}}\setminus\beta_x((T_x)^{tt'}_{t'}))\). Note that no edge of $F$ crosses the cut $(X,Y)$, and hence $O'$ is an $(F,(X,Y))$-quasi-onion.
		
		Now let \(O''\) be the minimal $(F,(X,Y))$-quasi-onion contained in the union of the paths of \(O'\) that has its root in $X$ and tip in $Y$.
		By~\cref{lem:quasi_onions}, \(O''\) is an onion and in particular, by our choice of \((X,Y)\), an out-onion of \(t\) in $\TT_x$. Therefore, $O''$ is hit by \(\theta_x(t)\).
		
		If the intersection of \(O''\) and \(\theta_x(t)\) contains an edge $e$ that is not an auxiliary edge in $\torso{x}$, then $\torsoEdgeMap{x}(e)$ also lies in \(\theta(t)\) and hits \(O\).
		Otherwise, \(O\) is hit by the set \(\bigcup\Set{\hat{\gamma}(x,c_i) \mid i\in [\ell] \text{ and }\theta_x(t)\cap E(O_i)\neq \emptyset}\), which was included in $\theta(t)$.
	\end{claimproof}
	
	We conclude that \(\TT=(T,\beta,\gamma,\theta)\) is indeed a \dtcdecomp of \(D\) of width at most \(\thmFkt{thm:grid_or_small_dtcw}{f}{k}\). This finishes the proof of the combinatorial bound.

	Finally, it is straightforward to verify that all the steps of the construction can be performed in fixed-parameter time, parameterized by $k$. This, together with the algorithmic statements asserted in \cref{lemma:potato,lem:SCL,lem:onion_star}, proves the algorithmic statement of \cref{thm:grid_or_small_dtcw}. 
\end{proof}

\section{Conclusions and open problems}\label{sec:conclusions}

In this work we have introduced a digraph parameter directed tree-cutwidth and we proved that the directed tree-cutwidth of a digraph $D$ is functionally equivalent to the largest order of a cylindrical wall that can be found in $D$ as an immersion.
This provides a suitable Grid Theorem for directed immersions. 
We expect that, similarly to other Grid Theorems found in structural graph theory, our theorem will lead to various follow-up results connected to immersions in digraphs.
One possibility for this are Erd\H{o}s-P\'osa-type statements.
Compared to other parameters, we understand relatively little about Erd\H{o}s-P\'osa-properties with respect to immersions.

The second natural direction are algorithmic consequences.
A major strength of treewidth and (undirected) tree-cutwidth is their connection to the rich world
of fixed-parameter tractability.
Note here that our main result, \cref{thm:grid_or_small_dtcw}, is already prepared for algorithmic usage, as it comes equipped with a fixed-parameter algorithm that either computes a directed tree-cut decomposition of bounded width or exposes an immersion model of a cylindrical wall of large order.
We leave working out such applications to future~work.

It is worth pointing out that sadly no advance can be made for the general \textsc{Edge-Disjoint Paths} problem.
As bounded directed tree-cutwidth implies bounded directed treewidth, one can directly obtain an XP-algorithm.
And as directed acyclic graphs (DAGs) have bounded directed tree-cutwidth, an FPT-algorithm is unlikely to exist under a
standard complexity theory assumption~\cite{edp-w1-hard}.

It follows from the algorithmic part of~\cref{thm:grid_or_small_dtcw} that there exists an FPT-time algorithm
that either correctly concludes that \dtcw of the input digraph \(D\) is greater than \(k\) (and provides a certificate in the form of a cylindrical wall immersion model),
or outputs a \dtcdecomp of \(D\) of width at most \(f(k)\) for some function \(f\).
However, the function \(f\) is non-elementary, and thus to make algorithmic results truly interesting, it would be important
to efficiently obtain decompositions of more reasonable width.
For directed treewidth there exists a simple constant factor approximation algorithm.
A similar result for undirected tree-cutwidth is established~\cite{tcw2018fptapprox}, it may provide a starting point for studying the directed setting.


In our presentation we refrained from giving an explicit bound on the function $\thmFkt{thm:grid_or_small_dtcw}{f}{k}$ bounding the directed tree-cutwidth of a digraph excluding \(\gridImm{k}\) as an immersion. Following the recent work of Hatzel, Kreuzer, Garlet Milani, and Muzi~\cite{COSS2024FOCS}, who improved the function in the Grid Theorem for directed treewidth (\cref{thm:gridtheorem}) to an elementary one, it is natural to ask whether $\thmFkt{thm:grid_or_small_dtcw}{f}{k}$ can also be bounded by an elementary function of $k$. An inspection of the arguments presented in this paper, as well as of the results from~\cite{bozyk2022digraphs,2024tripodEP} that we rely on, shows that we do \emph{not} obtain an elementary bound, but there is exactly one bottleneck preventing this: the proof of the Congested Onion Harvesting Lemma (\cref{lem:onion_harvesting}), presented in~\cite{bozyk2022digraphs} for $c=1$ and augmented in \cref{sec:appendix} for larger $c$, yields a non-elementary bound on the provided function $\thmFkt{lem:onion_harvesting}{g}{}$. Improving this bound to an elementary one, even just for $c=2$, would directly imply an elementary bound on the function $\thmFkt{thm:grid_or_small_dtcw}{f}{}$ provided by our main result. 

Finally, in~\cite{wollan2015structure} Wollan gave not only a Grid Theorem linking (undirected) tree-cutwidth of a graph~$G$ with the largest order of a wall immersion in $G$, but also proved a structure theorem approximately describing graphs excluding a complete graph $K_t$ as an immersion. A similar result was also proved independently by DeVos, McDonald, Mohar, and Scheide~\cite{DeVosMMS13}, also in the setting of Eulerian digraphs; however, as we discussed in \cref{sec:intro}, it is more in the spirit of our Potato Theorem (\cref{lemma:potato}). In essence, the theorem of Wollan says that if an undirected graph $G$ excludes $K_t$ as an immersion, then $G$ admits a tree-partition $\cal T$ such that:
\begin{itemize}
	\item every edge of $\cal T$ corresponds to a cut crossed by at most $t^2$ edges; and
	\item for every node $x$ of $\cal T$, the (appropriately defined) torso of $x$ contains only at most $t$ vertices of degree larger than $t^2$. 
\end{itemize}
In the light of this work, it is natural to ask whether there is some digraph analogue of this structural result of Wollan.
More precisely, if $\vec{K}_t$ denotes the complete digraph on $t$ vertices (i.e., with edges $(u,v)$ and $(v,u)$ present for every pair of vertices $u,v$), then can we prove a theorem approximately describing the structure of digraphs excluding $\vec{K}_t$ as an immersion?
Attempting to lift the approach of Wollan~\cite{wollan2015structure} directly immediately hits the following question: Is it true that if a digraph $D$ contains $k$ vertices that are pairwise $k$-flow-connected, for some large $k$ depending on $t$, then $D$ must contain $\vec{K}_t$ as an immersion? It turns out that the answer is negative even if $D$ is a plane digraph; we give an example below.

\begin{figure}[!ht]
	\centering
	\begin{tikzpicture}
		\node (centre) at (0,0) {};
		\node[draw,edge,rectangle,inner sep=1cm,rotate=45] (L) at ($(centre)+(180:2)$) {};
		\node[draw,edge,rectangle,inner sep=1cm,rotate=45] (T) at ($(centre)+(90:2)$) {};
		\node[draw,edge,rectangle,inner sep=1cm,rotate=45] (R) at ($(centre)+(0:2)$) {};
		\node[draw,edge,rectangle,inner sep=1cm,rotate=45] (B) at ($(centre)+(270:2)$) {};
		
		\node[vertex,CornflowerBlue,scale=1.5] (v-T) at ($(T)+(90:2.7)$) {};
		\node[vertex,CornflowerBlue,scale=1.5] (v-R) at ($(R)+(0:2.7)$) {};
		\node[vertex,CornflowerBlue,scale=1.5] (v-B) at ($(B)+(270:2.7)$) {};
		\node[vertex,CornflowerBlue,scale=1.5] (v-L) at ($(L)+(180:2.7)$) {};
		
		\draw[directededge,out=225,in=125] (v-T) to ($(T.north)+(45:0.7)$);
		\draw[directededge,out=215,in=125] (v-T) to ($(T.north)$);
		\draw[directededge,out=205,in=125] (v-T) to ($(T.north)+(225:0.7)$);
		
		\draw[directededgedecorated,out=55,in=315] ($(T.east)+(135:0.7)$) to (v-T);
		\draw[directededgedecorated,out=55,in=325] ($(T.east)$) to (v-T);
		\draw[directededgedecorated,out=55,in=335] ($(T.east)+(315:0.7)$) to (v-T);
		
		\draw[directededgedecorated,in=225,out=325] ($(R.south)+(45:0.7)$) to (v-R);
		\draw[directededgedecorated,in=235,out=325] ($(R.south)$) to (v-R);
		\draw[directededgedecorated,in=245,out=325] ($(R.south)+(225:0.7)$) to (v-R);
		
		\draw[directededge,out=115,in=35] (v-R) to ($(R.east)+(135:0.7)$);
		\draw[directededge,out=125,in=35] (v-R) to ($(R.east)$);
		\draw[directededge,out=135,in=35] (v-R) to ($(R.east)+(315:0.7)$);
		
		\draw[directededge,out=35,in=305] (v-B) to ($(B.south)+(45:0.7)$);
		\draw[directededge,out=45,in=305] (v-B) to ($(B.south)$);
		\draw[directededge,out=55,in=305] (v-B) to ($(B.south)+(225:0.7)$);
		
		\draw[directededgedecorated,out=235,in=145] ($(B.west)+(135:0.7)$) to (v-B);
		\draw[directededgedecorated,out=235,in=135] ($(B.west)$) to (v-B);
		\draw[directededgedecorated,out=235,in=125] ($(B.west)+(315:0.7)$) to (v-B);
		
		\draw[directededge,out=325,in=215] (v-L) to ($(L.west)+(135:0.7)$);
		\draw[directededge,out=315,in=215] (v-L) to ($(L.west)$);
		\draw[directededge,out=305,in=215] (v-L) to ($(L.west)+(315:0.7)$);
		
		\draw[directededgedecorated,out=145,in=55] ($(L.north)+(45:0.7)$) to (v-L);
		\draw[directededgedecorated,out=145,in=45] ($(L.north)$) to (v-L);
		\draw[directededgedecorated,out=145,in=35] ($(L.north)+(225:0.7)$) to (v-L);
		
		\draw[directededge] ($(T.south)+(45:0.7)$) to ($(R.north)+(45:0.7)$);
		\draw[directededge] ($(T.south)$) to ($(R.north)$);
		\draw[directededge] ($(T.south)+(225:0.7)$) to ($(R.north)+(225:0.7)$);
		
		\draw[directededge] ($(R.west)+(135:0.7)$) to ($(B.east)+(135:0.7)$);
		\draw[directededge] ($(R.west)$) to ($(B.east)$);
		\draw[directededge] ($(R.west)+(315:0.7)$) to ($(B.east)+(315:0.7)$);
		
		\draw[directededge] ($(B.north)+(45:0.7)$) to ($(L.south)+(45:0.7)$);
		\draw[directededge] ($(B.north)$) to ($(L.south)$);
		\draw[directededge] ($(B.north)+(225:0.7)$) to ($(L.south)+(225:0.7)$);
		
		\draw[directededge] ($(L.east)+(135:0.7)$) to ($(T.west)+(135:0.7)$);
		\draw[directededge] ($(L.east)$) to ($(T.west)$);
		\draw[directededge] ($(L.east)+(315:0.7)$) to ($(T.west)+(315:0.7)$);
		
		\draw[directededge,gray] ($(T)+(225:0.7)$) to ($(T)+(45:0.7)$);
		\draw[directededge,gray] ($(T)+(135:0.7)$) to ($(T)+(315:0.7)$);
		
		\draw[directededge,gray] ($(R)+(135:0.7)$) to ($(R)+(315:0.7)$);
		\draw[directededge,gray] ($(R)+(45:0.7)$) to ($(R)+(225:0.7)$);
		
		\draw[directededge,gray] ($(B)+(45:0.7)$) to ($(B)+(225:0.7)$);
		\draw[directededge,gray] ($(B)+(315:0.7)$) to ($(B)+(135:0.7)$);
		
		\draw[directededge,gray] ($(L)+(315:0.7)$) to ($(L)+(135:0.7)$);
		\draw[directededge,gray] ($(L)+(225:0.7)$) to ($(L)+(45:0.7)$);
	\end{tikzpicture}
	\caption{The plane digraph \(D\) built using $4$ grid-like constructions from \cite[Figure 2]{bozyk2022digraphs}.
		It has \textcolor{CornflowerBlue}{4 vertices} that are \(k\)-flow-connected but does not contain an immersion model of $\vec{K}_3$.}
	\label{fig:highly_connected_set_no_K5}
\end{figure}

Take several copies of the grid-like construction from \cite[Figure 2]{bozyk2022digraphs}, and arrange them as follows; see~\cref{fig:highly_connected_set_no_K5}.
In each copy, remove the eastern terminal.
Arrange the copies into a uni-directed cycle-of-grids by adding edges from the south-east side of each grid to the north-east side of the next grid.
Now, the remaining western terminals are pairwise highly flow-connected.
However, for any terminal \(t\) and paths \(P,Q\) such that \(P\) connects \(t\) to some other terminal and \(Q\) connects some other terminal to \(t\), the paths \(P\) and \(Q\) must share an edge.
Since the terminals are the only vertices of degree higher than 3, the resulting digraph does not contain \(\vec{K}_3\) as an immersion.

Given the example provided above, it is unclear to us what structure should be expected of digraphs excluding $\vec{K}_t$ as an immersion. Even more generally, what connectivity conditions on a digraph force the existence of an immersion model of $\vec{K}_t$? In~\cite{DeVosMMS12}, DeVos, McDonald, Mohar, and Scheide showed that assuming that the digraph has a large minimum outdegree and a large minimum indegree (both in terms of $t$) does \emph{not} suffice (but does, if the digraph is Eulerian). We conjecture that the stronger assumption that all the vertices have large cycle-degree is sufficient.

\begin{conjecture}\label{con:cycdeg}
	There exists a function $\thmFkt{con:cycdeg}{f}{}\colon \N\to \N$ such that for every $t\in \N$, every simple digraph of minimum cycle-degree at least $\thmFkt{con:cycdeg}{f}{t}$ contains $\vec{K}_t$ as an immersion.
\end{conjecture}

It can be verified that the example provided~in~\cite{DeVosMMS12} contains vertices of cycle-degree~$1$, hence it is not a counterexample to \cref{con:cycdeg}.



\paragraph*{Acknowledgments.}
We thank the reviewers for pointing us to relevant literature simplifying the presentation of our proof, as well as for finding a mistake in an earlier version of the counterexample depicted in \cref{fig:highly_connected_set_no_K5}. We also thank \L{}ukasz Bo\.zyk and Oscar Defrain for discussions at the early stage of this~project.

\paragraph*{AI disclosure.}
AI tools were used only for proofreading the manuscript and minor copy-editing. All the mathematical ideas are due to the authors and they take full responsibility for the reported findings.

\bibliographystyle{alphaurl}
\bibliography{literature}

\appendix 
\renewcommand*{\overline}[1]{\bar{#1}}

\section{Proof of \cref{lem:onion_harvesting}}
\label{sec:appendix}
Let $\cP$ and $\cQ$ be two families of trails in $D$ satisfying the four bullets in the  statement of \cref{lem:onion_harvesting}. 
We call $(\cP,\cQ)$ a \emph{well-crossing pair with congestion $c$ rooted at $x$}.

The statement of \cref{lem:onion_harvesting} is a strengthening of the Onion Harvesting Lemma from \cite{bozyk2022digraphs} that does not require that the trails in $\cP$ and $\cQ$ are actual paths, but every vertex except $x$ may repeat a fixed number of times within the same path. 
By closely examining the proof of OHL we may observe that the assumption about $\cP$ and $\cQ$ being paths is used only in the proof of the auxiliary Lemma 3.2  \cite{bozyk2022digraphs}. We sketch the proof of the analogue of Lemma 3.2 for our setting, although we omit the arguments that are the same as in the original proof of OHL. 

\begin{lemma}\label{lem:single-out}
Let $c,k\geq 1$ be integers. There exists a function $f_c\colon\N \to \N$ with the following property: If in a digraph $D$ there exists a well-crossing pair $(\cP,\cQ)$ with congestion $c$ rooted at $x$ with $|\cP|= f_c(k)$ and $|\cQ|= f_c(k)$, then in $D$ there exist an onion-immersion $\pi$ with root $x$, and a well-crossing pair $(\cP^*,\cQ^*)$ with congestion $c$ rooted at $x$ with $|\cP^*|= k$, $|\cQ^*|= k$ such that:
\begin{itemize}
\item $A(\cP^*) \subseteq A(\cP)$, and $A(\cQ^*) \subseteq A(\cQ)$;
\item all paths in $\cP^*\cup\cQ^*$ are arc-disjoint with $\pi$;
\item no arc in $A(\cP^*)\cap A(\cQ^*)$ has the sink of $\pi$ for tail.
\end{itemize}
\end{lemma}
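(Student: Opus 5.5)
We will follow the structure of the proof of Lemma~3.2 in \cite{bozyk2022digraphs}, tracking congestion explicitly. The plan is to locate a single onion rooted at $x$ inside the union of a few trails of $\cP$ and $\cQ$, and then discard every trail of $\cP\cup\cQ$ that meets this onion or that shares with the other family an arc leaving its tip.

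The first step is to pick the onion. We fix one trail $Q_0\in\cQ$ and look at the first arc along $Q_0$ that is shared with many trails of $\cP$. Since each $P\in\cP$ meets $Q_0$, pigeonhole over the arcs of $Q_0$ is not bounded in length. Instead we proceed as in the original proof. We fix two trails $P_1,P_2\in\cP$ and one $Q_1\in\cQ$, and for a shared arc $e\in A(P_i)\cap A(Q_1)$ we form $x\xrightarrow{P_i}\tail{e}$ together with the continuation along $Q_1$ back to $x$. This gives an $x$-$v$-$x$ closed structure, and we build the onion $\pi$ with root $x$ and tip $v=\tail{e}$ from prefixes of $P_1,P_2$ and a suffix of $Q_1$. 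Each of these pieces is a trail. Because vertices other than $x$ repeat at most $c$ times in a trail, taking the reduction $\red{\cdot}$ of each concatenation yields genuine paths. This is the only place where congestion enters: a prefix of a trail ending at the chosen vertex is still well defined if we take the first occurrence of $v$. The two prefixes of $P_1,P_2$ are arc-disjoint because $\cP$ is, but they may share arcs with the $Q_1$-suffix. To handle this we choose $v$ as the first vertex on $Q_1$ (scanning backwards from $x$) lying on $P_1\cup P_2$ after a shared arc, in the style of the minimality argument of the Quasi-onion Lemma (\cref{lem:quasi_onions}). Repeating over candidate triples then produces an arc-disjoint onion.

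The second step is the cleaning. The onion $\pi$ uses arcs of only boundedly many trails: it is contained in $P_1\cup P_2\cup Q_1$, and arcs of $\pi$ intersect other trails of $\cP$ or $\cQ$ in an uncontrolled way. We avoid this difficulty by choosing $P_1,P_2,Q_1$ via Ramsey-type selection (\cref{thm:BipRamsey}) so that the remaining families are large. The trails of $\cP$ meeting $\pi$ do so only on the $Q_1$-part, and symmetrically for $\cQ$. Every trail passes through the tip $v$ at most $c$ times, and $v$ has bounded cycle-involvement within the trails under consideration. Hence only $O(c)$ out-arcs of $v$ are used by each trail, and a pigeonhole over out-arcs of $v$ discards trails sharing an arc with tail $v$ with the other family. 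This costs only a factor depending on $c$, which is absorbed into $f_c$.

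The main obstacle is exactly this second step: bounding how many trails of $\cP\cup\cQ$ must be discarded because they intersect $\pi$. In the path setting of \cite{bozyk2022digraphs} this is controlled by the fact that each trail visits $v$ once. Here we would replace that bound by at most $c$ visits, and define $f_c(k)$ recursively with a factor polynomial in $c$ at each round of the original iteration. The remaining parts of the argument coincide with the original proof and can be cited.
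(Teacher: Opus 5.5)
There is a genuine gap, and it is in your second step, which is the actual content of the lemma. Producing \emph{some} onion rooted at $x$ is easy, for example from the first crossings of two trails $P_1,P_2\in\cP$ with a fixed $Q\in\cQ$. What has to be shown is that the onion can be chosen so that large subfamilies survive that avoid $\pi$ and still pairwise share arcs. Nothing in your proposal bounds how many trails of $\cP$ meet the $Q_1$-part of $\pi$, or how many trails of $\cQ$ meet its $P_1$- and $P_2$-parts; a priori each can be almost all of them. Invoking \cref{thm:BipRamsey} does not help, because the intersection graph of $\cP$ and $\cQ$ is complete by the well-crossing assumption, and you specify no other graph. What is needed is control over \emph{where} along the trails the crossings occur. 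The paper gets this from two devices. First, it restricts to a suffix $\overline{Q}$ of a fixed $Q$ that meets only about $|\cP|/3$ trails of $\cP$. Your abandoned idea of scanning a fixed $Q_0$ was the right start: you threshold on the number of $\cP$-trails met, rather than pigeonholing over arcs. Second, it calls a $(P,Q)$-crossing \emph{dangerous} if fewer than $|\cQ|/3$ trails of $\cQ\setminus\{Q\}$ have their $<_P$-first crossing with $P$ before it. A prefix of $P$ ending at a dangerous crossing then meets fewer than $|\cQ|/3$ other trails of $\cQ$.

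If two trails of $\cP$ have dangerous crossings with $\overline{Q}$, the onion is assembled from such prefixes and a piece of $\overline{Q}$. Only whole trails are then discarded, so well-crossingness is inherited. Otherwise, the prefixes of $P_1,P_2$ used by $\pi$ end at safe crossings. They therefore meet at least $|\cQ|/3$ (possibly almost all) trails of $\cQ$, and discarding whole trails does not work. Instead you must trim the trails of $\cQ$ to suffixes avoiding $P_1\cup P_2$, and the relevant trails of $\cP$ to prefixes avoiding $\overline{Q}$. You then have to re-establish that the trimmed families still pairwise cross. This is what the K\H{o}v\'ari--S\'os--Tur\'an-based claims do (\cref{thm:kst}), and it is where $g_c$ and $f_c$ come from. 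Your plan has no counterpart of this case.

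Two smaller points about Step~1. The two $x$-$v$ trails cannot both be prefixes of $P_1$ and $P_2$, since $v=\tail{e}$ with $e\in A(P_i)\cap A(Q_1)$ need not lie on $P_{3-i}$; one of them has to be a prefix of $P_1$ continued along $Q$ to $v$. Also, \cref{lem:quasi_onions} is not applicable, since it concerns shared edges forming a matching that does not cross a fixed cut.

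Your treatment of the third bullet is fine and agrees with the paper: the $k$ trails of $\cP^*$ have at most $ck$ arcs with tail at the tip, each on at most one trail of $\cQ$, so $(c+1)k$ candidates suffice. However, the bound on visits to the tip matters only there and in extracting minimal trails inside $\pi$. It plays no role in bounding how many trails meet $\pi$, contrary to what your last paragraph suggests.
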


We need the following classical result of Kővári, Sós and Turán~\cite{kHovari1954problem}.
\begin{theorem}[\cite{kHovari1954problem}]\label{thm:kst}
There exists a function $\thmFkt{thm:kst}{g}{}: \N\to (0,\infty)$ such that if a graph $G$ has at least $\thmFkt{thm:kst}{g}{q} \cdot |V(G)|^{2-1/q}$ edges for some $q \in \mathbb{N}$, then it contains $K_{q,q}$ as a subgraph.
\end{theorem}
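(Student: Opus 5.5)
The plan is the classical double-counting argument for the Kővári--Sós--Turán bound. We count pairs $(v,S)$ where $v\in V(G)$ and $S$ is a $q$-element subset of the neighbourhood of $v$. We then compare a lower bound from convexity with the upper bound forced by the absence of $K_{q,q}$. The function will simply be $\thmFkt{thm:kst}{g}{q}\coloneqq q$.

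Fix $q\geq 1$, let $n\coloneqq |V(G)|$ and $m\coloneqq |E(G)|\geq q\, n^{2-1/q}$. If $q=1$, the graph has at least one edge, which is a $K_{1,1}$, so assume $q\geq 2$. Let $N$ be the number of pairs $(v,S)$ with $S\subseteq N(v)$ and $|S|=q$. Counting by $v$ gives
\[N=\sum_{v\in V(G)}\binom{d(v)}{q},\]
where $d(v)$ is the degree of $v$.

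For the lower bound, consider the function $\phi(x)\coloneqq x(x-1)\cdots(x-q+1)/q!$ for $x\geq q-1$ and $\phi(x)\coloneqq 0$ otherwise. It is convex on $\mathbb{R}$ and agrees with $\binom{x}{q}$ on non-negative integers. Jensen's inequality then gives $N\geq n\,\phi(\bar d)$, where $\bar d\coloneqq 2m/n\geq 2q\,n^{1-1/q}\geq 2q$. Since $\bar d\geq 2q$, every factor satisfies $\bar d-i\geq \bar d/2$ for $i<q$. Hence
\[\phi(\bar d)\geq \frac{(\bar d/2)^q}{q!}\geq \frac{q^q\, n^{q-1}}{q!}.\]
This yields $N\geq q^q n^{q}/q!$.

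For the upper bound, suppose $G$ has no $K_{q,q}$ subgraph. Then every $q$-set $S$ has at most $q-1$ common neighbours; otherwise $q$ of them together with $S$ form a $K_{q,q}$, since a common neighbour of $S$ lies outside $S$ as the graph is loopless. Counting by $S$ therefore gives
\[N\leq (q-1)\binom{n}{q}\leq \frac{(q-1)\,n^q}{q!}.\]
As $q^q>q-1$, this contradicts the lower bound, so $G$ contains $K_{q,q}$.

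The only delicate step is the convexity estimate. The extension $\phi$ must be checked to be convex and to agree with the binomial coefficient at integers. One also needs the slack $\bar d\geq 2q$ so that the falling factorial can be bounded below by $(\bar d/2)^q$. The choice $g(q)=q$ provides exactly this slack, and the remaining steps are routine.
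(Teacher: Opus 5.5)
Your proof is correct. It is the classical K\H{o}v\'ari--S\'os--Tur\'an double-counting argument; the paper gives no proof of its own and simply cites that result. The convexity of your extension $\phi$ follows from Rolle's theorem: all roots of the second derivative of $x(x-1)\cdots(x-q+1)$ lie below $q-1$, and the right derivative of $\phi$ at $q-1$ is $1/q>0$, which dominates the left derivative $0$.

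The only tacit assumption is $|V(G)|\ge 1$, which you need for Jensen's inequality. For the empty graph the statement as written fails trivially, so this is a defect of the formulation rather than of your argument.
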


\newcommand{\nextarc}{\mathsf{next}}
\newcommand{\prevarc}{\mathsf{prev}}

For a trail $P$ let $<_P$ be the natural linear order of $A(P)$ defined by $a_i <_P a_j$ if and only if $i < j$. 
If $a_i$ directly precedes $a_j$ in $<_P$, we write $a_i=\prevarc_P(a_j)$ and $a_j=\nextarc_P(a_i)$.
We call an arc $e$ that belongs to $A(P) \cap A(Q)$ for some $P \in \cP$ and $Q \in \cQ$ a \emph{crossing} of $(\cP,\cQ)$, and specify \emph{$P$-crossing} (resp.~\emph{$Q$-crossing}, \emph{$(P,Q)$-crossing}) for a crossing of $(\cP,\cQ)$ contained in $P$ (resp.~$Q$, $A(P)\cap A(Q)$).
A~crossing $e$ is \emph{$(P,Q)$-safe} (or simply \emph{safe}) if there exist at least $|\cQ|/3$ trails in $\cQ \setminus \{Q\}$ whose $<_P$-minimal crossing with $P$ precedes $e$ in $<_P$.
Otherwise, it is said to be $(P,Q)$-\emph{dangerous} (or simply \emph{dangerous}).
We say that a trail $P$ is \emph{$Q$-dangerous} if it contains a dangerous $(P,Q)$-crossing.
If a trail $Q\in \cQ$ is trimmed to $\overline{Q}$, then we say that $P$ is \emph{$\overline{Q}$-dangerous} if there exists a dangerous $(P,Q)$-crossing belonging to $\overline{Q}$.

\begin{proof}[Proof of~\cref{lem:single-out}]
Define functions $g_c$ and $f_c$ by:
\begin{align*}g_c(k)&=\max\left\{8,\left\lceil\frac12\left(16\cdot \thmFkt{thm:kst}{g}{ck +k}\right)^{ck+k}\right\rceil\right\}, \\ f_c(k)&=\max\left\{3k(c+1),3\cdot\left\lceil\frac14(36 \cdot \thmFkt{thm:kst}{g}{g_c(k)})^{g_c(k)}\right\rceil\right\}.
\end{align*}
Fix $Q \in \cQ$, and let $e$ be the $<_Q$-smallest arc of $Q$ such that the number of paths in $\cP$ that have a common arc with the suffix of $Q$ that starts in $\nextarc_Q(e)$ equals $|\cP|/3$. 
Let $\overline{Q}$ be the suffix of $Q$ that starts in $e$.

\textbf{Case 1.} Suppose that $\cP(\overline{Q})=\{P \in \cP \mid A(P) \cap A(\overline{Q})\}$ contains at least two $\overline{Q}$-dangerous paths. 
Let $d$ be the $<_Q$-greatest dangerous crossing in $Q$ and let $P_2$ be the path such that $d$ is a $(P_2,Q)$-crossing. 
Then $d$ belongs to $\overline{Q}$.
Let $e'$ be the $<_Q$-greatest dangerous crossing in $A(Q)\setminus A(P_2)$ and let $P_1\in\cP(\overline{Q})\setminus\{P_2\}$ be the path such that $e'$ is a $(P_1,Q)$-crossing (with possibly $e=e'$).
Let $d'$ be the $<_{P_2}$-smallest $(P_2,Q)$-crossing belonging to the suffix of $Q$ that starts with $\nextarc_Q(e')$.
Clearly since $d$ is dangerous and belongs to $\overline{Q}$, $d'$ must be dangerous (with possibly $d=d'$). 

Let $y=\tail{d'}$.
We consider the $(x,y)$-trails: one that goes along $P_1$ to $e'$ and then continues along $Q$ to $y$, and the other that goes along $P_2$, and the $(y,x)$-trail that goes along $Q$.
All of these are arc-disjoint.
Now the minimal $(x,y)$-trails/$(y,x)$-trail contained in the above (as $y$ may occur more than one on each of them) form an onion immersion model with root $x$ and tip $y$.

Let $\cQ^\circ \subseteq \cQ$ be the set of trails which are arc-disjoint both with the prefix of $P_1$ that ends with $e'$ and the prefix of $P_2$ that ends with $d'$ (so in particular $Q\notin \cQ^\circ$).
Since $e'$ and $d'$ are both dangerous, $|\cQ^\circ|> |\cQ|-|\cQ|/3-|\cQ|/3=|\cQ|/3\geq (c+1)k$.
Let us pick an arbitrary collection $\cP^*\subseteq\cP\setminus \{P_1,P_2\}$ of $k$ trails that are arc-disjoint with the suffix of $Q$ that starts with $e'$.
Such a collection exists, as the number of trails in $\cP$ that intersect the suffix of $Q$ that starts with $\nextarc_Q(e)$ equals $|\cP|/3$, yielding $\smash{\frac23}|\cP|\geq 2(c+1)k$ candidate trails for $\cP^*$.
Since each trail in such a collection has at most $c$ arcs whose tail is $y$ (as each vertex occurs at most $c$ times on each trail), there are at most $c|\cP^*|=ck$ arcs in $A(\cQ^\circ)\cap A(\cP^*)$ whose tail is $y$. Let $\cQ^*\subseteq \cQ^\circ$ comprise any $n$ trails disjoint with these arcs.
Clearly, since $\cP^* \subseteq \cP$ and $\cQ^* \subseteq \cQ$, $(\cP^*,\cQ^*)$ is a well-crossing pair. Hence $\pi$, $\cP^*$ and $\cQ^*$ satisfy all the desired properties.

\textbf{Case 2.} Suppose that there is at most one $\overline{Q}$-dangerous path in $\cP(\overline{Q})$. 
Let $\cS$ be the set of those trails in $\cP(\overline{Q})$ whose all crossings with $\overline{Q}$ are safe, and let $\overline{\cS}$ be the set of trails from $\cS$ trimmed so that they start at $x$, have no arcs in common with $\overline{Q}$ and are maximally long.
Since $|\cP(\overline{Q})| = |\cP|/3+1$ and there is at most one dangerous trail in $\cP(\overline{Q})$, we derive $|\overline{\cS}|\geq|\cP|/3$. 

\begin{claim}[\cite{bozyk2022digraphs}]\label{cla:same-paths}
There exist subfamilies $\overline{\cS}'\subseteq \overline{\cS}$, $\cQ'\subseteq \cQ\setminus \{Q\}$ such that $|\overline{\cS}'|=g_c(k)$, $|\cQ'|=g_c(k)$ and $(\overline{\cS}',\cQ')$ is well-crossing with congestion $c$ rooted at $x$.
\end{claim}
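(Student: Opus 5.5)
We show that $\overline{\cS}$ and $\cQ\setminus\{Q\}$ cross in a dense pattern, and then extract a complete bipartite pattern with Kővári--Sós--Turán (\cref{thm:kst}). Build an auxiliary bipartite graph $H$ with sides $\overline{\cS}$ and $\cQ\setminus\{Q\}$, where $\overline{S}\in\overline{\cS}$ and $Q'$ are adjacent if they share an arc. Suppose we show that $H$ has at least $\thmFkt{thm:kst}{g}{g_c(k)}\cdot|V(H)|^{2-1/g_c(k)}$ edges. Then \cref{thm:kst} gives a copy of $K_{g_c(k),g_c(k)}$ in $H$, whose sides are the required $\overline{\cS}'$ and $\cQ'$.

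These subfamilies form a well-crossing pair with congestion $c$ rooted at $x$. Each $\overline{S}$ is a prefix of a trail of $\cP$, so it starts at $x$, visits $x$ once and every other vertex at most $c$ times. Edge-disjointness inside each family is inherited from $\cP$ and $\cQ$, and pairwise crossing is exactly completeness in $H$.

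\textbf{Density.} Fix $S\in\cS$ and its trimmed version $\overline{S}$. All crossings of $S$ with $\overline{Q}$ are safe. So, taking $d$ to be the $<_S$-first arc of $S$ on $\overline{Q}$, at least $|\cQ|/3$ trails of $\cQ\setminus\{Q\}$ have their $<_S$-minimal crossing with $S$ strictly before $d$. The trimmed $\overline{S}$ is the prefix of $S$ ending just before its first arc on $\overline{Q}$, so it still contains all of these minimal crossings.

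I need to check this carefully against the definition of ``maximally long''. If a vertex repeats, the maximal prefix avoiding $\overline{Q}$ is exactly the prefix before the first arc of $\overline{Q}$, and this is the only subtle point coming from congestion. Granting it, every $\overline{S}$ has degree at least $|\cQ|/3$ in $H$, so $|E(H)|\ge |\overline{\cS}|\cdot|\cQ|/3$.

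Since $|\overline{\cS}|\ge|\cP|/3$ and $|\cP|=|\cQ|=f_c(k)$, this gives $|E(H)|\ge f_c(k)^2/9$, while $|V(H)|\le 2f_c(k)$. The KST condition then reads roughly $f_c(k)^2/9\ge \thmFkt{thm:kst}{g}{g_c(k)}(2f_c(k))^{2-1/g_c(k)}$, i.e.\ $f_c(k)^{1/g_c(k)}\ge 36\cdot\thmFkt{thm:kst}{g}{g_c(k)}$ up to constants. This holds by the choice $f_c(k)\ge 3\lceil\frac14(36\,\thmFkt{thm:kst}{g}{g_c(k)})^{g_c(k)}\rceil$, and I would only adjust constants to match exactly.

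\textbf{Main obstacle.} The hard part is the safety-to-degree transfer. Safety of $(S,Q)$-crossings counts trails whose minimal crossing with $S$ precedes $e$, which refers to the untrimmed $S$, not to $\overline{S}$. I must argue that those minimal crossings lie strictly before the first arc that $S$ shares with $\overline{Q}$. This is fine because safety is asserted for that first such crossing itself. Discarding the at most one dangerous trail costs only an additive $1$ in $|\overline{\cS}|$.
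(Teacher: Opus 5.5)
Your proof is correct and follows the argument of \cite{bozyk2022digraphs} that the paper relies on; this is what the factor $(36\,\thmFkt{thm:kst}{g}{g_c(k)})^{g_c(k)}$ in $f_c$ is for. Safety of the $<_S$-first arc of $S$ on $\overline{Q}$ gives every $\overline{S}\in\overline{\cS}$ at least $|\cQ|/3$ neighbours in $\cQ\setminus\{Q\}$, and K\H{o}v\'ari--S\'os--Tur\'an then yields the $K_{g_c(k),g_c(k)}$, whose sides automatically respect the bipartition. The trimming point you flagged is immediate, since $x$ occurs only once on $S$, and your constants already suffice because $f_c(k)\ge\frac34(36\,\thmFkt{thm:kst}{g}{g_c(k)})^{g_c(k)}$ exceeds the needed $\frac12(36\,\thmFkt{thm:kst}{g}{g_c(k)})^{g_c(k)}$.
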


Consider the pair $(\overline{\cS}',\cQ')$ given by \cref{cla:same-paths} and
let $\cS'$ be the set of paths in $\cS$ from which $\overline{\cS}'$ originates (i.e., the paths from $\overline{\cS}'$ before trimming).
\begin{claim}[\cite{bozyk2022digraphs}]\label{cla:paths}
There exist 
\begin{itemize}
\item distinct $P_1,P_2 \in \cS'$, 
\item a subfamily $\cP^\circ$ of $\overline{\cS}'$, and 
\item a subfamily $\cQ^\circ$ of the set $\overline{\cQ}'$ of trails from $\cQ'$ trimmed so that they end at $x$, have no arcs in common with $P_1$ or $P_2$ and are longest possible, 
\end{itemize}
such that $|\cP^\circ| = |\cQ^\circ| = (c+1)k$ and $(\cP^\circ,\cQ^\circ)$ is well-crossing with congestion $c$ rooted at~$x$.\footnotemark
\end{claim}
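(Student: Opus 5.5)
The plan is to obtain $\cP^\circ$ and $\cQ^\circ$ as the two sides of a complete bipartite subgraph $K_{(c+1)k,(c+1)k}$, extracted by \cref{thm:kst} from a dense auxiliary bipartite graph. This matches the choice of $g_c(k)$: with $q\coloneqq ck+k$ and $n\coloneqq|\overline{\cS}'|+|\cQ'|=2g_c(k)$, the definition of $g_c$ makes $n^2/64$ edges exceed $\thmFkt{thm:kst}{g}{q}\cdot n^{2-1/q}$. So it suffices to find $P_1,P_2\in\cS'$ for which the following bipartite graph $B_{P_1,P_2}$ has at least a constant fraction (say $1/4$) of all $g_c(k)^2$ possible pairs as edges. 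Its sides are $\overline{\cS}'\setminus\{\overline{P}_1,\overline{P}_2\}$ and $\overline{\cQ}'$, where each $Q\in\cQ'$ is trimmed to the longest suffix ending at $x$ avoiding $A(P_1)\cup A(P_2)$, and $\overline{P}\sim\overline{Q}$ if they share an arc.

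Once $K_{q,q}$ is found, its two sides give $\cP^\circ$ and $\cQ^\circ$ directly, and the well-crossing property is inherited. Each $\overline{Q}$ is a subtrail of a trail of $\cQ$ ending at $x$, and each $\overline{P}$ is a subtrail of a trail of $\cP$ starting at $x$. Edge-disjointness within each family survives trimming, and so does the bound of $c$ occurrences per vertex other than $x$. Pairwise crossing is exactly the completeness of the $K_{q,q}$.

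To choose $P_1,P_2$ I would average over a random pair. For a fixed $Q\in\cQ'$, list the trails of $\cS'$ according to the $<_Q$-position of the \emph{last} arc of the full trail $P$ on $Q$. Trimming $Q$ with respect to $\{P_1,P_2\}$ removes the prefix up to the later of these two positions. Every $\overline{P}$ whose last crossing with $\overline{Q}$ (on the trimmed-to-$\overline{\cS}'$ side) lies beyond that position remains adjacent to $\overline{Q}$. For a uniformly random pair, the two last positions fall in the first half of the ordering with probability about $1/4$. In that event, roughly half of the $\overline{P}$'s survive as neighbours, provided that the last arc of $\overline{P}$ on $Q$ is comparable to the last arc of the full $P$. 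Summing over $Q$ and taking expectations then gives a pair $(P_1,P_2)$ with at least about $g_c(k)^2/16$ edges in $B_{P_1,P_2}$, which is enough for \cref{thm:kst}.

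The main obstacle is the mismatch between the full trails $P_1,P_2$, whose arcs determine the trimming of $Q$, and the trimmed $\overline{P}$'s, whose arcs certify adjacency. A full $P$ may hit $Q$ late only in its part beyond $\overline{P}$, that is, after it has met the original $\overline{Q}$. I would handle this using the fact that all trails in $\cS$ have only safe crossings with $\overline{Q}$, together with the counting already done in \cref{cla:same-paths}. First I would try to show that for most $Q\in\cQ'$, the arcs of full trails $P\in\cS'$ lying on $Q$ occur, in $<_Q$, within a controlled initial segment or in a way consistent with the order of their $\overline{P}$-crossings. If this holds only for a constant fraction of pairs $(P,Q)$, I would lower the density constant accordingly, which is where the slack ($16$ versus $64$) in $g_c$ would be spent. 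Congestion $c$ enters only through the bookkeeping that trimmed trails still visit each vertex at most $c$ times, which is automatic.
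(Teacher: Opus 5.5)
Your overall plan is the right one: choose a pair $\{P_1,P_2\}$ by averaging, order the trails along each $Q'\in\cQ'$ by last crossings, then apply \cref{thm:kst} with $q=(c+1)k$. But the step you flag yourself is a genuine gap, and the remedy you sketch cannot close it. Safety of the crossings of $P\in\cS$ with $\overline{Q}$ only constrains the prefix $\overline{P}$: it asks that enough trails of $\cQ\setminus\{Q\}$ be met before the first arc of $P$ on $\overline{Q}$. Likewise, \cref{cla:same-paths} only concerns the prefixes in $\overline{\cS}'$. Nothing controls where the remainder of the full trail $P$ meets the trails of $\cQ'$.

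In fact, if you trim against the full trails $P_1,P_2$, the conclusion can fail. Let every $P\in\cP$ start at $x$, meet each $Q'\in\cQ\setminus\{Q\}$ in a private ``early'' arc, then cross $Q$, then meet each $Q'$ again in a private ``late'' arc. Arrange that along every $Q'$ all early arcs precede all late arcs, and make all connecting arcs private. Every crossing with $Q$ is safe, so we are in Case~2, and each $\overline{P}$ consists only of early arcs and connectors. Yet for any $P_1\in\cS'$ and $Q'\in\cQ'$, the longest suffix of $Q'$ avoiding $A(P_1)$ begins inside the late zone. It therefore shares no arc with any member of $\overline{\cS}'$, and no well-crossing pair of positive size survives.

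The missing observation is that only the prefixes need to be avoided. The onion $\pi$ built afterwards lies in $A(\overline{P_1})\cup A(\overline{P_2})\cup A(Q)$, and the trails of $\cQ'\subseteq\cQ\setminus\{Q\}$ are arc-disjoint from $Q$. So proceed as follows.
\begin{itemize}
\item Rank $\overline{\cS}'$ along each $Q'$ by the $<_{Q'}$-position of the last arc of $\overline{P}$ on $Q'$. This arc exists because $(\overline{\cS}',\cQ')$ is well-crossing, and the arcs for different $\overline{P}$ are distinct by edge-disjointness of $\cP$.
\item Trim $Q'$ against $A(\overline{P_1})\cup A(\overline{P_2})$ only. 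The trimmed trail then meets every $\overline{P}$ ranked after both $\overline{P_1}$ and $\overline{P_2}$, so the mismatch disappears.
\end{itemize}

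Your counting also needs repair. Write $g=g_c(k)$ and let $c_q$ be the constant of \cref{thm:kst}. Since $2g\ge(16c_q)^q$, the threshold for $n=2g$ vertices is $c_q n^{2-1/q}\le n^2/16=g^2/4$. This is tight when $g=\frac12(16c_q)^q$, so $n^2/64$ edges do not suffice, and your ``both in the first half'' estimate gives only about $g^2/8$.

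Count exactly instead. For a fixed $Q'$, summing $g-\max(\mathrm{rank}(P_1),\mathrm{rank}(P_2))$ over all pairs counts increasing triples of ranks, so the sum is $\binom{g}{3}$. The average over the $\binom{g}{2}$ pairs is therefore $(g-2)/3$, and some pair yields at least $g(g-2)/3$ edges in total. Since $g\ge 8$ (which is why the $8$ appears in $g_c$), this is at least $g^2/4$. \cref{thm:kst} then gives a $K_{q,q}$. It avoids $\overline{P_1}$ and $\overline{P_2}$, which are isolated after trimming, and its two sides are the required $\cP^\circ$ and $\cQ^\circ$.
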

\footnotetext{The proof of \cref{cla:paths} in \cite{bozyk2022digraphs} uses the definition of $g_c$ for $c=1$, but the correctness of calculations for the more general case is straightforward to verify.}

Consider $P_1$, $P_2$ and $(\cP^\circ,\cQ^\circ)$ as in the claim above and denote
$e_i=\min_{<_{P_i}}\{A(\overline{Q})\cap A(P_i)\}$ for $i=1,2$. 
Assume without loss of generality that $e_1<_Q e_2$, i.e., that the $<_{P_1}$-smallest $(P_1,\overline{Q})$-crossing is further from $x$ on $\overline{Q}$ than the $<_{P_2}$-smallest $(P_2,\overline{Q})$-crossing. Let $y=\tail{e_2}$.
By the choice of $e_1$ and $e_2$, we have two $(x,y)$-trails: the one that goes along $P_1$ to $e_1$ and then continues along $Q$ to $y$, and the one that is a prefix of $P_2$, and one $(y,x)$-trail that is a suffix of $Q$.
All of these are pairwise arc-disjoint.
Now the minimal $(x,y)$-trails/$(y,x)$-trail contained in the above form an onion-immersion with root $x$ and tip $y$. 

The last step (to ensure that $y$ will not be the tail of a crossing in the found well-crossing pair) is performed similarly as in the first case: we pick an arbitrary collection $\cP^\ast$ of exactly $k$ trails among the $(c+1)n$ paths in $\cP^\circ$, and select a subfamily $\cQ^\ast$ consisting of $k$ trails which do not contain crossings of tail $y$ among the $(c+1)k$ trails in $\cQ^\circ$.
\end{proof}

Last, we note that one can adapt the proofs given in~\cite{bozyk2022digraphs} using standard tools in order to obtain the algorithmic counterparts of both~\cref{lem:onion_harvesting} and~\cref{lem:onion_star}.

\end{document}